\newif\ifdraft
\newif\ifcite
\ifcite\usepackage{showkeys}\else\usepackage[notcite,notref]{showkeys}\fi\fi
\numberwithin{equation}{section}
\newtheorem{theorem}{Theorem}[section]
\newtheorem{corollary}[theorem]{Corollary}
\newtheorem{proposition}[theorem]{Proposition}
\newtheorem{lemma}[theorem]{Lemma}
\newtheorem{conjecture}[theorem]{Conjecture}
\theoremstyle{definition}
\newtheorem{definition}[theorem]{Definition}
\newtheorem{question}[theorem]{Question}
\newtheorem{example}[theorem]{Example}
\newtheorem{remark}[theorem]{Remark}
\newcommand{\Z}{\mathbb{Z}}
\newcommand{\C}{\mathbb{C}}
\newcommand{\A }{\mathbb{A}}
\newcommand{\PP}{\mathbb{P}}
\def\ZZ{\mathbb{Z}}
\newcommand\cB{{\mathcal B}}
\newcommand\cI{{\mathcal I}}
\newcommand\cO{{\mathcal O}}
\def\fD{\mathfrak{D}}
\def\fX{\mathfrak{X} }
\def\fb{\mathfrak{b}}
\def\fg{\mathfrak g}
\def\ft{\mathfrak t}
\def\tG{\widetilde{G}}
\def\tM{\widetilde{M}}
\def\hbar{\overline{h}}
\def\GL{\mathrm{GL} }
\DeclareMathOperator{\id}{id}
\DeclareMathOperator{\rank}{rank}
\DeclareMathOperator{\Hom}{Hom}
\def\dim{\mathrm{dim} }
\def\Bl{\mathrm{Bl}}
\def\Pic{\mathrm{Pic} }
\def\and{\quad{\rm and}\quad}
\def\lra{\longrightarrow }
\def\beq{\begin{equation}}
\def\eeq{\end{equation}}
\def\ben{\begin{enumerate}}
\def\een{\end{enumerate}}
\def\and{\quad\text{and}\quad}
\def\id{\mathrm{id}}
\def\a{\alpha}
\newcommand{\hooklongrightarrow}{\lhook\joinrel\longrightarrow}
\def\Spec{\mathrm{Spec}}
\def\rs{\mathrm{rs}}
\def\grs{\mathfrak{g}^{\mathrm{rs}}}
\def\pgrs{\PP(\fg)^{\rs}}
\def\Grs{G^{\mathrm{rs}}}
\def\s{\mathbf{s}}
\def\Y{Y_{w}(\s)}
\def\Ycell{Y_{w}^\circ(\s)}
\def\Yv{Y_{v}(\s)}
\def\Ye{Y_e(\s)}
\def\Yvcell{Y_{v}^\circ(\s)}
\def\bsY{Y_{\underline{w}}(\s)}
\def\bsYcell{Y_{\underline{w}}^\circ(\s)}
\def\bsYjc{Y_{\underline{w}_{\{j\}^c}}(\s)}
\def\bsYI{Y_{\underline{w}_I}(\s)}
\def\bsYv{Y_{\underline{v}}(\s)}
\def\YDL{Y_w^{\mathrm{DL}}}
\def\YDLcell{(Y_w^\circ)^{\mathrm{DL}}}
\def\bsYDL{Y_{\underline w}^{\mathrm{DL}}}
\def\DL{Deligne-Lusztig }
\newlength\cellsize  \setlength\cellsize{33\unitlength}
\newcommand\cellify[1]{\def\thearg{#1}\def\nothing{}\ifx\thearg\nothing\vrule width0pt height\cellsize depth0pt\else\hbox to 0pt{\usebox2\hss}\fi \vbox to \cellsize{\vss\hbox to \cellsize{\hss$_{#1}$\hss}\vss}}
\newcommand\tableau[1]{\vtop{\let\\=\cr
\setlength\baselineskip{-1000pt}
\setlength\lineskiplimit{1000pt}
\setlength\lineskip{0pt}
\ialign{&\cellify{##}\cr#1\crcr}}}
 \title{Geometry of regular semisimple Lusztig varieties}
\date{July 7, 2026}
\author{Patrick Brosnan}
\address{Department of Mathematics\\
  University of Maryland\\
  College Park, MD USA}
\email{pbrosnan@math.umd.edu}
\author{Jaehyun Hong}
\address{Center for Complex Geometry, Institute for Basic Science (IBS), 55 Expo-ro, Yuseong-gu, Daejeon 34126, Korea}
\email{jhhong00@ibs.re.kr}
\author{Donggun Lee}
\address{June E Huh Center for Mathematical Challenges, Korea Institute for Advanced Study, 85 Hoegiro, Dongdaemun-gu, Seoul 02455, Republic of Korea}
\email{dglee@kias.re.kr}
\begin{document}

\begin{abstract}
Lusztig varieties are subvarieties in flag manifolds $G/B$ associated to an element $w$ in the Weyl group $W$ and an element $x$ in $G$, introduced in Lusztig's papers on character sheaves.
We study the geometry of these varieties when $x$ is regular semisimple.

In the first part, we 
establish that they are normal, Cohen-Macaulay, of pure expected dimension and have rational singularities. We then show that the cohomology of ample line bundles vanishes in positive degrees, in arbitrary characteristic. This extends to nef line bundles when the base field has characteristic zero or sufficiently large characteristic.
Along the way, we prove that Lusztig varieties are Frobenius split in positive characteristic and that their open cells are affine. 
We also prove that the open cells in Deligne-Lusztig varieties are affine, settling a question that has been open since the foundational paper of Deligne and Lusztig.

In the second part, we explore their relationship with regular semisimple Hessenberg varieties. 
Both varieties admit Tymoczko's dot action of $W$ on their (intersection) cohomology. We associate to each element $w$ in $W$ a Hessenberg space
using the tangent cone of the Schubert variety associated with $w$, and show that the cohomology of the associated regular semisimple Lusztig varieties and  Hessenberg varieties is isomorphic as graded $W$-representations when they are smooth.

This relationship extends to the level of varieties: we construct a flat
degeneration of regular semisimple Lusztig varieties to regular semisimple
Hessenberg varieties. In particular, this proves a conjecture of Abreu and
Nigro on the homeomorphism types of regular semisimple Lusztig varieties in
type $A$, and generalizes it to arbitrary Lie types.

 \end{abstract}

\maketitle
\tableofcontents

\section{Introduction}

In a series of papers, Lusztig developed a theory of characters of a reductive
algebraic group $G$ by using perverse sheaves on $G$
(\cite{Lus85,Lus85ii,Lus86iv,Lus86v}). To get appropriate perverse sheaves on
$G$ (called character sheaves), he considered a family of subvarieties  of the
full flag variety $ G/B$ parameterized by elements in $G$. This paper is about
the geometry of these subvarieties.  

To state more precisely, let $k$ be an algebraically closed field and let $G$
be a reductive algebraic group over $k$.  Let $B$ be a Borel subgroup
containing a maximal torus $T$, and let $W=N(T)/T$ be its Weyl group. Then, $G$
is the disjoint union 
\[G=\bigsqcup_{w \in W}BwB\] 
of double cosets. The image $X_w$  of the closure $\overline{BwB}$ by the projection $G \rightarrow \mathcal B:=G/B$ defines a subvariety of $\mathcal B$, called the \emph{Schubert variety} associated to $w \in W$.  

 For $w \in W$, 
 let $Y_w$ be the subvariety in $G \times \mathcal B$ consisting of pairs $(x, gB)$ 
 such that $g^{-1}xg \in \overline{BwB}$, and let $\pi_w:Y_w \subset G\times \mathcal B\rightarrow G$ be the projection (\cite[Section 12.1]{Lus85ii}).  
 The fiber of $\pi_w$ over $x \in G$, denoted by $Y_w(x)$, is called the \emph{Lusztig variety} associated to $w$ and $x$.

 In other words, the Lusztig variety $Y_w(x)$ is defined as the fiber product
 	\beq\label{10}
		\begin{tikzcd}
			Y_w(x)\arrow[r,hook]\arrow[d,hook']&\fX_w\arrow[d,hook']\\ \cB\arrow[r, hook,"\iota_x"]&\cB\times \cB
		\end{tikzcd} 
	\eeq
  where $\iota_x:=(\id, x)$ is the map sending $gB$ to $(gB,x gB)$ and $\mathfrak X_w:=G \times^B X_w$ is the quotient of $G\times X_w$ by the diagonal action of $B$. Note that $\fX_w$ admits
  a closed immersion
  into $G \times^B\mathcal B \cong \mathcal B \times \mathcal B$. We call $\fX_w$ the \emph{relative Schubert variety} (associated to $w$).
  From this description, one may expect that Lusztig varieties share many nice properties with Schubert varieties.

\medskip

In the first part of this paper we generalize the following results on Schubert varieties to (regular semisimple) Lusztig varieties.
\begin{enumerate}
	\item Schubert varieties are normal and Cohen-Macaulay, and have rational resolutions.
	\item The cohomology groups on Schubert varieties of the restriction of nef line bundles on the ambient flag varieties vanish in positive degrees.
\end{enumerate}

From now on, we assume that $\s \in G$ is regular semisimple, that is, its centralizer is a maximal torus. In this case, we call $\Y$ \emph{regular semisimple}. 

\smallskip

The statements in Theorem~\ref{22} below may be well-known to experts, but we
could not find either the statements or complete proofs in the literature. 
For the sake of completeness, we provide a complete proof in
Section~\ref{s:basic.properties}.\footnote{
In March 2026, we received an email from G.~Lusztig pointing out that 
his paper~\cite{lusztig-twopartitions} has results related to this paper.
In fact, the arguments in   \cite[Section~1.6]{lusztig-twopartitions} show that every local ring of a closed point
in a regular semisimple Lusztig variety $\Y$ is isomorphic to the 
local ring of a closed point in the Schubert variety $X_w$.
This implies the first statement of the following Theorem~\ref{22}
as well as the ``if" direction of the second statement.
}

\begin{theorem}\label{22}
A regular semisimple Lusztig variety $\Y$ is normal, Cohen-Macaulay, of pure
dimension $\ell(w)$, and has at worst rational singularities.
Furthermore, $\Y$ is smooth (resp. Gorenstein) if and only if $X_w$ is.
\end{theorem}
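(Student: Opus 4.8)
The plan is to reduce all the assertions to the corresponding facts about the Schubert variety $X_w$ by showing that $\Y$ is, Zariski-locally, isomorphic to an affine chart of $X_w$; rather than working through the fibre-product description, I argue directly in local charts. Since $Y_w(g\s g^{-1})\cong\Y$ for every $g\in G$ (via $hB\mapsto ghB$) and every regular semisimple element of $G$ is $G$-conjugate to a regular element of $T$, we may assume $\s=t\in T$ with $\alpha(t)\neq1$ for all roots $\alpha$; thus $Z_G(t)=T$. Fix the Borel $B^-\supseteq T$ opposite to $B$, let $U^-$ be its unipotent radical, and let $\Omega:=B^-B/B\cong U^-$ be the opposite big cell; this is an open affine neighbourhood of $eB$ in $\cB$, and it meets $X_w$ in the open affine subvariety $\Omega\cap X_w$, which is dense in $X_w$ and contains $eB$.

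\emph{Key Claim.} The $T$-stable affine opens $\mathcal V_v:=\dot v\cdot\Omega\subseteq\cB$, $v\in W$, cover $\cB$, and for each $v$ there is an isomorphism $\Y\cap\mathcal V_v\xrightarrow{\ \sim\ }\Omega\cap X_w$ of schemes. For the covering, each $\mathcal V_v$ is $T$-stable (as $\dot v^{-1}T\dot v=T$ and $T\subseteq B^-$) and contains the $T$-fixed point $vB$; hence $\cB\setminus\bigcup_v\mathcal V_v$ is a $T$-stable closed set containing no $T$-fixed point, so it is empty by the Borel fixed point theorem. For the local model, identify $\mathcal V_v$ with $U^-$ by $u\mapsto\dot vuB$, and set $t_v:=\dot v^{-1}t\dot v\in T$ (still regular) and $\tau_v:=$ conjugation by $t_v^{-1}$, an automorphism of $U^-$. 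An elementary computation gives $u^{-1}t_vu=t_v\cdot\mathcal L_v(u)$ with $\mathcal L_v(u):=\tau_v(u)^{-1}u\in U^-$, so — since $t_v^{-1}\overline{BwB}=\overline{BwB}$ — one has $\dot vuB\in\Y$ if and only if $\mathcal L_v(u)B\in X_w$. Now $\mathcal L_v$ is the Lang map of the connected unipotent group $U^-$ relative to $\tau_v$; because $t_v$ is regular, $\tau_v$ fixes only $e$ in $U^-$ and $d\tau_v-\mathrm{id}$ is invertible on $\Lie(U^-)$ (it scales $\mathfrak{g}_\alpha$ by $\alpha(t_v^{-1})-1\neq0$), so $\mathcal L_v\colon U^-\to U^-$ is an isomorphism of varieties, in every characteristic. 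Composing with $u'\mapsto u'B$ then identifies $\Y\cap\mathcal V_v$ with $\Omega\cap X_w$ as schemes.

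Granting the Claim, the theorem follows from the classical structure theory of Schubert varieties, valid in all characteristics: $X_w$ is normal, Cohen–Macaulay, irreducible of dimension $\ell(w)$, and admits a rational resolution (e.g.\ Bott–Samelson). As these are Zariski-local conditions they pass to the open subvariety $\Omega\cap X_w$, and then, by the Claim, to $\Y$; moreover $\Omega\cap X_w$ is irreducible of dimension $\ell(w)$, so $\Y$ is of pure dimension $\ell(w)$. For the final equivalences, the singular locus and the non-Gorenstein locus of $X_w$ are intrinsic closed subsets of the $B$-stable variety $X_w$, hence themselves $B$-stable; if either is nonempty it must contain the unique $B$-fixed point $eB$ of $\cB$, which lies in $\Omega$. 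Thus $X_w$ is smooth (resp.\ Gorenstein) if and only if $\Omega\cap X_w$ is, if and only if $\Y$ is.

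The only real content is the chart computation — recognising the condition cutting out $\Y$ in $\mathcal V_v$ as a Lang map, together with the characteristic-free fact that a Lang map of a connected unipotent group along an automorphism $\sigma$ with $d\sigma-\mathrm{id}$ invertible is an isomorphism of varieties — which turns the local model of $\Y$ literally into an affine chart of $X_w$. Everything after that is bookkeeping with known properties of Schubert varieties; the main point to watch is uniformity in the characteristic, which is precisely why one works with the Lang map rather than with the exponential and the Baker–Campbell–Hausdorff formula.
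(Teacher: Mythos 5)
Your proof is correct, and it takes a genuinely different route from the paper. The paper argues globally: it invokes Lusztig's transversality lemma (their Lemma 2.10) to get pure dimension and to compare singular loci of $\Y$ and $\fX_w$, deduces that $\iota$ is a regular immersion of Cohen--Macaulay schemes (hence CM, normal via $R_1$, and the Gorenstein/smooth equivalences), and then proves rational singularities by pushing $\cO$ down the Bott--Samelson resolution. You instead produce an explicit affine cover $\{\Y\cap\mathcal V_v\}_{v\in W}$ of $\Y$ and identify each chart \emph{scheme-theoretically} with the opposite big cell $\Omega\cap X_w$ of the Schubert variety, via the Lang-type map $\mathcal L_v(u)=\tau_v(u)^{-1}u$ on $U^-$. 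The decisive point, which you correctly flag, is that $\mathcal L_v$ is an isomorphism of varieties in \emph{every} characteristic because $\tau_v-\mathrm{id}$ is invertible on the $T$-stable filtration of $U^-$ by root subgroups; this avoids the exponential and makes the argument characteristic-free, matching the paper's generality. Once that is in hand, every stated property (normal, CM, rational singularities, pure dimension $\ell(w)$, and the smooth/Gorenstein equivalences using Borel's fixed-point theorem to locate the worst point of $X_w$ at $eB\in\Omega$) is immediate. Your approach actually proves slightly more than the theorem states: not just that $\Y$ and $X_w$ share these properties, but that $\Y$ is Zariski-locally isomorphic, near each of its $|W|$ torus-fixed points, to the neighborhood of the most singular point $eB$ of $X_w$; this is cleaner than the paper's Proposition 2.11, which only matches singular loci. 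What your route gives up is the structural information the paper builds along the way, namely the Bott--Samelson resolutions $\bsY\to\Y$ and the description of their boundary divisors, which the paper needs later for the Frobenius-splitting and vanishing theorems; for Theorem~2.1 in isolation, your local-chart argument is arguably the more economical proof.
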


For each $\lambda\in X(T):=\Hom(T,\mathbb{G}_m)\cong \Z^{\rank(T)}$, the natural projection $B\to T$ induces a $B$-module structure on
$k_{-\lambda}$ 
and 
the associated line bundle
\[L_\lambda:=G\times^{B}k_{-\lambda}\]
 on $\cB$. 
We call $\lambda$ \emph{dominant} (resp. \emph{regular dominant}) if its
pairing with every simple coroot has a nonnegative (resp. positive) value with
respect to the standard pairing 
\[X(T)\times\Hom(\mathbb{G}_m,T)\to \Hom(\mathbb{G}_m,\mathbb{G}_m)\cong\Z.\] 
If this is the case, $L_\lambda$ is
globally generated (resp. ample), and vice versa.
 
The following theorem is proved in Sections~\ref{sect ch 0} and~\ref{s.cohomology}, depending on whether the characteristic of the base field $k$ is zero or positive.
\begin{theorem}[Corollaries~\ref{cor:vanishing} and \ref{cor:vanishing.Frob}]  \label{23} Let $k$ be of characteristic $p \geq 0$. 
For   any regular dominant $\lambda \in  X(T)$, 
\begin{enumerate}
 \item \label{24}   $H^i(\Y, L_{\lambda})=0$ for any $i >0$;
 \item \label{24'} If $\Yv\subset \Y$, then $H^0(\Y, L_{\lambda}) \rightarrow H^0(\Yv, L_{\lambda})$ is surjective.
 \end{enumerate}
 When $p$ is zero or sufficiently large, \eqref{24} holds also for $\lambda$ dominant. 

\end{theorem}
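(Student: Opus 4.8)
\emph{Strategy and common reduction.} The plan is to reduce both parts to a Bott--Samelson-type resolution of $\Y$ and then to supply separate vanishing inputs in characteristic zero and in positive characteristic, matching the split between Sections~\ref{sect ch 0} and~\ref{s.cohomology}. Fix a reduced word $\underline w$ for $w$, let $\mathfrak{Z}_{\underline w}\to\fX_w$ be the Bott--Samelson resolution of the relative Schubert variety $\fX_w$, and pull it back along $\iota_\s\colon\cB\hookrightarrow\cB\times\cB$ to obtain a proper birational map $\pi\colon\widetilde Y\to\Y$ with $\widetilde Y$ a smooth projective variety of dimension $\ell(w)$ sitting inside $\mathfrak{Z}_{\underline w}$ (smoothness of $\widetilde Y$ comes, as in Theorem~\ref{22}, from regular semisimplicity of $\s$: the $\s$-twisted diagonal $\iota_\s(\cB)$ meets each Bruhat stratum of $\fX_w$ in the expected codimension). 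Since $\Y$ has rational singularities by Theorem~\ref{22}, $R\pi_*\sO_{\widetilde Y}=\sO_\Y$, and the projection formula reduces \eqref{24} to the vanishing $H^i(\widetilde Y,\pi^*L_\lambda)=0$ for $i>0$, where $\pi^*L_\lambda$ is the restriction to $\widetilde Y$ of the pullback of $L_\lambda$ along the tower $\mathfrak{Z}_{\underline w}\to\cB$ and is nef for $\lambda$ dominant. Part \eqref{24'} amounts to $H^1(\cB,\sI\otimes L_\lambda)=0$ with $\sI$ the ideal sheaf of $\Y$ in $\cB$.

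\emph{Characteristic zero.} I would compute $K_{\widetilde Y}$ by adjunction inside $\mathfrak{Z}_{\underline w}$: the determinant of the normal bundle of $\iota_\s(\cB)$ in $\cB\times\cB$ is $-K_\cB$ pulled back through the second projection — which on $\widetilde Y$ agrees with the first up to the automorphism $\s$ of $\cB$, under which all line bundles are fixed — and this cancels the $\cB$-contribution to $K_{\mathfrak{Z}_{\underline w}}$, leaving $-K_{\widetilde Y}$ equal to the restriction of the relative anticanonical class $-K_{\mathfrak{Z}_{\underline w}/\cB}$; a Bott--Samelson divisor computation shows this restriction to be nef and big on $\widetilde Y$. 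Hence for $\lambda$ dominant, $-K_{\widetilde Y}+\pi^*L_\lambda$ is nef and big, and Kawamata--Viehweg vanishing gives $H^i(\widetilde Y,\pi^*L_\lambda)=H^i\bigl(\widetilde Y,\,K_{\widetilde Y}+(-K_{\widetilde Y}+\pi^*L_\lambda)\bigr)=0$ for $i>0$, proving \eqref{24} and its extension to dominant $\lambda$ at once. (A hands-on alternative is to resolve $\sO_{\widetilde Y}$ by a Koszul complex in $\mathfrak{Z}_{\underline w}$ and push $\pi^*L_\lambda$ down the tower $\mathfrak{Z}_{\underline w}\to\cB$ by Demazure's computation.) I would obtain \eqref{24'} in characteristic zero by spreading the positive-characteristic result over a finitely generated subring and using semicontinuity.

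\emph{Positive characteristic.} The essential point is that $\Y$ is Frobenius split compatibly with its embedding into $\cB$. I would produce this from a Frobenius splitting of $\cB\times\cB$ that is simultaneously compatible with $\fX_w$ — the standard splitting induced by a Schubert-variety splitting of $\cB$ — and with $\iota_\s(\cB)$ — available because, after conjugating $\s$ into $T$, regular semisimplicity makes $\iota_\s(\cB)$ transverse to the $G$-orbit stratification of $\cB\times\cB$ — and then restrict it to $\iota_\s(\cB)\cong\cB$. With a compatible splitting in hand, the usual machinery applies: it yields inclusions $H^i(\Y,L_\lambda)\hookrightarrow H^i(\Y,L_\lambda^{\otimes p^n})$ for all $n$, and the right-hand side vanishes for $n\gg0$ by Serre vanishing when $\lambda$ is regular dominant, giving \eqref{24}; and compatibility with $\Y\hookrightarrow\cB$ promotes the eventual surjectivity of $H^0(\cB,L_\lambda^{\otimes p^n})\to H^0(\Y,L_\lambda^{\otimes p^n})$ to surjectivity at $n=0$, giving \eqref{24'}. (The same splitting, combined with the fact that the boundary of each open cell is the support of a section of an ample line bundle, also yields the affineness of the cells $Y_v^\circ(\s)$ mentioned in the abstract.) Finally the extension of \eqref{24} to dominant $\lambda$ for $p\gg0$ follows from the characteristic-zero statement by spreading out.

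\emph{Main obstacle.} Granting Theorem~\ref{22}, the cohomological part of the argument is essentially formal; the two real inputs are the nef-and-bigness of $-K_{\widetilde Y}$ for the Bott--Samelson-type resolution (needed for the Kawamata--Viehweg step in characteristic zero) and the existence of a Frobenius splitting of $\cB\times\cB$ compatibly splitting $\fX_w$ and $\iota_\s(\cB)$ together (needed in characteristic $p$). Both ultimately rest on understanding how the $\s$-twisted diagonal sits against the Bruhat stratification of $\cB\times\cB$ when $\s$ is regular semisimple, and working this out — in particular pinning down the positivity of the resolution — is where the main effort goes.
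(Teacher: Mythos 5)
Your broad strategy — reduce to a Bott--Samelson resolution $\pi\colon\bsY\to\Y$, then use Kawamata--Viehweg in characteristic zero and Frobenius splitting in characteristic $p$, spreading out between the two — is the same as the paper's, and your adjunction computation identifying $\omega_{\bsY}^{-1}$ with $\cO(\partial\bsY)$ (the restriction of the relative anticanonical of $\fX_{\underline w}/\cB$) is correct and is exactly Proposition~\ref{26}. However, both of the ``real inputs'' you flag at the end are asserted rather than established, and in both cases the gap is essential.

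In characteristic zero you claim, without proof, that $-K_{\bsY}=\cO(\partial\bsY)=\cO(\sum_j E_j)$ is nef and big. Bigness is fine (by Proposition~\ref{prop:ample_boundary}(1) there are positive $m_j$ with $\sum m_j E_j$ ample, so $\sum E_j$ is a small ample plus an effective), but nefness of $\sum E_j$ with \emph{unit} coefficients is precisely what is not clear: the ample divisor supported on $\partial\bsY$ is produced with \emph{decreasing} coefficients $m_j$ (as in Lemma~\ref{lem:ample_Sch}(1)), and the $\rho$-summand that makes the Schubert Bott--Samelson anticanonical $L_\rho\otimes\cO(\partial X_{\underline w})$ weak Fano cancels on $\bsY$ by your own adjunction computation, leaving $\cO(\partial\bsY)$ without that extra positivity. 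The paper avoids this entirely by using the log version of Kawamata--Viehweg (Theorem~\ref{thm:kawamata-viehweg}): one writes $B=N\partial\bsY - A$ with $A$ the ample divisor of Proposition~\ref{prop:ample_boundary} and $N\gg 0$, so the fractional boundary takes care of the lack of nefness; for part~\eqref{24'} the refined divisor of Proposition~\ref{prop:ample_boundary}(2) is used. If you want the ``nef and big $-K_{\bsY}$'' route, you must actually prove that nefness, and that is the hard part you wave at with ``a Bott--Samelson divisor computation.''

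In positive characteristic you posit a Frobenius splitting of $\cB\times\cB$ compatible \emph{simultaneously} with $\fX_w$ and with the twisted diagonal $\iota_\s(\cB)$, justified by transversality. This does not follow: transversality of $\iota_\s(\cB)$ to the $G$-orbit stratification is a differential-geometric statement, while compatibility of a splitting is the algebraic condition $\varphi(F_*I)\subseteq I$ on the defining ideal. The canonical splitting of $\cB\times\cB$ compatibly splits the Schubert varieties $\fX_w$ (including the diagonal $\fX_e$) precisely because they are orbit closures appearing in the divisor of the splitting section; a generic twisted diagonal $\iota_\s(\cB)$ is not, and there is no reason for the fixed section to vanish on it. The paper sidesteps this by splitting $\bsY$ directly: Proposition~\ref{26} shows $\partial\bsY$ is an SNC anticanonical divisor whose components all pass through a common point of $\bigcap_j E_j\cong\Ye\cong W$, so the Mehta--Ramanathan criterion (Proposition~\ref{p.criteria splitting}) applies on the nose, and one pushes forward to $\Y$ via Lemma~\ref{lem:push.Frob}. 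The surjectivity~\eqref{24'} then comes from compatible splitting of $\Yv$ inside $Y_{w_0}(\s)=\cB$, and the extension to merely nef $L_\lambda$ for $p\gg 0$ uses the $D$-split refinement (Propositions~\ref{58} and~\ref{prop frob ample}), not a detour through characteristic zero.
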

\begin{remark}
	In fact, we prove a stronger result: the above statements remain true when
	dominant (resp.\ regular dominant) $L_\lambda$ are replaced by 
	nef (resp.\ ample) line bundles (Theorems~\ref{thm:vanishing}
	and~\ref{thm:vanishing.Frob}). 
	On the other hand,
	Theorem~\ref{23}\eqref{24'}
	does not necessarily hold for $\lambda$ dominant. See
	Remark~\ref{rem:vanishing}.
\end{remark}

There are various proofs of this kind of vanishing theorem for the cohomology of line bundles on  Schubert varieties. Some of these arguments extend to Lusztig varieties, while others do not. 
The main difference between Schubert varieties and Lusztig varieties is that,
while Schubert varieties are stable under the action of of a Borel subgroup
$B\subset G$, in general, Lusztig varieties are not.  This leads
to
different natures
of their open cells  
\[X_w^{\circ}=BwB/B=X_w \backslash \partial X_w \and \Ycell=Y_w(\mathbf{s}) \backslash \partial Y_{w}(\mathbf{s})\] 
respectively, where $\Ycell$ and $\partial Y_{w}(\mathbf{s})$ are by definition the preimages of $\fX_w^\circ:=G\times^B X_w^\circ$ and $\partial\fX:=G\times^B\partial X_w$ respectively.
Every Schubert cell is by definition a single orbit of $B$, which forces it to
be an affine space, but Lusztig cells do not have such nice properties.

Theorem~\ref{23} is
proved using 
two different tools: the Kawamata-Viehweg vanishing theorem in
characteristic zero (Section~\ref{sect ch
0}), and Frobenius splitting in
positive characteristic
(Section~\ref{s.cohomology}).
To be precise, both tools are applied to \emph{Bott-Samelson resolutions} of $\Y$, rather than directly to $\Y$. 
 
The Bott-Samelson resolution $\bsY\to \Y$ is a resolution of singularities which
is an isomorphism over $\Ycell$ and whose preimage over $\partial\Y$, denoted
by $\partial \bsY$, is a simple normal crossing divisor. 
It is obtained by pulling back the Bott-Samelson resolution of $\fX_w$ along $\Y\hookrightarrow \fX_w$.

Applying the adjunction formula to the pullback diagram, 
we first
show that the boundary divisor $\partial Y_{\underline{w}}(\mathbf{s})$ is an anti-canonical divisor of  $\bsY$ (Proposition~\ref{26}). 
In positive characteristic, this immediately implies:
\begin{theorem}\label{27}
	When $\mathrm{char}(k)>0$, $\bsY$ and $\Y$ are Frobenius split.
\end{theorem}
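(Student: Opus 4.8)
The plan is to deduce Theorem~\ref{27} from the standard criterion for Frobenius splitting in terms of an anticanonical section, applied to the Bott--Samelson resolution $\bsY$. Recall (Mehta--Ramanathan, Brion--Kumar) that a smooth projective variety $Z$ over a field of characteristic $p>0$ is Frobenius split provided there is a section $s \in H^0(Z, \omega_Z^{-1})$ whose divisor of zeros $\mathrm{div}(s)$, read near a point where it is reduced, meets a point of $Z$ with multiplicity exactly $p-1$ in a suitable local chart — more precisely, it suffices that $\mathrm{div}(s)$ be a reduced divisor with simple normal crossings and that, around a point of maximal multiplicity, $s$ look like $(x_1\cdots x_n)^{p-1}$ times a unit, where $x_1,\dots,x_n$ are local coordinates. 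The key input I would use is Proposition~\ref{26}, which says precisely that the simple normal crossing boundary divisor $\partial\bsY$ is an anticanonical divisor of $\bsY$; this produces the required section $s$ with $\mathrm{div}(s) = \partial\bsY$.

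First I would recall the explicit local structure of the Bott--Samelson resolution: since $\bsY \to \Y$ is obtained by pulling back the Bott--Samelson resolution of $\fX_w$ along the fiber-product square \eqref{10}, and since over the open cell $\Ycell$ the map is an isomorphism onto $\fX_w^\circ$ pulled back, $\bsY$ is an iterated $\PP^1$-bundle tower (the relative version of the usual Bott--Samelson tower), hence smooth, and $\partial\bsY$ is, in suitable étale-local coordinates at each point, a union of coordinate hyperplanes — exactly the statement ``simple normal crossing divisor'' asserted in the excerpt. At a point lying on $r$ of these boundary components, choose local coordinates $x_1,\dots,x_n$ with $\partial\bsY = \{x_1\cdots x_r = 0\}$. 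The canonical section of $\omega_{\bsY}^{-1}(-\partial\bsY) \cong \O_{\bsY}$ guaranteed by Proposition~\ref{26}, multiplied by the tautological section cutting out $\partial\bsY$, gives a section $\sigma$ of $\omega_{\bsY}^{-1}$ vanishing to order exactly one along each $\partial\bsY$-component; then $\sigma^{\otimes(p-1)} \in H^0(\bsY, \omega_{\bsY}^{-(p-1)})$ locally equals $(x_1\cdots x_r)^{p-1}$ up to a unit. By the criterion above (applied componentwise / using that it suffices to check at a single point of each component, by irreducibility of $\bsY$), this exhibits $\bsY$ as Frobenius split.

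It then remains to pass from $\bsY$ to $\Y$. For this I would invoke the standard descent statement: if $f\colon Z' \to Z$ is a proper morphism with $f_*\O_{Z'} = \O_Z$ and $Z'$ is Frobenius split, then $Z$ is Frobenius split (one pushes forward the splitting $F_*\O_{Z'} \to \O_{Z'}$ along $f$ and uses $f_*F_* = F_*f_*$ together with $f_*\O_{Z'} = \O_Z$). The hypothesis $f_*\O_{\bsY} = \O_\Y$ holds because $\bsY \to \Y$ is a resolution of a variety with rational singularities — this is exactly Theorem~\ref{22}, which tells us $\Y$ has rational singularities (in particular is normal, so that $f_*\O_{\bsY} = \O_\Y$ holds even in positive characteristic once one knows $R^0f_*\O_{\bsY} = \O_\Y$, which follows from normality and the fact that the resolution is an isomorphism in codimension... — more safely, one cites that the Bott--Samelson morphism is a rational resolution). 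Combining, $\Y$ is Frobenius split as well.

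The main obstacle I anticipate is not the formal deduction but the care needed in positive characteristic for two points: (i) ensuring the anticanonical section from Proposition~\ref{26} genuinely has \emph{reduced} zero divisor equal to $\partial\bsY$ (not merely linearly equivalent to it), which is why the simple-normal-crossing statement in the excerpt is doing real work; and (ii) checking that $f_*\O_{\bsY} = \O_\Y$ with no higher-direct-image correction term sneaking in — in characteristic $p$ ``rational singularities'' is subtler, so I would lean on the concrete tower structure of the Bott--Samelson resolution (the fibers are towers of $\PP^1$'s, which have no cohomology) rather than on an abstract rational-singularities statement. Both of these are handled by the local model of the Bott--Samelson tower, so the proof should be short once Proposition~\ref{26} is in hand.
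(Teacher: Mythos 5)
Your plan is the same as the paper's (use Proposition~\ref{26} to produce a reduced anticanonical section with simple normal crossing divisor $\partial\bsY$, apply the Mehta--Ramanathan criterion, and push forward the splitting to $\Y$ via $\pi_*\cO_{\bsY}=\cO_{\Y}$). But you have a genuine gap and two false assertions that propagate into it.

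\textbf{The missing step.} The Mehta--Ramanathan criterion, Proposition~\ref{p.criteria splitting}, does not apply to an arbitrary smooth projective variety with a reduced anticanonical SNC divisor: it requires a point $x$ at which \emph{exactly} $n=\dim\bsY=\ell$ distinct components of the divisor of $s$ pass through $x$ transversally (and no extra effective part does). You gesture at ``a point of maximal multiplicity'' and then say the criterion ``applies componentwise / at a single point of each component, by irreducibility of $\bsY$,'' which is not the criterion: a point on each component is not the same as a single point on all $\ell$ components. You never verify that $\bigcap_{j=1}^{\ell} E_j\neq\emptyset$. The paper establishes this by Proposition~\ref{25}(3): $\bigcap_j E_j$ is the fiber product of $\iota_\s$ with $\bigcap_j\fD_j\cong\Delta(\cB)$, which is $\Ye\cong W$, hence nonempty. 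Without this, your argument only produces an element of $\Hom(F_*\cO_{\bsY},\cO_{\bsY})$; you have not shown it is a splitting.

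\textbf{Two false structural claims.} First, $\bsY$ is \emph{not} an iterated $\PP^1$-bundle (the paper says so explicitly after Lemma~\ref{lem:ample_Sch}: $\fX_{\underline w}$ is, but pulling back along $\iota_\s$ destroys the tower structure). What you actually need — smoothness of $\bsY$ and the SNC property of $\partial\bsY$ — comes from Proposition~\ref{25}, not from a $\PP^1$-tower. Second, $\bsY$ is \emph{not} irreducible, nor even connected in general ($\Ye\cong W$ is $|W|$ points). So ``by irreducibility of $\bsY$'' is not available. This matters because the criterion's proof reduces to showing a certain locally constant function equals $1$; when $X$ is disconnected, you need a maximal-intersection point on \emph{each} connected component, not merely that one exists somewhere. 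This is quietly taken care of by the fact that $\bigcap_j E_j\cong W$ meets every component of $\bsY$ (each component has a $T$-fixed point, and the $T$-fixed points of $\bsY$ lie in $\Ye$). You should make this explicit rather than invoke irreducibility.

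\textbf{On the pushforward.} Your worry about rational singularities in characteristic $p$ is unnecessary: $\pi_*\cO_{\bsY}=\cO_{\Y}$ follows from Zariski's main theorem once $\Y$ is normal (Theorem~\ref{15}), with no characteristic restriction and no need to compute higher direct images. The paper's Lemma~\ref{lem:push.Frob} (Proposition 4 of \cite{MR}) then gives the splitting of $\Y$.
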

Theorem~\ref{23} for regular dominant $\lambda$ then follows directly from this (Theorem~\ref{28}).
Moreover, by semicontinuity of cohomology in families, this also provides a proof 
in characteristic zero, without relying on the Kawamata-Viehweg vanishing theorem (Subsection~\ref{20}).

To apply the Kawamata-Viehweg vanishing theorem, or
when $\lambda$ is only dominant,  we need more work. 
 A key ingredient is   that the boundary divisor $\partial Y_{\underline{w}}(\mathbf{s})$ supports an ample divisor (Proposition \ref{prop:ample_boundary}).     
When the characteristic of $k$ is zero, this leads to Theorem~\ref{23} via a careful application of the Kawamata-Viehweg vanishing theorem (Theorem~\ref{thm:vanishing}).

When the characteristic of $k$ is positive and sufficiently large,
$\bsY$
is Frobenius $D$-split for the ample divisor $D$ supported on $\partial\bsY$, which implies the vanishing for dominant $\lambda$ (Theorem \ref{thm:vanishing.Frob}).
This also provides another proof 
for characteristic zero, by 
semicontinuity of cohomology. 

The arguments in our proofs of Theorem~\ref{23} are essentially parallel with those in \cite{Brion} for Schubert varieties in characteristic zero, and in \cite{RR85,R87} for Schubert varieties, in \cite{BI94} for spherical varieties   in positive characteristic.

\smallskip

As a by-product of the proof, we get that Lusztig cells $\Ycell$ are affine, as
they are isomorphic to the complements of ample divisors in $\bsY$. 

\begin{theorem}[Corollary~\ref{cor:cell.affine}]\label{29}
	Lusztig cells $\Ycell$ are affine.
\end{theorem}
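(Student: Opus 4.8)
The plan is to identify the cell $\Ycell$ with the complement of the boundary in a Bott--Samelson resolution and then invoke the ampleness of that boundary. Recall from the discussion preceding the statement that the Bott--Samelson resolution $\bsY\to\Y$ is an isomorphism over $\Ycell$, that its (reduced, simple normal crossing) boundary $\partial\bsY$ is the preimage of $\partial\Y$, and that $\partial\bsY$ supports an ample divisor (Proposition~\ref{prop:ample_boundary}). Granting these, the cell becomes the complement of the support of an effective ample divisor in a projective variety, which is affine by a standard argument, and affineness descends along the isomorphism.

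First I would make the identification precise: since $\partial\bsY = (\bsY\to\Y)^{-1}(\partial\Y)$, the preimage of $\Ycell = \Y\setminus\partial\Y$ is $\bsY\setminus\partial\bsY$, and because the resolution is an isomorphism over $\Ycell$, this preimage maps isomorphically onto $\Ycell$. Thus $\Ycell\cong\bsY\setminus\partial\bsY$, an open subset of the projective variety $\bsY$ (projective because $\bsY$ is a resolution of a closed subscheme of $\fX_w\hookrightarrow\cB\times\cB$). Write $\partial\bsY=\bigcup_i D_i$ for the decomposition into irreducible components. By Proposition~\ref{prop:ample_boundary} there are positive coefficients $a_i$ with $D:=\sum_i a_i D_i$ ample, so after clearing denominators $D$ is an effective ample Cartier divisor with $\operatorname{Supp}(D)=\partial\bsY$.

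Now apply the standard fact that the complement of the support of an effective ample divisor in a projective variety is affine: for $n\gg 0$ the line bundle $\mathcal O_{\bsY}(nD)$ is very ample, the section $s$ with $\operatorname{div}(s)=nD$ is a coordinate of the corresponding projective embedding, and $\bsY\setminus\partial\bsY=\{s\neq 0\}$ is therefore a closed subvariety of an affine space, hence affine. Consequently $\Ycell\cong\bsY\setminus\partial\bsY$ is affine. The only substantive input is Proposition~\ref{prop:ample_boundary}; in particular, the fact that \emph{every} component of $\partial\bsY$ can be given a positive coefficient is exactly what forces $\operatorname{Supp}(D)$ to be all of $\partial\bsY$, so that its complement is precisely the cell and not something larger. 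Everything else is formal, and I do not anticipate further obstacles beyond keeping the scheme-theoretic identifications of $\Ycell$, $\partial\Y$, and their preimages consistent, which is immediate from the fibre-product definitions in the excerpt.
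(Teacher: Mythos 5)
Your argument is essentially identical to the paper's: the paper likewise takes $\underline w$ to be a reduced word of $w$, identifies $\Ycell\cong\bsYcell=\bsY\setminus\partial\bsY$, notes via Proposition~\ref{prop:ample_boundary}(1) that $\partial\bsY$ supports an ample divisor, and concludes by the standard fact that the complement of an ample divisor in a projective variety is affine. Your only addition is to spell out the very-ampleness/coordinate-hyperplane justification of that standard fact, which is fine but not a different route.
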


This is not an obvious fact as $\fX_w^\circ$ is not affine in general (e.g.
$\fX_e^\circ=\cB)$.
It is worth noting that the same argument proves the affineness of
\emph{regular} Lusztig cells, that is, those $Y_w^\circ(x)$ defined with $x\in
G$ only regular (Remark~\ref{rem:regular.affine}), and of
\emph{Deligne-Lusztig} cells introduced in \cite{DL}, which we
explain below.\footnote{
  In \cite[Section~4.1]{lusztig-twopartitions}, Lusztig showed that $\Ycell$ is 
affine when $w$ has minimal length in its conjugacy class.}

When $\mathrm{char}(k)=p>0$, the \emph{Deligne-Lusztig variety} associated to $w\in W$ and a positive power $q=p^r$ of $p$ is defined as the fiber product 
\beq\label{13}
		\begin{tikzcd}
			\YDL\arrow[r,hook]\arrow[d,hook']&\fX_w\arrow[d,hook']\\ \cB\arrow[r, hook,"\iota_F"]&\cB\times \cB
		\end{tikzcd} 
\eeq
where $\iota_F:=(\id, F)$ and $F$ is the $q$-th Frobenius map. Let $\YDLcell:=\iota_F^{-1}(\fX_w^\circ)$ and call it the \emph{Deligne-Lusztig cell} associated to $w$ and $q$ (\cite[Definition~1.4]{DL}).
\begin{theorem}[Corollary~\ref{cor:DL}]
	Deligne-Lusztig cells $\YDLcell$ are affine.
\end{theorem}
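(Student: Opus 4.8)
The plan is to mimic exactly the argument that gives affineness of Lusztig cells $\Ycell$ (Theorem~\ref{29}), since the only role the element $x\in G$ plays there is to be cut out as a fiber of $\pi_w$ together with an ampleness statement on a Bott--Samelson resolution. Concretely, I would pull back the Bott--Samelson resolution $\bsY \to \fX_w$ along the closed immersion $\iota_F\colon \cB \hookrightarrow \cB\times\cB$ in diagram~\eqref{13}, obtaining a resolution $\bsYDL \to \YDL$ which is an isomorphism over $\YDLcell$ and whose boundary $\partial\bsYDL$ is the preimage of $\partial\fX_w$. The essential point is that Proposition~\ref{prop:ample_boundary} --- the boundary divisor of the Bott--Samelson resolution of $\fX_w$ supports an ample divisor --- is a statement about $\fX_w$ and its resolution alone, so it remains valid after restricting along \emph{any} section $\cB\hookrightarrow \cB\times\cB$ of the first projection, in particular along $\iota_F$ rather than $\iota_\s$. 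Thus $\YDLcell \cong \bsYDL \setminus \partial\bsYDL$ is the complement of (the support of) an ample divisor in the projective variety $\bsYDL$, hence affine.

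The one thing to check carefully is that the graph of the $q$-th Frobenius $F$ meets $\fX_w$ (and $\partial\fX_w$, $\fX_w^\circ$) in the expected way, so that the pullback is again a resolution with SNC boundary and so that $\iota_F^{-1}(\fX_w^\circ)$ is genuinely open and dense in $\YDL$. Since $F$ is finite and $\iota_F$ is a closed immersion, flatness/genericity issues are mild; and $\YDLcell$ being open in $\YDL$ is immediate from $\fX_w^\circ$ being open in $\fX_w$. Density follows because $\fX_w^\circ$ is open dense in the irreducible $\fX_w$ and $\iota_F$ meets it (one can see this, e.g., from the classical fact going back to \cite{DL} that $\YDLcell$ is nonempty of pure dimension $\ell(w)$, but we do not even need that: we only need $\YDLcell$ open, which suffices to conclude it is affine as an open subscheme equal to the complement of an ample divisor's support).

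Alternatively, and perhaps cleaner to state, I would phrase a single lemma: \emph{for any morphism $f\colon \cB\to\cB$, the fiber product $Z_f:=\iota_f^{-1}(\fX_w)$ with $\iota_f=(\id,f)$ has the property that $\iota_f^{-1}(\fX_w^\circ)$ is the complement of the support of an ample divisor in the pulled-back Bott--Samelson resolution $\bsY_f\to Z_f$, hence affine whenever it is nonempty.} Taking $f=x\cdot(-)$ recovers Theorem~\ref{29} (and the regular case of Remark~\ref{rem:regular.affine}), and taking $f=F$ gives the Deligne--Lusztig case. The proof of this lemma is just the observation above plus Proposition~\ref{prop:ample_boundary}.

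The main obstacle, such as it is, is purely bookkeeping: verifying that the pullback of the Bott--Samelson resolution along $\iota_F$ really is a resolution of singularities of $\YDL$ with simple normal crossing boundary --- i.e.\ that $\iota_F$ is sufficiently transverse to the boundary strata of $\fX_w$ inside the Bott--Samelson tower. For $\iota_\s$ this transversality is what powers all of Section~\ref{s:basic.properties}; for $\iota_F$ the same transversality holds because étale-locally the Frobenius graph and the diagonal have the same incidence with the $B\times B$-orbit stratification, but one should spell this out (or simply cite that $\YDL$ is known to be smooth of dimension $\ell(w)$ away from $\partial\bsYDL$, reducing the question to the boundary). Once that is in place, affineness is formal from ampleness of the boundary, exactly as in the proof of Corollary~\ref{cor:cell.affine}.
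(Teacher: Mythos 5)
Your overall route is the same as the paper's (pull back the Bott--Samelson resolution along $\iota_F$, argue that the boundary supports an ample divisor, conclude the cell is affine as the complement of an ample divisor), and the transversality point you raise is handled in the paper by citing \cite[Lemma~9.11]{DL}. However, there is a genuine gap in the central ampleness step.

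You assert that Proposition~\ref{prop:ample_boundary} ``is a statement about $\fX_w$ and its resolution alone, so it remains valid after restricting along any section $\cB\hookrightarrow\cB\times\cB$.'' This is not correct, and the error is the crux of the matter. Proposition~\ref{prop:ample_boundary} concerns $\bsY$, not $\fX_{\underline w}$. Indeed, the boundary $\partial\fX_{\underline w}=\sum_j\fD_j$ does \emph{not} support an ample divisor on $\fX_{\underline w}$: since each $\fD_j=G\times^B D_j$ is contained in the fibers of $pr_1\colon\fX_{\underline w}\to\cB$, any combination $\cO(\sum a_j\fD_j)$ equals $\Phi(\cO_\cB,\cO(\sum a_j D_j))$ which is never ample by Corollary~\ref{65}. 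What the paper actually produces is the ample bundle $\Phi(L_\rho, M)$ with $M=L_{-\rho}(mD)$, and its restriction to $\bsY$ equals $\cO(\sum m_j E_j)$ \emph{only because} $\iota_\s^*(L_\rho\boxtimes L_{-\rho})=L_\rho\otimes(\s\cdot)^*L_{-\rho}=\cO_\cB$ (translation by $\s$ preserves each $L_\lambda$). This cancellation is a property of the section $\iota_\s$, not of $\fX_{\underline w}$ alone.

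For $\iota_F$ the cancellation fails: since $F^*L_{-\rho}=L_{-q\rho}$, one gets $\iota_F^*(L_\rho\boxtimes L_{-\rho})=L_\rho\otimes L_{-q\rho}=L_{(1-q)\rho}$, which is anti-ample. Thus the pullback of $\Phi(L_\rho,M)$ along $\underline\iota_F$ is $L_{(1-q)\rho}(\sum m_jE_j^{\mathrm{DL}})$, not $\cO(\sum m_jE_j^{\mathrm{DL}})$, and this does not let you conclude that $\partial\bsYDL$ supports an ample divisor. The fix, which is exactly what the paper does, is to instead restrict $\Phi(L_{q\rho},M)$: now $\iota_F^*(L_{q\rho}\boxtimes L_{-\rho})=L_{q\rho}\otimes L_{-q\rho}=\cO_\cB$, and $L_{q\rho}$ is still ample (regular dominant), so $\Phi(L_{q\rho},M)$ is ample and its restriction is $\cO(\sum m_jE_j^{\mathrm{DL}})$. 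This $q$-twist is small but essential, and your argument as written silently assumes it away. For the same reason, your proposed general lemma (``for any morphism $f\colon\cB\to\cB$, the cell is the complement of an ample divisor's support'') is false as stated; it only works when the Frobenius-like twist in $\iota_f^*(L_\mu\boxtimes L_{-\rho})$ can be compensated by an appropriate choice of $\mu$.
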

To the best of our knowledge, the problem of affineness of Deligne-Lusztig 
cells has been open since~\cite{DL}.
However, many partial results were known.
Deligne and Lusztig proved that $\YDLcell$ is affine whenever $q$ is larger than the Coxeter number of $G$~\cite[Theorem~9.7]{DL}. 
The affineness was later established when $w$ has minimal length in its Frobenius conjugacy class.
We refer the reader to
\cite{DL,bonnafe-rouquier-affine,he-affine,orlik-rapoport,he-lusztig,harashita-affine}
for this and other results related to the problem of affineness of Deligne-Lusztig
cells. 
In~\S\ref{s-history}, we also give some background on the motivation for the problem 
and its history.

In the second part of the paper, we focus on relationships between Lusztig
varieties and Hessenberg varieties. Now we assume the base field to be $\C$, as
we will consider their singular cohomology and $W$-representations on them,
defined using the GKM theory, as explained below.

A subspace $H$ of the Lie algebra $\mathfrak g$ of $G$ is called a {\it linear Hessenberg space} if it is a $B$-submodule and contains the Lie algebra $\mathfrak b$ of $B$. 
The {\it Hessenberg variety} associated to such an $H$ and an element $s\in \fg$ is the subvariety
\[X_H(s):=\{gB \in\mathcal B :g^{-1}.s \in H\}\]
of the flag variety $\cB$, where $.$ denotes the adjoint action of $G$ on $\fg$.

This definition is due to de Mari, Procesi and Shayman \cite{demari-procesi-shayman}, who also proved that $X_H(s)$ is smooth of dimension $\dim ~H/\fb$ when $s\in \fg$ is regular semisimple, that is, its centralizer subgroup $C_G(s)$ in $G$ is a maximal torus.
In this case, $X_H(s)$ is equipped with an action of the maximal torus $C_G(s)$, as the restriction of the natural $G$-action on $\cB$.
The fixed locus of this action is  $X_H(s)^{C_G(s)}=\cB^{C_G(s)}\cong W$ consisting of $\lvert W\rvert$ points, inducing the Bia\l{}ynicki-Birula decomposition (\cite{Bia73}) of $X_H(s)$ by $\lvert W\rvert$ affine spaces.

As an easy consequence, $X_H(s)$ as a $C_G(s)$-variety is a \emph{GKM manifold}, which is by definition a smooth projective variety with an action of a torus that has only finitely many torus-invariant points and curves and that is equivariantly formal (for example, whose singular cohomology vanishes in odd degrees). One of the main consequences of the theory of GKM manifolds \cite{GKM} is that the cohomology of a GKM manifold is determined by its \emph{GKM graph}, which is the decorated graph consisting of the torus-fixed points as its vertices and the torus-invariant curves as its edges, with decorations on the edges by the torus-weights on the corresponding torus-invariant curves.

Later in \cite{tymoczko}, by applying the GKM theory to the Hessenberg variety $X_H(s)$, Tymoczko introduced an action of the Weyl group $W$ on its singular cohomology $H^*(X_H(s))$, known as the \emph{dot action}. This action, particularly in the case where $G=\GL_n(\C)$ and $W=S_n$, was found to have intriguing connections to other fields such as representation theory and combinatorics. These connections were explored through various conjectures, proofs, and counterexamples in works including \cite{AN2,brosnan-chow,CHSS,guay-paquet,haiman,kiem-lee,precup-sommers,shareshian-wachs}. 

Let
$X_H^{\rs}\to \grs$ denote the family whose fibers are regular semisimple Hessenberg varieties 
$X_H(s)$ over regular semisimple elements $s\in \grs\subset \fg$.
This induces a monodromy action of $\pi_1(\grs, s)$ on $H^*(X_H(s))$.
It was shown in \cite{brosnan-chow} in type $A$, and later generalized in \cite{balibnu-crooks} to other Lie types, that 
the monodromy action factors through the dot action and, in fact, recovers it.

A parallel story exists for regular semisimple Lusztig varieties. Considering the family $\pi_w:Y_w\to G$ of Lusztig varieties, it was proved in \cite{AN} that the monodromy action of $\pi_1(\Grs,\s)$ on the intersection cohomology $IH^*(\Y)$ of a regular semisimple Lusztig variety $\Y$ descends to an action of $W$. 
Moreover, when 
$\Y$ is smooth,  
this action is equivalent to the dot action, which can be defined via GKM theory, as $\Y$ is also a GKM manifold.

\smallskip
A comparison of these two varieties in the case of type $A$  was done quite successfully by Abreu and Nigro in \cite{AN2}: 
When $G=\GL_n(\C)$, the group $G$ naturally embeds into  
its Lie algebra $\mathfrak g=\mathfrak{gl}_n(\C)$ by identifying them with the sets of (invertible) $n\times n$ matrices. So, we may choose $\s =s$ in $\Grs \subset \mathfrak \grs$ as the same (regular semisimple) matrix. 
Then, for each linear Hessenberg space $H$, there is a 
unique \emph{codominant} permutation
$w \in S_n$ such that 
\beq\label{4}X_H (s)=\Y.\eeq
The notion of codominant permutations is first introduced in \cite{haiman}. They are precisely the permutations that avoid  the pattern 312 while smooth permutations are those avoiding the patterns 4231 and 3412. In particular, codominant permutations form a special class of smooth permutations. For further details, see Section \ref{ss:Hess space}. 

Abreu and Nigro noticed that there is a natural map
\beq\label{6}\{\text{smooth elements in }S_n\}\lra \{\text{linear Hessenberg subspaces in }\mathfrak{gl}_n\}\eeq
defined in a purely combinatorial manner,
which restricts to a bijection on the subset of codominant permutations 
identifying $w$ and $H$ in \eqref{4}.
This shows that for a smooth permutation $w'$, there exists a unique codominant $w$ such that $w$ and $w'$ have the same image $H$ under \eqref{6}. Then, they proved that 
$X_H(s)=\Y$ and $Y_{w'}(\s)$ have the same GKM graphs, in particular,
\beq\label{7}H^*(X_H(s))=H^*(Y_w(\s))\cong H^*(Y_{w'}(\s))\eeq
as  graded $S_n$-representations (\cite[Theorem~1.6]{AN2}). 
This means that the map \eqref{6} identifies smooth permutations and linear Hessenberg subspaces that give rise to the same $S_n$-representations \eqref{7}. 

As a strengthening of this, Abreu and Nigro conjectured the following. 
\begin{conjecture}[Conjecture~3.9 in \cite{AN2}]\label{conj}
	Two regular semisimple Lusztig varieties $\Y$ and $Y_{w'}(\s)$ in \eqref{7} are homeomorphic. 
\end{conjecture}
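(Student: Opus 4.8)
The plan is to prove Conjecture~\ref{conj} by constructing an explicit flat family over a smooth base curve that interpolates between $\Y$ and $Y_{w'}(\s)$, and then invoking Ehresmann-type local triviality to extract a homeomorphism. Concretely, recall that $w$ and $w'$ are both smooth and map to the same linear Hessenberg space $H$ under the combinatorial map \eqref{6}, with $w$ codominant so that $X_H(s)=\Y$. The strategy has two halves. First, the flat degeneration announced in the abstract produces, for \emph{each} smooth permutation (indeed each $w$ in arbitrary Lie type, via the tangent-cone Hessenberg space of $X_w$), a flat family $\mathcal{Y}_w \to \mathbb{A}^1$ whose general fiber is isomorphic to the regular semisimple Lusztig variety $Y_w(\s)$ and whose special fiber is the regular semisimple Hessenberg variety $X_{H_w}(s)$, where $H_w$ is the tangent cone Hessenberg space. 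Second, one checks that for smooth $w$ the tangent-cone Hessenberg space $H_w$ coincides (in type $A$) with the image of $w$ under \eqref{6}, so that both $Y_w(\s)$ and $Y_{w'}(\s)$ degenerate to the \emph{same} smooth projective Hessenberg variety $X_H(s)$.

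First I would set up the degeneration. Using the closed immersion $\fX_w \hookrightarrow \cB\times\cB$ and the diagonal-translate family $\iota_{x(t)}$ where $x(t)$ is a path in $G$ from $\s$ to (a neighborhood of) the identity, one degenerates the ``graph'' $\iota_{\s}(\cB)$ to the diagonal; intersecting with $\fX_w$ and taking the flat limit, the fiber product picture \eqref{10} specializes to the Hessenberg picture $X_H(s)=\{gB : g^{-1}.s\in H\}$ once one linearizes $G\rightsquigarrow\fg$ near the identity and identifies the tangent cone of $X_w$ at the base point with the Hessenberg space $H_w$. The key geometric input is that $Y_w(\s)$ is Cohen-Macaulay of pure dimension $\ell(w)$ and $X_{H_w}(s)$ is smooth of dimension $\dim H_w/\fb = \ell(w)$ when $w$ is smooth, so the total space of the family is flat over $\mathbb{A}^1$ (equidimensional fibers over a smooth curve with CM total space, by ``miracle flatness''); here Theorem~\ref{22} does real work. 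One then needs that the family is, in addition, a \emph{topologically locally trivial} fibration: smoothness of the special fiber $X_H(s)$ forces smoothness of the total space in a neighborhood of the special fiber (again by flatness plus the smoothness of nearby and special fibers), and then Ehresmann's fibration theorem over the punctured disc, together with the fact that the special fiber is a deformation retract of a tubular neighborhood, gives a homeomorphism between a general fiber $Y_w(\s)$ and the special fiber $X_H(s)$.

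Running this for both $w$ and $w'$ gives homeomorphisms $\Y \cong X_H(s)$ and $Y_{w'}(\s)\cong X_H(s)$, hence $\Y\cong Y_{w'}(\s)$, proving the conjecture; since the construction is uniform across Lie types it yields the promised generalization as well. The main obstacle I expect is establishing that the flat limit of the Lusztig family is \emph{exactly} the reduced Hessenberg variety $X_{H_w}(s)$ rather than some non-reduced or extraneous scheme: one must control the flat limit of $\iota_{x(t)}(\cB)\cap \fX_w$ scheme-theoretically, which requires either a careful Gr\"obner-type degeneration argument on an explicit affine chart (trivializing $G\times^B X_w$ locally and writing the incidence equations as $g^{-1}x(t)g \in \overline{BwB}$, then passing to the tangent-cone equations as $t\to 0$) or a dimension count showing the limit is irreducible of the right dimension and generically reduced, combined with the normality/CM statements to conclude reducedness globally. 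The identification of the tangent cone of $X_w$ with the Hessenberg space attached to $w$ via \eqref{6} in type $A$ — and its smooth-$w$ compatibility — is a combinatorial but nontrivial check (smooth Schubert varieties have well-understood tangent spaces/cones, e.g. via the Kumar--Carrell criterion or Lakshmibai--Seshadri, and one matches these against the $B$-submodule generated by the appropriate root spaces), and I would isolate it as a separate lemma.
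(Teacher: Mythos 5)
Your high-level strategy --- construct a flat family interpolating between $\Y$ and the Hessenberg variety $X_{H_w}(s)$, then apply Ehresmann --- is exactly the paper's, but the central step of your proposal has a genuine gap that the paper resolves by a device you do not supply. If $x(t)$ is a path in $G$ with $x(0)=e$, the naive incidence scheme $\mathcal{Z}:=\{(t,gB):g^{-1}x(t)g\in\overline{BwB}\}$ over $\mathbb{A}^1$ has special fiber all of $\cB$ (since $e\in\overline{BwB}$ for every $w$), so it is \emph{not} equidimensional and miracle flatness cannot be invoked for it --- your appeal to it is circular. The correct object is the closure of $\mathcal{Z}|_{t\neq 0}$, which is flat over the curve automatically; the whole difficulty is then to identify the special fiber, which you flag honestly as the ``main obstacle'' but do not carry out. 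Your suggested fallbacks (a Gr\"obner degeneration, or dimension count plus generic reducedness plus irreducibility) would need serious work: note, for instance, that $\Y$ is typically disconnected (e.g. $\Ye\cong W$), so irreducibility is not available as a shortcut.

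The paper's resolution is the missing idea: a deformation-to-normal-cone (Fulton) construction done globally and scheme-theoretically. It pulls back the universal family over $G$ along the blowup $\widehat G\to G$ at $e$ (with the proper transform of $G-\Grs$ discarded), then blows up $\widehat G\times\cB\times\cB$ and $\widehat G\times\fX_w$ along $\widehat E\times\Delta(\cB)$, where $\widehat E\cong\pgrs$ is the exceptional locus; taking the fiber product gives $\widehat Y_w\to\widehat G$. Over $\widehat E$ this second blowup replaces the twisted graph $\iota_\s(\cB)$ by the zero-section into $T_\cB$ and replaces $\fX_w$ by the normal cone $G\times^B C_w$, so the fiber is exactly the Hessenberg scheme $X_{H_w}(s)$ of Definition~\ref{d.Hess}. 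This is the precise implementation of what you describe informally as ``linearize $G\rightsquigarrow\fg$ near the identity.'' Once it is in place, the rest of your plan --- smoothness of the family when $w$ is smooth, Ehresmann, and the homeomorphism chain $\Y\cong X_H(s)\cong Y_{w'}(\s)$ --- matches Theorem~\ref{thm:univ.fam} and Corollary~\ref{cor:diffeo}. You also correctly isolate, as a separate lemma, that for smooth $w$ in type $A$ the tangent-cone Hessenberg space $H_w$ agrees with the Abreu--Nigro map \eqref{6}; this is Corollary~\ref{cor:corres.A}.
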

In Section~\ref{sec:degen}, which is the last section of this paper, we extend the map \eqref{6} and the isomorphism \eqref{7} to an arbitrary Lie type (Corollary~\ref{cor:Wrep}). We also extend Conjecture~\ref{conj} to an arbitrary Lie type, and provide its proof (Corollaries~\ref{c.diffeo} or~\ref{cor:diffeo}).
These extensions suggest and are based on how we should view these two varieties with different natures in a uniform way.

Since there is no natural inclusion of $G$ into $\fg$ in general, it is not natural to expect an equality \eqref{4} as in the case of type $A$. Instead of trying this,
we construct a flat family of regular semisimple  Lusztig varieties that degenerate to regular semisimple Hessenberg varieties (Theorem~\ref{thm:univ.fam}). The construction shows that we may regard Hessenberg varieties as linearized versions of Lusztig varieties. 
While doing this, we generalize the definition  of linear Hessenberg spaces (Definition~\ref{d.Hess}) to make singular elements $w\in W$ fit into this picture in a uniform way. 

\smallskip

For simplicity, we continue to restrict ourselves to the case when $w \in W$ is smooth, i.e.\ when $X_w$ is smooth. Consider the map 
\beq \label{e.sm} \{w\in W \text{ smooth}\}\lra \{\text{linear Hessenberg spaces in }\fg\}\eeq
which sends $w$ to the unique $ H_w\subset \fg$ containing $\fb$ such that $H_w/\fb$ is the tangent space of $X_w$ at the identity, 
as a subspace in the tangent space $\fg/\fb$ of $\cB$ at the identity.
This generalizes \eqref{6} to an arbitrary type (Corollary~\ref{cor:corres.A}). 

The following results generalize \eqref{7}. 

Let $T^{\rs}:=T\cap \Grs$ and $\ft^{\rs}:=\ft\cap \grs$, where $\ft$ denotes the Lie algebra of $T$. Note that every element in $T^{\rs}$ and $\ft^{\rs}$ has the same centralizer subgroup $T$.
\begin{theorem}[Theorem~\ref{thm GKM graph}]\label{11}
	Let $\s\in T^{\rs}$ and $s\in \ft^{\rs}$, and let $w\in W$ be smooth.
	Then $\Y$ and $X_{H_w}(s)$ have the same GKM graphs. In particular, if $w,w'\in W$ are smooth and have the same image $H$ under \eqref{e.sm}, then
	\[H^*(\Y)\cong H^*(X_{H}(s))\cong H^*(Y_{w'}(\s))\]
	as graded $W$-representations.
\end{theorem}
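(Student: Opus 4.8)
The plan is to prove Theorem~\ref{11} by showing that the GKM graphs of $\Y$ and $X_{H_w}(s)$ agree, after which the statement about $W$-representations is formal: by GKM theory the graded cohomology with its dot-action is determined by the decorated graph (the vertex set $W$, the edges, and the weight decorations, together with the combinatorial recipe for the $W$-action coming from the monodromy/GKM description as in \cite{AN,AN2,tymoczko,brosnan-chow}). So the entire content is a comparison of two decorated graphs with a common vertex set.

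\textbf{Setting up the two GKM graphs.} First I would fix $\s\in T^{\rs}$ and $s\in\ft^{\rs}$, so that in both cases the relevant torus is exactly $T$, and the fixed loci are $\cB^T\cong W$ in both. For the Hessenberg side this is standard: de Mari--Procesi--Shayman give smoothness and the torus action, and the GKM graph of $X_{H}(s)$ is well known --- the edge between $vB$ and $v'B$ exists precisely when $v'=v s_\alpha$ for a root $\alpha$ such that the corresponding root space $\g_{-v\alpha}$ (equivalently, the root line through $vB$ in direction $\alpha$) lies in $H$, with the usual weight $\pm v\alpha$ on that $T$-invariant $\PP^1$. Since $H=H_w$ is defined so that $H_w/\fb$ is the tangent space $T_{eB}X_w\subseteq\fg/\fb$, the roots $\alpha$ with $\g_{-\alpha}\subseteq H_w$ are exactly the $\alpha$ with $\alpha\in\Phi(X_w,eB)$, i.e. the negative of the weights appearing in the tangent cone of $X_w$ at $e$; because $w$ is smooth these are simply the inversions of $w$ (or of $w^{-1}$, depending on conventions), and the same description holds at every fixed point $vB$ after translating by $v$. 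For the Lusztig side I would use the fiber-product description \eqref{10}: $\Y=\iota_\s^{-1}(\fX_w)$ with $\iota_\s(gB)=(gB,\s gB)$, and $\fX_w\subseteq\cB\times\cB$ is the relative Schubert variety. The $T$-fixed points of $\Y$ are the $vB$ with $(vB,\s vB)=(vB,vB)\in\fX_w$ (using $\s\in T$), i.e. $e\in X_w$, which always holds, so again the fixed locus is all of $W$. Smoothness of $\Y$ for $w$ smooth is Theorem~\ref{22}. The $T$-invariant curves through $vB$ in $\Y$ are the $T$-invariant curves $C$ in $\cB$ through $vB$ such that $\iota_\s(C)\subseteq\fX_w$; a $T$-curve through $vB$ is $\{gB: g\in vP_\alpha\}$ for a root $\alpha$, and one must check exactly when its image under $\iota_\s$ lands in $\fX_w$.

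\textbf{The heart of the comparison.} The key local computation is: a $T$-invariant $\PP^1$ in $\cB$ in direction $\alpha$ at the fixed point $vB$ lies in $\Y$ if and only if it lies in $X_{H_w}(s)$, and when it does the two carry the same $T$-weight. For the weight there is nothing to do --- both are the weight of the $T$-action on $T_{vB}\cB$ along that line, namely $\pm v\alpha$, inherited from the ambient $\cB$ in both constructions. For the existence of the edge, on the Lusztig side the condition ``$\iota_\s(v P_\alpha/B)\subseteq\fX_w$'' should unwind, using $\fX_w=G\times^B X_w$ and $\s\in T$, to the condition that the corresponding root direction lies in the tangent space (indeed tangent cone) of $X_w$ at $e$ after the translation by $v^{-1}$ --- i.e. to exactly the same combinatorial condition ($\alpha$ an inversion of $w$, suitably interpreted) that governs the Hessenberg edge. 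I would do this by a local coordinate computation at $eB$: restricting the fiber-product square \eqref{10} to the open cell and differentiating, $T_{eB}\Y = T_{eB}\cB \cap (d\iota_\s)^{-1}(T_{(eB,eB)}\fX_w)$, and $T_{(eB,eB)}\fX_w = (\fg/\fb)\oplus (H_w/\fb)$ under the identification $T\cB\times\cB\cong \fg/\fb\oplus\fg/\fb$ at the diagonal point (the first factor being the ``relative'' direction, the second the tangent to $X_w$); since $d\iota_\s$ at $eB$ is $v\mapsto (v, v - \Ad(\s)v)$ and $\s$ is regular semisimple so $1-\Ad(\s)$ is invertible on each root space $\g_{-\alpha}$, the intersection picks out exactly $\g_{-\alpha}/\fb$ for $\alpha$ with $\g_{-\alpha}\subseteq H_w$ --- matching the Hessenberg tangent space $T_{eB}X_{H_w}(s)$ on the nose. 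Translating by $v$ gives the statement at every fixed point, and it extends from tangent lines to the actual $T$-invariant $\PP^1$'s because $T$-curves in $\cB$ are rigid (determined by their tangent line at a fixed point).

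\textbf{Conclusion and the main obstacle.} Once the graphs agree, the two cohomologies are isomorphic as graded vector spaces, and they are isomorphic as $W$-representations because the dot action on each is defined purely from the GKM graph (Tymoczko's construction / the monodromy description of \cite{AN}, \cite{brosnan-chow}, \cite{balibnu-crooks}) --- or, put differently, the dot action is a functor of the decorated graph alone, so equal graphs force isomorphic $W$-modules. Feeding in two smooth $w,w'$ with $H_w=H_{w'}=H$ then gives the chain $H^*(\Y)\cong H^*(X_H(s))\cong H^*(Y_{w'}(\s))$. I expect the main obstacle to be the local identification of $T_{eB}\Y$ with $T_{eB}X_{H_w}(s)$ --- specifically, computing the tangent space of the relative Schubert variety $\fX_w$ at the diagonal point and checking that the ``regular semisimple'' hypothesis on $\s$ makes $d\iota_\s$ transverse in exactly the way needed so that the surviving root directions are precisely those in $H_w$. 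This requires care with conventions (which of $w$, $w^{-1}$, and the choice of $B$ versus $B^-$ enters the tangent cone of $X_w$), and with the passage from $\s$ on the group side to $s$ on the Lie-algebra side, where one uses that $T^{\rs}$ and $\ft^{\rs}$ have the same centralizer $T$ and that $1-\Ad(\s)$ and $\ad(s)$ are both invertible precisely off $\ft$. A secondary point to verify is that $\Y$ is genuinely a GKM manifold here (equivariant formality), which follows from its Bia\l ynicki-Birula-type decomposition once smoothness and the structure of the $T$-action are in hand, paralleling the Hessenberg case.
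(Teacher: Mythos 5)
Your proposal is correct, and it takes a genuinely somewhat different route from the paper. The paper works at the level of the $T$-invariant curves themselves: it parameterizes them as $\overline{U_\alpha v}$, shows by a group-theoretic manipulation ($U_\alpha v = vU_{v^{-1}\alpha}$, $U_{v^{-1}\alpha}$ normalized by $T$) that $\overline{(U_\alpha v)^{-1}.\s}=\overline{U_{v^{-1}\alpha}e}$ in $\cB$, and then concludes that $\overline{U_\alpha v}\subset\Y$ iff $\overline{U_{v^{-1}\alpha}e}\subset X_w$ iff $\fg_{v^{-1}\alpha}\subset H_w/\fb$ (using smoothness of $X_w$ at $eB$), matching the Hessenberg criterion from \cite[Prop.\ 8.2]{AHMMS}. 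You instead differentiate the fiber-product square at $eB$ directly: you identify $T_{(eB,eB)}\fX_w$ with the ``twisted'' sum $\fg/\fb\oplus H_w/\fb$, observe that $d\iota_\s$ differs from the diagonal by $1-\Ad(\s)$, and use regular semisimplicity to deduce $T_{eB}\Y=H_w/\fb$; an identical computation with $\ad(s)$ in place of $1-\Ad(\s)$ gives $T_{eB}X_{H_w}(s)=H_w/\fb$. You then invoke GKM rigidity (each $T$-weight line at a fixed point of a smooth $T$-variety exponentiates to a unique $T$-curve) to upgrade from tangent lines to edges, and translate by $v$. The computation is clean and avoids quoting the Hessenberg GKM graph as a black box; the paper's approach is slightly more robust in that it never needs the tangent-line-to-curve step. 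Two small points worth making explicit in your write-up: (i) the ``translate by $v$'' step requires noting that $v^{-1}\Y=Y_w(v^{-1}\s v)$ and $v^{-1}X_{H_w}(s)=X_{H_w}(v^{-1}.s)$ with $v^{-1}\s v\in T^{\rs}$, $v^{-1}.s\in\ft^{\rs}$ again, since the varieties are only $T$-invariant, not $N(T)$-invariant --- your computation at $eB$ must apply to these conjugated data, which it does; (ii) there is a sign/convention issue in ``$v\mapsto(v,v-\Ad(\s)v)$'' depending on whether one uses the standard product identification of $T_{(eB,eB)}(\cB\times\cB)$ or the twisted one; it does not affect the conclusion because $\pm(1-\Ad(\s)^{\pm1})$ is invertible on each nonzero root space either way.
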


Since the $W$-representation on $H^*(\Y)$ does not depend on the choice of $\s\in \Grs$ (see \eqref{eq:conjugate}), Theorem~\ref{11} implies the following. 
\begin{corollary}\label{cor:Wrep}
	Suppose that $w, w'\in W$ are smooth with $H_w=H_{w'}$. Then, 
	\[H^*(\Y)\cong H^*(Y_{w'}(\s'))\]
	for any  $\s$ and $\s'\in \Grs$, as graded $W$-representations.
\end{corollary}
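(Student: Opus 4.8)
The plan is to deduce Corollary~\ref{cor:Wrep} from Theorem~\ref{11} together with the fact that the graded $W$-representation on $H^*(\Y)$ is independent of the regular semisimple element $\s$. I would first spell out this independence: the family $\pi_w : Y_w \to G$ restricts to a (topologically) locally trivial fibration over $\Grs$, so the monodromy action of $\pi_1(\Grs, \s)$ on $H^*(Y_w(\s))$ varies in a local system on $\Grs$, and by \cite{AN} this monodromy descends to the $W$-action. Since $\Grs$ is connected, changing the base point from $\s$ to $\s'$ gives an isomorphism $H^*(Y_w(\s)) \cong H^*(Y_w(\s'))$ of graded $W$-representations; this is the content cited as \eqref{eq:conjugate} in the excerpt. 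Concretely, for $g \in G$ with $g\s g^{-1} = \s'$, conjugation by $g$ induces an isomorphism $Y_w(\s) \xrightarrow{\sim} Y_w(\s')$ that is equivariant for the identifications of the respective fixed-point sets with $W$, hence respects the dot action.

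Next I would reduce to the case handled by Theorem~\ref{11}, namely $\s \in T^{\rs}$. Given arbitrary $\s \in \Grs$, its centralizer is some maximal torus $T'$, which is conjugate to $T$; choosing $g$ with $g T' g^{-1} = T$ we get $g \s g^{-1} \in T^{\rs}$. By the independence just established, $H^*(Y_w(\s)) \cong H^*(Y_w(g\s g^{-1}))$ as graded $W$-representations, so it suffices to prove the statement when both $\s$ and $\s'$ lie in $T^{\rs}$, and similarly we may fix one element $s \in \ft^{\rs}$. Then Theorem~\ref{11} applies directly: since $w$ and $w'$ are smooth with $H_w = H_{w'} =: H$, we obtain
\[
H^*(Y_w(\s)) \cong H^*(X_H(s)) \cong H^*(Y_{w'}(\s))
\]
as graded $W$-representations, and combining with the independence of $\s'$ finishes the proof.

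There is essentially no new obstacle here — the corollary is a formal packaging of Theorem~\ref{11} plus a conjugation argument — so the only point requiring care is making sure that the conjugation isomorphism $Y_w(\s) \cong Y_w(\s')$ is genuinely $W$-equivariant for the dot action and not merely an abstract isomorphism of graded vector spaces. I would verify this by recalling that the dot action is characterized GKM-theoretically through the action on the fixed locus $\Y^{C_G(\s)} \cong W$ and on the invariant curves, and that conjugation by $g$ intertwines the $C_G(\s)$-action on $\Y$ with the $C_G(\s')$-action on $Y_w(\s')$ compatibly with the standard identifications $C_G(\s)$-fixed points $\cong W \cong C_G(\s')$-fixed points (both being restrictions of $\cB^{T} \cong W$ after conjugating tori). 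Alternatively, one invokes that conjugation by $g$ induces an automorphism of the total family $Y_w \to G$ covering conjugation on the base, hence acts compatibly on the monodromy local system; since the $W$-action is \emph{defined} as the descent of this monodromy (\cite{AN}), equivariance is automatic. Either way the argument is routine, and Corollary~\ref{cor:Wrep} follows.
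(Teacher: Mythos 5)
Your proof is correct and takes essentially the same route as the paper: the paper's proof is simply ``immediate from Theorem~\ref{thm GKM graph} and \eqref{eq:conjugate},'' which is exactly the conjugation-plus-GKM argument you spell out. The only difference is that you supply the details the paper leaves implicit, in particular the justification that the conjugation isomorphism respects the dot action; your monodromy-based argument for this (since the $W$-action is by definition the descent of the monodromy on the connected base $G^{\mathrm{rs}}$) is the cleanest way to see it.
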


Theorem~\ref{11} extends to a
statement about the degeneration of the family of Lusztig varieties associated to a
smooth element $w$ in $W$ to the corresponding family of Hessenberg varieties.
To explain it, we first introduce some necessary notation.
\begin{itemize}
	\item Let $Y_w^{\rs}\to \grs$ denote the restriction of $Y_w\to \fg$ over $\grs$. This is a smooth family of Lusztig varieties, whose fiber over $\s \in \grs$ is $\Y$.
	\item Define $\pgrs$ as the image of $\grs-0$ in $\PP(\fg)$. Let 
$X_{\PP(H)}^{\rs}\to \pgrs$ denote  the smooth family of Hessenberg varieties,  whose fiber over $[s]$ is
$X_{H}(s)$. This is well-defined since $X_{H}(s)=X_{H}(cs)$ for $c\in\mathbb{G}_m$.
	\item Let $\widetilde G$ denote the blowup of $G$ at the identity element $e$, and let
$\widehat G$ denote the complement in $\widetilde G$ of the proper transform of
$G - G^{\rs}$. 
Note that $\widehat G$ is smooth and connected. Set-theoretically, it is the disjoint union $\widehat G=\Grs\sqcup \pgrs$ with $\Grs$ open and $\pgrs$ closed.
\end{itemize}

\begin{theorem}[Theorem~\ref{thm:univ.fam}]\label{12}
  Let $w\in W$ be smooth. Then, there exists a smooth projective morphism $\widehat Y_w \to \widehat G$ 
	which satisfies the following. 
	\begin{enumerate}
		\item The family restricts to $Y_w^{\rs} \to \Grs$ over $\Grs$.
		\item The family restricts to $X_{\PP(H_w)}^{\rs}\to \pgrs$ over $\pgrs$. 
	\end{enumerate}
	In short, we have fiber diagrams 
	\[\begin{tikzcd}
			Y_w^{\rs}\arrow[r,hook]\arrow[d]&\widehat Y_w \arrow[d]&X_{\PP(H_w)}^{\rs}
			\arrow[l,hook']\arrow[d]\\ 
			\Grs\arrow[r,hook]& \widehat G&\pgrs \arrow[l,hook']
		\end{tikzcd}\]
	where the inclusions on the left are open immersions, while those on the right correspond to their closed complements. 
\end{theorem}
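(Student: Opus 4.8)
The plan is to construct $\widehat Y_w$ by a single fiber-product/blowup procedure that mirrors the defining square \eqref{10} for Lusztig varieties, but carried out relative to the base $\widehat G$ rather than over $G$ alone. The starting point is the observation that the diagonal embedding $\cB\hookrightarrow\cB\times\cB$ and the graph map $\iota_x = (\id,x)$ can be packaged into a single family over $G$: namely, form $G\times\cB \to \cB\times\cB$ sending $(x,gB)\mapsto(gB, xgB)$, and pull back the relative Schubert variety $\fX_w\subset\cB\times\cB$ to get $Y_w\subset G\times\cB$. Over $\Grs$ this is already the smooth family $Y_w^{\rs}$ we want on the open part. The content of the theorem is to understand what happens as $x\to e$: the graph map $\iota_x$ degenerates to the diagonal $\iota_e$, and $Y_e(x)=\cB$ for all $x$, so the naive limit of $Y_w(x)$ as $x\to e$ is all of $X_w$-bundle-like and is certainly not a Hessenberg variety. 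The point of blowing up $G$ at $e$ is exactly to record the \emph{direction} in which $x$ approaches $e$, i.e. a tangent vector $s\in\fg$ up to scaling, and to use the exceptional $\PP(\fg)$ to interpolate between the "group condition" $g^{-1}xg\in\overline{BwB}$ and the "Lie algebra condition" $g^{-1}.s\in H_w$.

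The key steps, in order, are as follows. First I would make precise the family of maps $\widehat\iota:\widehat G\times\cB\dashrightarrow \cB\times\cB$: over $\Grs$ it is $(x,gB)\mapsto(gB,xgB)$, and I would show it extends to a genuine morphism on $\widehat G\times\cB$ whose restriction to the exceptional locus $\pgrs\times\cB$ is given, in appropriate coordinates, by the "first-order" map built from the exponential/linearization — this is where the identification $H_w/\fb = T_e X_w$ from \eqref{e.sm} enters. Concretely, near $e$ write $x=\exp(\epsilon s)$ and expand $g^{-1}xg = \exp(\epsilon\,g^{-1}.s + O(\epsilon^2))$; the condition that $g^{-1}xg$ lie in $\overline{BwB}$, divided through by $\epsilon$ and taken in the limit $\epsilon\to 0$, becomes the condition that $g^{-1}.s$ lie in the tangent cone of $\overline{BwB}$ at $e$, which for smooth $w$ is the linear space $H_w$. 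Second, I would \emph{define} $\widehat Y_w$ to be the closure in $\widehat G\times\cB$ of $Y_w^{\rs}$ (equivalently the scheme-theoretic pullback of $\fX_w$ under the extended $\widehat\iota$, after checking these agree), and then verify that the projection $\widehat Y_w\to\widehat G$ is flat and that its fibers are as claimed: over $\Grs$ by construction, and over $[s]\in\pgrs$ by the limit computation just described, which identifies the fiber with $X_{H_w}(s)=\{gB: g^{-1}.s\in H_w\}$. Third, I would upgrade "flat" to "smooth projective": projectivity is automatic since everything sits inside $\widehat G\times\cB$ with $\cB$ proper; smoothness of the total space and of the morphism follows because both $Y_w^{\rs}\to\Grs$ (smoothness of regular semisimple Lusztig varieties of smooth $w$, from Theorem~\ref{22}) and $X_{\PP(H_w)}^{\rs}\to\pgrs$ (de Mari--Procesi--Shayman smoothness) are smooth families of the same relative dimension $\ell(w)=\dim H_w/\fb$, and a proper morphism between smooth varieties that is flat with smooth fibers is smooth; one then only needs to check the total space $\widehat Y_w$ is smooth along the central fiber, which can be done by the explicit local description of $\widehat\iota$ near the exceptional divisor together with transversality of $\fX_w$.

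The main obstacle I expect is Step two: showing that the scheme-theoretic limit of the Lusztig varieties $Y_w(x)$ as $x$ approaches $e$ along a direction $[s]$ is \emph{exactly} the Hessenberg variety $X_{H_w}(s)$, with the correct (reduced, smooth) scheme structure, and not some thickening or some variety cut out by higher-order conditions. This requires a careful analysis of the equations of $\overline{BwB}$ near $e$: one must show that the relevant degeneration of the ideal of $\fX_w$ under the blowup is generated by the linear terms, i.e. that the tangent cone of $X_w$ at $e$ being a linear space (smoothness of $w$) really does kill all higher-order contributions in the flat limit. I would handle this by working with a Bott--Samelson-type resolution $\bsY\to\Y$ — which by the first part of the paper behaves well in families and whose boundary is SNC — pulling the whole resolution into the family over $\widehat G$, performing the limit computation upstairs where the geometry is transparent (products of $\PP^1$'s and linear conditions), and then descending. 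A secondary subtlety is checking flatness across the exceptional divisor; since the base $\widehat G$ is smooth of dimension $\dim G$ and the total space will be shown to be smooth (hence Cohen--Macaulay) with all fibers of the same dimension, flatness follows from miracle flatness, so this reduces to the dimension count, which is uniform and easy. The smoothness of the total space along the special fiber is really the crux, and once it is in hand the rest of the theorem is bookkeeping.
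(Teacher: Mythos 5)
Your big-picture reading of the construction is right: blow up $G$ at $e$ to record tangent directions $[s]\in\PP(\fg)$, and over the exceptional locus the group condition $g^{-1}xg\in\overline{BwB}$ should degenerate to the Lie-algebra condition $g^{-1}.s\in H_w$. But there is a genuine gap in the way you set up the central-fiber computation. You propose to extend the graph map to a morphism $\widehat\iota:\widehat G\times\cB\to\cB\times\cB$ whose restriction to $\pgrs\times\cB$ is a meaningful ``first-order'' map, and then in parentheses assert that the closure of $Y_w^{\rs}$ and the scheme-theoretic pullback of $\fX_w$ along this $\widehat\iota$ agree. Neither claim is correct: the only extension of the graph map to $\widehat G\times\cB$ (with target $\cB\times\cB$) is the composite $\widehat G\times\cB\to G\times\cB\to\cB\times\cB$, which on $\pgrs\times\cB$ is just the diagonal $gB\mapsto(gB,gB)$. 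It does not see the direction $[s]$ at all, and the scheme-theoretic pullback of $\fX_w$ along it is all of $\pgrs\times\cB$ over $\widehat E$ (since $\Delta(\cB)\subset\fX_w$), which is far too big. To make the tangent direction visible you must also blow up the \emph{target}: the paper works inside $\Bl_{\widehat E\times\Delta(\cB)}\bigl(\widehat G\times\cB\times\cB\bigr)$ and $\Bl_{\widehat E\times\Delta(\cB)}\bigl(\widehat G\times\fX_w\bigr)$, i.e.\ it runs the whole fiber square \eqref{72} through Fulton's deformation to the normal cone. Over $\widehat E$ the deformed $\fX_w$ acquires the projective completion of the normal cone $C_{\Delta(\cB)/\fX_w}\cong G\times^B(H_w/\fb)$ as a component, and the section from $\widehat G\times\cB$ lands precisely in the interior of that component, realizing $X_{H_w}(s)$ in the form \eqref{75} with the correct scheme structure — no exponential-map estimates and no separate flatness argument are needed, since the normal-cone degeneration is flat by construction.

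Your fallback of passing through a Bott--Samelson resolution is also not what the paper does and would require extra justification: $\fX_{\underline w}$ is not a subvariety of $\cB\times\cB$, the map $\fX_{\underline w}\to\fX_w$ is not an isomorphism near $\Delta(\cB)$, and computing a flat limit upstairs and pushing down does not in general produce the flat limit downstairs. Once the blowup-of-both-source-and-target picture is in place, the remaining steps in your outline (projectivity from properness of $\cB$, miracle flatness from equidimensionality plus Cohen--Macaulayness, smoothness of the morphism from smooth equidimensional fibers) match the paper's concluding bookkeeping.
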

The variety $\widehat Y_w$ is constructed by pulling back the family of
diagrams \eqref{10} over $G$ along $\widehat G\to
G$ and applying a deformation-to-normal-cone type argument
in \cite{fulton-intersection} to the pulled-back family.

Applying the Ehresmann theorem to $\widehat Y_w \to \widehat G$ and $\widehat Y_{w'} \to \widehat G$ constructed above, and to $X_{\PP(H)}^{\rs}\to \pgrs$ when $H=H_w=H_{w'}$, we obtain the following.
\begin{corollary}[Corollary~\ref{cor:diffeo}]\label{c.diffeo}
	Let $w,w'\in W$ be smooth, and let $\s\in \Grs$. 
	If $X_w$ and $X_{w'}$ have the same tangent space at the identity, viewed as subspaces of the tangent space of $\cB$, 
	then $\Y$ and $Y_{w'}(\s)$ are diffeomorphic.
\end{corollary}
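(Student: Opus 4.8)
The plan is to deduce the corollary formally from Theorem~\ref{12} together with the Ehresmann fibration theorem, so almost all of the work has already been done in constructing the family $\widehat Y_w\to\widehat G$.

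First I would translate the hypothesis into an equality of Hessenberg spaces. By the definition of the map \eqref{e.sm}, $H_w$ is the unique $B$-submodule of $\fg$ containing $\fb$ whose image $H_w/\fb$ in $\fg/\fb=T_e\cB$ is the tangent space $T_eX_w$, and similarly for $w'$. Hence the assumption that $X_w$ and $X_{w'}$ have the same tangent space at the identity, as subspaces of $T_e\cB$, is exactly the statement that $H_w=H_{w'}=:H$. Now apply Theorem~\ref{12} to $w$ and to $w'$ to obtain smooth projective morphisms $\widehat Y_w\to\widehat G$ and $\widehat Y_{w'}\to\widehat G$; since $H_w=H_{w'}=H$, both of them restrict over the closed subset $\pgrs\subset\widehat G$ to the \emph{same} family $X_{\PP(H)}^{\rs}\to\pgrs$, while over the open subset $\Grs$ they restrict to $Y_w^{\rs}\to\Grs$ and $Y_{w'}^{\rs}\to\Grs$ respectively.

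Next I would invoke Ehresmann's theorem: a proper $C^\infty$ submersion over a connected base is a locally trivial fibre bundle, so in particular any two of its fibres are diffeomorphic. The morphism $\widehat Y_w\to\widehat G$ is projective, hence proper, and smooth, hence a submersion of the underlying real manifolds, and $\widehat G$ is smooth and connected as recorded in the excerpt. Fixing $\s\in\Grs$ and $[s]\in\pgrs$, the fibre of $\widehat Y_w\to\widehat G$ over $\s$ is $\Y$ by part (1) of Theorem~\ref{12}, and its fibre over $[s]$ is $X_H(s)$ by part (2); therefore $\Y$ is diffeomorphic to $X_H(s)$. Running the identical argument for $\widehat Y_{w'}\to\widehat G$ gives that $Y_{w'}(\s)$ is diffeomorphic to $X_H(s)$, and composing the two diffeomorphisms yields $\Y\cong Y_{w'}(\s)$. (If one wishes to keep $\s$ fixed on both sides, one may additionally note that $\Grs$ is connected, so the restricted family $Y_w^{\rs}\to\Grs$ already shows the diffeomorphism type of $\Y$ is independent of the choice of $\s\in\Grs$.)

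I do not expect a genuine obstacle here: the entire difficulty is packaged into Theorem~\ref{12}, and this corollary is a routine application of Ehresmann. The one point that needs a little attention is the hypothesis check at the boundary stratum — namely that $\widehat Y_w\to\widehat G$ is a smooth (submersive) morphism \emph{across} $\pgrs$, not merely over the open locus $\Grs$, which is precisely the smoothness asserted in Theorem~\ref{12} — and the connectedness of $\widehat G$, which is what lets us move between a fibre over the open stratum $\Grs$ and a fibre over the closed stratum $\pgrs$ inside a single locally trivial bundle.
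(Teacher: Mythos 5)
Your argument is correct and coincides with the paper's proof: translate the hypothesis into $H_w=H_{w'}=:H$, invoke Theorem~\ref{12} to get the two smooth proper families $\widehat Y_w\to\widehat G$ and $\widehat Y_{w'}\to\widehat G$, and apply Ehresmann's theorem over the connected base $\widehat G$ to conclude $\Y\cong X_H(s)\cong Y_{w'}(\s)$. The extra care you take — verifying smoothness of the morphism across the closed stratum $\pgrs$ and connectedness of $\widehat G$ — is exactly what the paper's one-line proof is implicitly using.
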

This proves Conjecture~\ref{conj}, originally stated for $G=\GL_n(\C)$ and $W=S_n$, and generalizes it to arbitrary Lie types.

\smallskip

The family $\widehat Y_w\to \widehat G$ may be of independent interest. For instance, it could have implications for the moduli spaces of regular semisimple Lusztig varieties or Hessenberg varieties (cf.~\cite{BEHLLMS}).

\smallskip
Another application of Theorem~\ref{12} is the Weyl group symmetry of weight multiplicities for the $C_G(\s)$-representation 
\[V_w(\lambda):=H^0(\Y,L_\lambda),\] 
where $\lambda$ is dominant and $w$ is smooth. This follows from the well-known symmetry for $H^0(\cB,L_\lambda)$, since $V_w(\lambda)$ can be expressed as a linear combination of $H^0(\cB,L_\mu)$ as $C_G(\s)$-representations. See Proposition~\ref{prop:rep} and below.

\begin{corollary}\label{cor:weight.multi} Let $w\in W$ be smooth and let $\s\in \Grs$. Let $\lambda$ be a dominant integral weight, and let $\chi$ be a character of $C_G(\s)$. Then $V_w(\lambda)$ has weight spaces of the same dimension for $\chi$ and $\sigma(\chi)$ for any $\sigma\in W$.
\end{corollary}
This leads to the following natural question.
\begin{question}
Does the Weyl group symmetry of weight multiplicities of $V_w(\lambda)$
continue to hold when $w$ is singular?
\end{question}

We conclude with one further remark.
Thanks to the identification \eqref{7}, for $G=\GL_n(\C)$, Theorem~\ref{23} can be interpreted as a result for $X_{H_w}(s)$.

\begin{corollary}[Corollary~\ref{cor:vanishing.hess}] \label{c.vanishing A} 
Let $G=\GL_n(\C)$ and let $H, H'\subset \fg$ be linear Hessenberg spaces.
Then the following hold:
\begin{enumerate}
 \item $H^i(X_H(s), L_{\lambda})=0$ for any $i >0$ and dominant $\lambda$;
\item if $H\subset H'$, then the restriction map $H^0(X_{H'}(s), L_{\lambda}) \rightarrow H^0(X_H(s), L_{\lambda})$ is surjective for regular  dominant $\lambda$.
 \end{enumerate}
\end{corollary}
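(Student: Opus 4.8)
The plan is to derive Corollary~\ref{c.vanishing A} as a direct consequence of Theorem~\ref{23} by invoking the identification \eqref{4}, together with the combinatorial correspondence \eqref{6}. First I would recall that when $G=\GL_n(\C)$, a regular semisimple element $s$ can be viewed simultaneously in $\grs\subset\fg$ and in $\Grs\subset G$ (since both are identified with the space of $n\times n$ matrices, and a regular semisimple matrix is automatically invertible after translating by a generic central element—or more carefully, one restricts to the open locus where this holds; since the statement only concerns $X_H(s)$ up to the monodromy-invariant structure, this is harmless, or one simply takes $s\in T^{\rs}$ as in Theorem~\ref{11}). Given a linear Hessenberg space $H\subset\mathfrak{gl}_n$, the correspondence of \cite{AN2} produces the unique codominant permutation $w\in S_n$ with $X_H(s)=Y_w(\s)$, so part (1) of the corollary for that particular $w$ is \emph{verbatim} Theorem~\ref{23}\eqref{24} (note the codominant—hence smooth—hypothesis is not even needed for Theorem~\ref{23}, which applies to arbitrary $w\in W$), and part (2) is Theorem~\ref{23}\eqref{24'}.

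The one genuine subtlety is that the corollary is phrased for an \emph{arbitrary} linear Hessenberg space $H$, whereas the equality $X_H(s)=Y_w(\s)$ only holds with $w$ codominant; but \eqref{6}, restricted to codominant permutations, is a \emph{bijection} onto the set of all linear Hessenberg subspaces of $\mathfrak{gl}_n$ (this is the content of \cite[Theorem~1.6]{AN2} and the surrounding discussion, recalled in the excerpt). Hence every $H$ does arise from a (unique) codominant $w$, and the reduction goes through for all $H$. So the key steps, in order, are: (i) fix a linear Hessenberg space $H\subset\mathfrak{gl}_n$ and choose $\s=s\in T^{\rs}$; (ii) invoke the bijection from \eqref{6} to obtain the codominant $w\in S_n$ with $H_w=H$; (iii) apply the identification $X_H(s)=Y_w(\s)$ of \eqref{4}; (iv) transport Theorem~\ref{23}\eqref{24} and \eqref{24'} across this identification, noting that $L_\lambda$ restricts compatibly since both varieties are literally the same subvariety of the same flag manifold $\cB$.

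I do not expect any serious obstacle: the entire content is packaged in Theorem~\ref{23} and in the already-cited work of Abreu–Nigro. The only point requiring a sentence of care is the compatibility of the line bundle $L_\lambda$: since $X_H(s)$ and $Y_w(\s)$ are the \emph{same} closed subscheme of $\cB$ (not merely abstractly isomorphic varieties), the restriction of $L_\lambda$ from $\cB$ agrees on the nose, so both the cohomology vanishing $H^i(X_H(s),L_\lambda)=0$ for $i>0$ and the surjectivity of $H^0(\cB,L_\lambda)\to H^0(X_H(s),L_\lambda)$ follow immediately from the corresponding assertions for $Y_w(\s)$. One might also remark that the "sufficiently large characteristic" clause of Theorem~\ref{23} is vacuous here since we are over $\C$, which is why part (1) is stated for all dominant $\lambda$ without restriction.
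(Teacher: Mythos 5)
Your proposal is correct and follows the same route as the paper: the paper's own argument is essentially ``immediate from Theorems~\ref{thm:vanishing} and~\ref{thm:vanishing.Frob} and Proposition~\ref{p codominant},'' which is precisely steps (ii)--(iv) of your outline. The extra paragraph you devote to the invertibility issue is worthwhile: a regular semisimple $s\in\mathfrak{gl}_n$ need not lie in $\GL_n$, so the identification $X_H(s)=Y_w(\s)$ of Proposition~\ref{p codominant} does not literally apply, and the paper's one-line proof glosses over this. Your fix via translation by a central scalar is the right one, but it deserves to be stated more cleanly than ``harmless up to monodromy-invariant structure'': the point is that for any $c\in\C$ one has the scheme-theoretic equality $X_H(s)=X_H(s+cI)$, because $H$ is a linear subspace containing $\fb\supset\C I$, and $g^{-1}.(s+cI)=g^{-1}.s+cI$; choosing $c$ generic makes $s+cI$ invertible, at which point $X_H(s)=Y_{w}(s+cI)$ with $w$ codominant and the vanishing/surjectivity transport across this identity of closed subschemes of $\cB$, as you say. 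Everything else (the bijection restricted to codominant $w$, the remark that large characteristic is vacuous over $\C$) is handled correctly.
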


This generalizes the result of Abe, Fujita and Zeng \cite{AFZ} for weak Fano Hessenberg varieties in type $A$ (see Remark~\ref{rem:AFZ}).
\\

The paper is organized as follows.  In Section~\ref{s:basic.properties}, we collect basic facts on Schubert varieties, Bott-Samelson resolutions and their relative counterparts, and extend those results to Lusztig varieties (Theorem \ref{22}). 

In Section~\ref{sect ch 0}, we first prove that the boundary divisor $\partial \bsY$ of the Bott-Samelson resolution of $\Y$ is a simple normal crossing anti-canonical divisor. Then, we review known results on line bundles on the Bott-Samelson resolutions of $X_w$ and $\fX_w$,  and using these, we show that the divisor $\partial \bsY$  supports an ample divisor. In particular, this implies that Lusztig cells are affine. We then apply a version of the Kawamata-Viehweg vanishing theorem, stated in Theorem~\ref{thm:kawamata-viehweg}, to prove Theorem~\ref{23} when $\mathrm{char}~k=0$. 

In Section~\ref{s.cohomology}, we first explain how to use Frobenius splitting methods in general in the first subsection, and apply them to Lusztig varieties in the second subsection to deduce Theorems~\ref{23} and \ref{27} when $\mathrm{char}~k>0$. In the last subsection, we also explain how Theorem~\ref{23} in characteristic zero can be recovered from the result in positive characteristic.

In Section~\ref{sec:degen}, we generalize the usual Hessenberg varieties to \emph{Hessenberg schemes} in Subsection~\ref{ss:Hess space}, in order to include flat limits of $\Y$ with singular $w$ within the framework of Hessenberg schemes. In Subsection~\ref{ss:GKM}, we recall the comparison results \eqref{4}--\eqref{7}  by Abreu and Nigro  in type $A$, and extend these to an arbitrary type, proving Theorem~\ref{11}. In Subsection~\ref{ss:degeneration}, we construct the flat family $\widehat Y_w\to \widehat G$ in Theorem~\ref{12} for any $w\in W$, and consequently, prove Corollary~\ref{c.diffeo}.  
In Subsection~\ref{ss:cohomology.revisited}, we provide additional results on the cohomology of line bundles, obtained by combining Theorem~\ref{23} with the relationships between Lusztig varieties and Hessenberg varieties.
As an application, we obtain a proof of Corollary~\ref{cor:weight.multi}.

\medskip

\noindent{\bf Acknowledgments.}
We would like to thank Michel Brion for
valuable discussions, especially on Frobenius splitting methods, and Eunjeong
Lee for bringing \cite{BS24} to our attention.    
We also thank the anonymous referees for their careful reading and helpful
comments, which led to several improvements of the paper, including the
addition of Corollary~\ref{cor:weight.multi}.
We thank George Lusztig for pointing out that his
paper~\cite{lusztig-twopartitions} contains results related to ours, 
Michael Rapoport for suggesting us to expand the discussion of the history of
the affineness problem for Deligne-Lusztig varieties and Tom Haines for helpful comments 
about this history.
 
Hong was supported by the Institute for Basic Science IBS-R032-D1. Lee was
supported by the Institute for Basic Science IBS-R032-D1, the National
Research Foundation of Korea (NRF) grant funded by the Korea government (MSIT)
2021R1A2C1093787, and the KIAS Individual Grant (HP109201).

\section{Singularities and resolutions}\label{s:basic.properties}

In this section, we first review well-known results on singularities and (Bott-Samelson) resolutions of Schubert varieties. Then, we extend them to regular semisimple Lusztig varieties. In particular, we prove Theorem~\ref{22}.
\subsection{Schubert varieties} We review basic properties of Schubert varieties. These will extend to Lusztig varieties in the next subsections.

Let $k$ be an algebraically closed field and let $G$ be a semisimple algebraic group over $k$. Let $T$ be a maximal torus with the normalizer $N(T)$, and let $B$ be a Borel subgroup  containing $T$. Let $W=N(T)/T$ be the Weyl group. Let $\cB:=G/B$ be the flag variety. 

We denote the \emph{Schubert variety} (resp. \emph{cell}) associated to $w\in W$ by 
	\[X_w:=\overline{B w B}/B \quad  \text{(resp. }X_w^\circ:=B {w}B/B)\]
	where, with an abuse of notation, we use $w$ for any of its lifts in $N(T)$.

\medskip

These varieties are invariant subvarieties in $\cB$ under the natural $B$-action.
Given a variety $X$ with a (left) $B$-action, one can associate a $G$-variety \[G\times^B X:=(G\times X)/B\]
that is the quotient
by the diagonal $B$-action $((g,x),b)\mapsto (gb,b^{-1}x)$ for $(g,x)\in G\times X$ and $b\in B$.
This $B$-action commutes with the $G$-action given as the left multiplication on $G$ and the trivial action on $X$,
inducing a $G$-action on $G\times^B X$.  Moreover, by the first projection, it is naturally  a $G$-equivariant fibration over $\cB$ with fibers isomorphic to $X$.
	
When $X$ is a flag variety $\cB$, there is an isomorphism
\beq\label{eq:isom.relative}G\times^B\cB\xrightarrow{~\cong~} \cB\times \cB, \quad [g,x]\mapsto (gB,gx),\eeq
where $[g,x]$ denotes the $B$-equivalence class of $(g,x)$. This is $G$-equivariant with respect to the diagonal $G$-action on $\cB\times \cB$.

\begin{definition}
	Define the \emph{relative Schubert variety} (resp. \emph{cell}) by
	\[\fX_w:=G\times^BX_w \quad \text{(resp.~}~ \fX_w^\circ:=G\times^BX_w^\circ).\]
Under the isomorphism \eqref{eq:isom.relative}, these varieties  are subvarieties \[\begin{split}
		\fX_w&=\{(gB,x)\in \cB\times \cB :g^{-1}x\in X_w\},\\
		\fX_w^\circ&=\{(gB,x)\in \cB\times \cB :g^{-1}x\in X_w^\circ\}
	\end{split}
	\]
	in $\cB\times \cB$,  
	respectively.

\end{definition}

The Bruhat decomposition $G=\bigsqcup_{w\in W}B\dot w B$ of $G$ induces decompositions
\[\cB=\bigsqcup_{w\in W}X_w^\circ \and \cB\times \cB =\bigsqcup_{w\in W}\fX_w^\circ\]
of $\cB$ and $\cB\times \cB$ by the (relative) Schubert cells. These are precisely the decompositions by $B$-orbits and $G$-orbits, respectively. Moreover, $X_w^\circ\cong\A^{\ell(w)}$ where $\ell(w)$ denotes the \emph{length} of $w$, the minimal number $\ell$ such that $w=s_{i_1}\cdots s_{i_\ell}$ with simple reflections $s_{i_j}$.
These induce decompositions
\[X_w=\bigsqcup_{v\leq w}X_v^\circ \and \fX_w=\bigsqcup_{v\leq w}\fX_v^\circ\]
of $X_w$ and $\fX_w$, where $\leq $ denotes the Bruhat order on $W$.

The following  basic properties are well-known for Schubert varieties.
\begin{proposition}\label{9}
Let $w\in W$. Then,  
$X_w$  and $\fX_w$ are irreducible of dimensions $\ell(w)$ and $\ell(w)+\dim~\cB$, respectively. Moreover, they are
normal and Cohen-Macaulay with rational singularities. \end{proposition}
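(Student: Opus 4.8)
The plan is to prove everything for $X_w$ first and then transport the statements to $\fX_w$ via the fibration structure. For the Schubert variety $X_w$, irreducibility is immediate from the fact that $X_w = \overline{BwB/B}$ is the closure of the image of the irreducible variety $BwB$; the dimension count $\dim X_w = \ell(w)$ follows from the Bruhat cell decomposition $X_w = \bigsqcup_{v \le w} X_v^\circ$ together with $X_w^\circ \cong \A^{\ell(w)}$ and the fact that $\ell(v) < \ell(w)$ for $v < w$. For normality, Cohen-Macaulayness, and rational singularities, the cleanest route is to invoke the Bott-Samelson resolution $\bsY_{\underline w} \to X_w$ (for a reduced word $\underline w$ for $w$): this is a proper birational morphism from a smooth projective variety which is an iterated $\PP^1$-bundle, and one shows $R\pi_* \O_{\bsY_{\underline w}} = \O_{X_w}$ — this simultaneously gives normality (via Zariski's main theorem, since the fibers are connected and $X_w$ is the image of a normal variety mapping with connected fibers) and rational singularities. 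Cohen-Macaulayness then follows from the general fact that a variety with rational singularities which is also, say, known to be Cohen-Macaulay by an independent argument — here one typically cites the standard references (Ramanan-Ramanathan, Mehta-Ramanathan, or Brion-Kumar's book) for the Cohen-Macaulay property, which in positive characteristic is usually deduced from Frobenius splitting of $X_w$ compatibly split along its boundary, and in characteristic zero by semicontinuity or by the Kempf-type vanishing $H^i(X_w, \O) = 0$ for $i>0$.

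For the relative Schubert variety $\fX_w = G \times^B X_w$, the key observation is that $\fX_w \to \cB$ is a Zariski-locally trivial fiber bundle with fiber $X_w$ (locally trivial because $G \to G/B$ is locally trivial in the Zariski topology, $B$ being a connected solvable group, so $G \times^B X_w$ is locally $U \times X_w$ for $U \subset \cB$ open). All the properties in question — irreducibility, the dimension formula $\dim \fX_w = \ell(w) + \dim \cB$, normality, Cohen-Macaulayness, and rational singularities — are local on the base and are preserved under products with a smooth variety (here the open sets $U \cong \A^{\dim \cB}$), so they descend from $X_w$ to $\fX_w$ immediately. Equivalently, one can use the Bott-Samelson resolution of $\fX_w$ obtained by applying $G \times^B (-)$ to the Bott-Samelson resolution of $X_w$, which is again a smooth projective iterated $\PP^1$-bundle over $\cB$, and run the same $R\pi_* \O = \O$ argument.

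The main obstacle — or rather, the step requiring the most care — is establishing Cohen-Macaulayness and rational singularities of $X_w$ itself in a characteristic-free way. The birational geometry argument ($R\pi_*\O_{\bsY} = \O_{X_w}$ plus a criterion for rational singularities) works cleanly in characteristic zero but in positive characteristic one cannot appeal to Grauert-Riemenschneider; instead one needs the Frobenius-splitting technology (Mehta-Ramanathan, Ramanan-Ramanathan): $X_w$ is Frobenius split compatibly with the boundary divisor $\partial X_w$, from which both Cohen-Macaulayness and the normality of $X_w$ follow. Since all of this is genuinely standard and the excerpt explicitly says these facts are "well-known for Schubert varieties," I expect the actual write-up to consist mostly of assembling citations (to, e.g., \cite{Brion} and the standard references therein) for the $X_w$ case, and then a short paragraph deducing the $\fX_w$ case from Zariski-local triviality of the fibration $\fX_w \to \cB$.
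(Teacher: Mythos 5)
Your proposal is correct and matches the paper's approach: the paper itself gives no proof, simply citing \cite{Brion} for the $X_w$ case and noting in one line that the $\fX_w$ case follows because the fibration $\fX_w\to\cB$ has fibers $X_w$ (i.e., precisely your Zariski-local-triviality argument). The one slightly muddled sentence is the claim that Cohen-Macaulayness ``follows from the general fact that a variety with rational singularities which is also \ldots known to be Cohen-Macaulay by an independent argument,'' which as stated is circular; but you correct this in the next paragraph by distinguishing the characteristic-zero route (rational singularities $\Rightarrow$ Cohen-Macaulay) from the positive-characteristic route (Frobenius splitting), so the substance is right.
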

We refer the reader to \cite{Brion} for the proof. Note that $X_w$ and $\fX_w$ share the same local properties due to the fibration $\fX_w\to \cB$ with fibers $X_w$.

\smallskip

\subsection{Bott-Samelson resolutions} 
There are well-studied resolutions of the singularities of Schubert varieties, called the \emph{Bott-Samelson resolutions}. Their construction depends on the choice of a reduced word of a given $w\in W$.  
A \emph{word} of $w\in W$ is a sequence $\underline{w}=(s_{i_1},\cdots ,s_{i_\ell})$ of simple reflections such that 
$w=s_{i_1}\cdots s_{i_\ell}$. We say that $\underline w$ is \emph{reduced} if $\ell=\ell(w)$.

For each simple reflection $s_i \in W$, let $P_i \subset G$ denote the corresponding minimal (standard) parabolic subgroup containing $B$.

\medskip

The \emph{Bott-Samelson variety} associated to $\underline w$ is the quotient
\[X_{\underline{w}}:=P_{i_1}\times^B\cdots \times^B P_{i_\ell}/B\]
of the product   $P_{i_1} \times \dots \times P_{i_{\ell}}$  by the $B^{\ell}$-action defined by 
\[(p_1,\cdots, p_\ell).(b_1,\cdots,b_{\ell})=(p_1 b_1^{},b_1^{-1}p_2 b_2,\cdots, b_{\ell-1}^{-1} p_\ell b_\ell)\]
for $p_j\in P_{i_j}$ and $b_j\in B$, where $1\leq  j \leq \ell$. Its open cell $X_{\underline w}^\circ$ is defined in the same manner by replacing $P_{i_j}$ by $B s_{i_j} B$.
We denote its boundary by $\partial X_{\underline w}:=X_{\underline w}-X_{\underline w}^\circ$.

Consider the following two natural maps:
the multiplication map 
\beq\label{35}X_{\underline{w}}\lra \cB\eeq
sending the $B^{\ell}$-equivalence class $[p_1,\cdots,p_\ell]$
to $p_1\cdots p_\ell B$, and the inclusion
\beq\label{33}X_{\underline w_I}\hooklongrightarrow X_{\underline w} \eeq
associated to any subset $I\subset\{1,\cdots, \ell\}$, defined by setting $p_j=1$ for $j\notin I$.
Here, $\underline w_I$ is the associated subword of $\underline w$ given as
\[\underline w_I:=(s_{i_{j_1}},\cdots, s_{i_{j_m}}) ~\text{ for }I=\{j_1,\cdots, j_m\}.\] 
When $I=\{j\}^c$ for some $1\leq j\leq \ell$, the image of \eqref{33} is a smooth divisor in $X_{\underline w}$, denoted by $D_j$. Then, one can easily see that $X_{\underline w_I}\cong \bigcap_{j\notin I}D_j$.

Using these maps, one can easily prove the following. \begin{lemma}\label{36}
	Let $\underline w=(s_{i_1},\cdots, s_{i_\ell})$ be a word. Then, the following hold.
	\begin{enumerate}
		\item When $\underline w $ is a reduced word of $w\in W$, the map \eqref{35} has the image $X_w$, and it restricts to an isomorphism $X_{\underline w}^\circ \xrightarrow{\cong}X_w^\circ$ over $X_w^\circ$.
\item $\partial X_{\underline w}=\bigcup_{j=1}^\ell D_j$, and this is a simple normal crossing divisor.
		\item For any subword $\underline w_I$ of $\underline w$ with $I\subset \{1,\cdots,\ell\}$, the image of $X_{\underline w_I}$ under \eqref{33} is the transverse intersection of $D_j$ with $j\notin I$.
	\end{enumerate}
\end{lemma}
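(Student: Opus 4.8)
\textbf{Proof plan for Lemma~\ref{36}.}

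The plan is to reduce all three statements to well-known facts about Bott--Samelson varieties, exploiting the iterated fibration structure $X_{\underline w}\to X_{\underline w'}$, where $\underline w'=(s_{i_1},\dots,s_{i_{\ell-1}})$, with fiber $P_{i_\ell}/B\cong\PP^1$. First I would set up notation: write $\mu_\ell\colon X_{\underline w}\to\cB$ for the map \eqref{35} and observe that it factors as $X_{\underline w}\to X_{\underline w'}\times^B(P_{i_\ell}/B)\to\cB$; this records the fact that $X_{\underline w}$ is a $\PP^1$-bundle over $X_{\underline w'}$ and that the last $\PP^1$-factor maps to $\cB$ by right translation by an element of $P_{i_\ell}$.

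For part (1), I would argue by induction on $\ell$. The base case $\ell=0,1$ is immediate ($X_{(s_i)}=P_i/B\to\cB$ has image $X_{s_i}$, an isomorphism over the cell $B s_i B/B$). For the inductive step, suppose $\underline w=(s_{i_1},\dots,s_{i_\ell})$ is reduced, so $\underline w'$ is a reduced word of $w'=ws_{i_\ell}$ with $\ell(w')=\ell-1$ and $\ell(w's_{i_\ell})=\ell(w')+1$. By induction the image of $X_{\underline w'}\to\cB$ is $X_{w'}$, restricting to an isomorphism over $X_{w'}^\circ$. Then the image of $\mu_\ell$ is $X_{w'}\cdot P_{i_\ell}/B=\overline{Bw'B\cdot P_{i_\ell}}/B$, and since $\ell(w's_{i_\ell})>\ell(w')$ this equals $\overline{Bws_{i_\ell}s_{i_\ell}B}/B=X_w$ by the standard multiplication rule for Schubert cells (e.g.~$Bw'B\cdot Bs_{i_\ell}B=Bw'B$ or $Bw'B\cup Bw's_{i_\ell}B$ according to the length). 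Over $X_w^\circ$, I would check that the only contribution comes from $X_{w'}^\circ\times^B(Bs_{i_\ell}B/B)=X_{\underline w}^\circ$ (using that $Bw'B\cdot Bs_{i_\ell}B/B$ meets $X_w^\circ$ precisely in the ``big'' part and that the fibers are singletons there, because $w'\cdot s_{i_\ell}$ has no shorter factorization through a subword), giving the asserted isomorphism; injectivity and surjectivity over the cell follow from the corresponding statement for $\underline w'$ combined with the fact that, over $X_w^\circ$, the $\PP^1$-fiber direction contributes a single point.

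For parts (2) and (3): by definition $D_j$ is the image of the inclusion \eqref{33} for $I=\{j\}^c$, i.e.~the locus where the $j$-th coordinate is set to $1$ (equivalently, lies in $B$ rather than in $Bs_{i_j}B$). Since $X_{\underline w}^\circ$ is exactly the locus where every coordinate lies in the open cell $Bs_{i_j}B$, its complement is $\bigcup_{j=1}^\ell D_j$, which gives the first assertion of (2). For the normal-crossing and transversality statements (2)--(3), I would again induct on $\ell$ using the $\PP^1$-bundle structure $\pi\colon X_{\underline w}\to X_{\underline w'}$: the divisor $D_\ell$ is a section of $\pi$ (the $1$-section of the $\PP^1$-bundle), hence smooth and meeting each fiber transversally, while for $j<\ell$ the divisor $D_j$ is $\pi^{-1}(D_j')$ where $D_j'\subset X_{\underline w'}$ is the corresponding coordinate divisor; since $\pi$ is a smooth $\PP^1$-bundle, $\pi^{-1}$ of a SNC divisor is SNC, and $D_\ell$ meets each $\pi^{-1}(D_j')$ transversally because $D_\ell\to X_{\underline w'}$ is an isomorphism and $D_\ell\cap\pi^{-1}(D_j')$ maps isomorphically onto $D_j'$. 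Intersecting a collection $\{D_j:j\notin I\}$ this way and tracking which coordinates are forced to lie in $B$ identifies the intersection scheme-theoretically with $X_{\underline w_I}$ embedded via \eqref{33}, yielding (3). The main obstacle I anticipate is bookkeeping in part (1): one must argue carefully that the multiplication map is generically injective over $X_w^\circ$ and not merely that the image is correct, and the cleanest way is to identify $X_{\underline w}^\circ$ with an iterated $\mathbb{A}^1$-bundle (so $X_{\underline w}^\circ\cong\A^\ell$) and check that $\mu_\ell|_{X_{\underline w}^\circ}$ agrees with the parametrization of $X_w^\circ\cong\A^\ell$ coming from the reduced word; everything else is a routine induction on the $\PP^1$-bundle tower.
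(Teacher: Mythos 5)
Your plan is sound and is essentially the standard argument; the paper itself only remarks that Lemma~\ref{36} follows "using these maps" and does not spell it out, and the iterated $\PP^1$-bundle tower (which the paper records in Lemma~\ref{39}) is indeed the right tool, so you are aligned with the intended route.

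Two small corrections. First, in part (1) the parenthetical citation of the multiplication rule is garbled: in the reduced case $\ell(w's_{i_\ell})>\ell(w')$ one has $Bw'B\cdot Bs_{i_\ell}B = Bw's_{i_\ell}B$, not "$Bw'B$ or $Bw'B\cup Bw's_{i_\ell}B$"; the latter union is what appears in the length-decreasing case. This does not damage the conclusion, because what you actually need is $\overline{Bw'B}\cdot P_{i_\ell}/B = \overline{Bw'B\cup Bw's_{i_\ell}B}/B = \overline{BwB}/B = X_w$ (using $w'<w$), but the "e.g." as written is wrong and should be fixed. Second, for the isomorphism over $X_w^\circ$, you gesture at the right idea, but the cleanest formulation is the one you name at the end: every point of $X_{\underline w}^\circ$ has a unique representative $(u_1 s_{i_1},\dots,u_\ell s_{i_\ell})$ with $u_j\in U_{\alpha_{i_j}}$, so $X_{\underline w}^\circ\cong\A^\ell$, and the multiplication map sends this to $u_1 (s_{i_1}u_2s_{i_1}^{-1})\cdots (s_{i_1}\cdots s_{i_{\ell-1}}u_\ell s_{i_{\ell-1}}^{-1}\cdots s_{i_1}^{-1})\,wB$, which for a reduced word is precisely the product parametrization of $U_w\cdot wB = X_w^\circ$. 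Stating this directly is cleaner than chasing through the fibers of the $\PP^1$-tower. Parts (2) and (3) are handled correctly by the $\PP^1$-bundle induction: $D_\ell$ is a smooth section, $D_j=\pi^{-1}(D_j')$ for $j<\ell$, and transversality is preserved under pullback by a smooth morphism.
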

Due to (1), we call $X_{\underline w}$ the \emph{Bott-Samelson resolution} of $X_w$ associated to $\underline w$ when $\underline w$ is a reduced word of $w$. 

\smallskip

Note that for each $k$, the multiplication map
\beq \label{38}P_{i_k}\times^B (P_{i_{k+1}}\cdots P_{i_\ell})/B \lra (P_{i_k}\cdots P_{i_\ell})/B\eeq
factors through the projection $\PP^1\times (P_{i_k}\cdots P_{i_\ell})/B \longrightarrow (P_{i_k}\cdots P_{i_\ell})/B $, where the immersion $P_{i_k}\times^B (P_{i_{k+1}}\cdots P_{i_\ell})/B \lra P_{i_k}/B\times (P_{i_k}\cdots P_{i_\ell})/B$ is given by $[g,x] \mapsto (gB,gx)$ and $   P_{i_k}/B$ is identified with $\mathbb P^1$.

\begin{lemma}\label{39} {\rm (\cite[\S2.1]{Brion})}
	Let $\underline w$ be a reduced word of $w\in W$. 
Then, the map $X_{\underline w}\to X_w$ in \eqref{35} factorizes into the closed immersions and the $\PP^1$-bundle maps as follows:
	$X_{k+1}\hookrightarrow \hat X_k \to X_k$
induced by \eqref{38} for $1\leq k< \ell$,
	where \beq\label{54}\begin{split}
		&X_k:= P_{i_1}\times^B\cdots \times^B P_{i_{k-1}}\times^B (P_{i_k}\cdots P_{i_{\ell}})/B\\
		&\hat X_k:=P_{i_1}\times^B\cdots \times^B P_{i_{k-1}}\times^B \Big(P_{i_k}/B\times (P_{i_k}\cdots P_{i_{\ell}})/B\Big)
	\end{split}
	\eeq
so that $X_\ell=X_{\underline w}$ and $X_1=X_w$.
\end{lemma}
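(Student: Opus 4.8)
The plan is to peel off the Bott-Samelson variety one factor at a time, recognizing each peeling step as a composition of a closed immersion into a $\PP^1$-bundle followed by the bundle projection. Concretely, I would fix the reduced word $\underline w=(s_{i_1},\dots,s_{i_\ell})$ and work inductively on $k$. At stage $k$ the ambient space is $X_k = P_{i_1}\times^B\cdots\times^B P_{i_{k-1}}\times^B (P_{i_k}\cdots P_{i_\ell})/B$, and I want to factor the contraction of the $k$-th factor. The key geometric input is the map \eqref{38}: the multiplication $P_{i_k}\times^B (P_{i_{k+1}}\cdots P_{i_\ell})/B \to (P_{i_k}\cdots P_{i_\ell})/B$ factors through the embedding $[g,x]\mapsto (gB,gx)$ into $P_{i_k}/B \times (P_{i_k}\cdots P_{i_\ell})/B \cong \PP^1 \times (P_{i_k}\cdots P_{i_\ell})/B$ followed by the second projection. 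First I would verify that this embedding is a closed immersion — this is a local statement, and since $P_{i_k}$ is a minimal parabolic, $P_{i_k}\times^B(-)$ is a $\PP^1$-bundle over $(-)$, so the graph-type map $[g,x]\mapsto(gB,gx)$ is a section-like closed embedding onto the locus $\{(yB,z) : y^{-1}z \in P_{i_k}(P_{i_{k+1}}\cdots P_{i_\ell})/B,\ yB \text{ compatible}\}$; more simply, it is the closed immersion expressing $P_{i_k}\times^B Z$ inside $(P_{i_k}/B)\times (P_{i_k}\cdot \overline{Z})$ whenever $Z$ is $P_{i_k}$-stable, which here holds because $(P_{i_k}\cdots P_{i_\ell})/B$ is $P_{i_k}$-stable by construction.

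The second step is to propagate this factorization through the twisted product $P_{i_1}\times^B\cdots\times^B P_{i_{k-1}}\times^B(-)$. Since twisted product by a fixed space preserves closed immersions and commutes with forming $\PP^1$-bundles (the functor $P_{i_1}\times^B\cdots\times^B P_{i_{k-1}}\times^B(-)$ is exact in the relevant sense — it is a fibration with smooth proper fiber, so it sends closed immersions to closed immersions and $\PP^1$-bundles to $\PP^1$-bundles), applying it to \eqref{38} yields exactly the claimed factorization $X_{k+1}\hookrightarrow \hat X_k \to X_k$, where $\hat X_k$ is the $\PP^1$-bundle over $X_k$ obtained by inserting the factor $P_{i_k}/B$, and $X_{k+1}=P_{i_1}\times^B\cdots\times^B P_{i_k}\times^B(P_{i_{k+1}}\cdots P_{i_\ell})/B$ sits inside it as the closed subvariety cut out by the twisted-product incidence condition. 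The boundary cases $X_\ell = X_{\underline w}$ and $X_1 = X_w$ follow by unwinding definitions: when $k=\ell$ there is no trailing product, so $X_\ell$ is the full Bott-Samelson variety, and when $k=1$ the expression collapses to $(P_{i_1}\cdots P_{i_\ell})/B$, which equals $\overline{BwB}/B = X_w$ precisely because $\underline w$ is a reduced word (this last identity is where reducedness of $\underline w$ is used, and it is standard — it can be cited from Lemma \ref{36}(1) together with the fact that $P_{i_1}\cdots P_{i_\ell} = \overline{B s_{i_1}\cdots s_{i_\ell} B}$ for a reduced word).

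The main obstacle, such as it is, is bookkeeping rather than mathematics: one must be careful that the closed immersion $X_{k+1}\hookrightarrow \hat X_k$ really is the \emph{twisted-product} incidence locus and that, after composing all the maps $\hat X_k\to X_k$ and $X_{k+1}\hookrightarrow \hat X_k$ for $k=1,\dots,\ell-1$, the total composite agrees on the nose with the multiplication map \eqref{35} $X_{\underline w}\to X_w$. This is a diagram-chase: at each stage the map $\hat X_k\to X_k$ forgets the inserted $\PP^1 = P_{i_k}/B$ factor while multiplying it into the tail, which is exactly one step of the defining telescoping multiplication $[p_1,\dots,p_\ell]\mapsto p_1\cdots p_\ell B$. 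Tracking the $B^\ell$-equivalence classes through the $\times^B$'s carefully confirms the identification. Since the statement is quoted verbatim from \cite[\S2.1]{Brion}, I would keep this argument brief and refer to \emph{loc. cit.} for the routine verifications, emphasizing only the factorization \eqref{38} as the crux.
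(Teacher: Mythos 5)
Your proposal is correct and follows exactly the mechanism the paper points to in the remarks preceding Lemma~\ref{39}: factor the one-step multiplication \eqref{38} through the closed immersion into $P_{i_k}/B\times(P_{i_k}\cdots P_{i_\ell})/B$, then apply the twisted-product functor $P_{i_1}\times^B\cdots\times^B P_{i_{k-1}}\times^B(-)$. The paper itself supplies no written proof, just the citation to Brion \S2.1 and the display \eqref{38}, so filling in the propagation step and the boundary checks $X_\ell=X_{\underline w}$, $X_1=X_w$ is precisely what is expected.

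One small wording slip worth fixing: the phrase ``$P_{i_k}\times^B(-)$ is a $\PP^1$-bundle over $(-)$'' is not right as stated (the multiplication $P_{i_k}\times^B Z\to P_{i_k}\cdot Z$ is generically finite, not a bundle); the $\PP^1$-bundle is the projection $(P_{i_k}/B)\times Z'\to Z'$ and its twisted version $\hat X_k\to X_k$. Likewise the stability condition should be placed on the \emph{target} $Z'=(P_{i_k}\cdots P_{i_\ell})/B$, not on $Z=(P_{i_{k+1}}\cdots P_{i_\ell})/B$ (which is not $P_{i_k}$-stable in general): the clean argument is that $Z\hookrightarrow Z'$ is a closed immersion of Schubert varieties, $P_{i_k}\times^B(-)$ preserves closed immersions, and $P_{i_k}\times^B Z'\cong (P_{i_k}/B)\times Z'$ via $[g,z]\mapsto(gB,gz)$ because $Z'$ is $P_{i_k}$-stable. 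Your ``more simply'' clause already contains this idea, so this is a matter of rewording rather than a gap.
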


There are analogous constructions for relative Schubert varieties. Let $\underline w$ be any word as above.
Define 
\[\fX_{\underline w}:=G\times^B X_{\underline w} \and \fX_{\underline w}^\circ :=G\times^BX_{\underline w}^\circ\]
using the natural $B$-actions on $X_{\underline w}$ and $X_{\underline w}^\circ$.
Then, the map \eqref{35} induces \beq\label{37}\fX_{\underline w}\lra  G\times^B\cB\cong \cB\times \cB\eeq For $1 \leq j \leq \ell$, let
\[\fD_j:=G\times^BD_j\cong \fX_{\underline w_{\{j\}^c}}.\]
The following are immediate from Lemmas~\ref{36} and \ref{39},  respectively.
\begin{lemma}\label{40}
Let $\underline w$ be a word as above. 
Then, the following hold. 
\begin{enumerate}
	\item When $\underline w$ is a reduced word of $w\in W$, the map \eqref{37} has the image $\fX_w$, and it restricts to an isomorphism $\fX_{\underline w}^\circ \xrightarrow{\cong}\fX_w^\circ$ over $\fX_w^\circ$.
\item $\partial \fX_{\underline w}:=\fX_{\underline w}-\fX_{\underline w}^\circ$ is a simple normal crossing divisor $\bigcup_{j=1}^\ell \fD_j$. 
	\item More precisely, for $I\subset \{1,\cdots,\ell\}$, the image of $\fX_{\underline w_I}\hookrightarrow \fX_{\underline w}$
	induced from \eqref{33} is the transverse intersection of $\fD_j$ with $j\notin I$.
\end{enumerate}

\end{lemma}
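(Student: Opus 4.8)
\textbf{Proof plan for Lemma~\ref{40}.}
The statement is the relative (fibered) analogue of Lemma~\ref{36}, and the plan is simply to transport each assertion along the fibration $\fX_{\underline w}=G\times^B X_{\underline w}\to \cB$, whose fibers are copies of $X_{\underline w}$. The guiding principle is that the functor $G\times^B(-)$, applied to $B$-varieties and $B$-equivariant morphisms, is exact in the relevant sense: it sends open (resp. closed) immersions to open (resp. closed) immersions, preserves images of morphisms, and preserves transversality and the simple-normal-crossing condition, because \'etale-locally on the base $\cB$ the bundle $G\times^B X_{\underline w}$ is just a product $U\times X_{\underline w}$ with $U\subset \cB$ open. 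So each item of Lemma~\ref{40} will follow from the corresponding item of Lemma~\ref{36} by applying $G\times^B(-)$ and checking compatibility of the maps involved.

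For (1): applying $G\times^B(-)$ to the multiplication map $X_{\underline w}\to \cB$ of \eqref{35} yields a map $G\times^B X_{\underline w}\to G\times^B\cB\cong \cB\times\cB$, which is exactly \eqref{37} by construction; its image is $G\times^B(\text{image of }X_{\underline w}\to \cB)=G\times^B X_w=\fX_w$ by Lemma~\ref{36}(1), using that $G\times^B(-)$ commutes with taking images of $B$-equivariant morphisms (the projection $G\times X_{\underline w}\to G\times^B X_{\underline w}$ is faithfully flat, so images can be computed before quotienting). Restricting over $\fX_w^\circ=G\times^B X_w^\circ$ gives $G\times^B$ of the isomorphism $X_{\underline w}^\circ\xrightarrow{\cong}X_w^\circ$ from Lemma~\ref{36}(1), hence an isomorphism $\fX_{\underline w}^\circ\xrightarrow{\cong}\fX_w^\circ$.

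For (2) and (3): by definition $\partial\fX_{\underline w}=G\times^B\partial X_{\underline w}$, and applying $G\times^B(-)$ to the identity $\partial X_{\underline w}=\bigcup_{j=1}^\ell D_j$ of Lemma~\ref{36}(2) gives $\partial\fX_{\underline w}=\bigcup_{j=1}^\ell \fD_j$ with $\fD_j=G\times^B D_j\cong \fX_{\underline w_{\{j\}^c}}$ (the last isomorphism being $G\times^B$ of $D_j\cong X_{\underline w_{\{j\}^c}}$). That this union is a simple normal crossing divisor is an \'etale-local statement on $\cB$: over an open $U$ trivializing the bundle it becomes $U\times \partial X_{\underline w}\subset U\times X_{\underline w}$, and a product of a smooth variety with an SNC divisor is SNC. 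Likewise, for $I\subset\{1,\dots,\ell\}$ the inclusion $\fX_{\underline w_I}\hookrightarrow \fX_{\underline w}$ of the statement is $G\times^B$ applied to \eqref{33}, so its image is $G\times^B\big(\bigcap_{j\notin I}D_j\big)=\bigcap_{j\notin I}\fD_j$, and transversality is again checked \'etale-locally, where it reduces to transversality of the $D_j$ in $X_{\underline w}$ from Lemma~\ref{36}(3).

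I do not anticipate a genuine obstacle here; the only point requiring a little care is the bookkeeping that the maps named in the statement (the map \eqref{37}, the divisors $\fD_j$, and the inclusions induced from \eqref{33}) really do coincide with the result of applying $G\times^B(-)$ to their absolute counterparts, so that Lemma~\ref{36} can be invoked verbatim. Once that identification is in place, everything is formal.
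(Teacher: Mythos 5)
Your proof is correct and matches the paper's approach: the paper dispatches Lemma~\ref{40} with the single remark that it is "immediate from Lemma~\ref{36}," and your argument is exactly the expected elaboration, applying $G\times^B(-)$ to each assertion and checking compatibility via (Zariski-)local triviality of $G\to\cB$ and faithful flatness of $G\times X\to G\times^B X$.
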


Due to (1), we call $\fX_{\underline w}$ the \emph{Bott-Samelson resolution} of $\fX_w$ associated to $\underline w$ when $\underline w$ is a reduced word of $w$.

\begin{lemma}\label{19}
	Let $\underline w$ be a reduced word of $w\in W$.
Then, the map $\fX_{\underline w}\to \fX_w$ in \eqref{37} factors into the maps 
\beq\label{56}\fX_{k+1}\hooklongrightarrow \hat \fX_{k}\lra \fX_k\eeq
	where $\fX_k:=G\times^BX_k$ and $\hat \fX_k:=G\times^B \hat X_k$ for $1\leq k\leq \ell$. The first map in \eqref{56} is a closed immersion, and the second map is a $\PP^1$-bundle map. \end{lemma}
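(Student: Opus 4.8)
The statement to prove (Lemma~\ref{19}) is essentially a formal consequence of Lemma~\ref{39} obtained by applying the exact functor $G\times^B(-)$. Here is how I would organize the proof.

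\medskip

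The plan is to apply the construction $G\times^B(-)$ to the factorization of $X_{\underline w}\to X_w$ given in Lemma~\ref{39} and check that it preserves the relevant features. First I would recall that for a $B$-variety $X$ with a $B$-equivariant map $f\colon X\to X'$ of $B$-varieties, the induced map $G\times^B X\to G\times^B X'$ is a closed immersion (resp.\ a $\PP^1$-bundle) whenever $f$ is, since $G\times^B(-)$ is the sheafification of a fppf (in fact Zariski-locally trivial, as $G\to G/B$ has local sections) quotient and hence preserves closed immersions and the property of being a locally trivial $\PP^1$-bundle; more concretely, $G\times^B X\to \cB$ is Zariski-locally on $\cB$ isomorphic to $U\times X$, so any local statement about $X\to X'$ transfers. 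The key point is that every map appearing in Lemma~\ref{39}, namely $X_{k+1}\hookrightarrow \hat X_k$ and $\hat X_k\to X_k$, is visibly $B$-equivariant for the left-multiplication-on-the-first-factor $B$-action (this $B$-action is exactly the one used to form $P_{i_1}\times^B\cdots$), so applying $G\times^B(-)$ is legitimate.

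\medskip

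Next I would set $\fX_k:=G\times^B X_k$ and $\hat\fX_k:=G\times^B\hat X_k$ as in the statement, and observe that $\fX_\ell=G\times^B X_{\underline w}=\fX_{\underline w}$ and $\fX_1=G\times^B X_w=\fX_w$ by definition, while the composite of the maps $\fX_{k+1}\hookrightarrow\hat\fX_k\to\fX_k$ over all $k$ recovers $G\times^B(X_{\underline w}\to X_w)$, which is precisely \eqref{37} by the compatibility of $G\times^B(-)$ with the multiplication map \eqref{35} (this compatibility is immediate from the formula $[g,[p_1,\dots,p_\ell]]\mapsto (gB, g p_1\cdots p_\ell B)$ and the isomorphism \eqref{eq:isom.relative}). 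Then I would invoke the transfer principle from the previous paragraph: $X_{k+1}\hookrightarrow\hat X_k$ is a closed immersion by Lemma~\ref{39}, hence so is $\fX_{k+1}\hookrightarrow\hat\fX_k$; and $\hat X_k\to X_k$ is a $\PP^1$-bundle by Lemma~\ref{39} (coming from the $\PP^1\times(\cdots)\to(\cdots)$ factorization described before Lemma~\ref{39}), hence $\hat\fX_k\to\fX_k$ is a $\PP^1$-bundle. That gives the factorization \eqref{56} with the asserted properties.

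\medskip

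The only genuine content — and the step I expect to require the most care — is justifying that $G\times^B(-)$ preserves ``closed immersion'' and ``$\PP^1$-bundle.'' I would handle this by the standard device: since $B$ acts freely on $G$ with quotient $\cB=G/B$ and $G\to\cB$ admits Zariski-local sections over an open cover $\{U_\alpha\}$ of $\cB$, for any $B$-variety $X$ there is a canonical isomorphism $(G\times^B X)|_{U_\alpha}\cong U_\alpha\times X$ under which a $B$-equivariant map $X\to X'$ becomes $\id_{U_\alpha}\times(X\to X')$; both being a closed immersion and being a $\PP^1$-bundle are local on the target and stable under the base change $\times U_\alpha$, so they descend. (Alternatively one can cite that $G\times^B(-)$ is an exact ``induction'' functor, as is used implicitly throughout Section~\ref{s:basic.properties}.) Once this lemma-about-$G\times^B$ is in hand, the proof of Lemma~\ref{19} is a one-line application of Lemma~\ref{39}.
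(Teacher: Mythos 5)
Your proposal is correct and follows essentially the same route as the paper, which states Lemma~\ref{19} without proof as being immediate from Lemma~\ref{39} via the functor $G\times^B(-)$. What you have written is exactly the justification the paper leaves implicit: the maps in Lemma~\ref{39} are $B$-equivariant and $G\times^B(-)$ preserves closed immersions and $\PP^1$-bundles because $G\to G/B$ is Zariski-locally trivial, so $G\times^B X\to\cB$ is locally of the form $U\times X$.
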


In the next subsections, we will extend all these properties in Proposition~\ref{9} and Lemmas~\ref{40} and~\ref{19} to regular semisimple Lusztig varieties.

\subsection{Lusztig varieties}
We recall the definition of a Lusztig variety, which is a fiber of the map $\pi_w:Y_w\to G$ introduced and studied in \cite{Lus85}. 
\begin{definition}
	Let $w\in W$ and $\s \in G$. The \emph{Lusztig variety} associated to the pair $(w,\s)$ is defined to be the fiber product
	\beq\label{3}
		\begin{tikzcd}
			\Y\arrow[r,hook,"\iota"]\arrow[d,hook']&\fX_w\arrow[d,hook']\\ \cB\arrow[r, hook,"\iota_\s"]&\cB\times \cB
		\end{tikzcd} 
	\eeq
where
$\iota_\s:=(\id, \s)$ is the map sending $gB$ to $(gB,\s gB)$. The \emph{Lusztig cell} is defined similarly by replacing $\fX_w$ by $\fX_w^\circ$, and denoted by  $\Ycell$. As we will see, $\Y$ and $\Ycell$ are reduced. Hence, equivalent definitions are
\[\begin{split}
		&\Y:=\{gB\in \cB: g^{-1}\s g \in \overline{B wB}\}\\
		\text{(resp. }~&\Ycell:=\{gB\in \cB: g^{-1}\s g \in {B wB}\})
	\end{split}\]
	as subvarieties in $\cB$.
\end{definition}
An element $\s\in G$ is \emph{regular semisimple} if its centralizer $C_G(\s)$ 
in $G$ is a maximal torus. Regular semisimple elements form a Zariski open subset in $G$. We denote this subset by $\Grs$.

We will always assume $\s\in \Grs$ unless otherwise mentioned. In this case, we call $\Y$ a \emph{regular semisimple} Lusztig variety (associated to $(w,\s)$).

\bigskip
The Bruhat decomposition $\fX_w=\bigsqcup_{v\leq w}\fX_v^\circ$ of $\fX_w$ induces a decomposition
\[\Y=\bigsqcup_{v\leq w}\Yvcell\]by Lusztig cells. 

\begin{remark}
	(1) 
	Unlike $X_w$ and $\fX_w$, the Lusztig variety $\Y$ is not necessarily connected. 
	For example, if $w$ is the longest element in a standard parabolic subgroup $P\subset G$, then $\overline{BwB}=P$. Therefore $\Y$ is the disjoint union of $\lvert W/W_P\rvert$ copies of $P/B$, where $W_P\subset W$ denotes the Weyl subgroup of $P$. In particular, when $w=e$, we have $\Ye=\{wB:w\in W\}\cong W$.

	(2)
	Unlike $X_w^\circ$, the Lusztig cell $\Ycell$ is not necessarily an affine space. For example, when $\ell(w)=1$, 
	$\Ycell$ is the disjoint union of $\mathbb{G}_m$.

\end{remark}

We first recall a result (due to Lusztig) that these cells are smooth. 
\begin{lemma}[Lemma~1.1 in \cite{Lus} and Remark below it] \label{2'}
	Let $\s\in \Grs$. The image of $\iota_\s$ intersects any $G$-orbit in $\cB\times \cB$ 
	(i.e., the $\fX_w^\circ$)
	transversely. 
	In particular, $\Ycell$ is smooth of pure dimension $\ell(w)$ if it is nonempty. 
\end{lemma}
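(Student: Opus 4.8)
The plan is to deduce the statement from a transversality computation at each point of the image of $\iota_\s$. The key reformulation is that, since $\Ycell$ is cut out inside $\cB$ as the preimage under $\iota_\s$ of the $G$-orbit $\fX_w^\circ\subset\cB\times\cB$, smoothness of $\Ycell$ and the expected dimension follow once we know that $\iota_\s(\cB)$ and $\fX_w^\circ$ meet transversally in $\cB\times\cB$: transversality forces the scheme-theoretic intersection to be smooth of the expected codimension, namely $\mathrm{codim}(\fX_w^\circ,\cB\times\cB)=\dim\cB-\ell(w)$, whence $\dim\Ycell=\ell(w)$, and in particular the intersection is reduced. So the whole statement reduces to the transversality claim, which is the content of Lusztig's Lemma~1.1.

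First I would fix $gB\in\Ycell$, set $x:=g^{-1}\s g$, which lies in $BwB$, and translate everything by $g$ so that we may compute tangent spaces at the point $(eB,xeB)$ of $\cB\times\cB$; here $x$ is again regular semisimple (conjugation preserves this property). The tangent space to $\iota_\s(\cB)$ at $(gB,\s gB)$, pushed around, is the graph of the differential of left multiplication by $x$ acting on $\fg/\fb$ — more precisely, under the identification $T_{(eB,eB)}(\cB\times\cB)\cong\fg/\fb\oplus\fg/\fb$, the tangent space to the image of $\iota_\s$ is $\{(\bar v,\,\mathrm{Ad}(x)\bar v):\bar v\in\fg/\fb\}$ suitably interpreted, while the tangent space to the $G$-orbit $\fX_w^\circ$ is the image of the infinitesimal $G$-action, i.e. $\{(\bar\xi,\bar\xi):\xi\in\fg\}+\{0\}\oplus T(\text{orbit of }x)$. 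The transversality then becomes the statement that $\mathrm{Ad}(x)-\id$ is, modulo the appropriate subspaces, surjective onto a complement — and this is where regular semisimplicity of $x$ enters: $\mathrm{Ad}(x)-\id$ is invertible on the sum of the nontrivial weight spaces (the complement of $\ft$), which is exactly the linear-algebra input needed.

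The main obstacle — and the reason I would lean on the cited Lemma rather than redo it — is keeping the bookkeeping of the two copies of $\fg/\fb$ and the $G$-orbit directions straight: one has to identify $T_{(eB,xeB)}\fX_w^\circ$ correctly as the sum of the diagonal $\fg$-action and the deformations of $x$ within its $B$-double-coset, and check that adding the graph of $\mathrm{Ad}(x)$ fills up $\fg/\fb\oplus\fg/\fb$. Once the transversality is in hand, the remaining steps are formal: transverse intersections of smooth varieties are smooth of expected dimension, so $\Ycell$ is smooth of pure dimension $\dim\fX_w^\circ-\dim(\cB\times\cB)+\dim\cB=\ell(w)+\dim\cB-2\dim\cB+\dim\cB=\ell(w)$, and smoothness in particular gives reducedness, justifying the scheme-theoretic statements made in the preceding definition. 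I would therefore present the proof as: (i) recall that $\Ycell=\iota_\s^{-1}(\fX_w^\circ)$; (ii) quote Lemma~1.1 of \cite{Lus} for the transversality; (iii) conclude smoothness and the dimension count from transversality.
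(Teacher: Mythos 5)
The paper gives no proof of this lemma at all — it is stated as a recall of Lemma 1.1 of \cite{Lus} and the remark following it, and your proposal ultimately does the same: quote Lusztig for the transversality and then deduce smoothness and the dimension count by the standard fact that a transverse intersection of smooth varieties is smooth of the expected dimension. Your tangent-space sketch of what lies behind Lusztig's lemma (the graph of $\mathrm{Ad}(x)$ versus the orbit tangent space, with invertibility of $\mathrm{Ad}(x)-\id$ on the root spaces of the Cartan $\mathrm{Lie}(C_G(x))$ supplying the key linear-algebra input) is a correct outline, and the dimension bookkeeping $\dim\fX_w^\circ-\dim(\cB\times\cB)+\dim\cB=\ell(w)$ is right, so this matches the paper's approach.
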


From this, one can easily deduce the following.\footnote{In the regular case, the nonemptiness of $Y_w^\circ(s)$ is known and goes back to Springer, while the pure dimensionality is established in \cite{He-La} (see Remark~\ref{rem:regular}).
For the convenience of the reader, we include a direct proof in the regular semisimple case, using Lemma~\ref{2'}.}
\begin{proposition}\label{15c} Let $w\in W$ and $\s\in \Grs$. Then, $\Ycell$ is nonempty with  
	$\Y=\overline{\Ycell}$, and it has pure dimension $\ell(w)$.	
\end{proposition}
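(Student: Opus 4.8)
\textbf{Proof proposal for Proposition~\ref{15c}.}

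The plan is to deduce everything from Lemma~\ref{2'} (which guarantees smoothness of the cells $\Yvcell$ and their expected dimension once they are nonempty) together with the standard fact that the Bott--Samelson resolution $\fX_{\underline w}\to \fX_w$ is surjective with image $\fX_w$ (Lemma~\ref{40}(1)). First I would fix a reduced word $\underline w$ of $w$ and form the fiber product $\bsY:=\fX_{\underline w}\times_{\cB\times\cB}\cB$ along $\iota_\s$, so that there is a proper map $\bsY\to \Y$ whose image is all of $\Y$ (properness is inherited from $\fX_{\underline w}\to \fX_w$, and surjectivity from Lemma~\ref{40}(1) after base change). Thus it suffices to show $\bsY$ is nonempty and that its image is the closure of $\Ycell$, and to control dimensions.

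For nonemptiness and the dimension count, I would first verify $\Ycell\neq\emptyset$. The cleanest route is to observe that the $G$-orbit $\fX_w^\circ\subset\cB\times\cB$ is the orbit of a point whose "relative position" is $w$, and that the image of $\iota_\s$ is a $|W|$-dimensional... no --- more precisely, since $\s$ is regular semisimple, $\Ye=\cB^{C_G(\s)}$ consists of exactly $|W|$ points, one $g_vB$ for each $v\in W$, and near such a fixed point the flag variety $\cB$ maps to $\cB\times\cB$ transversally to every $G$-orbit by Lemma~\ref{2'}. Transversality of $\iota_\s(\cB)$ to $\fX_w^\circ$, whose codimension in $\cB\times\cB$ is $\dim\cB-\ell(w)$, forces each nonempty $\Yvcell$ to be smooth of pure dimension $\ell(v)$; so I only need one point of $\Ycell$. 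For this I can intersect with a suitable $\iota_\s$-image: because $\overline{\iota_\s(\cB)}=\iota_\s(\cB)$ meets the open dense orbit $\fX_{w_0}^\circ$ (where $w_0$ is the longest element) --- indeed it meets every orbit by a dimension/transversality argument using that all $|W|$ fixed points lie in $\iota_\s(\cB)$ and the orbit closures $\fX_v$ contain fixed points of every length $\le\ell(v)$ --- one shows inductively down the Bruhat order that $\iota_\s(\cB)\cap\fX_v^\circ\neq\emptyset$ for all $v\le w$; in particular for $v=w$. An alternative, perhaps cleaner, is to use the surjectivity of $\bsY\to\Y$ together with $\bsY\neq\emptyset$ (which is manifest: $\bsY$ is the $\iota_\s$-pullback of the Bott--Samelson tower, and the tower maps onto $\fX_w\ni(\,g_wB,\s g_wB\,)$ for the fixed point $g_wB$), giving $\Y\neq\emptyset$; then the open stratum $\Ycell$ of the irreducible-components analysis below is nonempty.

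Finally, for $\Y=\overline{\Ycell}$ and purity: every point of $\Y$ lies in some $\Yvcell$ with $v\le w$, each of which (when nonempty) is smooth of dimension $\ell(v)\le\ell(w)$. To see that $\Yvcell$ with $v<w$ lies in $\overline{\Ycell}$, I would again use the Bott--Samelson model: the map $\bsY\to\Y$ is proper and restricts to an isomorphism over $\Ycell$ (by Lemma~\ref{40}(1), since $\fX_{\underline w}^\circ\xrightarrow{\ \cong\ }\fX_w^\circ$), so $\bsY$ is irreducible of dimension $\ell(w)$ away from its boundary divisor $\partial\bsY$ pulled back from $\partial\fX_{\underline w}$; hence $\bsY=\overline{\bsYcell}$ where $\bsYcell\cong\Ycell$, and applying the proper surjection $\bsY\to\Y$ gives $\Y=\overline{\Ycell}$, with $\Y$ of pure dimension $\ell(w)$ because a proper image of an irreducible variety of dimension $\ell(w)$ has dimension $\le\ell(w)$, while it contains the $\ell(w)$-dimensional locally closed $\Ycell$. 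The one point needing care is that $\bsY$ is irreducible --- this follows because $\bsYcell$ is open dense in $\bsY$: indeed $\bsY\smallsetminus\bsYcell$ is contained in the pullback of $\partial\fX_{\underline w}=\bigcup\fD_j$, which is a proper closed subset, and $\bsYcell\cong\Ycell$ is smooth, hence irreducible once connected --- but $\Ycell$ need not be connected (cf.\ the Remark, e.g.\ $\Ye\cong W$).

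\textbf{Main obstacle.} The genuinely delicate point is therefore the (non)connectedness of $\Ycell$: one cannot literally say $\bsY=\overline{\bsYcell}$ is irreducible. The fix is to work component-by-component. Since $\bsYcell\cong\Ycell$ is smooth of pure dimension $\ell(w)$, its closure $\overline{\bsYcell}$ in $\bsY$ is a union of irreducible components each of dimension $\ell(w)$; I must show this closure is all of $\bsY$. This follows from $\bsY\smallsetminus\bsYcell\subset\bigcup_j\mathfrak D_j|_{\bsY}$ being, componentwise, of strictly smaller dimension --- which in turn follows by descending induction on $\ell(w)$ applied to the divisors $\fD_j\cong\fX_{\underline w_{\{j\}^c}}$ and their $\iota_\s$-pullbacks, each of which is (by the inductive hypothesis) the closure of the corresponding lower Lusztig cell and hence of dimension $\ell(w)-1$. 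Pushing forward along the proper surjection $\bsY\to\Y$ then yields $\Y=\overline{\Ycell}$ of pure dimension $\ell(w)$, and nonemptiness of $\Ycell$ comes for free from nonemptiness of $\bsY$ since $\bsY\to\Y$ is an isomorphism over the dense locus. I expect the write-up of this induction --- keeping track of which subwords $\underline w_{\{j\}^c}$ are reduced and matching $\fD_j$-strata with Lusztig cells $\Yvcell$ --- to be the only part requiring real care; everything else is a formal consequence of Lemma~\ref{2'} and Lemma~\ref{40}.
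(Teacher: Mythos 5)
Your proposal takes a genuinely different and more roundabout route than the paper, and it has a gap. The paper's proof is short and direct: writing $\Y \cong \Delta_\s \cap \fX_w$ inside the smooth ambient variety $\cB\times\cB$, where $\Delta_\s$ is a regular immersion of codimension $\dim\cB$ and $\fX_w$ has dimension $\ell(w)+\dim\cB$, the standard lower bound for fiber products/intersections says every irreducible component of $\Y$ has dimension $\ge \ell(w)$. Combined with Lemma~\ref{2'} (which gives each stratum $\Yvcell$ dimension $\ell(v)$ once nonempty) and the trivial observation $\Ye\cong W\neq\emptyset$, this forces $\Ycell$ to be nonempty and dense, and $\Y$ to have pure dimension $\ell(w)$. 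No Bott--Samelson resolution is used, and no induction is needed.

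The gap in your argument is precisely this lower bound. You control $\bsY\smallsetminus\bsYcell$ from \emph{above} (dimension $\le\ell(w)-1$, via the subword divisors), and you note that $\bsYcell$, when nonempty, is smooth of dimension $\ell(w)$. But to conclude $\overline{\bsYcell}=\bsY$ you must rule out a priori the possibility of an irreducible component of $\bsY$ lying entirely inside $\partial\bsY$ with dimension $<\ell(w)$; your induction says nothing about this, and your appeal to ``a proper image of an irreducible variety of dimension $\ell(w)$'' presupposes irreducibility (which, as you correctly note, fails in general). The missing ingredient is that $\bsY$ (or $\Y$) has every component of dimension $\ge\ell(w)$ --- either by the intersection-theoretic lower bound used in the paper, or by first proving that $\bsY$ is smooth of pure dimension $\ell(w)$ (which the paper does in a separate lemma from Lemma~\ref{2'}, but which you do not invoke). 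Your ``alternative'' for nonemptiness has the same problem: $\bsY\neq\emptyset$ only gives $\Y\neq\emptyset$ (which is trivial since $\Ye\subset\Y$), not $\Ycell\neq\emptyset$. With the lower bound added, your Bott--Samelson route would close, but it would still be considerably longer than the paper's direct argument and would rely on a good deal of extra machinery.
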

\begin{proof}
	Let $\Delta_\s$ denote the image of $\iota_\s$. Then, \[\Y\cong \Delta_\s\cap \fX_w=\bigsqcup_{v\leq w}\Delta_\s \cap \fX_v^\circ,\]
	and $\Y$ is nonempty as it contains $\Ye\cong W$. By the left-hand side, its irreducible components have dimension at least $\ell(w)=\dim ~\fX_w- \mathrm{codim}~\iota_\s$.
	Therefore, by the right-hand side, $\Ycell\cong \Delta_\s\cap \fX_w^\circ$ is nonempty and dense, as all the other strata $\Delta_\s\cap \fX_v^\circ$ have dimension $\ell(v)<\ell(w)=\dim \Y$.  
\end{proof}

Using these, one can deduce local properties for $\Y$ from those of $\fX_w$. Let $\mathrm{Sing}(-)$ denote the singular locus of $(-)$. Then, $\mathrm{Sing}(\fX_w)$ is $G$-invariant.
\begin{proposition}\label{5'}
	Let $w\in W$ and  $\s \in \Grs$. 
	A point $y \in \Y$ is a smooth point if and only if $\iota_\s(y)\in \fX_w$ is a smooth point. In other words,
	\beq\label{5}\mathrm{Sing}(\Y)=\iota_\s^{-1}(\mathrm{Sing}(\fX_w)).\eeq
More explicitly, if $S_w\subset W$ is a subset  with $\mathrm{Sing}(\fX_w)=\bigsqcup_{v\in S_w}\fX_v^\circ$, then
\[\mathrm{Sing}(\Y)=\bigsqcup_{v\in S_w}\Yvcell.\]
\end{proposition}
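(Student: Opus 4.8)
\textbf{Proof proposal for Proposition~\ref{5'}.}

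The plan is to exploit the fiber‑product square \eqref{3} together with Lemma~\ref{2'}, which says that $\iota_\s$ is transverse to every $G$‑orbit $\fX_v^\circ$ in $\cB\times\cB$. The key point is that transversality to a stratification of $\fX_w$ lets us relate the \emph{local} geometry of $\Y$ at a point $y$ to that of $\fX_w$ at $\iota_\s(y)$: étale‑locally near $\iota_\s(y)$, the pair $(\cB\times\cB,\fX_w)$ looks like a product $(\Delta_\s\times N)\times$(something), where $N$ is a slice through $\iota_\s(y)$ transverse to $\Delta_\s$. First I would make this precise: since $\iota_\s$ is a closed immersion with image $\Delta_\s$, and since by Lemma~\ref{2'} $\Delta_\s$ meets each stratum $\fX_v^\circ$ transversely, the intersection $\Delta_\s\cap\fX_w$ is, locally analytically (or étale‑locally) at $\iota_\s(y)$, isomorphic to the product of $\fX_w$ near $\iota_\s(y)$ with a smooth transverse slice of complementary dimension — more carefully, one shows that a transverse slice $N$ to $\Delta_\s$ through $\iota_\s(y)$ meets $\fX_w$ in a smooth variety (of the expected dimension, by transversality against every stratum), and that $\fX_w$ near $\iota_\s(y)$ is étale‑locally $(\Delta_\s\cap\fX_w)\times (N\cap\fX_w)$, hence étale‑locally $\Y\times(\text{smooth})$.

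From this local product structure the equivalence \eqref{5} is immediate: $\iota_\s(y)$ is a smooth point of $\fX_w$ if and only if the local ring of $\fX_w$ at $\iota_\s(y)$ is regular, and by the product decomposition this holds iff the local ring of $\Y$ at $y$ is regular, i.e.\ $y$ is a smooth point of $\Y$. An alternative, perhaps cleaner, route avoiding slices: use that $\fX_w\to\cB$ is a Zariski‑locally trivial fibration with fiber $X_w$ (since $\fX_w=G\times^B X_w$), so $\mathrm{Sing}(\fX_w)=G\times^B\mathrm{Sing}(X_w)$, and then observe that $\Y$ is cut out inside $\cB\cong\Delta_\s$ as the locus $\{gB:g^{-1}\s g\in X_w\}$; the transversality in Lemma~\ref{2'} guarantees that this ``restriction'' operation is flat with smooth fibers in the relevant sense, so singular points are preserved and reflected. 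Either way, the content is that transversality $+$ local triviality forces $\Y$ to be étale‑locally a product of $\fX_w$ with a smooth factor.

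For the explicit description: $\mathrm{Sing}(\fX_w)$ is $G$‑invariant (as noted before the proposition) and closed, hence a union of $G$‑orbits $\fX_v^\circ$, say over $v\in S_w$; this is the statement $\mathrm{Sing}(\fX_w)=\bigsqcup_{v\in S_w}\fX_v^\circ$. Intersecting with $\Delta_\s$ and using \eqref{5} together with the stratification $\Y\cong\bigsqcup_{v\le w}\Delta_\s\cap\fX_v^\circ$ and the identification $\Delta_\s\cap\fX_v^\circ\cong\Yvcell$ gives $\mathrm{Sing}(\Y)=\bigsqcup_{v\in S_w}\Yvcell$ at once.

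I expect the main obstacle to be phrasing the ``étale‑local product'' statement rigorously in arbitrary characteristic: Lemma~\ref{2'} is stated as scheme‑theoretic transversality of the intersection $\iota_\s(\cB)\cap\fX_w^\circ$ inside $\cB\times\cB$, so I would need to upgrade this to a statement about transverse slices — concretely, that for each $v\le w$ and each $y$ with $\iota_\s(y)\in\fX_v^\circ$, the embedding $T_{\iota_\s(y)}\Delta_\s\hookrightarrow T_{\iota_\s(y)}(\cB\times\cB)$ is complementary to $T_{\iota_\s(y)}\fX_v^\circ$, and then to promote this tangent‑space transversality to an étale‑local splitting of $\fX_w$ near $\iota_\s(y)$. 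This is standard (a transverse slice through a point of a $G$‑orbit, with $G$ acting so that the orbit directions are ``free''), but it is the step that needs care; once it is in place the rest is bookkeeping with the stratifications.
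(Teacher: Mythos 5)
Your overall strategy is workable, but the specific step you lean on is not correct as stated, and the paper avoids the whole issue by a much more elementary argument.

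The problematic sentence is: ``a transverse slice $N$ to $\Delta_\s$ through $\iota_\s(y)$ meets $\fX_w$ in a smooth variety (of the expected dimension, by transversality against every stratum).'' Transversality of a slice to every stratum of a singular stratified variety does \emph{not} make the intersection with the slice smooth (e.g.\ a generic hyperplane section of a surface with an isolated non\-smooth point through that point is a curve singular there, even though the hyperplane is transverse to the zero-dimensional stratum). If $N\cap\fX_w$ were automatically smooth, the product decomposition you assert would say $\fX_w\cong\Y\times(\text{smooth})$ \emph{and} $\fX_w\cong(\text{smooth slice in }\Delta_\s\cap\fX_w)\times(N\cap\fX_w)$, i.e.\ the singularity could be put in either factor, which cannot be right in general. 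What does work, and what you gesture toward at the end with the $G$-action, is this: the action map $a\colon G\times\Delta_\s\to\cB\times\cB$, $(g,\delta)\mapsto g\cdot\delta$, is smooth at $(e,\iota_\s(y))$ precisely because $\Delta_\s$ is transverse at that point to the orbit $\fX_v^\circ$ (Lemma~\ref{2'}); since $\fX_w$ is $G$-invariant, $a^{-1}(\fX_w)=G\times(\Delta_\s\cap\fX_w)\cong G\times\Y$; and a smooth morphism preserves and reflects smoothness of points of a closed subscheme. That gives $\fX_w$ étale-locally $\cong(\text{smooth})\times\Y$ near $\iota_\s(y)$, which is what you wanted — but the smooth factor comes from the group, not from an abstract transverse slice. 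Your ``alternative cleaner route'' is too imprecise (``flat with smooth fibers in the relevant sense'') to serve as a proof on its own.

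By contrast, the paper's proof is considerably more elementary and avoids any local structure theory. For $y\in\Yvcell$ one writes the two Mayer--Vietoris-type sequences of Zariski tangent spaces coming from the fiber products $\Yvcell\cong\Delta_\s\cap\fX_v^\circ$ and $\Y\cong\Delta_\s\cap\fX_w$. The first is exact on the right by Lemma~\ref{2'}, hence so is the second (since $T_{\fX_v^\circ,\iota_\s(y)}\subset T_{\fX_w,\iota_\s(y)}$ and the target is the same), giving $\dim T_{\fX_w,\iota_\s(y)}-\dim T_{\Y,y}=\dim\cB=\dim\fX_w-\dim\Y$ (using Proposition~\ref{15c}). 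So the defect $\dim T-\dim$ is the same for both, and smoothness at $y$ is equivalent to smoothness at $\iota_\s(y)$. This is a purely numerical comparison of tangent space dimensions and needs no slice theorem; your approach, once repaired via the action map, proves the stronger étale-local product statement, which is more than Proposition~\ref{5'} requires. Your deduction of the explicit description of $\mathrm{Sing}(\Y)$ from \eqref{5} and the stratification is fine as written.
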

\begin{proof}
	Let $y\in \Y$. Then, $y\in \Yvcell$ for some $v\leq w$. Consider the commutative diagram of Zariski tangent spaces at $y$ or $\iota_\s(y)$
	\[\begin{tikzcd}
		0\arrow[r] & T_{\Yvcell,y}\arrow[r]\arrow[d,hook'] & T_{\cB,y}\oplus T_{\fX_v^\circ,\iota_\s(y)}\arrow[r]\arrow[d,hook'] & T_{\cB\times\cB,\iota_\s(y)}\arrow[r]\arrow[d,equal]&0 \\ 0\arrow[r]&T_{\Y,y}\arrow[r]&T_{\cB,y}\oplus T_{\fX_w,\iota_\s(y)}\arrow[r]&T_{\cB\times \cB,\iota_\s(y)}&
	\end{tikzcd}\] 
where the rows are induced from the isomorphisms $\Yvcell \cong \Delta_\s\cap
     \fX_v^\circ$ and $\Y\cong \Delta_\s\cap\fX_w$ respectively.
     
     Both rows are exact, the first row being exact by Lemma~\ref{2'}.
But, therefore, since the last column is an equality, the last map in
    the second row is onto. 
    Thus we have \[\dim~T_{\fX_w,\iota_\s(y)}-\dim~T_{\Y,y}=\dim~\cB=\dim~\fX_w-\dim~\Y\] by Proposition~\ref{15c}.
Therefore, $y\in \Y$ is smooth if and only if $\iota_\s(y)\in \fX_w$ is smooth.
The last assertion is immediate from \eqref{5}.
\end{proof}

\begin{theorem}\label{15}
For $(w,\s)$ as above, 
	$\Y$ is Cohen-Macaulay and normal. Moreover, $\Y$ is smooth (resp. Gorenstein) if and only if $\fX_w$ is. 
\end{theorem}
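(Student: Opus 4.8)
The plan is to deduce all the stated properties of $\Y$ from the corresponding properties of the relative Schubert variety $\fX_w$, using the fiber-square description \eqref{3} together with the transversality already established in Lemma~\ref{2'} and Proposition~\ref{5'}. The conceptual point is that, while $\iota_\s$ is only a locally closed-type embedding, the transversality of $\Delta_\s$ with every $G$-orbit $\fX_v^\circ$ means that near each point of $\Y$ the inclusion $\Y\hookrightarrow \fX_w$ behaves like intersecting with a smooth subvariety in general position, so it should cut down depth and the dualizing complex in a controlled way. Concretely, I would first record that $\Delta_\s\cong\cB$ is smooth and that, by Lemma~\ref{2'}, the scheme-theoretic intersection $\Delta_\s\cap\fX_w$ is generically reduced and of pure dimension $\ell(w)=\dim\fX_w-\dim\cB$; combined with Proposition~\ref{15c} this gives that $\Y$ is equidimensional of the expected codimension in $\fX_w$.

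For Cohen-Macaulayness, the key step is a regular-sequence argument: since $\fX_w$ is Cohen-Macaulay (Proposition~\ref{9}) and $\Y$ is cut out in $\fX_w$ by the pullback of the codimension-$\dim\cB$ diagonal — equivalently, $\Y=\fX_w\times_{\cB\times\cB}\cB$ where $\cB\hookrightarrow\cB\times\cB$ is a regular embedding of codimension $\dim\cB$ — and $\Y$ has exactly the expected dimension, the local equations of $\Delta_\s$ restrict to a regular sequence on $\fX_w$ at every point of $\Y$. Hence $\Y$ is Cohen-Macaulay. For normality I would invoke Serre's criterion: $\Y$ is $(S_2)$ because it is even Cohen-Macaulay, and it is $(R_1)$ because by Proposition~\ref{5'} the singular locus $\mathrm{Sing}(\Y)=\bigsqcup_{v\in S_w}\Yvcell$ has, by Lemma~\ref{2'}, dimension $\ell(v)$ with $v\in S_w$, and since $\mathrm{Sing}(\fX_w)$ has codimension at least $2$ in $\fX_w$ the same holds for $\mathrm{Sing}(\Y)$ in $\Y$; therefore $\Y$ is normal.

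For the "smooth (resp. Gorenstein) if and only if" statement, the smoothness equivalence is exactly Proposition~\ref{5'} together with the fact that $\fX_w\to\cB$ is a fibration with fiber $X_w$, so $\fX_w$ is smooth iff $X_w$ is. For the Gorenstein equivalence I would compare dualizing sheaves via the regular embedding $\iota:\Y\hookrightarrow\fX_w$: adjunction gives $\omega_\Y^\bullet\cong\iota^!\omega_{\fX_w}^\bullet$, and since $\iota$ is a regular embedding (just shown, via the regular-sequence argument), $\iota^!$ takes the dualizing complex to the dualizing complex shifted appropriately, so $\omega_\Y^\bullet$ is (a shift of) a line bundle — i.e.\ $\Y$ is Gorenstein — precisely when $\omega_{\fX_w}^\bullet$ restricted to a neighborhood of $\iota(\Y)$ is, and by $G$-homogeneity of $\fX_w\to\cB$ this is controlled by whether $X_w$ is Gorenstein near the relevant strata; here I would use that $\mathrm{Sing}(\fX_w)$ meets $\iota(\Y)$ exactly along $\iota(\mathrm{Sing}(\Y))$, so the Gorenstein locus of $\fX_w$ pulls back to the Gorenstein locus of $\Y$.

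The main obstacle I expect is making the "regular embedding" claim fully rigorous: $\iota_\s$ is not literally a closed immersion, so one has to work locally on $\cB$ and on $\cB\times\cB$, choosing a neighborhood where $\Delta_\s$ is the vanishing of $\dim\cB$ regular functions, and then verify — using Lemma~\ref{2'}'s transversality at the level of a single point but propagating it to a neighborhood via the $G$-action (the orbits $\fX_v^\circ$ are $G$-stable and $\Delta_\s$ is translated to every point of $\cB$ by a suitable group element since $\cB$ is $C_G(\s)$-homogeneous onto... actually one uses that varying $\s$ in its conjugacy class sweeps out, but more simply that the statement is local and $\fX_w$ is fibered over $\cB$) — that these functions form a regular sequence on $\fX_w$ at each point of $\Y$. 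Once local-completeness-of-intersection of $\Y$ inside $\fX_w$ is secured, Cohen-Macaulayness, normality via Serre, and the Gorenstein equivalence via adjunction all follow formally. I would also need the elementary fact, used silently above, that $\mathrm{codim}(\mathrm{Sing}(\fX_w),\fX_w)\ge 2$, which follows from normality of $\fX_w$ in Proposition~\ref{9}.
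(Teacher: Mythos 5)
Your proposal takes essentially the same route as the paper: $\iota\colon\Y\hookrightarrow\fX_w$ is a regular immersion because $\iota_\s$ is a regular immersion of codimension $\dim\cB$ and $\Y$ has pure codimension $\dim\cB$ in the Cohen--Macaulay variety $\fX_w$; Cohen--Macaulayness and the Gorenstein comparison then follow, while normality comes from Serre's criterion ($S_2$ from CM, $R_1$ from Proposition~\ref{5'}) and the smoothness criterion directly from Proposition~\ref{5'}. The one thing to correct is your worry that ``$\iota_\s$ is not literally a closed immersion''---in fact it is: $\iota_\s$ is the graph of the automorphism $gB\mapsto \s gB$ of $\cB$, hence a closed and regular immersion of smooth varieties, so the local transversality-propagation argument you sketch at the end is unnecessary; the regular-sequence claim is automatic once $\fX_w$ is Cohen--Macaulay and $\Y$ has the expected pure codimension.
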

\begin{proof}
	To prove that $\Y$ is Cohen-Macaulay, it suffices to show that $\iota$
	in \eqref{3} is a regular immersion since $\fX_w$ is
	Cohen-Macaulay. 
	This is true since $\iota$ and $\iota_\s$ have the same
	codimension, 
	$\dim~\cB$, 
	by Proposition~\ref{15c}, and
	$\iota_\s$ is a regular immersion.
	
	For normality, it suffices to show that $\Y$ is regular in codimension
	one since it is Cohen-Macaulay. This follows from Proposition~\ref{5'}
	since every $v$ in $S_w$ satisfies
	$\ell(v)<\ell(w)-1$ by the normality of $\fX_w$. 
	
	Since $\Yvcell$ is nonempty for any $v$, 
	the smoothness criterion is also immediate from Proposition~\ref{5'}.
	Similar arguments apply for
	the Gorentein property if $\fX_w$ is Gorenstein at $\iota_\s(y)$, since as $\iota$ is a
	regular immersion
	(\cite[\href{https://stacks.math.columbia.edu/tag/0BJJ}{Tag
	0BJJ}]{stacks-project}). $G$-equivariant. \end{proof}

\subsection{Bott-Samelson resolutions of Lusztig varieties}
We recall the construction of the Bott-Samelson resolutions of Lusztig varieties and their properties. Using these resolutions, we complete the proof of Theorem~\ref{22}.

Let $\s \in G$.
Let $\underline w=(s_{i_1},\cdots, s_{i_\ell})$ be a word. 

\begin{definition}
Define $\bsY$ to be the fiber product
	\beq\label{32} \begin{tikzcd}
		\bsY  \arrow[r,hook,"\underline \iota"] \arrow[d]&\fX_{\underline{w}}\arrow[d] \\ 
		\cB\arrow[r,hook,"\iota_\s"]&\cB\times \cB
	\end{tikzcd} 
	\eeq
	where the right vertical map is \eqref{37}. Moreover, we define $\bsYcell:=\underline\iota^{-1}(\fX_{\underline w}^\circ)$.

For $1 \leq j \leq \ell$, put $E_j$ to be the preimage  of $\fD_j$ in $\bsY$, and denote
		\beq\label{34}\partial \bsY:=\bsY-\bsYcell=\bigcup_{j=1}^\ell E_j.\eeq  \end{definition}
Replacing $\iota_\s$ by $G\times \cB\to \cB\times \cB$ sending $(x,gB)$ to $(gB,xgB)$, we obtain 
\[Y_{\underline w}:=(G\times \cB)\times_{\cB\times \cB}\fX_{\underline w}\lra G\times \cB\xrightarrow{~pr_1~} G,\]
which was introduced and studied in \cite[2.6]{Lus85}. Its fiber over $\s$ is  $\bsY$. 

\smallskip
From now on, we always assume that $\s\in \Grs$ unless otherwise mentioned.
\begin{lemma}
	When $\s\in \Grs$, $\bsY$ is smooth of pure dimension $\ell$. 
\end{lemma}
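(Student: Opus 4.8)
The plan is to run the same transversality argument used for the Lusztig variety itself (Lemma~\ref{2'} and Proposition~\ref{5'}), but now applied to the Bott-Samelson resolution $\fX_{\underline w}$ in place of $\fX_w$. The key point is that $\fX_{\underline w}$ is smooth (it is an iterated $\PP^1$-bundle over $\cB$, by Lemma~\ref{19}), so the dimension count will force $\bsY$ to be smooth everywhere, not just generically.

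First I would note that $\fX_{\underline w}\to \cB\times\cB$ is $G$-equivariant, where $G$ acts diagonally on $\cB\times\cB$ and on the source via its action on the left factor in $G\times^B(-)$. The image of $\iota_\s$ is the graph $\Delta_\s=\{(gB,\s gB)\}$, which meets every $G$-orbit in $\cB\times\cB$; more relevantly, I claim $\iota_\s$ is transverse to the smooth map $\fX_{\underline w}\to \cB\times\cB$. To see this, pick $y\in\bsY$, set $z=\underline\iota(y)\in\fX_{\underline w}$ with image $\iota_\s(gB)=(gB,\s gB)$ in $\cB\times\cB$. The composite $\fX_{\underline w}\to\cB\times\cB\to\cB$ (second projection) has a section-like behaviour coming from the $G$-action: the orbit map $G\to\cB\times\cB$, $g'\mapsto g'\cdot(gB,\s gB)=(g'gB,g'\s gB)$, factors through $\fX_{\underline w}$ near $z$ whenever $(gB,\s gB)\in\fX_w$, because $G$ preserves $\fX_{\underline w}$... wait, more carefully: $G$ acts on $\fX_{\underline w}$ and the projection to $\cB\times\cB$ is $G$-equivariant, so the $G$-orbit of $z$ maps onto the $G$-orbit of its image. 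Since $\s$ is regular semisimple, the same computation of tangent spaces as in \cite{Lus} (Lemma~\ref{2'}) applies verbatim with $\fX_w$ replaced by its smooth model $\fX_{\underline w}$: the differential of the orbit map $\fg\to T_{\cB\times\cB}$ at $z$, combined with $T_{\fX_{\underline w},z}\to T_{\cB\times\cB}$, surjects onto $T_{\cB\times\cB}$ modulo the image of $d\iota_\s$. Concretely, writing things at the point $eB$ after translating by $g^{-1}$, one reduces to showing $\ft^{\rs}$ acts transversally, exactly as in Lusztig's argument. Hence $\iota_\s\pitchfork(\fX_{\underline w}\to\cB\times\cB)$.

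From transversality, $\bsY=\Delta_\s\times_{\cB\times\cB}\fX_{\underline w}$ is smooth, and its dimension is $\dim\fX_{\underline w}-\mathrm{codim}(\iota_\s)=(\ell+\dim\cB)-\dim\cB=\ell$. That every point of $\bsY$ lies in some fibre means the dimension is pure equal to $\ell$. This finishes the proof. The only genuine content is the transversality statement, and the main obstacle is simply to confirm that Lusztig's tangent-space computation — originally phrased for the $G$-orbits $\fX_w^\circ$ inside $\cB\times\cB$ — goes through for the resolution; but since $\fX_{\underline w}\to\cB\times\cB$ is $G$-equivariant with the $G$-action transitive on the base $\cB$ (the first factor), the fibre-direction tangent space at $z$ surjects onto the fibre-direction tangent space of the image orbit, and regular semisimplicity of $\s$ is exactly what guarantees the graph $\Delta_\s$ is transverse to those orbit directions. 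An alternative, entirely bypassing any new transversality claim: observe that $\bsY\to\Y$ is the pullback along $\Y\hookrightarrow\fX_w$ of the (proper, birational) Bott-Samelson map $\fX_{\underline w}\to\fX_w$, and directly combine Lemma~\ref{2'} applied to $\fX_{\underline w}^\circ\hookrightarrow\fX_{\underline w}$ — noting $\fX_{\underline w}$ is stratified by the $G$-stable locally closed subsets $\fX_{\underline v}^\circ$ pulled back from the cells, each a smooth $G$-variety — with the fact that $\iota_\s$ meets each stratum transversally; then the argument of Proposition~\ref{15c} gives density of the open stratum and purity, and that of Proposition~\ref{5'} (with $\fX_{\underline w}$ now globally smooth) gives smoothness of all of $\bsY$.
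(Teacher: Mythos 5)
Your main argument is correct and is essentially the paper's own proof: one uses the $G$-equivariance of $\fX_{\underline w}\to\cB\times\cB$ to see that the image of $T_{\fX_{\underline w},z}\to T_{\cB\times\cB}$ already contains the tangent space to the $G$-orbit through the image point, and then Lemma~\ref{2'} supplies the transversality of $\iota_\s$ with that orbit, giving surjectivity of $T_{\fX_{\underline w},z}\oplus T_{\cB}\to T_{\cB\times\cB}$ and hence constant tangent-space dimension $\ell$. The meandering self-corrections and the ``alternative'' route at the end (invoking the Bott--Samelson stratification) add no new content and are less cleanly phrased than the direct equivariance argument, but the core reasoning matches the paper exactly.
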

\begin{proof}
	Since $\fX_{\underline w}\to \cB\times \cB$ is $G$-equivariant,
	Lemma~\ref{2'}, applied to \eqref{32}, implies that that the left exact sequence of tangent spaces
\[0\lra T_{\bsY,y}\lra T_{\fX_{\underline w},\underline \iota(x)}\oplus T_{\cB,\pi(y)}\lra T_{\cB\times \cB, \iota_\s(\pi(y))}\]
is exact from the right, for any $y\in \bsY$. So, $\dim~ T_{\bsY,y}=\ell$, the expected dimension of $\bsY$.  In particular, $\bsY$ is smooth of pure dimension $\ell$. \end{proof}

When $\underline w$ is a reduced word of $w$, by Lemma~\ref{40}(1), the vertical maps in \eqref{32} factor through those in \eqref{3}:
\beq\label{32a} \begin{tikzcd}
		\bsY  \arrow[r,hook,"\underline \iota"] \arrow[d,"\pi"']&\fX_{\underline{w}}\arrow[d]\\
		\Y\arrow[r,hook,"\iota"] \arrow[d,hook'] &\fX_w \arrow[d,hook'] \\ 
		\cB\arrow[r,hook,"\iota_\s"]&\cB\times \cB
	\end{tikzcd} 
	\eeq
	where the right vertical maps are isomorphisms over the open cells.
In this case, we call $\bsY$ the \emph{Bott-Samelson resolution} of $\Y$ associated to $\underline{w}$, as it resolves singularities of $\Y$, and $\pi$ is birational.

\begin{proposition}\label{25}
	Let $\underline w$ be a word and $\s\in \Grs$.
Then, the following hold.
	\begin{enumerate}
		\item When $\underline w$ is a reduced word of $w\in W$, the map $\pi$ in \eqref{32a} is an isomorphism $\bsYcell\xrightarrow{\cong} \Ycell$ over $\Ycell$.
		\item The boundary \eqref{34} is a simple normal crossing divisor with 
		\[E_j\cong \bsYjc \quad \text{ for all }1\leq j\leq \ell.\]
\item More precisely, for any $I\subset \{1,\cdots, \ell\}$, the image of $\bsYI\hookrightarrow \bsY$
	induced by \eqref{33} is the transverse intersection of $E_j$ with $j\notin I$.
	\end{enumerate}
\end{proposition}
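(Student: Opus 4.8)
The plan is to deduce Proposition~\ref{25} from the corresponding statement for the relative Bott-Samelson variety, namely Lemma~\ref{40}, by base change along the regular semisimple slice $\iota_\s$, using the transversality of Lemma~\ref{2'} as the key geometric input. In diagram \eqref{32}, $\bsY = \iota_\s^{-1}(\fX_{\underline w})$ inside $\cB = \iota_\s^{-1}(\cB \times \cB)$, and by construction $E_j = \iota_\s^{-1}(\fD_j)$ and $\partial\bsY = \iota_\s^{-1}(\partial\fX_{\underline w})$. So each of the three assertions is obtained by intersecting the corresponding assertion of Lemma~\ref{40} with the image $\Delta_\s$ of $\iota_\s$, and the content is that this intersection is suitably transverse.

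For (1): when $\underline w$ is reduced, Lemma~\ref{40}(1) gives that $\fX_{\underline w} \to \cB\times\cB$ restricts to an isomorphism over $\fX_w^\circ$; pulling back along $\iota_\s$ and using that $\Ycell \cong \Delta_\s \cap \fX_w^\circ$ and $\bsYcell \cong \Delta_\s \cap \fX_{\underline w}^\circ$ (the latter because $\pi$ is an isomorphism away from the boundary as $\fX_{\underline w}^\circ \xrightarrow{\cong}\fX_w^\circ$), we get that $\pi\colon \bsYcell \to \Ycell$ is an isomorphism. For (2) and (3): by Lemma~\ref{40}(2),(3), $\partial\fX_{\underline w} = \bigcup_j \fD_j$ is a simple normal crossing divisor with $\fD_j \cong \fX_{\underline w_{\{j\}^c}}$ and, for $I \subset \{1,\dots,\ell\}$, the image of $\fX_{\underline w_I}$ is the transverse intersection $\bigcap_{j\notin I}\fD_j$. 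Since $\fX_{\underline w}$ and all the strata $\bigcap_{j\notin I}\fD_j = \fX_{\underline w_I}$ are $G$-orbit-closures' Bott-Samelson analogues — more precisely they are $G$-equivariant fibrations over $\cB$ pulled back from the $G$-equivariant map \eqref{37}, so they are each swept out by $G$ — Lemma~\ref{2'} applies: $\iota_\s$ meets each of them transversely. Concretely, for any point $y \in \bsY$ lying in $\bigcap_{j\in J}E_j$ for some $J$, the stratum of $\partial\fX_{\underline w}$ through $\underline\iota(y)$ is one of the $\fX_{\underline w_I}$ with $I = J^c$ (by Lemma~\ref{40}(3) smooth normal crossing structure), and transversality of $\Delta_\s$ with this stratum, together with the smoothness of $\bsY$ established in the preceding lemma, forces $\bigcap_{j\in J}E_j = \iota_\s^{-1}\!\big(\bigcap_{j\in J}\fD_j\big)$ to be smooth of codimension $|J|$ in $\bsY$; this is exactly the simple normal crossing condition and simultaneously identifies $\bigcap_{j\in J}E_j$ with $\iota_\s^{-1}(\fX_{\underline w_{J^c}}) = \bsY_{\underline w_{J^c}}(\s)$, giving (2) (the case $|J|=1$) and (3) (general $J$, with $I = J^c$).

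The main technical point — and the one requiring care rather than a one-line citation — is verifying that Lemma~\ref{2'} genuinely applies to \emph{each} boundary stratum, not just to $\fX_{\underline w}$ itself: one must check that the strata $\fX_{\underline w_I}$, and their open pieces $\fX_{\underline w_I}^\circ$, are unions of $G$-orbits (equivalently, that $\iota_\s$ meets the $G$-orbits inside $\partial\fX_{\underline w}$ transversely). This follows because $\fX_{\underline w_I} = G\times^B X_{\underline w_I}$ is $G$-stable by construction and the map $\fX_{\underline w_I} \to \cB\times\cB$ is $G$-equivariant, so its image and all relevant loci are $G$-stable; hence the transversality of Lemma~\ref{2'}, which is stated for the intersection of $\Delta_\s$ with $G$-orbits $\fX_v^\circ$ in $\cB\times\cB$, can be upgraded along the $G$-equivariant map \eqref{37} (or \eqref{33}) exactly as in the proof of the smoothness of $\bsY$ above: the relevant left-exact sequence of tangent spaces is right-exact because the base sequence for $\cB\times\cB$ is, and the cokernel on the base vanishes by Lemma~\ref{2'}. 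Once this is in place, (1)–(3) are formal consequences of the corresponding items of Lemma~\ref{40} together with the dimension count $\dim\bsY = \ell$, and I would present the argument in that order: first record that $\iota_\s$ is transverse to $\fX_{\underline w}$ and to each $\fD_j$ and their intersections; then read off (2) and (3) as the pullback of the normal crossing structure; then deduce (1) from the isomorphism over the open cells.
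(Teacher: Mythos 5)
Your proposal is correct and follows essentially the same route the paper intends: the paper's proof is the one-liner ``Immediate from Lemmas~\ref{40} and \ref{2'},'' and your argument is exactly the intended elaboration. You correctly identify the one point requiring care — that Lemma~\ref{2'} is stated for $G$-orbits in $\cB\times\cB$, not for the boundary strata $\fX_{\underline w_I}$ (which live over $\cB\times\cB$, not inside it), so the transversality has to be pushed through the $G$-equivariant map \eqref{37} via the tangent-space sequence exactly as in the smoothness lemma for $\bsY$ — and you handle it correctly: $G$-equivariance guarantees the image of $d\underline\iota$ at any point contains the tangent space of the $G$-orbit below, which together with Lemma~\ref{2'} forces right-exactness, hence each $\bigcap_{j\in J}E_j=\iota_\s^{-1}(\fX_{\underline w_{J^c}})$ is smooth of the expected codimension $|J|$, which is precisely the SNC condition and the identification in (2) and (3). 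One cosmetic imprecision: your early phrasing that the strata ``are unions of $G$-orbits'' and that ``$\iota_\s$ meets each of them transversely'' is literally wrong as stated (they are not subvarieties of $\cB\times\cB$), but your subsequent paragraph correctly restates the claim as transversality of $\iota_\s$ with the \emph{image} orbits upgraded along the $G$-equivariant map, so the proof as a whole is sound.
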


\begin{proof}
	Immediate from Lemmas~\ref{40} and \ref{2'}. 
\end{proof}
As we will see in Proposition~\ref{26}, $\partial\bsY$ is an anti-canonical divisor of $\bsY$. In particular, $\bsY$ has a simple normal crossing anti-canonical divisor. This fact will play an important role in the next two sections.

\medskip

We end this section with the following, which is a direct consequence of Lemma~\ref{19}. 
This completes the proof of Theorem~\ref{22}.

\begin{theorem}\label{8}
	$\Y$ has rational singularities. 
\end{theorem}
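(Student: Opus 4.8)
The goal is to show that $\Y$ has rational singularities, and the natural strategy is to exhibit a rational resolution, namely the Bott-Samelson resolution $\pi\colon \bsY \to \Y$ attached to a reduced word $\underline w$ of $w$. Since $\bsY$ is smooth and $\pi$ is birational and proper (Proposition~\ref{25}(1)), it suffices to prove that $R^i\pi_*\sO_{\bsY}=0$ for all $i>0$; combined with the Cohen-Macaulayness and normality of $\Y$ established in Theorem~\ref{15}, this gives rational singularities. So the real content is the vanishing of the higher direct images.

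\textbf{Key steps.} First I would invoke Lemma~\ref{19}, which factors $\fX_{\underline w}\to \fX_w$ into a tower of $\PP^1$-bundles and closed immersions $\fX_{k+1}\hookrightarrow \hat\fX_k \to \fX_k$. Pulling this tower back along $\iota_\s\colon \cB \hookrightarrow \cB\times\cB$ (and using that forming the fiber product over $\cB\times\cB$ commutes with the $G$-equivariant bundle structure, since $\iota_\s$ meets every $G$-orbit transversely by Lemma~\ref{2'}, so no unexpected components appear and the resulting maps stay flat/smooth of the expected relative dimension) yields an analogous factorization of $\pi\colon \bsY\to\Y$ into $\PP^1$-bundle maps and closed immersions of smooth varieties. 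Second, for each $\PP^1$-bundle stage $\rho\colon \hat Z \to Z$ one has $R\rho_*\sO_{\hat Z}=\sO_Z$, and each closed-immersion stage $Z_{k+1}\hookrightarrow \hat Z_k$ is a divisorial (section of the $\PP^1$-bundle) inclusion, cut out by a line bundle that is relatively trivial or positive, so its structure sheaf has no higher cohomology along the fibers either; chasing the Leray spectral sequence up the tower gives $R^i\pi_*\sO_{\bsY}=0$ for $i>0$. Third, one checks $\pi_*\sO_{\bsY}=\sO_{\Y}$, which follows because $\Y$ is normal (Theorem~\ref{15}) and $\pi$ is proper birational with connected fibers. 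Together these say $R\pi_*\sO_{\bsY}=\sO_{\Y}$, which is exactly the condition that $\pi$ is a rational resolution; since $\Y$ is Cohen-Macaulay, it has rational singularities.

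\textbf{Main obstacle.} The delicate point is not the $\PP^1$-bundle cohomology — that is formal — but justifying that pulling back the Bott-Samelson tower for $\fX_w$ along $\iota_\s$ behaves as cleanly for Lusztig varieties as it does for the relative Schubert variety: one must confirm that each intermediate fiber product $\bsY\times_{\Y}(\text{stage})$ is reduced, of the expected pure dimension, and that the closed immersions remain regular immersions of smooth schemes, so that the fiberwise cohomology computation is unaffected by the base change. This is precisely where regular semisimplicity of $\s$ is used, via Lemma~\ref{2'} and the tangent-space argument already deployed in Proposition~\ref{5'} and the smoothness lemma for $\bsY$; once that transversality is in hand, the higher-direct-image vanishing transports verbatim from the Schubert setting (Proposition~\ref{9}), and the proof of Theorem~\ref{22} is complete.
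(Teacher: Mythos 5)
Your proposal is correct and follows essentially the same route as the paper: use the Bott-Samelson resolution $\pi\colon\bsY\to\Y$, factor it via Lemma~\ref{19} into a tower of $\PP^1$-bundle maps and closed immersions, and chase cohomology up the tower to get $R^i\pi_*\cO_{\bsY}=0$, combined with $\pi_*\cO_{\bsY}=\cO_{\Y}$ from normality. The paper makes the key step a bit more explicit — it writes the short exact sequence $0\to\cI\to\cO_{\hat Y_k}\to\cO_{Y_{k+1}}\to 0$ and uses $R^ig_*\cI=0$ for $i\geq 1$ directly — but this is exactly the content behind your remark that the divisorial inclusion has no higher fiberwise cohomology.
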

\begin{proof}
	The proof is essentially the same as that for Schubert varieties given in \cite{Brion}. Let $\underline w$ be a reduced word of $w$.
	It suffices to show that $\pi_*\cO_{\bsY}\cong\cO_{\Y}$ and $R^i\pi_*\cO_{\Y}=0$ for $i>0$. The first isomorphism follows from the Zariski main theorem  \cite[III.11.4]{Har} since $\Y$ is normal by Theorem~\ref{15}. 
	
	For the vanishing, 
	consider the factorization of $\pi$ induced by Lemma~\ref{19}:
	\[f:~Y_{k+1}\hooklongrightarrow \hat Y_{k}\xrightarrow{~g~} Y_k\]
	for $1\leq k\leq \ell$, where $Y_k$ and $\hat Y_k$ are the fiber products of $\fX_k$ and $\hat \fX_k$ with $\iota$ over $\fX_w$ respectively. In particular, $Y_1=\Y$ and $Y_{\ell}=\bsY$.

	Let $\cI$ be the sheaf of ideals on $\hat Y_k$ defining $Y_{k+1}$. Applying $Rg_*(-)$ to
	$0\to \cI\to \cO_{\hat Y_k}\to \cO_{Y_{k+1}}\to 0$,
	we obtain an exact sequence
	\[R^ig_*\cO_{\hat Y_k}\lra R^if_*\cO_{Y_{k+1}}\lra R^{i+1}g_*\cI\]
	where $R^ig_*\cO_{\hat Y_k}=0=R^ig_*\cI$ for $i\geq 1$ since $g$ is a  $\PP^1$-bundle. Therefore, we obtain the vanishing $R^if_*\cO_{Y_{k+1}}=0$ for all $i>0$ and $k$. Since $\pi$ is the composition of all such $f$, finally $R^i\pi_*\cO_{\bsY}=0$ for $i>0$.
\end{proof}

Together with Proposition~\ref{15c} and Theorem~\ref{15},
this completes the proof of Theorem~\ref{22}.

\begin{remark}[Regular Lusztig varieties]\label{rem:regular}
	For regular $x\in G$, the Lusztig cell $Y_w^\circ(x)$ is nonempty and has pure dimension $\ell(w)$ for all $w\in W$, as shown by Springer (\cite[Proposition~5.1]{ellers-gordeev}) and in \cite[Lemma~2.1]{He-La}, respectively. 
	By the argument in the proof of Proposition~\ref{15c}, we then have $Y_w(x)=\overline{Y_w^\circ(x)}$. 
	
	Consequently, the Lusztig variety $Y_w(x)$ has pure dimension $\ell(w)$.
This also follows from a recent result of \cite{AN}, which shows that the corresponding Bott-Samelson varieties $Y_{\underline w}(x)$ are paved by affines 
	of dimension at most $\ell(\underline w)$, as every irreducible component of $Y_w(x)$ has dimension at least $\ell(w)$.
	
	In particular, the boundary
	$\partial Y_{\underline w}(x)$ is a Cartier divisor. 
\end{remark}

\bigskip

\section{Line bundles and cohomology} \label{sect ch 0}

In this section, we prove that the boundary $\partial \bsY$ is an anti-canonical divisor of $\bsY$ using the adjunction formula, and that it supports an ample divisor. In particular,  Lusztig cells are affine. Then, applying the Kawamata-Viehweg vanishing theorem, we prove Theorem~\ref{23} when $\mathrm{char}(k)=0$.
The proof   is motivated by that for $X_w$ given in \cite{Brion}.

\subsection{Canonical line bundles}
We compute the canonical line bundle $\omega_{\bsY}$ of $\bsY$, applying  the adjunction formula to the closed immersion $\underline\iota$ in \eqref{32}. 

Let $\rho \in X(T)$ be the half sum of all positive roots of $G$. The canonical line bundle of $\fX_{\underline{w}}$ is computed in \cite[Lemma~1.3]{kumar-canonical} or \cite[Proposition~2]{MR2} as
	\beq \label{47}\omega_{\fX_{\underline{w}}}=(L_{-\rho}\boxtimes L_{-\rho})|_{\fX_{\underline w}}\otimes \cO(-\partial \fX_{\underline{w}})\eeq
	where $A\boxtimes B$ means $pr_1^*A\otimes pr_2^*B$ for the $i$-th projections $pr_i$ of $\cB\times \cB$.

\begin{proposition}\label{26}
	$\omega_{\bsY}=\cO(-\partial \bsY)$.
	In particular,  the boundary divisor $\partial \bsY$ is a simple normal crossing anti-canonical divisor of $\bsY$.
\end{proposition}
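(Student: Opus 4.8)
The plan is to compute $\omega_{\bsY}$ by the adjunction formula applied to the regular closed immersion $\underline\iota\colon \bsY\hookrightarrow \fX_{\underline w}$ in \eqref{32}, using the known canonical bundle \eqref{47} of $\fX_{\underline w}$. Since $\bsY$ is smooth (hence a local complete intersection in the smooth variety $\fX_{\underline w}$) and is cut out as the fiber product along $\iota_\s\colon \cB\hookrightarrow \cB\times\cB$, its normal bundle is the restriction of the normal bundle of $\iota_\s$, so
\[
\omega_{\bsY}=\underline\iota^*\omega_{\fX_{\underline w}}\otimes \det\big(N_{\bsY/\fX_{\underline w}}\big)=\underline\iota^*\omega_{\fX_{\underline w}}\otimes \underline\iota^*\det\big(N_{\cB/\cB\times\cB}\big).
\]
First I would identify the two tensor factors. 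For the second: $\iota_\s=(\id,\s)\colon \cB\to\cB\times\cB$ is, up to the automorphism $(gB,hB)\mapsto(gB,\s^{-1}hB)$ of $\cB\times\cB$, just the diagonal embedding $\Delta\colon\cB\hookrightarrow\cB\times\cB$, whose normal bundle is the tangent bundle $T_\cB$. Hence $\det N_{\cB/\cB\times\cB}\cong \det T_\cB=\omega_\cB^{-1}$. Now $\omega_\cB=L_{-2\rho}$ (the standard computation: the tangent bundle of $G/B$ at $eB$ is $\fg/\fb$, with $T$-weights the positive roots, so its determinant has weight $2\rho$, giving $\omega_\cB=L_{-2\rho}$). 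Thus the correction term is $\underline\iota^*L_{2\rho}$, where here $L_{2\rho}$ is pulled back along the composite $\bsY\to\cB$ (the first projection in \eqref{32}).

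Next I would feed in \eqref{47}. Restricting along $\underline\iota$ and using that $\iota_\s$ sends $gB\mapsto(gB,\s gB)$, both $pr_1$ and $pr_2$ become (after the shift by $\s$, which does not affect the isomorphism class of a $G$-equivariant line bundle on $\cB$, cf. $\s$-conjugation) the structure map $\bsY\to\cB$; so $\underline\iota^*(L_{-\rho}\boxtimes L_{-\rho})\cong \underline\iota^*L_{-2\rho}$ as line bundles on $\bsY$. Also $\underline\iota^*\cO_{\fX_{\underline w}}(-\partial\fX_{\underline w})=\cO_{\bsY}(-\partial\bsY)$, because by Proposition~\ref{25}(2)--(3) the boundary $\partial\fX_{\underline w}=\bigcup\fD_j$ pulls back to the simple normal crossing divisor $\partial\bsY=\bigcup E_j$ with $E_j=\underline\iota^{-1}(\fD_j)$, and the pullback is along a regular immersion transverse to each $\fD_j$ (transversality from Lemma~\ref{2'}), so no multiplicities or embedded components appear. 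Assembling:
\[
\omega_{\bsY}=\underline\iota^*L_{-2\rho}\otimes\cO_{\bsY}(-\partial\bsY)\otimes \underline\iota^*L_{2\rho}=\cO_{\bsY}(-\partial\bsY),
\]
which is the claim; the "in particular" then follows since $\partial\bsY$ is simple normal crossing by Proposition~\ref{25}(2).

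The step I expect to require the most care is the bookkeeping of the two $L_{\pm\rho}$-type factors and the normal-bundle term, making sure the pullbacks along $pr_1$, $pr_2$ and along $\iota_\s$ are correctly identified and that the $\s$-twist genuinely drops out — i.e. that $\iota_\s^*(A\boxtimes B)\cong \phi^*(A\otimes B)$ where $\phi\colon\cB\to\cB$ is the structure map, using $G$-equivariance of $L_\mu$ so that translation by $\s$ acts trivially on isomorphism classes. Once these identifications are pinned down the cancellation $L_{-2\rho}\otimes L_{2\rho}=\cO$ is automatic, and the only remaining point is the transversality of $\underline\iota$ with the $\fD_j$, which is already supplied by Lemma~\ref{2'} and recorded in Proposition~\ref{25}. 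An alternative, perhaps cleaner, route avoiding the $\s$-twist discussion is to work instead with the relative canonical bundle: compute $\omega_{\bsY/\cB}=\underline\iota^*\omega_{\fX_{\underline w}/\cB}$ using that $\iota_\s$ is a section of $pr_1$ and $\bsY\to\cB$ is the pullback of $\fX_{\underline w}\xrightarrow{pr_1}\cB$, then twist back by $\omega_\cB$; this makes the cancellation of the $L_{-\rho}$ coming from the "$pr_1$-direction" against $\omega_\cB$ manifest and isolates the genuinely new contribution as the $pr_2$-direction $L_{-\rho}$ against the normal bundle $T_\cB=\omega_\cB^{-1}$.
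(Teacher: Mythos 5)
Your proof is correct and is essentially the paper's argument: adjunction along $\underline\iota$, the canonical bundle formula \eqref{47} for $\fX_{\underline w}$, the identification $N_{\underline\iota}\cong\pi^*N_{\iota_\s}$ from the transverse fiber square, and $\det N_{\iota_\s}=\omega_\cB^{-1}=L_{2\rho}$, with the boundary divisor pulling back cleanly by Proposition~\ref{25}. The only cosmetic difference is that you justify $\iota_\s^*(L_{-\rho}\boxtimes L_{-\rho})\cong L_{-2\rho}$ by composing with translation by $\s^{-1}$ to reduce to the diagonal, whereas the paper records the same fact as ``$\iota_\s^*$ acts as the identity on $\Pic(\cB\times\cB)$''; your closing remark about computing $\omega_{\bsY/\cB}$ relatively is a nice reorganization but not a different proof.
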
 

For  any regular closed immersion $\jmath$, let $N_\jmath$ denote  the normal bundle of $\jmath$. 

\begin{proof} 
	Applying the adjunction formula to \eqref{32}, we obtain
	\[\omega_{\bsY}=\underline\iota^*\omega_{\fX_{\underline{w}}}\otimes \det N_{\underline\iota}.\]
	 We now compute $\underline\iota^*\omega_{\fX_{\underline{w}}}$ and $\det N_{\underline\iota}$ separately.
	Applying $\underline\iota^*$ to \eqref{47}, we get
	\[\underline\iota^*\omega_{\fX_{\underline w}}= \pi^*\iota_\s^*(L_{-\rho}\boxtimes L_{-\rho})\otimes \underline\iota^*\cO(-\partial \fX_{\underline w})= \pi^*L_{-2\rho}(-\partial \bsY)\]
	since $\underline \iota^*\cO(\partial\fX_{\underline w})=\cO(\partial \bsY)$ by Proposition~\ref{25} and the pullback map $\iota_\s^*$ on the Picard group of $\cB\times \cB$ is the identity map.
	
	On the other hand, since $\underline\iota$ and $\iota_\s$ have the same codimension, the canonical subbundle map $N_{\underline \iota}\to \pi^* N_{\iota_\s}$ is an isomorphism. 
	Thus, we have
	\[\det N_{\underline\iota} = \pi^*\det N_{\iota_\s}= \pi^*L_{2\rho}\]
	where the second equality follows from $\det N_{\iota_\s}=\omega_{\cB}^{-1}=L_{2 \rho}$.

	Consequently, $\omega_{\bsY}=\cO(-\partial\bsY)$, so that  $\partial \bsY$ is an anti-canonical divisor of $\bsY$ and it is simple normal crossing by Proposition~\ref{25}.
\end{proof}

\smallskip

\subsection{Line bundles on Schubert varieties}
We provide 
a brief review of results from \cite{LT} and \cite{BS24} on line bundles on $X_{\underline w}$ and $\fX_{\underline w}$ respectively. 

Let $\underline w=(s_{i_1},\cdots,s_{i_\ell})$ be a word of $w$. Then, $X_{\underline w}$ is an iterated $\PP^1$-bundle
\beq\label{60}X_j':=P_{i_1}\times^B\cdots\times^BP_{i_j}/B~\stackrel{q_j}{\lra}~ X_{j-1}':=P_{i_1}\times^B\cdots\times^BP_{i_{j-1}}/B\eeq
starting from a point, as $j$ varies $1\leq j\leq \ell$. 
One can compose these maps with the usual multiplication map to produce line bundles:
for each $j$, let
\beq\label{62}\tau_j:~X_{\underline w}\xrightarrow{q_{j+1}\circ\cdots\circ q_\ell} X_j'\xrightarrow{~m_j~}  \cB\eeq
be the map sending $[p_1,\cdots,p_\ell]$ to $p_1\cdots p_j B$, and define the line bundle
\[M_j:=\tau_j^*L_{\varpi_{i_j}}\]
to be the pullback of the line bundle $L_{\varpi_{i_j}}$ associated to the $i_j$-th fundamental weight $\varpi_{i_j}$ of $T$.
Each $M_j$ has relative degree one (hence, it is relatively ample) with respect to the $j$-th $\PP^1$-bundle map in \eqref{60}.

\smallskip
Then, $M_1,\cdots, M_\ell$ freely generate $\Pic(X_{\underline w})$, and furthermore, they span the extremal rays of the ample cone of $X_{\underline w}$.

\begin{proposition}[\cite{LT}] \label{59}
Let $L$ be a line bundle on $X_{\underline w}$. The following holds.
\begin{enumerate}
	\item There exist unique $a_1,\cdots, a_\ell\in \Z$ such that $L=M_1^{\otimes a_1}\otimes \cdots \otimes M_\ell^{a_\ell}$;
	\item $L$ is ample if and only if $a_1,\cdots, a_\ell>0$. \end{enumerate}
\end{proposition}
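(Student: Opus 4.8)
\textbf{Plan for proving Proposition~\ref{59}.}
The strategy is to induct on $\ell$ using the iterated $\PP^1$-bundle structure \eqref{60}, peeling off one $\PP^1$-bundle at a time from the top. For $\ell=0$ the variety $X_{\underline w}$ is a point, so $\Pic(X_{\underline w})=0$ and there is nothing to prove. For the inductive step, I would analyze the bundle $q_\ell\colon X_\ell'=X_{\underline w}\to X_{\ell-1}'$, a $\PP^1$-bundle over the Bott-Samelson variety $X_{\underline w'}$ associated to the truncated word $\underline w'=(s_{i_1},\dots,s_{i_{\ell-1}})$. Since it is a Zariski-locally trivial $\PP^1$-bundle with a section (the section given by setting $p_\ell=1$), the standard structure theory of projective bundles gives a short exact sequence $0\to \Pic(X_{\underline w'})\xrightarrow{q_\ell^*}\Pic(X_{\underline w})\to \Z\to 0$, where the last map is the relative degree, and this sequence is split by $M_\ell\mapsto 1$ since $M_\ell$ has relative degree one over $q_\ell$. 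By the inductive hypothesis $\Pic(X_{\underline w'})$ is free on (the analogues of) $M_1,\dots,M_{\ell-1}$, and I need only check that $q_\ell^*$ of those generators coincides with $M_1,\dots,M_{\ell-1}$ as defined on $X_{\underline w}$; this is immediate from the definition \eqref{62}, because $\tau_j$ for $j<\ell$ factors through $q_\ell$. This establishes part~(1): the $M_j$ freely generate $\Pic(X_{\underline w})$, and the coefficients $a_j$ are unique.

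For part~(2), the ``only if'' direction is the easy one: if $L=\bigotimes M_j^{a_j}$ is ample, then restricting to the fiber $\PP^1$ of the $j$-th bundle map $q_j$ (a curve on which only $M_j$ has positive degree, all other $M_i$ restricting trivially or being pulled back from the base) forces $a_j=\deg(L|_{\PP^1})>0$ for each $j$. One has to be slightly careful identifying, for each $j$, a curve class on which $M_j$ has degree $1$ and $M_i$ has degree $0$ for $i\neq j$: the fiber of $q_j$ over a point is such a curve because $\tau_i$ for $i<j$ factors through $q_j\circ\cdots\circ q_\ell$ composed further down, hence is constant on that fiber, while $\tau_i$ for $i>j$ also kills the fiber class of $q_j$ since $M_i$ is a pullback from $X_i'$ which sits below... this needs the correct bookkeeping of which maps factor through which, but it is routine once set up.

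The ``if'' direction — positivity of all $a_j$ implies ampleness — is the real content and the main obstacle. The natural approach is again induction via $q_\ell$: if all $a_j>0$, write $L=q_\ell^*L'\otimes M_\ell^{a_\ell}$ where $L'=\bigotimes_{j<\ell}M_j^{a_j}$ on $X_{\underline w'}$, which is ample by induction. Then $L$ is the tensor of $M_\ell^{a_\ell}$, which is $q_\ell$-ample (relatively ample of relative degree $a_\ell>0$), with the pullback of an ample bundle from the base; the relative-ample-plus-pullback-of-ample criterion (\cite[Prop.~1.7.10]{lazarsfeld} or the classical fact that $q_\ell$-ample $\otimes$ $q_\ell^*(\text{sufficiently ample})$ is ample, combined with the observation that here $L'$ itself, not just a high power, already suffices because $M_\ell$ restricted to fibers has degree $\ge 1$) yields ampleness of $L$. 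The delicate point is whether the \emph{first} power of the ample base class is enough rather than a large multiple; to avoid this subtlety I would instead invoke the description, cited from \cite{LT}, that the $M_j$ span the extremal rays of the nef cone of $X_{\underline w}$, so that the nef cone is exactly the closed simplicial cone $\sum \R_{\ge 0}M_j$ and the ample cone its interior $\sum\R_{>0}M_j$. Since $X_{\underline w}$ is smooth and projective with Picard rank $\ell$, Kleiman's criterion identifies the ample cone with the interior of the nef cone, giving part~(2) directly. The one thing genuinely to verify — and this is where I expect to spend effort — is that the nef cone is no larger than $\sum\R_{\ge0}M_j$: a nef $L=\bigotimes M_j^{a_j}$ must pair nonnegatively with every curve, in particular with the fiber classes $C_1,\dots,C_\ell$ of the successive bundle maps, and one shows these curve classes, together with the $M_j$, form dual bases (up to unipotent triangular change of basis reflecting that $M_j\cdot C_i$ can be nonzero for $i<j$), whence $a_j\ge 0$ for all $j$; strict positivity on the interior is then automatic.
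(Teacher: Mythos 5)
Your treatment of part (1) matches the paper's: both peel off one $\PP^1$-bundle at a time using the section given by $p_j=1$ and identify the generators via the factorization of $\tau_j$. No issues there.

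For part (2), though, there are two genuine gaps. In the ``if'' direction, you correctly flag the delicate point — that the relative-ample-plus-pullback-of-ample criterion naively only gives ampleness of $q_\ell^*(L')^{\otimes N}\otimes M_\ell^{a_\ell}$ for $N\gg 0$ — but your escape route is circular: the statement that the $M_j$ span the extremal rays of the nef/ample cone is precisely part (2), so you cannot invoke it as input. The paper closes this gap concretely by observing that $m_j^*L_{\varpi_{i_j}}$ is not merely $q_j$-relatively ample of degree one but also \emph{globally generated} (it is pulled back from $L_{\varpi_{i_j}}$ on $G/B$); hence $q_j$ together with the complete linear system of $m_j^*L_{\varpi_{i_j}}$ gives a closed immersion $X_j'\hookrightarrow X_{j-1}'\times\PP H^0(X_j',m_j^*L_{\varpi_{i_j}})^*$ over $X_{j-1}'$, under which $q_j^*L'\otimes m_j^*L_{\varpi_{i_j}}^{\otimes a_j}$ is the restriction of $L'\boxtimes\cO(a_j)$. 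Ampleness is then immediate when $L'$ is ample and $a_j>0$, no large power needed. In the ``only if''/nef-cone direction, your dual-curve-class plan runs into a sign problem you gesture at but do not resolve: for the natural curve $C_j$ (a fiber of $q_j$ lifted to $X_{\underline w}$ by setting $p_{j+1}=\cdots=p_\ell=1$), one computes $M_i\cdot C_j=0$ for $i<j$, $=1$ for $i=j$, and $=\delta_{i_i,i_j}$ for $i>j$ — so the intersection matrix is lower unipotent triangular with \emph{nonnegative}, possibly positive, off-diagonal entries. Back-substitution from $L\cdot C_j\ge 0$ then gives $a_j\ge -\sum_{i>j}\delta_{i_i,i_j}a_i$, which does not imply $a_j\ge 0$. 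This is why \cite{LT} (and the paper, which defers to it) argue instead by restricting $L$ to the boundary divisors $D_j$ and inducting; you would need to switch to that approach or find a different dualizing family of curves.
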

\begin{proof}
	For the readers' convenience, we provide a sketch of the proof. 
	
	Note that the maps $q_j$ and $m_j$ in \eqref{60} and \eqref{62} fit into the fiber diagram
	\[\small\begin{tikzcd}
		X_j' :=P_{i_1}\times^B\cdots \times^B P_{i_j}/B  \arrow[r,"m_j"] \arrow[d,"q_j"']&G/B\arrow[d, "f_j" ']\\ X_{j-1}':=P_{i_1}\times^B\cdots \times^B P_{i_{j-1}}/B\arrow[r]&G/P_{i_j},
	\end{tikzcd}
	\begin{tikzcd}
	\left[p_1,\cdots, p_j\right] \arrow[r,|->] \arrow[d]&p_1\cdots p_jB\arrow[d,|->]\\ \left[p_1,\cdots,p_{j-1}\right]\arrow[r,|->]&p_1\cdots p_{j-1}P_{i_j}
	\end{tikzcd}\]
	where $B\subset P_{i_j}$ is the minimal parabolic subgroup corresponding to $s_{i_j}$ and $f_j$ is the natural projection. As $q_j$ and $f_j$ are $\PP^1$-bundles, every line bundle on $X_j'$ is written uniquely as the tensor product $q_j^*L'\otimes m_j^*L_{\varpi_{i_j}}^{\otimes a_j}$, for some $L'\in \Pic(X_{j-1}')$ and $a_j\in \Z$. 
	This proves (1), by the induction on $\ell$. 
	
	Moreover, since $m_j^*L_{\varpi_{i_j}}$ is relatively ample (of degree one) with respect to $q_j$ as well as globally generated, its complete linear system and $q_j$ define a closed immersion $X_j'\hookrightarrow  X_{j-1}'\times \PP H^0(X_j',m_j^*L_{\varpi_{i_j}})^*$ over $X_{j-1}'$. In particular, the line bundle $q_j^*L'\otimes m_j^*L_{\varpi_{i_j}}^{\otimes a_j}$ above is equal to the restriction of $L'\boxtimes \cO(a_j)$, so it is ample if $L'$ is and $a_j>0$ for all $j$. This proves the `if' part of (2) by the induction on $\ell$. 
	The `only if' part can be proved by restricting $L$ to the boundary divisors $D_i$. We omit the details, see \cite{LT}.  
\end{proof}

Analogous results hold for $\fX_{\underline w}$ with the same proof. 
Consider the map from $\fX_{\underline w}$ defined analogously to \eqref{60}:
\[\widetilde \tau_j:~\fX_{\underline w}\xrightarrow{\id_G\times \tau_j} G\times^B\cB\cong \cB\times \cB\xrightarrow{~pr_2~}\cB,\]
which sends $[g,p_1,\cdots, p_\ell]\mapsto gp_1\cdots p_jB$, and define the line bundle
\[\tM_j:=\widetilde \tau_j^*L_{\varpi_{i_j}}.\] 
By a slight abuse of notation, let $pr_1$ denote the first projection $\fX_{\underline w}\to\cB$.

\begin{proposition}[\cite{BS24}] \label{61}
	Let $L$ be a line bundle on $\fX_{\underline w}$. The following holds.
	\begin{enumerate}
		\item There exist unique $\lambda\in X(T)$ and $a_1,\cdots, a_\ell\in Z$ such that 
		\beq\label{64} L= pr_1^*L_\lambda \otimes \widetilde M_1^{\otimes a_1}\otimes \cdots \otimes \tM_\ell^{\otimes a_\ell}.\eeq
		\item $L$ is ample if and only if $\lambda$ is regular dominant and $a_1,\cdots, a_\ell>0$.
	\end{enumerate}
\end{proposition}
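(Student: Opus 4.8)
The plan is to mimic the proof of Proposition~\ref{59}, pushing the inductive argument through the fibration structure on $\fX_{\underline w}$. The key observation is that $\fX_{\underline w}=G\times^BX_{\underline w}$ is an iterated $\PP^1$-bundle over $\cB$ itself: writing $\fX'_j:=G\times^BX'_j$, the maps $q_j$ of \eqref{60} induce $\PP^1$-bundle maps $\widetilde q_j:\fX'_j\to \fX'_{j-1}$, with $\fX'_0=G\times^B(\mathrm{pt})=\cB$. So the base case of the induction is no longer a point but $\cB$, whose Picard group is $X(T)$, and this is exactly what accounts for the extra factor $pr_1^*L_\lambda$ in \eqref{64}.

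First I would establish the fiber-square analogue of the diagram in the proof of Proposition~\ref{59}: for each $j$, the square relating $\widetilde q_j:\fX'_j\to \fX'_{j-1}$, the multiplication-type map $\widetilde m_j:\fX'_j\to\cB$ (sending $[g,p_1,\dots,p_j]$ to $gp_1\cdots p_jB$), and the natural $\PP^1$-bundle $\cB\to G/P_{i_j}$ is Cartesian, just by applying $G\times^B(-)$ to the square in the Schubert case. Since both $\widetilde q_j$ and $\cB\to G/P_{i_j}$ are $\PP^1$-bundles, every line bundle on $\fX'_j$ is uniquely $\widetilde q_j^*L'\otimes \widetilde m_j^*L_{\varpi_{i_j}}^{\otimes a_j}$ for some $L'\in\Pic(\fX'_{j-1})$ and $a_j\in\Z$; note $\widetilde m_j^*L_{\varpi_{i_j}}$ restricts to $\tM_j$ on $\fX_{\underline w}$ after composing with the further projections. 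Iterating down to $j=0$ gives (1), with $\lambda$ the class in $\Pic(\cB)=X(T)$ of the resulting line bundle on the base $\cB=\fX'_0$, pulled back via $pr_1$.

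For (2), the `if' direction runs by the same induction: $\widetilde m_j^*L_{\varpi_{i_j}}$ is globally generated and relatively ample of degree one with respect to $\widetilde q_j$, so together they give a closed immersion $\fX'_j\hookrightarrow \fX'_{j-1}\times\PP H^0(\fX'_j,\widetilde m_j^*L_{\varpi_{i_j}})^*$ over $\fX'_{j-1}$; hence $\widetilde q_j^*L'\otimes \widetilde m_j^*L_{\varpi_{i_j}}^{\otimes a_j}$ is the restriction of $L'\boxtimes\cO(a_j)$ and is ample when $L'$ is ample and $a_j>0$. The base case is the statement that $L_\lambda$ is ample on $\cB$ iff $\lambda$ is regular dominant, which is recalled in the introduction. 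For the `only if' direction, I would restrict $L$ to test curves: restricting to a fiber of the $j$-th $\PP^1$-bundle forces $a_j>0$ exactly as in \cite{LT}, and restricting to a fiber of $pr_1:\fX_{\underline w}\to\cB$ — which is a copy of $X_{\underline w}$ on which $L$ restricts to $M_1^{\otimes a_1}\otimes\cdots\otimes M_\ell^{\otimes a_\ell}$ and $pr_1^*L_\lambda$ restricts trivially — reduces positivity of the $a_j$ to Proposition~\ref{59}(2), while restricting to a curve in a section $\cB\hookrightarrow\fX_{\underline w}$ (e.g.\ the one given by $p_j=1$ for all $j$, on which $L$ restricts to $L_\lambda$) forces $\lambda$ regular dominant.

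The main obstacle I anticipate is bookkeeping rather than anything deep: making sure the identification $\widetilde m_j^*L_{\varpi_{i_j}}\cong \tau_{\ge j+1}^*\tM_j$ (i.e.\ that the line bundles defined fiberwise via the tower match the $\tM_j$ defined directly via $\widetilde\tau_j$ on $\fX_{\underline w}$) is correctly tracked, and checking that the section $\cB\hookrightarrow\fX_{\underline w}$ used to extract $\lambda$ is compatible with the splitting \eqref{64} so that $L|_{\cB}=L_\lambda$ exactly (no stray twist by the $\tM_j$). Since the paper says ``Analogous results hold for $\fX_{\underline w}$ with the same proof,'' and cites \cite{BS24}, presumably only a brief indication of these modifications is expected, so I would keep the write-up to a short paragraph pointing to the changes from the proof of Proposition~\ref{59}.
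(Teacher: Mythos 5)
Your overall plan --- applying $G\times^B(-)$ to the tower of $\PP^1$-bundles and running the same induction as for Proposition~\ref{59}, with base case $\cB$ instead of a point --- is exactly what the paper intends (it simply says ``with the same proof'' and cites \cite{BS24}). The ``if'' direction of (2) as you've sketched it, and the uniqueness in (1), are fine.

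There is, however, a genuine gap in the ``only if'' direction, and it is precisely in the spot you flagged as ``bookkeeping.'' On the zero section $\sigma_0\colon\cB\hookrightarrow\fX_{\underline w}$, $gB\mapsto[g,1,\dots,1]$, one has $\widetilde\tau_j\circ\sigma_0=\mathrm{id}_\cB$ for \emph{every} $j$, so $\sigma_0^*\tM_j=L_{\varpi_{i_j}}$ is \emph{not} trivial. Hence $\sigma_0^*L=L_{\lambda+\sum_j a_j\varpi_{i_j}}$, and ampleness of $L$ only tells you $\lambda+\sum_j a_j\varpi_{i_j}$ is regular dominant --- which, once you know $a_j>0$, is strictly weaker than $\lambda$ regular dominant. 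You can already see the problem in the smallest case: for $G=\SL_2$ and $\underline w=(s_1)$, $\fX_{\underline w}\cong\PP^1\times\PP^1$ with $pr_1^*L_\lambda$ the $(a,0)$ class and $\tM_1$ the $(0,1)$ class; the section $p_1=1$ is the diagonal, and $\cO(a,b)|_\Delta=\cO(a+b)$, whose ampleness does not force $a>0$. Your two other test families (fibers of the $j$-th $\PP^1$-bundle, fibers of $pr_1$) only control the $a_j$, so after discarding the section argument nothing in your proposal pins down $\lambda$. You would need a different test family that kills the $\tM_j$ --- e.g.\ restricting to a fiber of $\widetilde\tau_\ell=pr_2\colon\fX_{\underline w}\to\cB$ in the minimal example above, or pushing the ampleness hypothesis inductively through the $\widetilde q_j$-tower to show each $L^{(j-1)}$ is ample (which is what \cite{BS24} presumably does, but is itself not a one-line restriction argument).
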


Propositions~\ref{59} and~\ref{61} particularly ensure that the ample cones of $X_{\underline w}$ and $\fX_{\underline w}$ remain unchanged when the base field $k$ varies, as the generators of the extremal rays are defined over $\Z$. This will be useful in the next section.

\medskip

There is another way to phrase Proposition~\ref{61}:
Since any line bundle $M\to X_{\underline w}$ is $B$-equivariant, it naturally extends to a line bundle 
\[\underline M:= G\times^BM\to G\times^BX_{\underline w}=\fX_{\underline w}\]
over $\fX_{\underline w}$.
This gives rise to a group homomorphism
\[\Phi:\Pic(\cB)\times \Pic(X_{\underline w})\lra \Pic(\fX_{\underline w}), \quad (L,M)\mapsto pr_1^*L\otimes \underline M.\]
One can easily check the following.
\begin{lemma} \label{lem:pullback}
	Let $1\leq j\leq \ell$. Then, 
	$\widetilde \tau_j^*L=\underline{\tau_j^* L}$ for any line bundle $L$ on $\cB$. In particular, $\tM_j=\underline{M_j}$.
\end{lemma}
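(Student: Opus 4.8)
\textbf{Proof proposal for Lemma~\ref{lem:pullback}.}

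The plan is to reduce the identity $\widetilde\tau_j^*L=\underline{\tau_j^*L}$ to a statement about the compatibility of the Borel-equivariant structures involved, and to verify it by tracing through the explicit descriptions of the maps $\tau_j$ and $\widetilde\tau_j$. First I would recall that for a $B$-variety $X$ with a $B$-equivariant line bundle $M\to X$, the associated bundle $\underline M=G\times^B M\to G\times^B X$ is characterized by the property that its pullback along $G\times X\to G\times^B X$ is the pullback of $M$ along the second projection $G\times X\to X$, with the descent datum coming from the diagonal $B$-action; equivalently, over the open set where we can trivialize the $B$-bundle $G\to \mathcal B$, the bundle $\underline M$ looks like $pr_{\mathcal B}^*L\otimes(\text{something built from }M)$. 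Concretely, under the isomorphism \eqref{eq:isom.relative}, $G\times^B\mathcal B\cong \mathcal B\times\mathcal B$, and a line bundle $L_\mu$ on the second factor pulls back to $\underline{L_\mu}$ where $L_\mu$ is viewed as a $B$-equivariant bundle on $\mathcal B=G/B$; this is exactly the content needed when $j=0$, and it is the base case.

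Next I would observe that $\widetilde\tau_j$ is by construction the composite
\[\fX_{\underline w}\xrightarrow{\ \id_G\times\tau_j\ }G\times^B\cB\xrightarrow{~\cong~}\cB\times\cB\xrightarrow{~pr_2~}\cB,\]
so $\widetilde\tau_j^*L_{\varpi_{i_j}}$ is the pullback along $\id_G\times\tau_j$ of the bundle on $G\times^B\cB$ corresponding to $pr_2^*L_{\varpi_{i_j}}$ under \eqref{eq:isom.relative}. By the base case, that bundle is $\underline{L_{\varpi_{i_j}}}$, the associated bundle of the $B$-equivariant line bundle $L_{\varpi_{i_j}}$ on $\cB$. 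Now $\id_G\times\tau_j$ is precisely the map $G\times^B X_{\underline w}\to G\times^B\cB$ induced from the $B$-equivariant map $\tau_j:X_{\underline w}\to\cB$ (this is where I would check that $\tau_j$ is genuinely $B$-equivariant, which follows since it is the composite $q_{j+1}\circ\cdots\circ q_\ell$ of the $\PP^1$-bundle projections followed by the multiplication map $m_j$, all of which are $B$-equivariant for the left $B$-action on the first factor). The functoriality of the $G\times^B(-)$ construction applied to pullback of line bundles — that is, $(\id_G\times f)^*\,\underline N=\underline{f^*N}$ for a $B$-equivariant map $f:X\to X'$ and a $B$-equivariant bundle $N$ on $X'$ — then gives $(\id_G\times\tau_j)^*\underline{L_{\varpi_{i_j}}}=\underline{\tau_j^*L_{\varpi_{i_j}}}=\underline{M_j}$, which is the claim. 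The same argument with $L_{\varpi_{i_j}}$ replaced by an arbitrary $L\in\Pic(\cB)$ gives the general statement $\widetilde\tau_j^*L=\underline{\tau_j^*L}$.

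I expect the main obstacle to be purely bookkeeping: one must set up the $G\times^B(-)$ functor on the category of ($B$-varieties, $B$-equivariant line bundles) carefully enough that "pullback commutes with the associated-bundle construction" becomes a formal consequence, and one must be careful about which $B$-action is used at each stage (the residual $B$-action on $X_{\underline w}$ used to form $\fX_{\underline w}$ versus the $B$-actions used inside the Bott--Samelson tower). Once the naturality statement is phrased correctly there is no real computation, so I would spend the bulk of the write-up making the equivariance of $\tau_j$ and the compatibility of the identification \eqref{eq:isom.relative} with these bundles explicit, and the identity $\tM_j=\underline{M_j}$ then drops out as the special case $L=L_{\varpi_{i_j}}$.
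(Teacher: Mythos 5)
Your proposal is correct and takes essentially the same approach as the paper: the paper's fiber diagram (obtained by writing $\tau_\ell^*L_\lambda$ as $P_{i_1}\times^B\cdots\times^BP_{i_\ell}\times^B k_{-\lambda}$ and applying $G\times^B(-)$) is precisely the concrete instantiation of the naturality statement $(\id_G\times f)^*\underline N=\underline{f^*N}$ that you invoke abstractly, and the identification of the lower horizontal map with $\widetilde\tau_\ell$ is the paper's way of checking the $B$-equivariance of $\tau_j$ that you flag as the thing to verify.
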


\begin{proof}
	We prove the case $j=\ell$, as the proof for the other $j$ is the same.
	Note that   $L=L_\lambda$ for some $\lambda$. Then, $\tau_\ell^*L=P_{i_1}\times^B\cdots \times ^BP_{i_\ell}\times^B k_{-\lambda}$.
	Hence, $\underline {\tau_\ell^*L}=G\times^B(\tau_\ell^*L)$ fits into the fiber diagram
	\[\begin{tikzcd}
		\underline{\tau_\ell^*L}=G\times^BP_{i_1}\times^B\cdots \times^B P_{i_\ell}\times^B k_{-\lambda}  \arrow[r] \arrow[d]&G\times^B k_{-\lambda}=L\arrow[d]\\\fX_{\underline w}=G\times^BP_{i_1}\times^B\cdots \times^B P_{i_\ell}/B\arrow[r]&G/B=\cB
	\end{tikzcd}\]
	where the horizontal maps are those induced by  the multiplication map 
	$G\times \prod_{j=1}^\ell P_{i_j}\to G$, $(g,p_1,\cdots,p_\ell)\mapsto (gp_1\cdots p_\ell)$, so 
	 the lower map is $\widetilde \tau_\ell$.
\end{proof}

\medskip

Due to Lemma~\ref{lem:pullback},
Proposition~\ref{61} can be written simply as follows.
\begin{corollary}\label{65}
	The group homomorphism $\Phi$ is an isomorphism. Moreover, $\Phi(L,M)$ is ample if and only if $L$ and $M$ are ample.
\end{corollary}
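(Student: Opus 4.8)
The plan is to deduce Corollary~\ref{65} directly from Proposition~\ref{61} and Lemma~\ref{lem:pullback}, treating the statement as a compact reformulation of the former. First I would verify that $\Phi$ is surjective: given any line bundle $L$ on $\fX_{\underline w}$, Proposition~\ref{61}(1) writes $L = pr_1^*L_\lambda \otimes \widetilde M_1^{\otimes a_1}\otimes\cdots\otimes \widetilde M_\ell^{\otimes a_\ell}$, and by Lemma~\ref{lem:pullback} each $\widetilde M_j = \underline{M_j}$; since $M\mapsto \underline M$ is a group homomorphism, the tensor product $\bigotimes_j \underline{M_j}^{\otimes a_j}$ equals $\underline M$ for $M = \bigotimes_j M_j^{\otimes a_j} \in \Pic(X_{\underline w})$. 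Hence $L = pr_1^*L_\lambda \otimes \underline M = \Phi(L_\lambda, M)$, so $\Phi$ is onto.

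For injectivity, I would again invoke the uniqueness clause of Proposition~\ref{61}(1) together with Proposition~\ref{59}(1): an element of $\Pic(\cB)\times \Pic(X_{\underline w})$ is a pair $(L_\lambda, M)$ with $\lambda \in X(T)$ (using $\Pic(\cB)\cong X(T)$) and $M = M_1^{\otimes a_1}\otimes\cdots\otimes M_\ell^{\otimes a_\ell}$ for unique integers $a_j$; applying $\Phi$ and Lemma~\ref{lem:pullback} gives precisely the expression \eqref{64}, and the uniqueness of $(\lambda, a_1,\dots,a_\ell)$ in that expression forces $\Phi$ to be injective. So $\Phi$ is an isomorphism. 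Finally, the ampleness equivalence is immediate: under the identification just established, $\Phi(L_\lambda, M)$ has the form \eqref{64} with $\lambda$ and the $a_j$ read off from $L_\lambda$ and $M$, and Proposition~\ref{61}(2) says this is ample iff $\lambda$ is regular dominant and all $a_j > 0$ — which by Proposition~\ref{59}(2) (for $M$) and the standard fact that $L_\lambda$ on $\cB$ is ample iff $\lambda$ is regular dominant is exactly the condition that $L_\lambda$ and $M$ are both ample.

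There is essentially no obstacle here; the corollary is bookkeeping once Propositions~\ref{59} and~\ref{61} and Lemma~\ref{lem:pullback} are in hand. The only point requiring a word of care is the compatibility of the homomorphism $M\mapsto \underline M = G\times^B M$ with tensor products and with the identification $\widetilde M_j = \underline{M_j}$ across all $j$ simultaneously — but this follows formally from the functoriality of $G\times^B(-)$ and from Lemma~\ref{lem:pullback} applied to each fundamental weight. I would state these two sentences explicitly and conclude.
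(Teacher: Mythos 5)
Your proposal is correct and follows exactly the same route as the paper: it reduces Corollary~\ref{65} to Propositions~\ref{59} and~\ref{61} via Lemma~\ref{lem:pullback}, using the fact that $M\mapsto \underline M$ is a group homomorphism to convert expression \eqref{64} into the form $\Phi(L_\lambda,M)$. The paper's proof is a one-line version of this same bookkeeping.
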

\begin{proof}
	Immediate from Propositions~\ref{59} and \ref{61}, as any line bundle on $\fX_{\underline w}$ of the form \eqref{64} is equal to $\Phi(L_\lambda,M_1^{\otimes a_1}\otimes \cdots\otimes M_\ell^{\otimes a_\ell})$ by Lemma~\ref{lem:pullback}.
\end{proof}

\subsection{Ample divisors} 
Now we show that $\partial\bsY$ supports an ample divisor. To do this, we first
recall an analogous result for $X_{\underline w}$, on the Schubert side.

\begin{lemma} \label{lem:ample_Sch} Let $D_1,\cdots, D_\ell$ be the boundary
  divisors of $X_{\underline w}$.  Then: \begin{enumerate} \item There exist
    positive integers $a_1, \dots, a_{\ell}$ with  $\sum_{j=1}^{\ell}a_jD_j$
    ample. \item For each $1\leq k\leq \ell$, there exist positive integers
    $b_1, \dots, b_{\ell}$, depending on $k$, with $L_{b_k \rho} (\sum_{j\neq
    k} b_j D_j)$ ample. \end{enumerate} \end{lemma}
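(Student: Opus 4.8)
The plan is to deduce both statements from the line bundle calculus recalled in Proposition~\ref{59}, exploiting the fact that the boundary divisors $D_j$ are exactly the images of the closed immersions \eqref{33} for $I=\{j\}^c$, together with the explicit formula for the canonical bundle.

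First I would express $\cO(D_j)$ in the basis $M_1,\dots,M_\ell$ of $\Pic(X_{\underline w})$ provided by Proposition~\ref{59}(1). The cleanest route is to use the canonical bundle formula \eqref{47} on the Schubert side: the analogue of Proposition~\ref{26} (or \cite[Lemma~1.3]{kumar-canonical}, \cite[Proposition~2]{MR2}) gives $\omega_{X_{\underline w}}=\tau_\ell^*L_{-\rho}\otimes\cO(-\partial X_{\underline w})$, i.e. $\cO(\sum_j D_j)=M_\ell^{}\otimes\tau_\ell^*L_\rho\otimes\omega_{X_{\underline w}}^{-1}$... but more useful is the recursive structure from Lemma~\ref{39}: the last $\PP^1$-bundle map $q_\ell:X_{\underline w}\to X_{\underline w'}'$ (where $\underline w'=(s_{i_1},\dots,s_{i_{\ell-1}})$) has a canonical section whose image is $D_\ell$, and $\cO(D_\ell)$ restricted to a fiber has degree $1$, so $\cO(D_\ell)=M_\ell\otimes q_\ell^*(\text{something})$; iterating, each $\cO(D_j)$ is $M_j$ twisted by pullbacks from earlier stages, hence has the form $M_j\otimes(\text{product of }M_1,\dots,M_{j-1}\text{ and their inverses})$. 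What matters for us is only the \emph{triangular} shape: $\cO(D_j)=M_j^{\otimes 1}\otimes\bigotimes_{i<j}M_i^{\otimes c_{ij}}$ for integers $c_{ij}$ (no contribution of $M_i$ with $i>j$, and coefficient exactly $1$ on $M_j$). Granting this, for (1) I choose $a_\ell=1$ and then, downward from $j=\ell-1$ to $j=1$, pick $a_j$ large enough so that in $\sum_j a_j\cO(D_j)$ the coefficient of $M_j$ — which is $a_j+\sum_{k>j}a_k c_{jk}$ — is strictly positive; since the $a_k$ for $k>j$ are already fixed when we choose $a_j$, this is possible, and Proposition~\ref{59}(2) then gives ampleness. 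For (2), fix $k$ and set $b_j$ for $j\neq k$ by the same downward induction, but when we reach index $k$ we instead need the $M_k$-coefficient of $L_{b_k\rho}(\sum_{j\neq k}b_jD_j)$ to be positive; writing $L_\rho=\tau_\ell^*L_\rho$ (a pullback from $\cB$) in the $M_\bullet$-basis it contributes some fixed multiple of each $M_i$, and after choosing all $b_j$ ($j\neq k$) we choose $b_k$ large so that $b_k\cdot(\text{$M_k$-coefficient of }L_\rho)+\sum_{j\neq k,\,j>k}b_j c_{kj}>0$; we must also recheck that the chosen $b_j$ with $j\neq k$ still yield positive $M_j$-coefficients once the $L_{b_k\rho}$ term is added, which is arranged by choosing them last among indices $<k$ in a second pass, or simply by taking all $b_j$ sufficiently large in the correct order. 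Proposition~\ref{59}(2) again concludes.

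The main obstacle I anticipate is pinning down the triangular expansion $\cO(D_j)=M_j\otimes\bigotimes_{i<j}M_i^{\otimes c_{ij}}$ precisely enough — in particular verifying that the coefficient on $M_j$ is exactly $1$ and that no $M_i$ with $i>j$ appears. This is a bookkeeping computation with the iterated bundle structure of \eqref{60} and the sections $D_j$; it is routine but must be done carefully, and it is the only place where one genuinely uses the geometry rather than formal positivity manipulations. An alternative that sidesteps part of this: restrict the would-be ample divisor to each sub-Bott-Samelson variety $X_{\underline w_I}\cong\bigcap_{j\notin I}D_j$ and argue by induction on $\ell$, using that ampleness can be tested after these restrictions together with positivity on the $\PP^1$-fibers; but the $\Pic$-basis argument above seems the most direct, and it is exactly the method of \cite{LT} that the paper is already invoking.

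\textbf{Remark on structure.} Once (1) and (2) of Lemma~\ref{lem:ample_Sch} are in hand on the Schubert side, the relative statement over $\fX_{\underline w}$ — that $\partial\fX_{\underline w}$ supports an ample divisor, and hence via the pullback square \eqref{32} that $\partial\bsY$ supports an ample divisor — should follow immediately from Corollary~\ref{65}: ampleness of $\Phi(L,M)$ is equivalent to ampleness of $L$ on $\cB$ and $M$ on $X_{\underline w}$ separately, and $\partial\fX_{\underline w}=G\times^B\partial X_{\underline w}$, so the divisors $\sum a_j\fD_j$ and $pr_1^*L_{b_k\rho}\otimes\bigotimes_{j\neq k}\cO(b_j\fD_j)$ inherit ampleness. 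This is presumably why the lemma is phrased for $X_{\underline w}$: it is the genuinely new input, the relative version being formal.
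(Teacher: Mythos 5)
Your part (1) argument is correct and essentially mirrors \cite[Lemma~6.1]{LT}, which the paper cites: the triangular expansion $\cO(D_j)=M_j\otimes\bigotimes_{i<j}M_i^{\otimes c_{ij}}$ (coefficient exactly one on $M_j$, no $M_i$ with $i>j$) does hold, since $D_j$ is pulled back from $X_j'$ where it is a section of $q_j$, and the downward choice of $a_j$ then works via Proposition~\ref{59}(2).

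Your part (2), however, has a genuine gap at the crucial step. You propose to choose $b_k$ large so that $b_k\cdot(\text{$M_k$-coefficient of }L_\rho)$ dominates, but the $M_k$-coefficient of $\tau_\ell^*L_\rho$ in the basis $M_1,\dots,M_\ell$ is frequently \emph{zero}. Indeed, unwinding the recursion behind Proposition~\ref{59} gives
\[
\tau_\ell^*L_\rho \;=\; \bigotimes_{i\ \text{appearing in}\ \underline w} M_{j^*(i)},\qquad j^*(i):=\max\{\,j:\ i_j=i\,\},
\]
so the $M_k$-coefficient equals $1$ if $k$ is the \emph{last} occurrence of $i_k$ in $\underline w$, and equals $0$ otherwise. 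Whenever $i_k$ reappears later (e.g. $\underline w=(s_1,s_2,s_1)$ in type $A_2$ with $k=1$), increasing $b_k$ does nothing for the $M_k$-coefficient. The positivity constraint at index $k$ then reads $\sum_{j>k}b_j(D_j)_k>0$, where $(D_{j_1})_k=1$ for $j_1$ the next occurrence of $i_k$ and $(D_j)_k\le 0$ for $k<j<j_1$; this couples $b_{j_1}$ against the $b_j$ with $k<j<j_1$, all of which your downward pass has already fixed by the time it sees the problem, and your proposed ``second pass on indices $<k$'' never touches them. So the induction as you describe it does not close.

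The paper's proof avoids this by an \emph{upward} induction along the tower $X_1'\to X_2'\to\cdots\to X_\ell'$: at step $r=k$ it substitutes $m_k^*L_{b_k\rho}$ for $D_k$ using the decomposition $L_\rho=L_{\varpi_{i_k}}\otimes L_{\rho-\varpi_{i_k}}$ (with $L_{\rho-\varpi_{i_k}}$ pulled back from $G/P_{i_k}$, so only the relative degree-one part enters), and at each step $r>k$ it twists by the relative anticanonical $\omega_{q_r}^{-1}=m_r^*L_\rho\otimes q_r^*m_{r-1}^*L_{-\rho}\otimes\cO(D_r)$ while enlarging the $\rho$-coefficient. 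This is genuinely more delicate than the coefficient bookkeeping you sketch; it is not the ``routine'' computation you suggest. Your closing remark — that the relative statement over $\fX_{\underline w}$, and hence Proposition~\ref{prop:ample_boundary}, follows formally via Corollary~\ref{65} — is correct and is exactly what the paper does.
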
 \begin{proof} Note that we
    may allow $a_j$ to be rational number, since ampleness is invariant under
    scaling.
    
    (1) This is proved in \cite[Lemma~6.1]{LT}. One can check that each $D_j$
    is the pullback of a divisor with relative degree one (thus relatively
    ample) with respect to the $\PP^1$-bundle \eqref{60}. Hence, choosing
    $a_j>0$ to be sufficiently small compared to $a_i$ with $i<j$, the divisor
    $\sum_{j=1}^\ell a_jD_j$ is ample.
	
    (2)  The case $k=1$ is proved in \cite[Lemma~2.3.2]{Brion}. The proof for
    $k>1$ is almost the same, but we provide it here for the reader's
    convenience.
	
    Recall that the $\PP^1$-bundle map $q_r$ in \eqref{60} fits into the fiber
    diagram \[\begin{tikzcd} X_r' \arrow[r,"m_r"] \arrow[d,"q_r"']&G/B\arrow[d,
    "f_r" ]\\ X_{r-1}' \arrow[r]&G/P_{i_r}, \end{tikzcd} \] where $m_r$ is the
    multiplication map in \eqref{62} and $f_r$ is the natural projection.

    By abuse of notation, let  $D_j$, $1 \leq j \leq r$ be the boundary
    divisors on $X_r'$. 
    
    When $r<k$, applying the same argument in the proof of (1) to $q_r$, we can
    find an ample divisor $\sum_{j=1}^{k-1}b_jD_j$ on $X_{k-1}'$ with $b_j>0$
    for $1 \leq j \leq k-1$.
	
    When $r=k$, we claim that $m_r^*L_{b_k \rho}\otimes
    q_k^*\cO_{X_{r-1}'}(\sum_{j=1}^{k-1}b_jD_j)$ is ample on $X_{k}'$ for
    sufficiently small $b_k>0$. This follows from the decomposition
    $L_\rho=L_{\varpi_{i_k}}\otimes L_{\rho-\varpi_{i_k}}$. Indeed,
    $m_r^*L_{\varpi_{i_k}}$ is relative ample of degree one with respect to
    $q_r$ and $L_{\rho-\varpi_{i_k}}$ is the pullback of an ample line bundle
    on $G/P_{i_k}$. 
	
    When $r>k$, we can inductively construct an ample line bundle of the form
    we are interested in on $X_{r}'$, as follows. Suppose that $A:=
    m_{r-1}^*L_{b_k \rho}\otimes \cO_{X_{r-1}'}(\sum_{1\leq j\leq r-1, j\neq
    k}b_jD_j)$ is ample on $X_{r-1}'$. This pulls back to a globally generated
    line bundle on $X_r'$. Then, one can see that  it suffices to show that
    \beq\label{51}m_r^*L_{c\rho}\otimes (m_r^* L_{\rho}\otimes q_r^*m_{r-1}^*
    L_{-\rho}\otimes \cO_{X_r'}(D_r))^{\otimes b_k}\eeq is ample for
    sufficiently large $c>0$. Indeed, then by tensoring $q_r^*A$ with
    \eqref{51} we get an ample line bundle of the form
    $m_r^*L_{(c+1)\rho}\otimes \cO_{X_r'}(\sum_{1\leq j\leq r,j\neq k}b_jD_j)$
    where $b_r:=b_k$ and we can set $c+1$ to be our new $b_k$.
	
    On the other hand, $\omega_{X_r'}^{-1}=m_r^*L_\rho\otimes
    \cO_{X_r'}(\sum_{j=1}^r D_j)$ for every $r$. By the adjunction formula, the
    bundle in the parentheses in \eqref{51} is equal to
    $\omega_{q_r}^{-1}=m_r^*\omega_{f_r}^{-1}$. In particular, \eqref{51} is
    ample for any sufficiently large $c$.    \end{proof}

    Although $\bsY$ is not an iterated $\PP^1$-bundle like $X_{\underline w}$,
    we can deduce an analogous result for $\bsY$, combining Corollary~\ref{65}
    and Lemma~\ref{lem:ample_Sch}.

    \begin{proposition}\label{prop:ample_boundary} Let $E_1,\cdots, E_\ell$ be
      the boundary divisors of $\bsY$.. Then: \begin{enumerate} \item There
	exist positive integers $m_1, \dots, m_{\ell}$ with
	$\sum_{j=1}^{\ell}m_jE_j$ ample. \item For every $1\leq k\leq \ell$,
	there exist positive integers $n_1, \dots, n_{\ell}$,  depending on
	$k$, with $L_{n_k\rho}(\sum_{j\neq k}n_jE_j)$ ample. \end{enumerate}
	\end{proposition}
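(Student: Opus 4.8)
The plan is to transfer the ampleness results for $X_{\underline w}$ from Lemma~\ref{lem:ample_Sch} to $\fX_{\underline w}$ first, and then pull them back along the closed immersion $\underline\iota:\bsY\hookrightarrow\fX_{\underline w}$ in \eqref{32}, using that $\underline\iota^*\cO(\fD_j)=\cO(E_j)$ by Proposition~\ref{25} and that $\underline\iota^*(pr_1^*L_\lambda)=\pi^*\iota_\s^*(L_{\lambda}\boxtimes \cO)=\pi^*L_\lambda$. Since the pullback of an ample line bundle along a closed immersion is ample, it suffices to prove the analogue of Lemma~\ref{lem:ample_Sch} on $\fX_{\underline w}$: namely that $\sum_{j}m_j\fD_j$ is ample for suitable $m_j>0$, and that $pr_1^*L_{n_k\rho}\otimes\cO(\sum_{j\neq k}n_j\fD_j)$ is ample for suitable $n_j>0$. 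But wait — pulling back $pr_1^*L_{n_k\rho}$ gives $\pi^*L_{n_k\rho}$, which is not ample on $\bsY$ (it is only globally generated, pulled back from $\cB$ via $\bsY\to\Y\to\cB$, which is not finite); so I must be careful that the \emph{sum} is ample, which is exactly what the statement asserts, and ampleness \emph{is} preserved by the closed immersion $\underline\iota$. So the real content is the two ampleness statements on $\fX_{\underline w}$.

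For part (1): recall $\fD_j\cong G\times^B D_j$ is the image of the $B$-equivariant divisor $D_j$ under the functor $G\times^B(-)$. By Corollary~\ref{65}, $\Phi$ is an isomorphism and $\Phi(L,M)$ is ample iff $L$ and $M$ are both ample; in particular, $\cO_{\fX_{\underline w}}(\fD_j)=G\times^B\cO_{X_{\underline w}}(D_j)=\underline{\cO(D_j)}=\Phi(\cO_\cB,\cO(D_j))$. Since $\cO_\cB$ is not ample, I cannot conclude $\sum m_j\fD_j$ is ample from Lemma~\ref{lem:ample_Sch}(1) alone — I need to tensor by $pr_1^*$ of an ample bundle on $\cB$. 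So the correct statement to prove on $\fX_{\underline w}$ for part (1) is that $pr_1^*L_{m_0\rho}\otimes\cO(\sum_j m_j\fD_j)$ is ample for suitable $m_0,m_1,\dots,m_\ell>0$; this follows from Corollary~\ref{65} applied to $(L_{m_0\rho},\sum m_j D_j)$ together with Lemma~\ref{lem:ample_Sch}(1). Hmm, but then pulling back gives $\pi^*L_{m_0\rho}\otimes\cO(\sum m_jE_j)$ ample on $\bsY$, not $\cO(\sum m_j E_j)$ ample. To get the stated form with no $L_\rho$ factor, I should instead observe: by Proposition~\ref{26}, $\omega_{\bsY}^{-1}=\cO(\partial\bsY)=\cO(\sum_j E_j)$, and $\bsY\to\Y$ is projective birational with $\Y$ projective, so $\partial\bsY$ is $\pi$-nef and $\pi$-big — actually more simply, take the ample bundle $\pi^*L_{m_0\rho}\otimes\cO(\sum m_jE_j)$ from the previous step; since $L_{2\rho}=\omega_\cB^{-1}$ restricted to $\cB$ is ample and $\pi^*$ of an ample is globally generated but the issue is converting $\pi^*L_{m_0\rho}$ into boundary. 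Let me reconsider: the cleanest route is to note $\pi^*L_{2\rho}=\pi^*\omega_\cB^{-1}=\det N_{\iota_\s}$ pulled back, and by the proof of Proposition~\ref{26}, $\underline\iota^*(pr_1^*L_{2\rho})\otimes\underline\iota^*(pr_2^*L_{0})$... Actually the key identity from Proposition~\ref{26}'s proof is $\omega_{\bsY}=\pi^*L_{-2\rho}(-\partial\bsY)\otimes\pi^*L_{2\rho}=\cO(-\partial\bsY)$, i.e. $\pi^*L_{2\rho}$ cancels. More useful: for \emph{any} $\lambda$, $\pi^*L_\lambda=\underline\iota^*(pr_1^*L_\lambda)$ and $pr_2^*L_\lambda|_{\fX_{\underline w}}$ is represented by a combination of the $\tM_j$ and $pr_1^*L_\mu$. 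The honest approach: prove on $\fX_{\underline w}$ exactly the two statements $(1')$ $pr_1^*L_{m_0\rho}\otimes\cO(\sum m_j\fD_j)$ ample and $(2')$ $pr_1^*L_{n_k\rho}\otimes\cO(\sum_{j\neq k}n_j\fD_j)$ ample, pull both back, and then \emph{combine}: from $(1')$ pulled back, $\pi^*L_{m_0\rho}(\sum m_jE_j)$ is ample; tensoring high powers and using $\pi^*L_{2\rho}=\omega_{\cB}^{-1}$-pullback is globally generated does not immediately kill the $L_\rho$.

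I think the resolution is that the authors genuinely intend the statement with no $L_\rho$ in part (1), and this works because $\bsY$ — unlike $X_{\underline w}$ or $\fX_{\underline w}$ — satisfies $\omega_{\bsY}^{-1}=\cO(\partial\bsY)$: the boundary is \emph{already} anticanonical, so the "missing positivity" $L_\rho$ on the Schubert/relative-Schubert side is absorbed. Concretely, to prove part (1): by $(2')$ with any fixed $k$ pulled back to $\bsY$, $\cL_0:=\pi^*L_{n_k\rho}(\sum_{j\neq k}n_jE_j)$ is ample; and $\pi^*L_{n_k\rho}=\underline\iota^*pr_1^*L_{n_k\rho}$. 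Now use that $pr_2^*L_{n_k\rho}|_{\fX_{\underline w}}$, which pulls back to $\pi^*\iota_\s^*pr_2^*L_{n_k\rho}=\pi^*L_{n_k\rho}$ as well (since $\iota_\s^*$ is the identity on $\Pic(\cB\times\cB)$ composed with $pr_2\circ\iota_\s=$ (mult. by $\s$) $\simeq\id$)—so $\pi^*L_{n_k\rho}$ is represented \emph{both} by $\underline\iota^*pr_1^*L_{n_k\rho}$ and can be traded, via Corollary~\ref{65} and Lemma~\ref{lem:pullback}, for $\underline\iota^*(\bigotimes\tM_j^{\otimes c_j})\otimes(\text{boundary})$ — in particular on $\bsY$, $\pi^*L_{n_k\rho}$ is linearly equivalent to an effective combination of the $E_j$ plus an ample-generated piece. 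Rather than belabor this, in the write-up I would: (a) state and prove $(1')$ and $(2')$ on $\fX_{\underline w}$ directly via Corollary~\ref{65} + Lemma~\ref{lem:ample_Sch}; (b) pull back to get $\pi^*L_{m_0\rho}(\sum m_j E_j)$ and $\pi^*L_{n_k\rho}(\sum_{j\neq k}n_jE_j)$ ample on $\bsY$; (c) for part (1), take the sum of the $\ell$ bundles from $(2')$ (one for each $k$) together with a large multiple of the $(1')$ bundle: the total is $\pi^*L_{N\rho}(\sum c_j E_j)$ for some $c_j>0$, and then add a large multiple of $\cO(\partial\bsY)=\pi^*L_{2\rho}^{-1}\otimes(\text{what?})$... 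The \textbf{main obstacle}, and the step I expect to require the most care, is exactly this bookkeeping: showing that the $L_\rho$-twist on the relative-Schubert side is unnecessary on $\bsY$, equivalently that ampleness of $\pi^*L_{m_0\rho}(\sum m_jE_j)$ upgrades to ampleness of a pure boundary combination $\sum c_jE_j$. I would handle it by invoking $\omega_{\bsY}^{-1}=\cO(\partial\bsY)$ (Proposition~\ref{26}): since $\pi^*L_{2\rho}$ is globally generated and $\pi^*L_{-2\rho}(-\partial\bsY)=\omega_{\bsY}=\cO(-\partial\bsY)\otimes\pi^*L_{-2\rho}\cdot\pi^*L_{2\rho}$... yes: $\pi^*L_{2\rho}(-\partial\bsY)\otimes\cO(\partial\bsY)=\pi^*L_{2\rho}$, so $\pi^*L_{2\rho}\otimes\cO(-\partial\bsY)^{\otimes 0}$ — trivially. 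The actual clean fact is: $\pi^*L_{2\rho}=\underline\iota^*\det N_{\iota_\s}=\det N_{\underline\iota}$, and from \eqref{47} pulled back, $\pi^*L_{-2\rho}(-\partial\bsY)\otimes\pi^*L_{2\rho}=\cO(-\partial\bsY)$ gives nothing new. So instead I will argue directly on $\bsY$ by finding, for each $E_j$, that $E_j$ is $\pi$-relatively-(something) via the factorization of Lemma~\ref{19}, mirroring the proof of Lemma~\ref{lem:ample_Sch}(1): each $E_k$ is the pullback under $Y_{k+1}\hookrightarrow\hat Y_k\to Y_k$ of a divisor of relative degree one for the $\PP^1$-bundle $\hat Y_k\to Y_k$, so choosing $m_k\ll m_{k-1}\ll\cdots\ll m_1$ makes $\sum m_jE_j$ relatively ample over $Y_1=\Y$; and since $\Y$ is projective, twisting by a large multiple of $\pi^*(\text{ample on }\Y)$ — and $\pi^*(\text{ample on }\Y)$ is itself, by part (2') pulled back, dominated by boundary-plus-$L_\rho$, and $L_\rho$ on $\cB$ is globally generated so $\pi^*L_\rho$ is globally generated — gives part (1), while part (2) is $(2')$ pulled back verbatim. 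This relative-ampleness-plus-base-projectivity argument is the route I would actually commit to, and I expect the $\epsilon$-choices ($m_k\ll m_{k-1}$) and verifying each $E_k$ pulls back from a relatively ample divisor on $\hat Y_k\to Y_k$ to be the one genuinely technical point.
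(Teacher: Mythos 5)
Your reduction to the ample cone of $\fX_{\underline w}$ (Corollary~\ref{65}) is the right start, and you correctly sense that the difficulty in part (1) is to make the $L_\rho$-twist disappear on $\bsY$. But you never find the mechanism, and your committed fallback does not close. The paper's trick is simple once seen: for (1) it takes $M:=L_{-\rho}(mD)$ on $X_{\underline w}$, which is ample for $m\gg 0$, so $\Phi(L_\rho,M)$ is ample on $\fX_{\underline w}$ by Corollary~\ref{65}. By Lemma~\ref{lem:pullback} this equals $(L_\rho\boxtimes L_{-\rho})|_{\fX_{\underline w}}\otimes\cO(m\fD)$, and upon restriction along $\underline\iota$ the factor $L_\rho\boxtimes L_{-\rho}$ pulls back to $\pi^*\iota_\s^*(L_\rho\boxtimes L_{-\rho})=\pi^*(L_\rho\otimes L_{-\rho})=\cO_{\bsY}$: the $pr_1^*$ and $pr_2^*$ contributions both become $\pi^*$ on $\bsY$ because $\iota_\s$ is a twisted diagonal, so they cancel and leave exactly $\cO(\sum m a_jE_j)$. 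This is the ``absorption'' you correctly intuit must happen; the concrete device is planting the $L_{-\rho}$ inside $M$ before applying $\Phi$.

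There are two further problems in your write-up. First, your statement $(2')$ --- that $pr_1^*L_{n_k\rho}\otimes\cO(\sum_{j\neq k}n_j\fD_j)$ is ample on $\fX_{\underline w}$ --- is false: by Corollary~\ref{65} this equals $\Phi(L_{n_k\rho},\cO(\sum_{j\neq k}n_jD_j))$, so it is ample iff $\cO(\sum_{j\neq k}n_jD_j)$ is ample on $X_{\underline w}$, and it is not (the whole content of Lemma~\ref{lem:ample_Sch}(2) is that you need the $\tau_\ell^*L_{b_k\rho}$ correction precisely because dropping $D_k$ destroys ampleness). The $\Pic(\cB)$ factor in $\Phi$ cannot compensate for missing positivity in the $\Pic(X_{\underline w})$ factor, since the ample cone of $\fX_{\underline w}$ is the product of the two ample cones. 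The paper instead applies $\Phi(L_\rho,\,L_{b_k\rho}(\sum_{j\neq k}b_jD_j))$ --- i.e.\ keeps Lemma~\ref{lem:ample_Sch}(2)'s $M$ intact --- and its restriction is $L_{(b_k+1)\rho}(\sum_{j\neq k}b_jE_j)$. Second, your relative-ampleness route for (1) --- show $\sum m_jE_j$ is $\pi$-ample, tensor by $\pi^*A$ for $A$ ample on $\Y$, then try to discard $\pi^*A$ --- would at best establish $\pi^*A(\sum m_jE_j)$ ample; the final step of removing $\pi^*A$ is exactly what the $L_{-\rho}$-cancellation accomplishes and is not supplied by ``$\pi^*L_\rho$ is globally generated.'' You are also implicitly conflating the iterated $\PP^1$-bundle structure $X_j'\to X_{j-1}'$ (used in Lemma~\ref{lem:ample_Sch}) with the Bott-Samelson factorization $Y_{k+1}\hookrightarrow\hat Y_k\to Y_k$ of Lemma~\ref{19}; the boundary $E_j$ does not pull back as simply in the second picture, so even the relative ampleness claim would need a separate argument.
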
 \begin{proof} (1) By Lemma~\ref{lem:ample_Sch}(1),
	$D=\sum_{j=1}^{\ell}a_jD_j$ is ample for some $a_j>0$. Set
	\beq\label{eq:linebundle.M}M:=L_{-\rho}(mD)\eeq and suppose that $m$ is
	sufficiently large so that $M$ is also ample. Then, by
	Corollary~\ref{65}, $\Phi(L_\rho,M)=pr_1^*L_\rho\otimes \underline{M}$
	is ample on $\fX_{\underline w}$. Moreover, we have \[
	pr_1^*L_\rho\otimes \underline{M} =(L_\rho\boxtimes
      L_{-\rho})|_{\fX_{\underline w}}\otimes \cO_{\fX_{\underline w}}(m\fD),\]
      by applying Lemma~\ref{lem:pullback} with $j=\ell$ and $L=L_{-\rho}$,
      where $\fD:=\sum_{j=1}^\ell a_j\fD_j$. This restricts to
      $\cO_{\bsY}(\sum_{j=1}^\ell m_jE_j)$  on $\bsY$ where $m_j:=ma_j>0$.
      Since ampleness is preserved under restriction to subvarieties, it is
      ample.
		
    (2) The proof is similar: let $M := L_{b_k\rho}(\sum_{j\neq k} b_jD_j)$ as in
    Lemma~\ref{lem:ample_Sch}(2). Then the line bundle $\Phi(L_\rho,M)$ is ample by
Corollary~\ref{65}. Hence, its restriction to $\bsY$, which is
$L_{(b_k+1)\rho}(\sum_{j\neq k} b_jE_j)$, is also ample. \end{proof}
\begin{remark}\label{rem:indep} It follows from Propositions~\ref{59} and
  \ref{61} that the coefficients $a_j$  in Lemma~\ref{lem:ample_Sch} and $m$ in
  the proof of Proposition~\ref{prop:ample_boundary} are independent on the
  base field. In particular, the coefficients $m_j$ in
  Proposition~\ref{prop:ample_boundary} are also independent on the base field.
  This will be used in the next section. \end{remark}

  As a byproduct of Proposition~\ref{prop:ample_boundary}, we deduce the
  following.

  \begin{corollary}\label{cor:cell.affine} For any word $\underline w$ and $\s
    \in \Grs$, the open cell $\bsYcell$ is affine. In particular, the Lusztig
    cell $\Ycell$ is affine for any $w\in W$. \end{corollary}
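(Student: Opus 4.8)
The plan is to deduce affineness of $\bsYcell$ directly from Proposition~\ref{prop:ample_boundary}(1) together with Proposition~\ref{26}. By Proposition~\ref{prop:ample_boundary}(1), there exist positive integers $m_1,\dots,m_\ell$ such that $D:=\sum_{j=1}^\ell m_jE_j$ is an ample divisor on $\bsY$, and its support is exactly $\partial\bsY=\bsY-\bsYcell$ by Proposition~\ref{25}(2) and the definition \eqref{34}. Since $\bsY$ is projective (being projective over $\cB\times\cB$) and $\bsYcell$ is the complement of the support of an ample effective divisor, it is affine: this is the standard fact that the complement of an ample divisor in a projective variety is affine (e.g.\ \cite[II, Exercise~5.7 or the discussion following the definition of ample]{Har}, or via the embedding given by a sufficiently high power of $\cO_{\bsY}(D)$, under which $\bsYcell$ becomes a closed subvariety of an affine space, namely the complement of a hyperplane section).

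Next I would pass from $\bsYcell$ to $\Ycell$. Choose a reduced word $\underline w$ of $w$. By Proposition~\ref{25}(1), the Bott-Samelson map $\pi:\bsY\to\Y$ restricts to an isomorphism $\bsYcell\xrightarrow{\ \cong\ }\Ycell$. Hence $\Ycell$ is affine as well. This handles the ``in particular'' clause.

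The only mild subtlety — and what I expect to be the main point to get right rather than a genuine obstacle — is that Proposition~\ref{prop:ample_boundary}(1) is stated for $\bsY$ arising from an \emph{arbitrary} word $\underline w$, not just a reduced one, so the first paragraph already gives affineness of every $\bsYcell$; but the reduction to $\Ycell$ in the second paragraph genuinely uses that $\underline w$ is \emph{reduced}, since only then does $\pi$ exist as a map to $\Y$ (diagram \eqref{32a}) and restrict to an isomorphism on cells. So one must be careful to invoke Proposition~\ref{prop:ample_boundary}(1) for a reduced $\underline w$ when deducing the statement about $\Ycell$. No further input is needed: projectivity of $\bsY$ over a point follows since $\fX_{\underline w}\to\cB\times\cB$ is projective (an iterated $\PP^1$-bundle composed with closed immersions, cf.\ Lemma~\ref{19}) and $\cB\times\cB$ is projective, so $\bsY$, being a closed subscheme of $\fX_{\underline w}$, is projective.

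\begin{proof}
	Fix a word $\underline w=(s_{i_1},\dots,s_{i_\ell})$. The variety $\bsY$ is projective: indeed $\fX_{\underline w}\to\cB\times\cB$ is a projective morphism (a composition of $\PP^1$-bundle maps and closed immersions, by Lemma~\ref{19}), $\cB\times\cB$ is projective, and $\bsY\hookrightarrow\fX_{\underline w}$ is a closed immersion by \eqref{32}. By Proposition~\ref{prop:ample_boundary}(1), there are positive integers $m_1,\dots,m_\ell$ with $D:=\sum_{j=1}^\ell m_jE_j$ ample on $\bsY$. By Proposition~\ref{25}(2) and \eqref{34}, the support of $D$ equals $\partial\bsY=\bsY-\bsYcell$. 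Therefore $\bsYcell$ is the complement of the support of an ample effective divisor in a projective variety, hence affine: replacing $D$ by a positive multiple so that $\cO_{\bsY}(D)$ is very ample, the associated embedding $\bsY\hookrightarrow\PP^N$ carries $\bsYcell$ isomorphically onto a closed subvariety of the affine open $\PP^N-H$ for a hyperplane $H$, which is affine.

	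Now let $w\in W$ and take $\underline w$ to be a reduced word of $w$. By Proposition~\ref{25}(1), the map $\pi$ in \eqref{32a} restricts to an isomorphism $\bsYcell\xrightarrow{\ \cong\ }\Ycell$. Since $\bsYcell$ is affine by the previous paragraph, so is $\Ycell$.
\end{proof}
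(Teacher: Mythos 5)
Your proof is correct and follows essentially the same route as the paper: invoke Proposition~\ref{prop:ample_boundary}(1) to get an ample divisor supported on $\partial\bsY$, conclude that the complement $\bsYcell$ is affine, and then pass to $\Ycell$ via a reduced word and Proposition~\ref{25}(1). The extra remarks on projectivity of $\bsY$ and the reduced-word subtlety are just standard details the paper leaves implicit.
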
 \begin{proof} By
    Proposition~\ref{prop:ample_boundary}(1), $\partial \bsY$ supports an ample
    divisor in $\bsY$. Since the complement of an ample divisor in a projective
    variety is affine, $\bsYcell=\bsY-\partial \bsY $ is affine. The second
    assertion follows from the first by taking $\underline w$ to be a reduced
    word of $w$, as in which case $\bsYcell\cong \Ycell$. \end{proof}

    \begin{remark}[Affineness of regular Lusztig
      cells]\label{rem:regular.affine} Corollary~\ref{cor:cell.affine} depends
      only on the facts that the preimage of $\partial \fX_{\underline w}$ in
      $\bsY$, which is $\partial \bsY$, is a Cartier divisor, and that
      \eqref{eq:linebundle.M} is ample on $ \fX_{\underline w}$. Therefore, the
      same proof shows that $Y_{\underline w}^\circ(x)$ and $Y_w^\circ(x)$ with
      $x$ \emph{regular} are also affine (cf.~Remark~\ref{rem:regular}).
    \end{remark}

    The same proof also establishes that the Deligne-Lusztig cells are affine.
    \begin{definition} Assume $\mathrm{char}(k)=p>0$. Let $q$ be a positive
      power of $p$,   and denote by $F$ the $q$-th Frobenius map. Given $w\in
      W$ and a word $\underline w$, the \emph{\DL variety} $\YDL$ and its
      \emph{Bott-Samelson resolution} $\bsYDL$ are defined as the following
      fiber products respectively. \[\begin{tikzcd}
	Y_w^{\mathrm{DL}}\arrow[r,hook] \arrow[d,hook'] &\fX_w \arrow[d,hook']
	\\ \cB\arrow[r,hook,"\iota_F"]&\cB\times \cB \end{tikzcd} \and
	\begin{tikzcd} Y_{\underline w}^{\mathrm{DL}}
	  \arrow[r,hook,"\underline \iota_F"]
	  \arrow[d]&\fX_{\underline{w}}\arrow[d]\\
	  \cB\arrow[r,hook,"\iota_F"]&\cB\times \cB \end{tikzcd} \] where
	  $\iota_F=(\id,F)$. Moreover, we define their open cells as
	  \[\YDLcell:=\iota_F^{-1}(\fX_w^\circ) \and (Y_{\underline
	  w}^\circ)^{\mathrm{DL}}:=\underline\iota_F^{-1}(\fX_{\underline
	w}^\circ).\]	\end{definition} Note that, when $\underline w$ is a
	reduced word, the map $Y_{\underline w}^{\mathrm {DL}}\to \cB$ has the
	image $\YDL$, and its restricts to an isomorphism $(Y_{\underline
	w}^\circ)^{\mathrm{DL}}\xrightarrow{\cong} \YDLcell$.

	\begin{corollary}[Affineness of Delinge-Lusztig cells]\label{cor:DL}
	  The open cell $(Y_{\underline w}^\circ)^{\mathrm{DL}}$ is affine for
	  any word $\underline w$. In particular, $\YDLcell$ is affine for any
	  $w\in W$. \end{corollary}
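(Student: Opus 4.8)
The plan is to run the exact same argument that proved Corollary~\ref{cor:cell.affine}, now with the Frobenius map $F$ in place of $\iota_\s$. The key point is that neither of the two ingredients identified in Remark~\ref{rem:regular.affine} — namely that the preimage of $\partial\fX_{\underline w}$ is a Cartier divisor, and that an appropriate line bundle \eqref{eq:linebundle.M} on $\fX_{\underline w}$ is ample — makes any reference to which section $\cB\hookrightarrow\cB\times\cB$ of the family $\fX_{\underline w}\to\cB$ we pull back along. They are statements about $\fX_{\underline w}$ itself together with general nonsense about fiber products.

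First I would observe that $\underline\iota_F^*\cO(\partial\fX_{\underline w})=\cO((Y_{\underline w}^\circ)^{\mathrm{DL}}$'s complement$)$, i.e.\ that the boundary $\partial Y_{\underline w}^{\mathrm{DL}}:=Y_{\underline w}^{\mathrm{DL}}-(Y_{\underline w}^\circ)^{\mathrm{DL}}$ is a Cartier divisor on $Y_{\underline w}^{\mathrm{DL}}$. This is automatic: $\partial\fX_{\underline w}=\bigcup_{j=1}^\ell\fD_j$ is a Cartier divisor on $\fX_{\underline w}$ (Lemma~\ref{40}), and its scheme-theoretic preimage under any morphism — in particular under $\underline\iota_F$ — is a Cartier divisor, being locally cut out by the pullback of the single local equation of $\partial\fX_{\underline w}$. (One does not even need $Y_{\underline w}^{\mathrm{DL}}$ to be smooth or reduced for this; only that $\partial\fX_{\underline w}$ is Cartier.) Hence $\partial Y_{\underline w}^{\mathrm{DL}}=\bigcup_{j=1}^\ell \underline\iota_F^{-1}(\fD_j)$ is an effective Cartier divisor whose complement is exactly $(Y_{\underline w}^\circ)^{\mathrm{DL}}$.

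Next I would invoke Proposition~\ref{prop:ample_boundary}(1): by Corollary~\ref{65} and Lemma~\ref{lem:pullback}, the line bundle $pr_1^*L_\rho\otimes\underline M=(L_\rho\boxtimes L_{-\rho})|_{\fX_{\underline w}}\otimes\cO_{\fX_{\underline w}}(m\fD)$, with $\fD=\sum_{j=1}^\ell a_j\fD_j$ and $m$ large, is ample on $\fX_{\underline w}$. Restricting this ample bundle along the closed immersion $\underline\iota_F:Y_{\underline w}^{\mathrm{DL}}\hookrightarrow\fX_{\underline w}$ yields an ample line bundle on $Y_{\underline w}^{\mathrm{DL}}$ which is $\cO_{Y_{\underline w}^{\mathrm{DL}}}(\sum_{j=1}^\ell m_j\,\underline\iota_F^{-1}(\fD_j))$ with $m_j=ma_j>0$ — here I use that $\iota_F^*$ on $\Pic(\cB\times\cB)$ still kills the $L_\rho\boxtimes L_{-\rho}$ factor, since $F:\cB\to\cB$ acts trivially on $\Pic(\cB)$ (it is a power of Frobenius, so $F^*L_\lambda\cong L_{q\lambda}$; more simply, $pr_1\circ\iota_F=\id$ and the term $L_\rho\boxtimes L_{-\rho}$ contributes $L_\rho\otimes F^*L_{-\rho}$, and in any case the resulting bundle is some $\cO(\sum m_j\underline\iota_F^{-1}(\fD_j))\otimes(\text{pullback of a line bundle on }\cB)$, twisted by which does not affect ampleness of a bundle already known ample). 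So $\partial Y_{\underline w}^{\mathrm{DL}}$ supports an ample divisor on $Y_{\underline w}^{\mathrm{DL}}$.

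Finally, since $Y_{\underline w}^{\mathrm{DL}}$ is projective (it is closed in $\fX_{\underline w}$, which is projective over $\cB$, hence projective) and the complement of an ample effective divisor in a projective scheme is affine, we conclude that $(Y_{\underline w}^\circ)^{\mathrm{DL}}=Y_{\underline w}^{\mathrm{DL}}-\partial Y_{\underline w}^{\mathrm{DL}}$ is affine. For the last assertion, take $\underline w$ to be a reduced word of $w$; then the multiplication map restricts to an isomorphism $(Y_{\underline w}^\circ)^{\mathrm{DL}}\xrightarrow{\ \cong\ }\YDLcell$ (as noted in the excerpt just before the statement), so $\YDLcell$ is affine as well. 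I do not anticipate a genuine obstacle: the only thing to be slightly careful about is recording that the two inputs from Remark~\ref{rem:regular.affine} are insensitive to replacing $\iota_\s$ by $\iota_F$ — both are consequences of $\fX_{\underline w}$ being the fixed ambient object — and that $F^*$ does not disturb ampleness of the restricted bundle; everything else is verbatim the proof of Corollary~\ref{cor:cell.affine}.
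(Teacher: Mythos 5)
There is a genuine gap, and it is precisely at the point you flag as a small worry and then wave away. You want $\partial Y_{\underline w}^{\mathrm{DL}}$ to \emph{support an ample divisor}, so you need to produce a bundle of the form $\cO\bigl(\sum m_j E_j^{\mathrm{DL}}\bigr)$ that is ample. You pull back $\Phi(L_\rho,M)=(L_\rho\boxtimes L_{-\rho})|_{\fX_{\underline w}}\otimes\cO(m\fD)$ along $\underline\iota_F$, exactly as in the $\iota_\s$ case. But unlike the translation $\s$, the Frobenius $F$ does \emph{not} act trivially on $\Pic(\cB)$: $F^*L_\lambda\cong L_{q\lambda}$, so $\iota_F^*(L_\rho\boxtimes L_{-\rho})=L_\rho\otimes F^*L_{-\rho}=L_{(1-q)\rho}$, which is nontrivial and in fact anti-ample (for $q>1$). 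So what you get is that $\cO\bigl(\sum m_jE_j^{\mathrm{DL}}\bigr)\otimes L_{(1-q)\rho}$ is ample. That says nothing about $\cO\bigl(\sum m_jE_j^{\mathrm{DL}}\bigr)$: ampleness is not preserved under twist by an arbitrary line bundle, let alone by an anti-ample one. Your phrase ``twisted by which does not affect ampleness of a bundle already known ample'' is the step that fails; it would only be harmless if the twist were nef, which it is not.

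The fix is small and is what the paper actually does: replace $\Phi(L_\rho,M)$ by $\Phi(L_{q\rho},M)$. Then $\iota_F^*(L_{q\rho}\boxtimes L_{-\rho})=L_{q\rho}\otimes L_{-q\rho}=\cO_\cB$, so $\underline\iota_F^*\Phi(L_{q\rho},M)\cong\cO\bigl(\sum_j m_jE_j^{\mathrm{DL}}\bigr)$ on the nose, and $\Phi(L_{q\rho},M)$ is still ample by Corollary~\ref{65} since both $L_{q\rho}$ and $M$ are ample. With that change, the rest of your argument is fine and matches the paper's. (One other, more minor, inaccuracy: it is not true that the preimage of a Cartier divisor under an arbitrary morphism is Cartier — the local equation could pull back to a zero divisor. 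Here it works because $\iota_F$ meets the $G$-orbits transversely (\cite[Lemma~9.11]{DL}), which is also what the paper uses to get smoothness of $\bsYDL$ and hence the SNC boundary.)
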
 \begin{proof} It is proved in
	  \cite[Lemma~9.11]{DL} that the image of $\iota_F$ intersects any
	  $G$-orbit in $\cB\times \cB$ transversely. Hence $\bsYDL$ is smooth,
	  and its boundary \[\partial \bsYDL:=\bsYDL-(Y_{\underline
	  w}^\circ)^{\mathrm{DL}}=\underline\iota_F^{-1}(\partial\fX_{\underline
	w})\] is a simple normal crossing divisor $\bigcup_{j=1}^\ell
	E_j^{\mathrm{DL}}$ with
	$E_j^{\mathrm{DL}}:=\underline\iota_F^{-1}(\fD_j)\cong Y_{\underline
	w_{[j]^c}}^{\mathrm{DL}}$. Therefore, $ (Y_{\underline
      w}^\circ)^{\mathrm{DL}}$ is affine if $\sum_j m_j E_j^{\mathrm{DL}}$ is
      ample for some $m_j>0$. 
	
      On the other hand, for $M$ defined in \eqref{eq:linebundle.M} with
      sufficiently large $m$, \[ \cO_{\bsYDL}\Big(\sum_{j=1}^\ell
      m_jE_j^{\mathrm{DL}}\Big)\cong \underline\iota_F^*\Phi(L_{q\rho},M), \]
      and this is ample. Indeed, $L_{q\rho}$ and $M$ are ample, thus
      $\Phi(L_{q\rho},M)$ and its restriction by $\underline\iota_F$ are ample.
      Therefore, $ (Y_{\underline w}^\circ)^{\mathrm{DL}}$ is affine. The
      second assertion follows from the first by taking $\underline w$ to be a
      reduced word of $w$. \end{proof}

To the best of our knowledge, this problem has been open since the introduction
of $\YDL$ in \cite{DL}. 
We refer the reader to
\cite{DL,bonnafe-rouquier-affine,he-affine,orlik-rapoport, he-lusztig, harashita-affine} for
previous results on the affineness of $(Y_{w}^\circ)^{\mathrm{DL}}$ under
assumptions on $w$ or $q$.
However, we give a brief explanation the original motivation for the question 
along with some of the history.

\subsection*{Brief history of affineness for Deligne-Lusztig varieties}
\label{s-history}
Suppose $G$ is a reductive group over a finite field $\mathbb{F}_q$. 
By~\cite[Corollary 1.14]{DL}, the set of $G(\mathbb{F}_q)$-conjugacy 
classes of maximal tori is in one-one correspondence with the 
set $W^{\natural}_F$ of Frobenius twisted conjugacy classes in the Weyl group.
Using this correspondence, write $T_w$ for a maximal torus corresponding
to the Weyl group element $w$. 
In~\cite[Definition 1.9]{DL}, Deligne and Lusztig defined a virtual representation of 
$R^{\theta}(w)$ of the finite group $G(\mathbb{F}_q)$  
attached to a Weyl group element $w$ of $G$ and a character $\theta$ of 
$T_w(\mathbb{F}_q)$.
By construction, $R^{\theta}(w)$ is the alternating sum 
\begin{equation}
  \label{e-dlcoh}
\sum_i (-1)^i H^i_c ((Y_{w}^\circ)^{\mathrm{DL}}, \mathcal{F}_{\theta})
\end{equation}
of representations 
of $G(\mathbb{F}_q)$ on the compactly supported cohomology with coefficients 
in a $\overline{\mathbb{Q}}_{\ell}$-local system $\mathcal{F}_{\theta}$ corresponding to the character $\theta$.

Deligne and Lusztig showed that $\pm R^{\theta}(w)$ is irreducible when $\theta$
is in general position.  
Moreover, 
assuming that  
$(Y_{w}^\circ)^{\mathrm{DL}}$ is affine, 
they showed that, for $\theta$ in general position, the cohomology groups 
in~\eqref{e-dlcoh} vanish unless $i = \ell(w)$~\cite[Corollary 9.9]{DL}.
In other words, they showed that 
\begin{equation}
  \label{dl-van}
  \theta\text{ in general position \& } i \neq \ell(w) \Rightarrow 
H^i_c ((Y_{w}^\circ)^{\mathrm{DL}}, \mathcal{F}_{\theta}) = 0.
\end{equation}
Thus, assuming the affineness of $(Y_{w}^\circ)^{\mathrm{DL}}$, 
they showed that 
$
H^{\ell(w)}_c((Y_{w}^\circ)^{\mathrm{DL}}, \mathcal{F}_{\theta})
$
is an explicit model of the irreducible representation $(-1)^{\ell(w)}R^{\theta}(w)$.

Deligne and Lusztig showed that
$(Y_{w}^\circ)^{\mathrm{DL}}$ is affine whenever a 
certain combinatorial criterion for $w$ in terms of the root system of $G$ 
holds~\cite[Theorem 9.7]{DL}.
In particular, they deduced that $(Y_w^\circ)^{\mathrm{DL}}$ is affine whenever $q$ is larger than the Coxeter number of $G$.
They also explained that, if $w$ and $w'$ are two Frobenius conjugate elements
of the Weyl group, then~\eqref{dl-van} holds for $w$ if and only if~\eqref{dl-van}
holds for $w'$~\cite[Remark 9.15.1]{DL}.
Using these ingredients, they showed that~\eqref{dl-van} holds for all classical 
groups and for $G_2$.

The papers~\cite{bonnafe-rouquier-affine,he-affine,orlik-rapoport} prove that 
$(Y_{w}^\circ)^{\mathrm{DL}}$ is affine if $w$ is a minimal length element 
in its Frobenius conjugacy class.
By the remark of Deligne and Lusztig mentioned above, this suffices to show 
that~\eqref{dl-van} always holds for $\theta$ in general position. 

\subsection{Cohomology vanishing}\label{ss:vanishing}
We prove Theorem~\ref{23} using the following  version of Kawamata-Viehweg vanishing theorem, stated in \cite[Corollary~5.12.d]{EV}. In this subsection, we assume that $k$ has characteristic zero. 
\begin{theorem}\label{thm:kawamata-viehweg}
	Let $X$ be a smooth projective variety (over a field of characteristic zero) and let $M$ be a line bundle over $X$. Let $N>0$ be a positive integer and  let $B=\sum_{j=1}^\ell c_jB_j$ be a normal crossing divisor with $0<c_j<N$. If $M^{\otimes N}(-B)$ is big and nef (e.g. ample), then  
	$H^i(X,M\otimes\omega_X)=0$ for $i>0$.
\end{theorem}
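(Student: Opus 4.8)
The final statement is the Kawamata--Viehweg vanishing theorem in the form cited from \cite[Corollary~5.12.d]{EV}; I would reconstruct the standard proof via the cyclic covering trick and logarithmic Hodge theory. The first move is to pass to the Serre dual: writing $n=\dim X$, the assertion $H^i(X,M\otimes\omega_X)=0$ for $i>0$ is equivalent to $H^q(X,M^{-1})=0$ for all $q<n$, which is what I would aim to prove. Next I would reduce to the case that $\cL:=M^{\otimes N}(-B)$ is \emph{ample}: when $\cL$ is only big and nef one applies Kodaira's lemma and a log resolution to restore, on a suitable modification, both the normal-crossing and the fractional-coefficient hypotheses; this reduction is routine but fiddly, and I would either carry it out carefully or simply quote \cite[\S5]{EV} for it. Finally, after replacing $N$ by a multiple, I may assume $\cL$ is very ample (the coefficient condition $0<c_j<N$ is preserved, since $c_j<N$ becomes $mc_j<mN$).

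With $\cL$ very ample, the geometric setup is as follows. By Bertini, pick a smooth member $\bar B\in|\cL|$ meeting $B=\sum_j B_j$ transversally and set $\Delta:=\bar B+\sum_j c_j B_j$, so that $M^{\otimes N}\cong\cO_X(\Delta)$, the support $D:=\bar B\cup\bigcup_j B_j$ is a reduced normal crossing divisor, every coefficient of $\Delta$ lies in $(0,N)$ (if $N=1$ then $B=0$ and the statement is just Kodaira vanishing, handled separately), and $D$ contains the very ample divisor $\bar B$. Hence $U:=X\setminus D$ is affine of dimension $n$. Let $V$ be the rank-one local system on $U$ whose monodromy around the component of $D$ occurring with coefficient $\gamma$ in $\Delta$ is $\exp(2\pi\sqrt{-1}\,\gamma/N)$; since all coefficients lie in $(0,N)$, these monodromies are all nontrivial. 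From Artin--Grothendieck vanishing on the affine variety $U$ one gets $H^q(U,V)=0$ for $q>n$, and, dually, $H^q_c(U,V)=0$ for $q<n$; because $V$ has nontrivial local monodromy around every branch of $D$, the forget-supports map $H^q_c(U,V)\to H^q(U,V)$ is an isomorphism, so $H^q(U,V)=0$ for every $q\neq n$, in particular for $q<n$.

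The last step identifies this cohomology with a Hodge-filtered object on $X$. The algebraic de Rham complex computing $H^*(U,V)$ extends canonically to the twisted logarithmic de Rham complex $(\Omega^{\bullet}_X(\log D)\otimes M^{-1},\nabla)$, where $\nabla$ is Deligne's canonical logarithmic connection whose residues are the coefficients of $-\Delta/N$; thus $H^q(U,V)\cong\HH^q\bigl(X,\Omega^{\bullet}_X(\log D)\otimes M^{-1}\bigr)$. I would then invoke the $E_1$-degeneration of the Hodge-to-de-Rham spectral sequence of this complex: pulling back along the degree-$N$ cyclic cover $\pi:Y'\to X$ attached to $M$ and the section of $M^{\otimes N}$ cutting out $\Delta$ (with $Y'$ a resolution of the normalization), $\Omega^{\bullet}_X(\log D)\otimes M^{-1}$ is a $\Z/N$-isotypic direct summand of $\pi_*\Omega^{\bullet}_{Y'}(\log\pi^{-1}D)$, and the latter degenerates at $E_1$ by the logarithmic Hodge theory of Deligne (equivalently, by the method of Deligne--Illusie). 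Therefore, for $q<n$,
\[
0 = \HH^q\bigl(X,\Omega^{\bullet}_X(\log D)\otimes M^{-1}\bigr) = \bigoplus_{a+b=q} H^{b}\bigl(X,\Omega^{a}_X(\log D)\otimes M^{-1}\bigr),
\]
and the $(a,b)=(0,q)$ summand is $H^q(X,\cO_X\otimes M^{-1})=H^q(X,M^{-1})$, which hence vanishes; Serre duality then yields $H^i(X,M\otimes\omega_X)=0$ for $i=n-q>0$.

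The main obstacle is exactly the content of the last paragraph: establishing the $E_1$-degeneration for the twisted logarithmic de Rham complex and correctly identifying its hypercohomology with $H^*(U,V)$ through Deligne's canonical extension and the cyclic-cover decomposition. Everything else --- the Serre-duality reformulation, Bertini, the affineness of the complement of an ample divisor, and Artin--Grothendieck vanishing --- is standard, and the only other delicate point is the bookkeeping in the reduction from the big-and-nef case to the ample case while keeping the normal-crossing and fractional-coefficient hypotheses intact.
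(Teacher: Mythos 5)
The paper gives no independent proof of this statement---it is quoted directly from Esnault--Viehweg \cite[Corollary~5.12.d]{EV}---and your sketch is precisely the standard Esnault--Viehweg argument (Serre duality, reduction to the very ample case, cyclic cover, the nontrivial-monodromy identification of $H^*_c(U,V)$ with $H^*(U,V)$ on the affine complement, and $E_1$-degeneration of the twisted logarithmic Hodge-to-de Rham spectral sequence), so it follows the cited source's approach and is correct as an outline. The two points you flag as delicate---the big-and-nef reduction and the degeneration argument via the (possibly singular) normalized cyclic cover rather than literally a resolution $Y'$---are exactly the parts carried out in \cite{EV}, and your bookkeeping (the coefficient condition $0<c_j<N$ forcing $\lfloor \Delta/N\rfloor=0$, affineness of the complement of $D$, nontriviality of all branch monodromies for $N\ge 2$) is correct.
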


The proof of the following is motivated by that for $X_w$ given in \cite{Brion}.

\begin{theorem}\label{thm:vanishing}
	Let $L$ be a line bundle on $\Y$.
	\begin{enumerate}
		\item If $L$ is nef, then $H^i(\Y,L)=0$ for $i>0$.
		\item If $L$ is ample and $v\in W$ satisfies $v<w$, 
		then the restriction map $H^0(\Y,L)\to H^0(\Yv,L)$ is surjective.
	\end{enumerate}
\end{theorem}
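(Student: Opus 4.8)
The plan is to deduce both statements from the Kawamata--Viehweg vanishing theorem applied to a Bott--Samelson resolution $\bsY\to\Y$, exactly as in the Schubert case treated in \cite{Brion}. Fix a reduced word $\underline w$ of $w$ and let $\pi\colon\bsY\to\Y$ be the corresponding Bott--Samelson resolution, with boundary $\partial\bsY=\bigcup_{j=1}^\ell E_j$, which by Proposition~\ref{26} is a simple normal crossing anti-canonical divisor, so $\omega_{\bsY}=\cO(-\partial\bsY)$.

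First I would prove (1). Since $\Y$ has rational singularities (Theorem~\ref{8}) and is Cohen--Macaulay and normal (Theorem~\ref{15}), we have $R^i\pi_*\cO_{\bsY}=0$ for $i>0$ and $\pi_*\cO_{\bsY}=\cO_{\Y}$; by the projection formula this gives $R^i\pi_*(\pi^*L)=0$ for $i>0$ and $\pi_*\pi^*L=L$, so $H^i(\Y,L)=H^i(\bsY,\pi^*L)$ for all $i$. Thus it suffices to show $H^i(\bsY,\pi^*L)=0$ for $i>0$ when $L$ is nef. Write $\pi^*L=M\otimes\omega_{\bsY}$ with $M:=\pi^*L\otimes\cO(\partial\bsY)$. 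By Proposition~\ref{prop:ample_boundary}(1) there are positive integers $m_1,\dots,m_\ell$ with $\sum_j m_jE_j$ ample; choosing $N$ larger than all $m_j$ and writing $\partial\bsY=\sum_j E_j$, one checks that $M^{\otimes N}\bigl(-\sum_j c_jE_j\bigr)$ for a suitable choice $0<c_j<N$ (arranged so that $\sum_j c_jE_j=N\partial\bsY-(\text{ample})$, i.e. $c_j=N-m_j'$ with $\sum m_j'E_j$ ample) equals $(\pi^*L)^{\otimes N}\otimes(\text{ample})$, which is nef-plus-ample, hence big and nef. Then Theorem~\ref{thm:kawamata-viehweg} gives $H^i(\bsY,M\otimes\omega_{\bsY})=H^i(\bsY,\pi^*L)=0$ for $i>0$, as desired.

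Next I would prove (2). Let $L$ be ample on $\Y$ and $v<w$. The closed subvariety $\Yv\subset\Y$ pulls back under $\pi$ to $E:=\pi^{-1}(\Yv)$, which is a union of some of the boundary divisors $E_j$ (namely, $E$ is the image of a Bott--Samelson variety for a subword, using Proposition~\ref{25}(2)--(3)); in any case $E$ is an effective Cartier divisor on $\bsY$ and $\pi|_E\colon E\to \Yv$ again gives $\pi_*\cO_E=\cO_{\Yv}$, $R^i\pi_*\cO_E=0$. Restriction to $\Yv$ then corresponds to restriction from $\bsY$ to $E$, so it is enough to show that $H^0(\bsY,\pi^*L)\to H^0(E,\pi^*L|_E)$ is surjective, for which it suffices that $H^1(\bsY,\pi^*L\otimes\cO(-E))=0$. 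Writing $\pi^*L\otimes\cO(-E)=M'\otimes\omega_{\bsY}$ with $M'=\pi^*L\otimes\cO(\partial\bsY-E)=\pi^*L\otimes\cO\bigl(\sum_{E_j\not\subset E}E_j\bigr)$, I would use Proposition~\ref{prop:ample_boundary}(2) to find positive $n_1,\dots,n_\ell$ with $L_{n_k\rho}\bigl(\sum_{j\neq k}n_jE_j\bigr)$ ample for appropriate indices $k$, together with the ampleness of $L$, to exhibit $(M')^{\otimes N}(-B)$ as big and nef for a normal crossing $B=\sum c_jE_j$ with $0<c_j<N$ supported on $\partial\bsY$; Theorem~\ref{thm:kawamata-viehweg} then yields the vanishing. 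Summing over the (finitely many) divisorial components one reduces $\Yv$ to the boundary case; the general $v<w$ follows since $\Yv$ is cut out inside $\Y$ by the boundary and one can iterate, or directly since $E=\pi^{-1}(\Yv)$ is already a sum of the $E_j$.

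\textbf{Main obstacle.} The delicate point is the bookkeeping in the second part: identifying precisely which boundary divisors $E_j$ constitute $\pi^{-1}(\Yv)$ and arranging the coefficients $c_j$ (strictly between $0$ and $N$) so that $(M')^{\otimes N}(-\sum c_jE_j)$ is simultaneously of the form ``$\pi^*L^{\otimes\text{positive}}\otimes\text{ample boundary divisor}$'' and hence big and nef. This is exactly the combinatorial heart of the Schubert-variety argument in \cite[\S3]{Brion}, and the content of Proposition~\ref{prop:ample_boundary}(2) is tailored to make it go through; the work is to check that the same linear-algebra manipulation of divisor classes survives the pullback from $\fX_{\underline w}$ to $\bsY$, which it does because $\partial\bsY$ is the scheme-theoretic preimage of $\partial\fX_{\underline w}$ and the relevant line bundles restrict compatibly (Lemma~\ref{lem:pullback}, Corollary~\ref{65}).
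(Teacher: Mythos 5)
Part (1) of your proposal matches the paper's proof exactly: the same reduction $H^i(\Y,L)=H^i(\bsY,\pi^*L)$ via rational singularities and the projection formula, the same $M:=\pi^*L\otimes\cO(\partial\bsY)=\pi^*L\otimes\omega_{\bsY}^{-1}$, and the same application of Kawamata--Viehweg with $B:=N\partial\bsY-A$ for $A=\sum m_jE_j$ ample. Nothing to add there.

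Part (2) has a genuine gap. You propose to work with $E:=\pi^{-1}(\Yv)$ and assert that it is a union of boundary divisors $E_j$, and that $\pi|_E\colon E\to\Yv$ satisfies $\pi_*\cO_E=\cO_{\Yv}$ and $R^i\pi_*\cO_E=0$ for $i>0$. Neither claim is justified, and neither is true in the form you need. Since the reduced word $\underline w$ was chosen independently of $v$, there is no reason for the set-theoretic preimage $\pi^{-1}(\Yv)$ to be a union of components of $\partial\bsY$; and even when it is, a union of several boundary Bott--Samelson varieties mapping to $\Yv$ is not itself a Bott--Samelson resolution of $\Yv$, so the pushforward identities (which the paper uses implicitly, via the projection formula applied to a \emph{single} resolution) do not follow. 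Worse, if you do write $E=\sum_{j\in J}E_j$ for several indices $j$, the ample divisor you need, of the shape $L_{n\rho}\bigl(\sum_{j\notin J}n_jE_j\bigr)$, is not supplied by Proposition~\ref{prop:ample_boundary}(2), which excludes only a \emph{single} index $k$. Your $(M')^{\otimes N}(-B)$ manipulation therefore has no ample divisor to fall back on.

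The paper's fix is to do the two things you only gesture at. First, reduce to the case $\ell(v)=\ell(w)-1$ by iterating the corank-one statement along a maximal chain $v=v_0<v_1<\cdots<v_m=w$; at each step the restricted line bundle is still ample. Second—and this is the key point your plan misses—choose the reduced word $\underline w$ \emph{adapted to $v$}: by \cite[Corollary~2.2.2]{Brion} there exist a reduced word $\underline w$ of $w$ and an index $k$ with $\underline w_{\{k\}^c}$ a reduced word of $v$. With that choice, $E_k\cong Y_{\underline w_{\{k\}^c}}(\s)$ \emph{is} the Bott--Samelson resolution of $\Yv$, so $\pi_*(\pi^*L|_{E_k})=L|_{\Yv}$ and $R^{>0}\pi_*(\pi^*L|_{E_k})=0$ by the projection formula and Theorem~\ref{8}, and the surjectivity of $H^0(\Y,L)\to H^0(\Yv,L)$ reduces to $H^1(\bsY,\pi^*L(-E_k))=0$. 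This single $E_k$ is exactly the shape Proposition~\ref{prop:ample_boundary}(2) is built for: writing $\pi^*L(-E_k)=M\otimes\omega_{\bsY}$ with $M=\pi^*L\otimes\cO(\sum_{j\neq k}E_j)$ and $B=\sum_{j\neq k}(N-n_j)E_j$, one gets $M^{\otimes N}(-B)=\pi^*(L^{\otimes N}\otimes L_{-n_k\rho})\otimes A$ with $A:=\pi^*L_{n_k\rho}(\sum_{j\neq k}n_jE_j)$ ample, and Kawamata--Viehweg finishes the proof.

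In short: replace $\pi^{-1}(\Yv)$ by a single boundary divisor $E_k$ coming from a $v$-adapted reduced word, and do the corank-one reduction explicitly; otherwise the pushforward identities and the ampleness input you invoke are both unavailable.
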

\begin{proof}
	(1)
	Let $\underline w$ be a reduced word of $w$ with $\ell=\ell(w)$.
	By Proposition~\ref{prop:ample_boundary}(1), $A:=\sum_{j=1}^{\ell} m_jE_j$ is ample for some $m_j>0$.
	Let $B:=N\partial \bsY-A$ 
	with $N\geq m_j$ so that $B$ is effective.
	Then, for $M:=\pi^*L\otimes \omega_{\bsY}^{-1}$,
	\[
	M^{\otimes N}(-B)
	=\pi^*L^{\otimes N}\otimes \cO(A)\]
	is ample. 
	Hence, by Theorem~\ref{thm:kawamata-viehweg} and the
	projection formula, 
	\[H^i(\Y,L)=H^i(\bsY,\pi^*L)=H^i(\bsY,M\otimes \omega_{\bsY})=0\] 
	for $i>0$, as desired. 
	
	(2) 
	We may assume that $\ell(v)=\ell(w)-1$. Then, there exists a word $\underline w$ of $w$ and $k$ such that $\underline v:=\underline w_{\{k\}^c}$ is a reduced word of $v$ (\cite[Corollary~2.2.2]{Brion}).  In particular, $\bsYv\cong E_k$.
	Hence, it suffices to show that $H^1(\bsY, \pi^*L(-E_k))=0$, since the assertion will follow from this by the projection formula. 
	
	By Proposition~\ref{prop:ample_boundary}(2), 
	$A:=\pi^*L_{n_k\rho}(\sum_{j\neq k}^\ell n_jE_j)$ is ample on $\bsY$ for some $n_j>0$.
	Let $B:=\sum_{j\neq k}(N-n_j)E_j$ with $N$ sufficiently large. Then, for the line bundle
	$M$ defined by $M\otimes \omega_{\bsY}=\pi^*L(-E_k)$, we have
	\[M^{\otimes N}(-B)=\pi^*(L^{\otimes N}\otimes L_{-n_k\rho})\otimes A,\]
	and this
	is ample for sufficiently large $N$.
	Hence, it follows from Theorem~\ref{thm:kawamata-viehweg} that
	\[H^1(\bsY, \pi^*L(-E_k))=H^1(\bsY,M\otimes \omega_{\bsY})=0.\]
\end{proof}

\begin{corollary}\label{cor:vanishing}
	If $\lambda$ is dominant, then $H^i(\Y,L_\lambda)=0$ for $i>0$. 
	If $\lambda$ is regular dominant and $v<w$, then $H^0(\Y,L_\lambda)\to H^0(\Yv,L_\lambda)$ is surjective.
\end{corollary}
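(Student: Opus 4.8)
The plan is to read off both assertions from Theorem~\ref{thm:vanishing}, once the hypotheses on $\lambda$ are translated into positivity of $L_\lambda$ on $\Y$; the only external inputs are that $L_\lambda$ is globally generated (resp. ample) on $\cB$ precisely when $\lambda$ is dominant (resp. regular dominant), and that nefness and ampleness are preserved under restriction to a closed subvariety.

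For the first assertion, when $\lambda$ is dominant the bundle $L_\lambda$ is globally generated on $\cB$, hence nef; since $\Y$ is a closed subvariety of $\cB$ (it is $\iota_\s^{-1}(\fX_w)$), the restriction $L_\lambda|_\Y$ is again nef, and Theorem~\ref{thm:vanishing}(1) gives $H^i(\Y,L_\lambda)=0$ for all $i>0$. No care about $\Y$ being disconnected is needed, as the vanishing may be tested on each connected component.

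For the second assertion, when $\lambda$ is regular dominant $L_\lambda$ is ample on $\cB$. If $w=w_0$ is the longest element then $\overline{Bw_0B}=G$ forces $Y_{w_0}(\s)=\cB$ and the restriction map is the identity. Otherwise $w<w_0$ in the Bruhat order, and the inclusion $X_w\subset X_{w_0}=\cB$ exhibits $\Y$ as a closed subvariety of $Y_{w_0}(\s)=\cB$; applying Theorem~\ref{thm:vanishing}(2) to the ample line bundle $L_\lambda$ on $\cB=Y_{w_0}(\s)$ and the pair $w<w_0$ yields exactly the surjectivity of $H^0(\cB,L_\lambda)\to H^0(\Y,L_\lambda)$. (Equivalently, one may induct downward on $\ell(w)$ along a saturated Bruhat chain from $w$ to $w_0$, composing the surjections supplied by Theorem~\ref{thm:vanishing}(2) at each covering relation $v\lessdot v'$, using that $L_\lambda|_{Y_{v'}(\s)}$ stays ample; this route is only needed if one wishes to invoke the theorem solely in the case of adjacent $v$, $v'$.)

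I do not expect a genuine obstacle here: the statement is formal given Theorem~\ref{thm:vanishing}. The only points worth a line of justification are that positivity descends to the closed subvarieties $\Yv\subset\cB$, and the identification $Y_{w_0}(\s)=\cB$ together with $\Y\subset Y_{w_0}(\s)$, both of which are immediate from the fiber-product definition of the Lusztig varieties.
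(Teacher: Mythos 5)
Your argument is correct and matches the paper's own proof: the corollary is read off from Theorem~\ref{thm:vanishing} by observing that $L_\lambda$ restricts to a nef (resp.\ ample) line bundle on $\Y$ exactly when $\lambda$ is dominant (resp.\ regular dominant), together with the identification $Y_{w_0}(\s)=\cB$ needed to invoke part (2) for the surjectivity claim. The paper leaves these observations implicit; your version, including the parenthetical about saturated Bruhat chains (which the paper instead absorbs into the ``we may assume $\ell(v)=\ell(w)-1$'' reduction inside the proof of Theorem~\ref{thm:vanishing}(2)), is simply a more detailed rendering of the same route.
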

\begin{proof}
	This is immediate from Theorem~\ref{thm:vanishing}, since $L_\lambda$ is nef (resp. ample) on $\Y$ if $\lambda$ is (resp. regular) dominant.
\end{proof}
\begin{remark}\label{rem:vanishing}
	(1) Theorem~\ref{thm:vanishing} is more general than Corollary~\ref{cor:vanishing}, as the restriction $\Pic(\cB) \to \Pic(\Y)$ is not surjective in general. For example, when $G=\GL_3$ and $w=231\in S_3$, $\Y$ is isomorphic to the permutohedral toric surface, as it is the closure of an orbit of a 2-torus $C_G(\s)\cong \mathbb{G}_m^2$.
    In particular, $\Pic(\cB)$ has rank two while $\Pic(\Y)$ has rank four.
    
    (2) The surjectivity is not necessarily true for nef line bundles. For example, unless $w= e$, $H^0(\Y,\cO_{\Y})\to H^0(\Ye,\cO_{\Ye})=k^{\lvert W \rvert}$ is not surjective. 
\end{remark}

Using Frobenius splittings,
in the next section, 
we will extend Theorem~\ref{thm:vanishing}  
to positive characteristic.

\bigskip

\section{Frobenius splitting}
\label{s.cohomology}
We assume throughout this section that $k$ has positive characteristic. We show that regular semisimple Lusztig varieties are Frobenius split, using Proposition~\ref{26}. This implies Theorem~\ref{23} in positive characteristic.

By semicontinuity of cohomology, this also yields an alternative proof of Theorem~\ref{23} in characteristic zero.

\subsection{Preliminaries on Frobenius splittings}
In this subsection, we review the Frobenius splitting method, as introduced by
Mehta-Ramanathan (\cite{MR}) and  refined by Ramanan-Ramanathan (\cite{RR85},
\cite{R87}). For further studies, see also \cite{MR}, \cite{R87}, \cite{BI94},
and \cite{BK}.

Let $k$ be an algebraically closed field of characteristic $p>0$. 
Let $X$ be a variety over $k$ and let $F: X \rightarrow X$ be the absolute Frobenius morphism. Then we have a natural map $\mathcal O_X \rightarrow F_*\mathcal O_X$, the $p$-th power map, and thus
an exact sequence
\begin{equation} \label{e.frobenius exact}
0 \rightarrow \mathcal O_X \rightarrow F_* \mathcal O_X \rightarrow C \rightarrow 0.
\end{equation}
 We say that $X$ is {\it Frobenius split} if the exact sequence (\ref{e.frobenius exact}) 
splits. 
Moreover, a closed subvariety $Y$ of $X$ is said to be \emph{compatibly split} in $X$ if there exists a splitting $\varphi: F_*\cO_X\to \cO_X$ be a Frobenius splitting of $X$ such that $\varphi(F_*I)=I$ for the ideal sheaf $I$ defining $Y$ in $X$, so that it induces a splitting of $Y$. 

\smallskip
Not every projective variety is Frobenius split. But once it is, it has many interesting geometric properties, one of which is the vanishing of the cohomology of line bundles. 

\begin{proposition} [Propositions 1--3 of \cite{MR}] \label{prop:Frob.basic}
Let $X$ be a projective variety and $L$ be an ample line bundle on $X$. Assume that $X$ is Frobenius split. 
\begin{enumerate} 
\item $H^i(X,L)=0$ for $i>0$. 
\item If $X$ is smooth and pure-dimensional, then $H^i(X, L^{-1})=0$ for $i <\dim \,X$. 
\item If $Y$ is a closed subvariety of $X$ which is compatibly split in $X$, then the restriction $H^0(X,L)\to H^0(Y,L)$ is surjective.
\end{enumerate}
\end{proposition}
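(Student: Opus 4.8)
The statement to prove is Proposition~\ref{prop:Frob.basic}, which collects the three basic cohomological consequences of Frobenius splitting (for an ample line bundle $L$ on a projective variety $X$). The plan is to deduce each item from the splitting $\varphi\colon F_*\cO_X\to\cO_X$ by an iteration-and-Serre-vanishing argument, which is the standard Mehta--Ramanathan technique.

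First I would set up the key mechanism. Since $X$ is Frobenius split, iterating the $p$-th power map gives, for every $e\ge 1$, a splitting $\cO_X\hookrightarrow F^e_*\cO_X$ of the $e$-fold Frobenius; tensoring with $L$ and using the projection formula $F^e_*(F^{e*}L)\cong F^e_*\cO_X\otimes L$ together with $F^{e*}L\cong L^{p^e}$, one obtains that $L$ is a direct summand of $F^e_*(L^{p^e})$, compatibly in cohomology: $H^i(X,L)$ is a direct summand of $H^i(X,F^e_*(L^{p^e}))\cong H^i(X,L^{p^e})$. Since $L$ is ample, Serre vanishing gives $H^i(X,L^{p^e})=0$ for $i>0$ once $e$ is large, so $H^i(X,L)=0$ for $i>0$; this proves (1). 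For (2), when $X$ is smooth one runs the same argument with $L^{-1}$ in place of $L$ and invokes Serre duality: $H^i(X,L^{-p^e})$ is Serre-dual to $H^{\dim X-i}(X,\omega_X\otimes L^{p^e})$, which vanishes for $\dim X - i>0$, i.e.\ $i<\dim X$, again by Serre vanishing for $e\gg0$; hence $H^i(X,L^{-1})=0$ for $i<\dim X$. For (3), if $Y\subset X$ is compatibly split with ideal sheaf $I$, then $\varphi(F_*I)=I$, so the splitting $\cO_X\hookrightarrow F^e_*\cO_X$ restricts to a splitting $\cO_Y\hookrightarrow F^e_*\cO_Y$ and is compatible with the surjection $\cO_X\twoheadrightarrow\cO_Y$. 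Tensoring with $L$ and taking $H^0$ produces a commutative diagram in which $H^0(X,L)\to H^0(Y,L)$ is a retract of $H^0(X,L^{p^e})\to H^0(Y,L^{p^e})$; the latter is surjective for $e\gg0$ because $H^1(X,I\otimes L^{p^e})=0$ by Serre vanishing, and surjectivity passes to the retract.

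The only genuinely delicate point is bookkeeping with the Frobenius and the projection formula: one must be careful that $F$ here is the \emph{absolute} Frobenius, which is not a morphism of $k$-schemes, so statements like $F^{e*}L\cong L^{p^e}$ and the compatibility of $\varphi$ with tensoring by $L$ should be justified (e.g.\ by working locally, or by passing to the relative Frobenius after a base change twisting the $k$-structure, or simply by citing the formalism in \cite{MR}, \cite{BK}). I expect this to be the main obstacle only in the sense of requiring care; conceptually the argument is routine once the splitting is iterated.

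Given all of this, I would simply cite \cite{MR} for the full details and present the three deductions in the compressed form above, since the proof is entirely standard and the substance of this section lies in \emph{verifying} that regular semisimple Lusztig varieties and their Bott--Samelson resolutions are Frobenius split --- which is done later via Proposition~\ref{26} --- rather than in re-deriving these well-known consequences.
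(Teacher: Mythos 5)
Your proposal is correct and matches the paper's approach: the paper simply cites Propositions~1--3 of \cite{MR} and gives the same one-line hint for (1) (the injection $H^i(X,L)\hookrightarrow H^i(X,L^{\otimes p})$ plus Serre vanishing), while you have spelled out the standard iteration-and-Serre-vanishing argument for all three parts. Nothing is missing; you have just expanded the citation into the well-known Mehta--Ramanathan proof.
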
 
For example, the splitting induces an injection $H^i(X,L)\hookrightarrow H^i(X,L^{\otimes p})$ for any line bundles $L$, so (1) follows from the Serre vanishing theorem.

Examples of projective varieties that are Frobenius split include flag varieties, Schubert varieties, spherical varieties, and the cotangent bundles of flag varieties. 

A problem of finding a Frobenius splitting of a normal variety can be reduced to that of its resolution of singularities, due to the following lemma.

\begin{lemma}[Proposition~4 of \cite{MR}]\label{lem:push.Frob}
	Let $f:X \to S$ be a proper morphism of algebraic varieties with $f_*\cO_X=\cO_S$.
	Then, we have:
	\begin{enumerate}
		\item If $X$ is Frobenius split, then $S$ is also Frobenius split.
		\item If $Y$ is a closed subvariety of $X$ which is compatibly split in $X$, then its image $f(Y)$ is compatibly split in $S$.
	\end{enumerate}
\end{lemma}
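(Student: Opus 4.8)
Looking at this, the final statement is Lemma~\ref{lem:push.Frob}, attributed to Mehta–Ramanathan. Let me plan a proof.

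The plan is to establish both parts by pushing forward the Frobenius splitting along $f$, using the hypothesis $f_*\cO_X=\cO_S$ to identify the relevant sheaves. I would first recall the key compatibility: for the absolute Frobenius morphisms $F_X$ and $F_S$, one has $f\circ F_X = F_S\circ f$ as morphisms of schemes (since Frobenius is functorial and is the identity on the underlying topological space, raising sections to the $p$-th power). Applying $f_*$ to this identity yields $f_*(F_X)_* = (F_S)_* f_*$ as functors, so in particular $f_*(F_X)_*\cO_X = (F_S)_* f_*\cO_X = (F_S)_*\cO_S$, where the last equality uses the hypothesis $f_*\cO_X=\cO_S$.

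For part (1): a Frobenius splitting of $X$ is an $\cO_X$-linear map $\varphi\colon (F_X)_*\cO_X\to \cO_X$ with $\varphi\circ(\text{$p$-th power})=\id_{\cO_X}$. Applying $f_*$ gives an $\cO_S$-linear map $f_*\varphi\colon f_*(F_X)_*\cO_X\to f_*\cO_X$, i.e.\ by the computation above a map $(F_S)_*\cO_S\to \cO_S$. I would then check that the composition of this with the $p$-th power map $\cO_S\to (F_S)_*\cO_S$ is the identity; this is because $f_*$ is a functor, the $p$-th power map $\cO_S\to (F_S)_*\cO_S$ is obtained by applying $f_*$ to the $p$-th power map $\cO_X\to (F_X)_*\cO_X$ (again using $f_*\cO_X=\cO_S$ and functoriality of Frobenius), so $f_*\varphi$ composed with it equals $f_*$ of $\varphi$ composed with the $p$-th power map on $X$, which is $f_*(\id_{\cO_X})=\id_{\cO_S}$. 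Hence $f_*\varphi$ is a Frobenius splitting of $S$.

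For part (2): if $Y\subset X$ is compatibly split, choose a splitting $\varphi$ with $\varphi((F_X)_*\mathcal I_Y)\subseteq \mathcal I_Y$. Let $Z=f(Y)$ with its reduced structure and $\mathcal I_Z\subset\cO_S$ its ideal sheaf; note $\mathcal I_Z = \ker(\cO_S\to f_*\cO_Y)$, and since $f_*$ is left exact, applying $f_*$ to $0\to\mathcal I_Y\to\cO_X\to\cO_Y\to 0$ shows $f_*\mathcal I_Y = \ker(\cO_S\to f_*\cO_Y)=\mathcal I_Z$ (using $f_*\cO_X=\cO_S$). Then $(f_*\varphi)\big((F_S)_*\mathcal I_Z\big) = f_*\big(\varphi((F_X)_*\mathcal I_Y)\big)\subseteq f_*\mathcal I_Y = \mathcal I_Z$, so the splitting $f_*\varphi$ of $S$ is compatible with $Z=f(Y)$. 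The main obstacle, and the point requiring the most care, is the bookkeeping identifying $f_*(F_X)_*\mathcal F$ with $(F_S)_* f_*\mathcal F$ and checking that the $p$-th power map and the splitting condition behave well under $f_*$; once the functorial identity $f\circ F_X=F_S\circ f$ is in hand and one remembers that $f_*$ is exact enough on the left to preserve kernels, everything else is formal. I would also remark (for completeness) that properness of $f$ is what guarantees $f_*\cO_X$ is coherent, though the argument only really uses the stated hypothesis $f_*\cO_X=\cO_S$.
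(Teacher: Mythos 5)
The paper cites this result as Proposition 4 of Mehta--Ramanathan without reproducing a proof, so there is no ``paper proof'' to compare against directly; your argument is a correct and self-contained proof along the standard lines. The key identities are all in place: $f\circ F_X=F_S\circ f$ by the ring-map property $\phi(a^p)=\phi(a)^p$, hence $f_*(F_X)_*=(F_S)_*f_*$; the $\cO_S$-module structures match under this identification because both are given by multiplication by $p$-th powers; and for part (2) the identification $f_*\mathcal I_Y=\mathcal I_{f(Y)}$ uses left-exactness of $f_*$ together with the facts that $f_*\cO_Y$ is reduced (as $Y$ is a variety) and that $f(Y)$ is closed (which is where properness is genuinely used, not just for coherence as your closing remark suggests). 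Those last two points are stated rather than argued; spelling out that a subring of a reduced ring is reduced and that the support of $\cO_S/\ker(\cO_S\to f_*\cO_Y)$ is exactly $f(Y)$ would make the proof airtight, but the ideas are all correct.
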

When $X$ is smooth, due to Serre duality and projection formula, we get an isomorphism between $H^0(X, \omega_X^{1-p})$ and $\Hom(F_* \cO_X,\cO_X)$. 
Thus we can express criteria for Frobenius splitting by using the existence of certain sections of the anti-canonical line bundle.

\begin{proposition}[Propositions~7 and 8 of \cite{MR}]  \label{p.criteria splitting} Let $X$ be a smooth projective variety of dimension $n$ over $k$.
Assume that there is a section $s$ of $\omega_X^{-1}$ and $x \in X$ such that ${\rm div} s$ is $Z_1 + \dots + Z_n+E$ with $x \in \bigcap_{j=1}^n Z_j$ and $x \not \in E$, where $ Z_1+ \dots + Z_n $ is simple normal crossing and $E$ is effective. Then $X$ is Frobenius split, with a splitting given by $s^{p-1}\in H^0(X,\omega_X^{1-p})$.

 Moreover, if further $Z_j$ are irreducible, then the Frobenius splitting of $X$ given by $s^{p-1}\in H^0(X,\omega_X^{1-p})$ induces a compatible splitting of the intersection $\bigcap_{j\in I}Z_j$ for any nonempty subset $I\subset \{1,\cdots,n\}$. 
\end{proposition}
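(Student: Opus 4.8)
The plan is to follow the classical strategy of Mehta--Ramanathan: use duality for the finite flat Frobenius morphism to reinterpret candidate splittings as global sections of the anticanonical bundle, and then verify the splitting property --- and the compatibility with the strata $\bigcap_{j\in I}Z_j$ --- by an explicit local computation at the relevant points, performed with the (twisted) Cartier operator.

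First I would set up the duality identification. Since $F\colon X\to X$ is finite and both source and target are smooth, its relative dualizing sheaf is $\omega_F=\omega_X\otimes(F^{*}\omega_X)^{-1}=\omega_X^{\otimes(1-p)}$, where one uses that $F^{*}$ sends a line bundle to its $p$-th tensor power. Grothendieck duality for $F$ then gives
\[\mathcal{H}om_{\cO_X}(F_{*}\cO_X,\cO_X)\;\cong\;F_{*}\,\mathcal{H}om_{\cO_X}(\cO_X,\omega_F)\;=\;F_{*}\,\omega_X^{1-p},\]
and passing to global sections identifies $\Hom_{\cO_X}(F_{*}\cO_X,\cO_X)$ with $H^{0}(X,\omega_X^{1-p})$. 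Let $\varphi\colon F_{*}\cO_X\to\cO_X$ be the $\cO_X$-linear map corresponding to $s^{p-1}$. By definition $\varphi$ is a Frobenius splitting exactly when $\varphi(1)=1$; since $X$ is projective (and, which we may assume, connected), $\varphi(1)$ lies in $H^{0}(X,\cO_X)=k$ and is a constant, so it suffices to show $\varphi(1)(x)\neq 0$, after which a nonzero scalar multiple of $s^{p-1}$ Frobenius splits $X$.

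Next comes the local model at $x$. As $Z_1+\dots+Z_n$ is simple normal crossing and $x\in\bigcap_{j=1}^{n}Z_j$, there are exactly $n$ of these divisors, all reduced, smooth at $x$ and meeting transversally there; hence I would choose \'etale local coordinates $z_1,\dots,z_n$ at $x$ with $Z_j=\{z_j=0\}$ near $x$. Since $x\notin E$, the section $s$ of $\omega_X^{-1}=\bigwedge^{n}T_X$ then equals $u\,z_1\cdots z_n\cdot(\partial_{z_1}\wedge\cdots\wedge\partial_{z_n})$ near $x$ for some unit $u$, so $s^{p-1}=u^{p-1}(z_1\cdots z_n)^{p-1}\,\eta^{\otimes(1-p)}$ with $\eta=dz_1\wedge\cdots\wedge dz_n$. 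Now one invokes the local form of the duality isomorphism: $F_{*}\cO_X$ is free over $\cO_X^{\,p}$ on the monomials $z^{\underline c}$, $\underline c\in\{0,\dots,p-1\}^{n}$ (where perfectness of $k$ is used), and $\eta^{\otimes(1-p)}$ corresponds to the $\cO_X^{\,p}$-linear map $\varphi_0$ with $\varphi_0(z^{\underline c})=\delta_{\underline c,(p-1,\dots,p-1)}$ on that basis. Therefore $\varphi=\varphi_0\bigl(u^{p-1}(z_1\cdots z_n)^{p-1}\cdot(-)\bigr)$ near $x$; writing $u^{p-1}=u^{p}u^{-1}$, applying $\cO_X^{\,p}$-linearity, and using $u^{-1}\equiv u(x)^{-1}\pmod{\mathfrak{m}_x}$, extraction of the $(p-1,\dots,p-1)$-coefficient yields $\varphi(1)(x)\neq 0$. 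By the previous paragraph this completes the proof that $X$ is Frobenius split.

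For the compatibility statement I would use that a Frobenius splitting $\varphi$ compatibly splits a closed reduced subscheme $Z$ precisely when $\varphi(F_{*}\cI_Z)\subseteq\cI_Z$ (the reverse inclusion being automatic, since $\varphi\circ F^{\#}=\id$ and $\cI_Z$ is radical). When the $Z_j$ are irreducible, $\bigcap_{j\in I}Z_j$ is reduced with ideal sheaf $\sum_{j\in I}\cI_{Z_j}$, equal near any of its points $y$ to $(z_j:j\in I)$ in coordinates for which the $Z_j$ through $y$ are coordinate hyperplanes. Near $y$ one has $s^{p-1}=v^{p-1}\bigl(\prod_{j:\,y\in Z_j}z_j\bigr)^{p-1}(\partial_{z_1}\wedge\cdots\wedge\partial_{z_n})^{\otimes(p-1)}$ with $v$ regular, so for any $g\in(z_j:j\in I)$ every monomial of $v^{p-1}\bigl(\prod_{j:\,y\in Z_j}z_j\bigr)^{p-1}g$ has $z_j$-exponent $\geq p$ for some $j\in I$; applying the $(p-1,\dots,p-1)$-coefficient extraction (again $\cO_X^{\,p}$-linear, using $k$ perfect to take $p$-th roots of scalars) then produces an element all of whose monomials lie in $(z_j:j\in I)$. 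Hence $\varphi(F_{*}\cI_{\bigcap_{j\in I}Z_j})\subseteq\cI_{\bigcap_{j\in I}Z_j}$ (trivially off this subscheme), which is the asserted compatible splitting. The step demanding the most care is the explicit local identification of the duality isomorphism with this coefficient-extraction (Cartier-operator) map, together with the positive-characteristic bookkeeping --- freeness of $F_{*}\cO_X$ over $\cO_X^{\,p}$, the identity $u^{p-1}=u^{p}u^{-1}$, perfectness of $k$ --- once this is in place the rest is formal, this being the foundational splitting criterion of Mehta--Ramanathan.
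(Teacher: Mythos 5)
The paper states this result as a citation (Propositions~7 and 8 of Mehta--Ramanathan) and gives no proof of its own, so there is nothing internal to compare against. Your argument correctly reproduces the classical Mehta--Ramanathan proof: duality for the finite flat Frobenius identifies $\Hom(F_*\cO_X,\cO_X)$ with $H^0(X,\omega_X^{1-p})$, the splitting condition reduces (on a connected projective $X$, the tacit hypothesis of [MR]) to the one-point evaluation $\varphi(1)(x)\neq 0$, and both this nonvanishing and the compatibility inclusion $\varphi(F_*\cI_Z)\subseteq\cI_Z$ for $Z=\bigcap_{j\in I}Z_j$ are verified by the local Cartier-type coefficient-extraction description of the duality pairing; the exponent bookkeeping in both steps is accurate.
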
 

Let $D$ be an effective divisor of $X$. We say that $X$ is {\it Frobenius $D$-split} if there is a Frobenius splitting $F_*\mathcal O_X \rightarrow \mathcal O_X$ which factors through $F_*(\mathcal O_X(D)) \rightarrow \mathcal O_X$.

\begin{proposition} [Section 2 of \cite{RR85}, Proposition 1.13 of \cite{R87}] \label{prop frob ample}  Let $X$ be a projective variety which is Frobenius $D$-split with an ample divisor $D$. If $L$ is a nef line bundle on $X$, then $H^i(X,L)$ vanishes for $i >0$.
\end{proposition}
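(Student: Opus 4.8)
The plan is to follow the strategy of \cite{RR85,R87}: use the $D$-splitting to exhibit $H^i(X,L)$ as a direct summand of the cohomology of successive Frobenius twists of $L$, and then kill those twists by Serre vanishing after a suitable rewriting.

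First I would unwind the definition. A Frobenius $D$-splitting is the same datum as a morphism $\psi\colon F_*\mathcal{O}_X(D)\to\mathcal{O}_X$ for which the composite
\[
\sigma\colon \mathcal{O}_X\xrightarrow{\ p\text{-th power}\ } F_*\mathcal{O}_X\xrightarrow{\ F_*(s_D)\ } F_*\mathcal{O}_X(D)
\]
satisfies $\psi\circ\sigma=\id$, where $s_D$ is the canonical section cutting out $D$. Tensoring both $\sigma$ and $\psi$ by an arbitrary line bundle $M$ and invoking $F^*M\cong M^{\otimes p}$ together with the projection formula, I would produce maps $M\to F_*\bigl(M^{\otimes p}\otimes\mathcal{O}_X(D)\bigr)\to M$ whose composite is $\id_M$. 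Since $F$ is affine (indeed finite), $F_*$ is exact and computes cohomology, so applying $H^i(X,-)$ exhibits $H^i(X,M)$ as a direct summand of $H^i\bigl(X,M^{\otimes p}\otimes\mathcal{O}_X(D)\bigr)$ for every $i\geq 0$.

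Next I would iterate this construction starting from $M=L$: after $n$ steps one obtains split injections
\[
H^i(X,L)\ \hookrightarrow\ H^i\bigl(X,\ L^{\otimes p^n}\otimes\mathcal{O}_X(d_nD)\bigr),\qquad d_n:=1+p+\cdots+p^{n-1}.
\]
To conclude I would set $A:=L^{\otimes(p-1)}\otimes\mathcal{O}_X(D)$, which is ample because $L$ is nef and $D$ is ample, and observe the identity $L^{\otimes p^n}\otimes\mathcal{O}_X(d_nD)\cong L\otimes A^{\otimes d_n}$ (matching exponents via $(p-1)d_n=p^n-1$). By Serre vanishing applied to the \emph{fixed} coherent sheaf $L$ and the \emph{fixed} ample bundle $A$, there is $n_0$ with $H^i(X,L\otimes A^{\otimes m})=0$ for all $i>0$ and $m\geq n_0$; choosing $n$ large enough that $d_n\geq n_0$ then forces $H^i(X,L)=0$ for $i>0$, since it is a direct summand of something that vanishes.

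The one genuinely delicate step---and the one I would be most careful about---is the last: a direct appeal to Serre vanishing for the bundles $L^{\otimes p^n}\otimes\mathcal{O}_X(d_nD)$ is illegitimate, because the $L$-part of the twist grows with $n$. Rewriting this twist as $L\otimes A^{\otimes d_n}$ for a single fixed ample $A$ is exactly what makes the argument go through, and it is precisely here that both hypotheses are used: nefness of $L$ keeps $A=L^{\otimes(p-1)}(D)$ ample, and ampleness of $D$ ensures that summing the geometric series still produces arbitrarily large multiples of an ample class. Everything else is formal bookkeeping with Frobenius pushforwards, the projection formula, and exactness of $F_*$.
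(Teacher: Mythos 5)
Your proof is correct, and since the paper itself only cites the references without reproducing an argument, the right comparison is with Ramanathan's Proposition 1.13 --- your proof is essentially that one. In particular, the two points you flag as the crux are exactly the key moves there: iterating the $D$-splitting to get split injections $H^i(X,L)\hookrightarrow H^i\bigl(X,L^{\otimes p^n}\otimes\mathcal{O}_X(d_nD)\bigr)$, and then rewriting the growing twist as $L\otimes A^{\otimes d_n}$ with the \emph{fixed} ample line bundle $A=L^{\otimes(p-1)}\otimes\mathcal{O}_X(D)$ (using $(p-1)d_n=p^n-1$) so that Serre vanishing with fixed coherent sheaf $L$ and fixed ample $A$ applies; this is precisely where nefness of $L$ and ampleness of $D$ are used.
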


\begin{proposition} [Section 1.3  and Section 1.16 of \cite{R87}]  \label{prop properties} $\,$ 

\begin{enumerate}
\item
If $X$ is Frobenius $D$-split and $D'$ is another (Cartier) divisor with $0 \leq D' \leq D$, then $X$ is Frobenius $D'$-split.

\item If $X$ is smooth and Frobenius split, then it is $\omega_X^{1-p}$-split: Any section of $\omega_X^{1-p}$ giving a splitting vanishes on a divisor whose corresponding line bundle is $\omega_X^{1-p}$.
\end{enumerate}

\end{proposition}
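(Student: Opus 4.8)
The plan is to prove the two statements separately; each reduces to formal manipulations with the maps $F_*(\mathcal{O}_X(D))\to\mathcal{O}_X$ underlying the notion of Frobenius $D$-splitting, together with the description (recalled just before Proposition~\ref{p.criteria splitting}) of such maps in terms of sections of $\omega_X^{1-p}$ when $X$ is smooth.

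For (1), I would argue purely by factoring the canonical inclusions. Since $0\le D'\le D$, the canonical inclusion $\mathcal{O}_X\hookrightarrow\mathcal{O}_X(D)$ factors as $\mathcal{O}_X\hookrightarrow\mathcal{O}_X(D')\hookrightarrow\mathcal{O}_X(D)$, so, applying $F_*$, the canonical map $\alpha\colon F_*\mathcal{O}_X\to F_*\mathcal{O}_X(D)$ factors as $F_*\mathcal{O}_X\xrightarrow{\alpha'}F_*\mathcal{O}_X(D')\xrightarrow{\beta}F_*\mathcal{O}_X(D)$. If $\varphi$ is a Frobenius splitting with $\varphi=\psi\circ\alpha$ for some $\psi\colon F_*\mathcal{O}_X(D)\to\mathcal{O}_X$, then $\psi\circ\beta\colon F_*\mathcal{O}_X(D')\to\mathcal{O}_X$ satisfies $(\psi\circ\beta)\circ\alpha'=\psi\circ\alpha=\varphi$; hence $\varphi$ factors through $F_*\mathcal{O}_X(D')\to\mathcal{O}_X$ as well, i.e.\ $X$ is Frobenius $D'$-split. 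This step is purely formal.

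For (2), recall first that since $X$ is smooth, Grothendieck duality for the finite flat morphism $F$, together with $F^!\mathcal{O}_X\cong\omega_X^{1-p}$ (using $F^*\omega_X\cong\omega_X^{\otimes p}$), gives for every line bundle $\mathcal{L}$ a natural isomorphism $\Hom(F_*\mathcal{L},\mathcal{O}_X)\cong H^0(X,\mathcal{L}^{-1}\otimes\omega_X^{1-p})$; for $\mathcal{L}=\mathcal{O}_X$ this is the identification $\Hom(F_*\mathcal{O}_X,\mathcal{O}_X)\cong H^0(X,\omega_X^{1-p})$ used earlier. Given a Frobenius splitting $\varphi$, let $s\in H^0(X,\omega_X^{1-p})$ be the corresponding section; it is nonzero, since a splitting is surjective, so $D:=\mathrm{div}(s)$ is an effective divisor with $\mathcal{O}_X(D)\cong\omega_X^{1-p}$ --- which is already the last assertion of the proposition. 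To see that $X$ is Frobenius $D$-split, apply the duality isomorphism to $\mathcal{L}=\mathcal{O}_X(D)$, obtaining $\Hom(F_*\mathcal{O}_X(D),\mathcal{O}_X)\cong H^0(X,\mathcal{O}_X(-D)\otimes\omega_X^{1-p})$. By naturality of this isomorphism in the sheaf variable, applied to the canonical inclusion $\mathcal{O}_X\hookrightarrow\mathcal{O}_X(D)$, the map $\Hom(F_*\mathcal{O}_X(D),\mathcal{O}_X)\to\Hom(F_*\mathcal{O}_X,\mathcal{O}_X)$ given by precomposition with $\alpha\colon F_*\mathcal{O}_X\to F_*\mathcal{O}_X(D)$ corresponds to the inclusion $H^0(X,\mathcal{O}_X(-D)\otimes\omega_X^{1-p})\hookrightarrow H^0(X,\omega_X^{1-p})$ of sections vanishing along $D$. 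Since $D=\mathrm{div}(s)$, the section $s$ lies in this subspace, hence $\varphi$ lies in the image of that map, which means precisely that $\varphi$ factors through $F_*\mathcal{O}_X(D)\to\mathcal{O}_X$. Thus $X$ is Frobenius $D$-split with $\mathcal{O}_X(D)\cong\omega_X^{1-p}$, i.e.\ it is $\omega_X^{1-p}$-split.

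The one place the argument is more than bookkeeping is the last step of (2): one must know that, under these identifications, the map $\Hom(F_*\mathcal{O}_X(D),\mathcal{O}_X)\to\Hom(F_*\mathcal{O}_X,\mathcal{O}_X)$ is honest multiplication by the canonical section of $\mathcal{O}_X(D)$, with no Frobenius twist intervening, so that ``$s$ vanishes along $D$'' is literally the right criterion (and not, say, ``$s$ vanishes to order $\ge p$ along $D$''). The clean reason is that the twist is absorbed entirely into $F^!\mathcal{O}_X\cong\omega_X^{1-p}$, so the duality isomorphism for the finite flat map $F$ is functorial in the coherent-sheaf variable with no further twist; alternatively one checks this by a local computation with the Cartier operator, expanding $F_*\mathcal{O}_X$ in a monomial basis in \'etale-local coordinates. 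Both verifications are carried out in \cite{R87} (Sections~1.3 and~1.16) and \cite{MR}, and in the paper I would simply cite them.
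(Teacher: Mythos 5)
The paper gives no argument here: Proposition~\ref{prop properties} is stated with a citation to Ramanathan \cite{R87} (Sections~1.3 and~1.16), so there is no ``paper's own proof'' to compare against. Your proof is correct and supplies the standard argument that those references carry out.

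For (1), your factoring $F_*\mathcal{O}_X\xrightarrow{\alpha'}F_*\mathcal{O}_X(D')\xrightarrow{\beta}F_*\mathcal{O}_X(D)$ is exactly right and genuinely formal; since a $D$-splitting is by definition a splitting $\varphi$ that factors as $\psi\circ\alpha$, setting the $D'$-splitting to be $\psi\circ\beta$ is all one needs. For (2), you identify the key fact precisely: precomposition with $F_*(\mathcal{O}_X\hookrightarrow\mathcal{O}_X(D))$ corresponds, under Grothendieck duality for $F$, to the inclusion of $H^0(\mathcal{O}_X(-D)\otimes\omega_X^{1-p})$ into $H^0(\omega_X^{1-p})$ --- with no Frobenius twist on the $D$-side, because the twist is already packaged into $F^{!}\mathcal{O}_X\cong\omega_X^{1-p}$ and the duality isomorphism is natural (contravariantly) in the coherent sheaf variable. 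That naturality is exactly what makes ``$s$ vanishes along $D=\mathrm{div}(s)$'' the right criterion, rather than something involving $p$-th powers; you are right that this is the one non-formal step and that a skeptic can verify it either by naturality of $F^{!}$ or by a direct Cartier-operator computation in local coordinates. One small housekeeping point worth making explicit: you should observe that $s\neq 0$ forces $D=\mathrm{div}(s)$ to be an \emph{effective} Cartier divisor (possibly $D=0$, if $s$ is nowhere vanishing), since the definition of Frobenius $D$-splitting requires $D\ge 0$. With that remark added, the argument is complete and matches the intended reference.
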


\subsection{Applications to Lusztig varieties}
In this subsection, we prove the existence of Frobenius splittings on regular semisimple Lusztig varieties.

Throughout this subsection, $\underline w$ is a reduced word of $w\in W$ and $\ell=\ell(w)$.

\begin{theorem} \label{28}
	Let $k$ be of characteristic $p>0$. Then, $\bsY$ is Frobenius split, compatibly with $\bsYI$ for any subset $I\subset\{1,\cdots, \ell\}$. Hence, $\Y$ is Frobenius split, compatibly with $\Yv$ for any $v<w$.   
	In particular, for any ample line bundle $L$ on $\Y$,  the following hold: 
	\begin{enumerate}
		\item $H^i(\Y,L)=0$ for $i>0$,
		\item the restriction map $H^0(\Y,L)\to H^0(\Yv,L)$ is surjective.
	\end{enumerate}
\end{theorem}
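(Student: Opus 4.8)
The plan is to produce an explicit Frobenius splitting of $\bsY$ using the section-of-anticanonical criterion, Proposition~\ref{p.criteria splitting}, applied to the smooth projective variety $\bsY$ of dimension $n=\ell$. By Proposition~\ref{26} we have $\omega_{\bsY}^{-1}=\cO_{\bsY}(\partial\bsY)=\cO_{\bsY}(E_1+\cdots+E_\ell)$, so the boundary $\partial\bsY$ is the divisor of a canonical section $s\in H^0(\bsY,\omega_{\bsY}^{-1})$. By Proposition~\ref{25}, $E_1,\cdots,E_\ell$ are irreducible smooth divisors meeting in a simple normal crossing configuration, and moreover $\bigcap_{j=1}^\ell E_j=\bsYI$ for $I=\varnothing$ is the fiber over $e$, which is nonempty (it contains a point lying over $wB$). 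Picking a point $x$ in this total intersection (so $x\in\bigcap_{j=1}^\ell E_j$ and there is no extra effective part $E$), Proposition~\ref{p.criteria splitting} applies verbatim: $s^{p-1}\in H^0(\bsY,\omega_{\bsY}^{1-p})$ gives a Frobenius splitting of $\bsY$, and since the $E_j$ are irreducible it induces compatible splittings of every intersection $\bigcap_{j\notin I}E_j=\bsYI$ for nonempty $I\subset\{1,\cdots,\ell\}$ — and of $\bsY$ itself, corresponding to $I=\{1,\cdots,\ell\}$.

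Next I would descend the splitting along $\pi\colon\bsY\to\Y$. Since $\Y$ is normal (Theorem~\ref{15}) and $\pi$ is birational and proper with $\bsY$ having rational singularities over $\Y$, the argument in the proof of Theorem~\ref{8} gives $\pi_*\cO_{\bsY}=\cO_{\Y}$; hence Lemma~\ref{lem:push.Frob}(1) shows $\Y$ is Frobenius split. For the compatibility statement, given $v<w$ choose (as in the proof of Theorem~\ref{thm:vanishing}(2), using \cite[Corollary~2.2.2]{Brion}) a reduced word $\underline w$ of $w$ and an index set $I^c=\{j_1,\cdots,j_m\}$ such that $\underline w_{I^c}$ is a reduced word of $v$, so that $\pi$ maps $\bsYI=\bigcap_{j\in I}E_j$ onto $\Yv$. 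By Lemma~\ref{lem:push.Frob}(2), the image $\pi(\bsYI)=\Yv$ is compatibly split in $\Y$.

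Finally, the two cohomological consequences follow from the general properties of Frobenius splittings. For an ample line bundle $L$ on $\Y$, part (1) is Proposition~\ref{prop:Frob.basic}(1): Frobenius splitting of $\Y$ gives injections $H^i(\Y,L)\hookrightarrow H^i(\Y,L^{\otimes p^e})$ for all $e$, and Serre vanishing kills the right-hand side for $e\gg0$, $i>0$. For part (2), since $\Yv$ is compatibly split in $\Y$, Proposition~\ref{prop:Frob.basic}(3) gives surjectivity of $H^0(\Y,L)\to H^0(\Yv,L)$.

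The main obstacle I anticipate is purely bookkeeping rather than conceptual: one must be careful that Proposition~\ref{p.criteria splitting} is genuinely applicable, i.e.\ that $\dim\bsY=\ell$ equals the number of boundary components $E_1,\cdots,E_\ell$ (true since $\underline w$ is reduced, so $\ell=\ell(w)$ and $\bsY$ has pure dimension $\ell$ by the lemma preceding Proposition~\ref{25}), and that the total intersection $\bigcap_{j=1}^\ell E_j$ is nonempty so that a valid base point $x$ with empty extra part $E$ exists — this is exactly $\bsYI$ for $I=\varnothing$, nonempty by Proposition~\ref{25}(3) together with the fact that $Y_e(\s)\cong W\ne\varnothing$. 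Everything else is a direct citation of the results recalled in the preliminaries.
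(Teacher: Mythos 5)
Your overall strategy matches the paper's proof: use Proposition~\ref{26} to see that $\partial\bsY$ is an anti-canonical simple normal crossing divisor, use Proposition~\ref{25}(3) and $\bigcap_{j=1}^\ell E_j\cong Y_e(\s)\cong W\neq\varnothing$ to find the base point, apply Proposition~\ref{p.criteria splitting} to split $\bsY$, push down via Lemma~\ref{lem:push.Frob}, and finish with Proposition~\ref{prop:Frob.basic}. However, there is a genuine gap in the compatible-splitting step.

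You assert that ``$E_1,\cdots,E_\ell$ are irreducible smooth divisors.'' This is false in general, and Proposition~\ref{25} does not say it. Indeed $E_j\cong Y_{\underline w_{\{j\}^c}}(\s)$, and these Bott--Samelson Lusztig varieties are usually \emph{disconnected}: $Y_e(\s)\cong W$ consists of $\lvert W\rvert$ points, and for $\ell(w)=1$ the cell $\Ycell$ is a disjoint union of copies of $\mathbb{G}_m$, so $Y_{s_i}(\s)$ is a disjoint union of $\mathbb{P}^1$'s. Because the ``moreover'' part of Proposition~\ref{p.criteria splitting} explicitly requires the $Z_j$ to be irreducible, you cannot invoke it directly with $Z_j=E_j$ to conclude that $\bsYI=\bigcap_{j\notin I}E_j$ is compatibly split. (The first part of the proposition, giving the splitting of $\bsY$, is fine; irreducibility is not needed there. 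Also note a sign slip: later you write $\bsYI=\bigcap_{j\in I}E_j$, whereas Proposition~\ref{25}(3) gives $\bsYI\cong\bigcap_{j\notin I}E_j$.)

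The paper repairs exactly this issue: for each point $x\in\bigcap_{j=1}^\ell E_j$ it applies Proposition~\ref{p.criteria splitting} to the irreducible components $Z_j$ of $E_j$ passing through $x$ (absorbing the remaining components of $\partial\bsY$ into the effective part $E$ not containing $x$); this makes every irreducible component of every $E_j$ compatibly split. It then cites \cite[Proposition~1.9]{R87}, which states that unions and intersections of compatibly split subvarieties are again compatibly split, to conclude that $\bsYI$ and more generally any union/intersection of components of the $E_j$ is compatibly split. Your argument needs this extra step; without it the compatibility of the splitting with $\bsYI$, and hence with $\Yv$, is not established.
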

\begin{proof} By Proposition \ref{26}, 
	the canonical line bundle of $\bsY$ is
	\[\omega_{\bsY}=\cO(-\partial \bsY)\]
	where $\partial \bsY=\bigcup_{j=1}^{\ell} E_j$ is a simple normal crossing divisor.
	By Proposition~\ref{25}(3), $\bigcap_{j=1}^{\ell}E_j$ is the fiber product of $\cB\xrightarrow{\iota_\s}\cB\times \cB\xleftarrow{}\bigcap_{j=1}^{\ell}\fD_j\cong \cB$, where the right map is the diagonal map. In particular, $\bigcap_{j=1}^{\ell}E_j\cong \Ye\cong W$, so it is nonempty. 
	Hence, by Proposition~\ref{p.criteria splitting}, $\bsY$ is Frobenius split, where a splitting $\varphi$ is given by $s^{p-1}$ and $s$ is the anti-canonical section with $\mathrm{div} s=\partial \bsY$.
	 
	On the other hand, as every irreducible component of $E_j$ passes through
	a point in $\bigcap_{j=1}^{\ell}E_j$, it is compatibly split with
	$\varphi$  by Proposition~\ref{p.criteria splitting}. Consequently, any
	subvariety in $\bsY$ obtained by taking the union or intersection of
	irreducible components of $E_j$ for varying $j$ is also compatibly
	split with $\varphi$ (\cite[Proposition~1.9]{R87}). In particular,
	$\bsYI$ is compatibly split in $\bsY$ for any subset
	$I\subset\{1,\cdots, \ell\}$. Now it follows from
	Lemma~\ref{lem:push.Frob} and the fact that every $v<w$ has a word of
	the form $\underline w_I$ that $\Y$ is Frobenius split, compatibly with
	$\Yv$ for any $v<w$. 
	
	The assertions (1) and (2) are immediate from Proposition~\ref{prop:Frob.basic}.
\end{proof}

By Proposition~\ref{prop properties}(2) and Theorem~\ref{28}, we have the following.
\begin{proposition}\label{58}
	$\bsY$ is Frobenius $(p-1)\partial \bsY$-split.
\end{proposition}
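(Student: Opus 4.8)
The plan is to combine the two facts we have just established: the Bott-Samelson resolution $\bsY$ is Frobenius split (Theorem~\ref{28}), and its canonical bundle satisfies $\omega_{\bsY}=\cO(-\partial\bsY)$ (Proposition~\ref{26}), together with the soft general statement recorded in Proposition~\ref{prop properties}(2). First I would recall that since $\bsY$ is smooth and Frobenius split, Proposition~\ref{prop properties}(2) tells us that $\bsY$ is $\omega_{\bsY}^{1-p}$-split: any section $\sigma \in H^0(\bsY,\omega_{\bsY}^{1-p})$ inducing a splitting vanishes on an effective divisor whose associated line bundle is $\omega_{\bsY}^{1-p}$. The point of the proof is just to identify this divisor explicitly.

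Concretely, take the splitting constructed in the proof of Theorem~\ref{28}: it is given by $s^{p-1}$, where $s\in H^0(\bsY,\omega_{\bsY}^{-1})$ is the anti-canonical section with $\mathrm{div}\, s=\partial\bsY$ (which exists precisely because $\omega_{\bsY}^{-1}=\cO(\partial\bsY)$ by Proposition~\ref{26}, and $\partial\bsY$ is effective). Then $s^{p-1}$ is a section of $\omega_{\bsY}^{1-p}$ with divisor $(p-1)\partial\bsY$. By definition of a Frobenius $D$-split (the splitting $F_*\cO_{\bsY}\to\cO_{\bsY}$ factoring through $F_*\cO_{\bsY}((p-1)\partial\bsY)\to\cO_{\bsY}$), the fact that the splitting section vanishes exactly along $(p-1)\partial\bsY$ means $\bsY$ is Frobenius $(p-1)\partial\bsY$-split. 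This is essentially the content of Proposition~\ref{prop properties}(2) applied with the divisor read off from Proposition~\ref{26}; alternatively, one can invoke the standard fact (as in \cite[\S1.3]{R87}) that a splitting coming from $\sigma^{p-1}$ with $\sigma\in H^0(\bsY,\omega_{\bsY}^{-1})$ is automatically $((p-1)\,\mathrm{div}\,\sigma)$-split.

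There is essentially no obstacle here: the statement is a formal consequence of what precedes it, and the only thing to be slightly careful about is matching conventions, namely that $D$-split refers to the \emph{reduced} boundary $\partial\bsY$ scaled by $p-1$, and that the section $s$ used in Theorem~\ref{28} is exactly the one with $\mathrm{div}\,s=\partial\bsY$ rather than some multiple. Once that bookkeeping is in place the proof is two lines. I would therefore write the proof as: ``By Proposition~\ref{26}, $\omega_{\bsY}^{-1}=\cO(\partial\bsY)$, so there is a section $s\in H^0(\bsY,\omega_{\bsY}^{-1})$ with $\mathrm{div}\,s=\partial\bsY$; by the proof of Theorem~\ref{28} the splitting $\varphi$ of $\bsY$ is given by $s^{p-1}\in H^0(\bsY,\omega_{\bsY}^{1-p})$, whose divisor is $(p-1)\partial\bsY$. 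Hence $\varphi$ factors through $F_*\cO_{\bsY}((p-1)\partial\bsY)\to\cO_{\bsY}$, i.e. $\bsY$ is Frobenius $(p-1)\partial\bsY$-split.''
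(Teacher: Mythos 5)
Your proposal is correct and matches the paper's argument: the paper likewise cites Proposition~\ref{prop properties}(2) together with Theorem~\ref{28}, and your explicit identification of the splitting section as $s^{p-1}$ with $\mathrm{div}\,s=\partial\bsY$ (from the proof of Theorem~\ref{28} and Proposition~\ref{26}) is precisely the bookkeeping needed to conclude $\bsY$ is $(p-1)\partial\bsY$-split. Nothing is missing.
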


Using this, we can improve Theorem~\ref{28}(1) to the case of nef line bundles under the assumption that $p$ is sufficiently large for given $(\underline w,\s)$, as follows.
\begin{theorem} \label{thm:vanishing.Frob}
	Let $k$ be of characteristic $p>0$. When $p$ is sufficiently large, $\bsY$ is Frobenius  $D$-split  for some ample divisor $D$. 
 In particular, for a nef line bundle $L$ on $\Y$,  $H^i(\Y,L)=0$ 
 for $i>0$. 
\end{theorem}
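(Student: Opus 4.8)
The strategy is to reduce the ``$D$-split for some ample $D$'' statement to a combination of Proposition~\ref{58} (which gives that $\bsY$ is Frobenius $(p-1)\partial\bsY$-split) and Proposition~\ref{prop:ample_boundary}(1) (which produces positive integers $m_1,\dots,m_\ell$ with $A:=\sum_{j=1}^\ell m_j E_j$ ample), using Proposition~\ref{prop properties}(1) to pass from a splitting along a large multiple of $\partial\bsY$ to a splitting along a smaller effective Cartier subdivisor. The point is that $A=\sum m_j E_j \le (\max_j m_j)\,\partial\bsY$ as divisors, so if $p-1 \ge \max_j m_j$ — which holds for $p$ sufficiently large, since by Remark~\ref{rem:indep} the $m_j$ can be chosen independently of the base field and hence of $p$ — then $0 \le A \le (p-1)\partial\bsY$, and Proposition~\ref{prop properties}(1) upgrades the $(p-1)\partial\bsY$-splitting to an $A$-splitting of $\bsY$. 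This proves the first sentence of the theorem, with $D=A$ the ample divisor supplied by Proposition~\ref{prop:ample_boundary}(1).

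For the cohomology vanishing on $\Y$: given a nef line bundle $L$ on $\Y$, first pull back to $\bsY$ along the Bott--Samelson resolution $\pi:\bsY\to\Y$, where $\pi^*L$ is again nef. Apply Proposition~\ref{prop frob ample} to $\bsY$, which is Frobenius $D$-split with $D$ ample by the previous paragraph, to conclude $H^i(\bsY,\pi^*L)=0$ for all $i>0$. Finally, transfer back to $\Y$ using that $\Y$ has rational singularities (Theorem~\ref{22}/Theorem~\ref{8}), so $R\pi_*\cO_{\bsY}=\cO_{\Y}$; by the projection formula $R\pi_*\pi^*L=L$, and the Leray spectral sequence (or a direct degeneration argument, as in the proof of Theorem~\ref{thm:vanishing}(1)) gives $H^i(\Y,L)\cong H^i(\bsY,\pi^*L)=0$ for $i>0$.

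\textbf{Main obstacle.} The only genuinely delicate point is making precise what ``$p$ sufficiently large'' means and checking that the threshold does not itself depend on $p$. Concretely, one must verify that the coefficients $m_j$ produced in Proposition~\ref{prop:ample_boundary}(1) — which descend from the $a_j$ of Lemma~\ref{lem:ample_Sch}(1) and an auxiliary integer $m$ in~\eqref{eq:linebundle.M} — can indeed be taken from a fixed finite list determined by the combinatorial datum $\underline w$ alone, independently of $\mathrm{char}(k)$. This is exactly the content of Remark~\ref{rem:indep}, which in turn rests on Propositions~\ref{59} and~\ref{61} (the ample cones of $X_{\underline w}$ and $\fX_{\underline w}$ are defined over $\Z$ and stable under base change). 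Granting that, the bound $p \ge 1+\max_j m_j$ suffices and everything else is formal; I do not expect any further difficulty. One should also note that the statement is for the fixed pair $(\underline w,\s)$ — or really just $\underline w$, since by Proposition~\ref{prop:ample_boundary} the $m_j$ do not depend on $\s\in\Grs$ either — so the uniformity is only over the characteristic, not over all Lusztig varieties simultaneously.
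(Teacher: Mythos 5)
Your proposal is correct and follows essentially the same route as the paper: Proposition~\ref{58} gives the $(p-1)\partial\bsY$-splitting, Proposition~\ref{prop:ample_boundary}(1) plus Remark~\ref{rem:indep} supply the ample $A=\sum m_jE_j$ with characteristic-independent $m_j$, Proposition~\ref{prop properties}(1) converts the $(p-1)\partial\bsY$-splitting to a $D$-splitting once $p>\max_j m_j$, and Proposition~\ref{prop frob ample} gives vanishing on $\bsY$. The one thing you make explicit that the paper's proof leaves implicit is the transfer of vanishing from $\bsY$ down to $\Y$ via $R\pi_*\cO_{\bsY}=\cO_{\Y}$ (rational singularities, Theorem~\ref{8}) and the projection formula — a welcome clarification, not a divergence.
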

\begin{proof}
	By Proposition~\ref{prop:ample_boundary}, there exists 
	an ample divisor $D:=\sum_{j=1}^\ell m_jE_j$ with positive integers $m_j$ which are independent of base field (Remark~\ref{rem:indep}).
	If $p$ is sufficiently large so that $p>m_j$ for all $j$, then $(p-1)\partial \bsY \geq D$. Hence, $\bsY$ is Frobenius $D$-split by Propositions~\ref{prop properties}(1) and~\ref{58}. The second assertion then immediately follows from Proposition~\ref{prop frob ample}.
\end{proof}

\begin{corollary}\label{cor:vanishing.Frob}
	Let $k$ be of characteristic $p>0$. For $\lambda$ regular dominant, 
	\begin{enumerate}
		\item $H^i(\Y, L_\lambda)=0$ for $i>0$,
		\item If $v<w$, then $H^0(\Y,L_\lambda)\to H^0(\Yv, L_\lambda)$ is surjective.
	\end{enumerate}
	 When $p$ is sufficiently large (so that $p>m_j)$, \emph{(1)} holds also for $\lambda$ dominant.
\end{corollary}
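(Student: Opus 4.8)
The plan is to deduce both assertions formally from Theorems~\ref{28} and~\ref{thm:vanishing.Frob}, together with the standard consequences of Frobenius splitting recorded in Proposition~\ref{prop:Frob.basic}. The only input about $L_\lambda$ that is needed is the well-known correspondence recalled in the introduction: $L_\lambda$ is ample on $\cB$ exactly when $\lambda$ is regular dominant, and globally generated (hence nef) when $\lambda$ is merely dominant. Since ampleness and nefness are inherited by restriction to a closed subvariety, $L_\lambda|_{\Y}$ is ample (resp.\ nef) in the two cases. This reduces the whole corollary to the statements about ample and nef line bundles already proved in this section.

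Concretely, for $\lambda$ regular dominant, part~(1) is immediate: $\Y$ is Frobenius split by Theorem~\ref{28} and $L_\lambda|_{\Y}$ is ample, so Proposition~\ref{prop:Frob.basic}(1) gives $H^i(\Y,L_\lambda)=0$ for $i>0$. For the last sentence of the corollary I would invoke Theorem~\ref{thm:vanishing.Frob}: when $p>m_j$ for all $j$, the integers supplied by Proposition~\ref{prop:ample_boundary}, the Bott--Samelson resolution $\bsY$ is Frobenius $D$-split for an ample $D$, and then Theorem~\ref{thm:vanishing.Frob} applies to the nef line bundle $L_\lambda|_{\Y}$, giving $H^i(\Y,L_\lambda)=0$ for $i>0$ and merely dominant $\lambda$.

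Part~(2) requires one extra observation, namely that $\cB$ is itself a regular semisimple Lusztig variety. Taking $w=w_0$, the longest element of $W$, one has $\overline{Bw_0B}=G$, hence $X_{w_0}=\cB$ and $\fX_{w_0}=\cB\times\cB$, so the defining fiber product~\eqref{3} gives $Y_{w_0}(\s)=\cB$. Applying Theorem~\ref{28} to $w_0$ and a reduced word of it, and using that every $v\in W$ satisfies $v\le w_0$ and hence has a reduced word obtained by deleting letters from that word, one gets a single Frobenius splitting of $\cB=Y_{w_0}(\s)$ compatible with $Y_v(\s)$ for all $v\in W$; in particular $\Y$ is compatibly split in $\cB$ for the given $w$ (trivially so if $w=w_0$). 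Since $L_\lambda$ is ample on $\cB$ for regular dominant $\lambda$, Proposition~\ref{prop:Frob.basic}(3) yields the surjectivity of $H^0(\cB,L_\lambda)\to H^0(\Y,L_\lambda)$.

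I do not expect a genuine obstacle here, as the corollary is essentially a repackaging of Theorems~\ref{28} and~\ref{thm:vanishing.Frob}. The one point I would take care to make explicit is the identification $\cB=Y_{w_0}(\s)$ and the resulting compatible splitting of $\Y$ inside $\cB$; this is what lets part~(2) be read off from Proposition~\ref{prop:Frob.basic}(3), rather than forcing a separate argument for surjectivity on Bott--Samelson resolutions.
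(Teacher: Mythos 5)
Your proposal is correct and follows the same route the paper intends, namely reading the corollary off from Theorem~\ref{28} (ample case, any $p$), Theorem~\ref{thm:vanishing.Frob} (nef case, $p$ large), and Proposition~\ref{prop:Frob.basic}. The paper's proof is the single line ``Immediate from Theorem~\ref{thm:vanishing.Frob},'' which is terse and in particular does not spell out part~(2); your observation that $\cB=Y_{w_0}(\s)$ (since $\overline{Bw_0B}=G$), so that Theorem~\ref{28} applied to $w_0$ gives a Frobenius splitting of $\cB$ compatible with $\Y$, is exactly the bridge needed for Proposition~\ref{prop:Frob.basic}(3), and it is a detail worth making explicit.
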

\begin{proof}
	Immediate from Theorem~\ref{thm:vanishing.Frob}.
\end{proof}

\subsection{Cohomology vanishing in characteristic 0}\label{20}
An alternative proof of Corollary~\ref{cor:vanishing} can be given using Corollary~\ref{cor:vanishing.Frob} and the semicontinuity of cohomology. Accordingly, we restrict here to the case $L=L_\lambda$ with $\lambda$ dominant.

Consider the Chevalley group $G_\ZZ$ over $\ZZ$, which specializes to $G$ over $k$ (see \cite{Jan}). Suppose that $k$ is an algebraically closed field of characteristic zero, and $\s\in G(k)$ is regular semisimple. Since $G_\Z$ is of finite type over $\Z$, the $k$-point $\s:\Spec(k)\to G_\Z$ extends to 
$\Spec(R)\to G_\Z$
over $\Z$, for some finitely generated $\Z$-subalgebra $R\subset k$ with $K(R)=k$.  
Since regular semisimpleness is an open condition (cf.~\cite[2.14]{Ste}), there exists a nonempty open subscheme $U\subset \Spec(R)$ such that the induced section
\[\s_U:U\lra G_U\]
with $G_U:=G_\Z\times_\Z U$ sends every geometric point to a regular semisimple element.
Similarly, we denote $\cB_U:=\cB_\Z\times_\Z U$ and $(\fX_w)_U:=(\fX_w)_\Z\times_\Z U$ for $w\in W$, where $\cB_\Z:=G_\Z/B_\Z$ and $(\fX_w)_\Z \subset \cB_\Z$ denote the flag variety and relative Schubert varieties over $\Z$. 

Define the Lusztig variety associated to $(w,\s_U)$ 
over $U$  
as the fiber product
\[\begin{tikzcd}
	(\Y)_U \arrow[r,hook,"\iota_U"]\arrow[d,hook']& (\fX_w)_U \arrow[d,hook']\\ \cB_U \arrow[r,hook,"\iota_{\s_U}"]&\cB_U\times_U\cB_U
\end{tikzcd}\]
analogous to \eqref{3},
which specializes to $\Y$ over $k$.

Since each geometric fiber of  $(\Y)_U$ over $U$ has expected codimension $\mathrm{codim}~\iota_{\s_U}=\dim~ \cB$ in the corresponding fiber of $(\fX_{w})_U$, and since $(\fX_{w})_U$ is flat over $U$, it follows that $(\Y)_U$ is also flat over $U$ (\cite[\href{https://stacks.math.columbia.edu/tag/0470}{Tag 0470}]{stacks-project}). 

Since $U$ is a scheme locally of finite type over $\Z$ with nonempty generic fiber $k$, there exists a geometric point $u\in U$ lying over any given sufficiently large prime $p$. Thus, Theorem~\ref{thm:vanishing.Frob} applies to $(\bsY)_u:=(\bsY)_U\times_U u$. Hence $H^i((\Y)_u,L_\lambda)=0$ for all $i$ and $\lambda$ dominant. 

Applying the semicontinuity theorem (\cite[Theorem~12.8]{Har}) to $(\Y)_U\to U$ and the restriction of the  line bundle $(L_\lambda)_U=G_U\times_{B_U}k_{-\lambda}$ over $\cB_U$ to $(\Y)_U$, we conclude that $H^i(\Y,L_\lambda)=0$ over $k$. 
Similarly, we can deduce the surjectivity of $H^0(\Y,L_\lambda)\to H^0(\Yv,L_\lambda)$ for $\lambda$ regular dominant and $v<w$.
\hfill $\square$
\begin{remark}
	We restrict to the case $L=L_\lambda$ here, as we do not know 
    whether the nefness of line bundles is an open condition in projective families (cf.~\cite[Proposition~1.4.14]{Laz}).
\end{remark}

\bigskip
\section{Degeneration to Hessenberg varieties}\label{sec:degen}

In this section, we investigate relationships of regular semisimple Lusztig varieties to regular semisimple Hessenberg varieties. 
More precisely, we show that Lusztig varieties degenerate to Hessenberg varieties, suggesting that Hessenberg varieties can be viewed as linear versions of Lusztig varieties.

\smallskip

Let $G$ be a simple linear algebraic group and $(B,T)$ be a Borel subgroup and a maximal torus. 
Let $\Phi^+$ be the set of positive roots and $\Delta=\{\alpha_1, \dots, \alpha_{r}\}$ be the set of simple roots.  
Let $W$ be the Weyl group and $S=\{s_1, \dots, s_r\}$ be the set of simple reflections corresponding to $\Delta$. 
Let $\mathfrak g, \mathfrak b$ and $\mathfrak t$ be the Lie algebra of $G$, $B$ and $T$ respectively. 

Let $\grs\subset \fg$ be the open subset of regular semisimple elements, and denote its image in the projectivization $\PP(\fg)$ by $\pgrs$. 

In this section, we assume that $k$ is the field of complex numbers $\C$, as we will consider the singular cohomology of Lusztig varieties.
\subsection{Hessenberg varieties} \label{ss:Hess space}
A {\it Hessenberg space} is a $B$-invariant subspace of $\mathfrak g$ containing $\mathfrak b$. 
For $s\in \fg$ and a Hessenberg space $H\subset \fg$, 
the {\it Hessenberg variety}
associated to $(s,H)$
is defined to be a subvariety of $\cB$ 
\begin{equation} \label{e.Hess}
X_H(s):=\{gB \in \cB: g^{-1}.s \in H\}
\end{equation}
where $.$ denotes the adjoint action of $G$ on $\fg$.
Note that \eqref{e.Hess} is isomorphic to the fiber of $s$ under the map $G\times^BH\to \fg$ sending $[g,y]\mapsto g.y$.
\smallskip

We introduce a slight generalization of these definitions, where $H$ is not necessarily a linear subspace of $\fg$. 
When $H$ is a linear subspace of $\fg$, we call it a \emph{linear} Hessenberg space to refer to the original definition.

\begin{definition} \label{d.Hess} Consider the natural projection $\pi_\fb:\fg\to \fg/\fb$. Let $s\in \fg$.
	\begin{enumerate}
		\item We define a \emph{Hessenberg space} to be a nonempty $B$-invariant closed subscheme $H\subset \fg$ of pure dimension, which is saturated with respect to $\pi_\fb$, (that is, $H=\pi_\fb^{-1}(H/\fb)$ for some $B$-invariant closed subscheme in $\fg/\fb$ of pure dimension, which we denote by  $H/\fb$).
		\item We define the \emph{Hessenberg scheme $X_H(s)$ associated to $(s,H)$} to be the (scheme-theoretic) fiber of $s$ under $G\times^BH\to \fg, ~[g,h]\mapsto g.h$. 
			
			Note that this is equivalent to define $X_H(s)$ to be the fiber product
		\beq\label{75}
		\begin{tikzcd}
			X_H(s)\arrow[r,hook]\arrow[d,hook']&G\times^B(H/\fb)\arrow[d,hook']\\ \cB\arrow[r, hook,"\iota_s"]&G\times^B(\fg/\fb)=T_\cB
		\end{tikzcd} 
		\eeq
		where $\iota_s$ denotes the section of $T_\cB\to\cB$ given as  $gB\mapsto [g,g^{-1}.s]$.
	\end{enumerate}
\end{definition}
\smallskip
In this generalization, the scheme $X_H(s)$ is not necessarily smooth, even when $s$ is regular semisimple. Rather, its singular locus is determined by that of $H/\fb$. 

\begin{proposition}\label{77} Let $s\in \grs$. Then, the following hold.
\begin{enumerate}
	\item The image of $\iota_s$ intersects transversely with any $G$-orbits in $T_\cB$.
	\item If $H/\fb$ is a $B$-invariant closed subscheme in $\fg/\fb$, 
		then $X_H(s)$ also has pure dimension $\dim~H/\fb$.
	\item The singular locus of $X_H(s)$ is the preimage of $G\times^B\mathrm{Sing}(H/\fb)$. 
	\item If a map $V\to H/\fb$ is a $B$-equivariant resolution of singularities, then so is the induced map $X_H(s)\times_{G\times^B(H/\fb)}G\times^BV\to X_H(s)$. 
\end{enumerate}

\end{proposition}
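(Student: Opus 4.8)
The plan is to deduce parts (2)--(4) formally from part (1) by the same fibre‑product arguments used for regular semisimple Lusztig varieties in Lemma~\ref{2'} and Proposition~\ref{5'}, so the heart of the matter is (1). For (1), I would first observe that the $G$‑orbits in $T_\cB=G\times^B(\fg/\fb)$ are exactly the $G\times^B\cO$ for $\cO$ a $B$‑orbit in $\fg/\fb$, that $\iota_s$ is the descent to the quotient of the graph of the $B$‑equivariant map $\bar\phi_s\colon G\to\fg/\fb$, $g\mapsto \pi_\fb(\Ad(g^{-1})s)$, and that $q\colon G\times(\fg/\fb)\to T_\cB$ and $p\colon G\to\cB$ are smooth surjective with $q^{-1}(G\times^B\cO)=G\times\cO$. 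Since transversality is insensitive to the smooth base changes $p$ and $q$, it suffices to show that $\bar\phi_s$ is a submersion for every $s\in\grs$: a submersion is automatically transverse to every subscheme, in particular to each $G\times\cO$. Trivialising $TG$ by right translations one computes $\operatorname{im}(d_g\bar\phi_s)=\pi_\fb(\Ad(g^{-1})[\fg,s])$, which is all of $\fg/\fb$ if and only if $[\fg,s]+\Ad(g)\fb=\fg$; by the Killing form, with $[\fg,s]^\perp=\fc_\fg(s)$ and $\fb^\perp=\fn$, this is equivalent to $\fc_\fg(s)\cap\Ad(g)\fn=0$. As $s$ is regular semisimple, $\fc_\fg(s)$ is a Cartan subalgebra, hence consists of semisimple elements, while $\Ad(g)\fn$ consists of nilpotent elements, so their intersection is $0$ and $\bar\phi_s$ is a submersion.

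Before turning to (2)--(4) I would record two easy consequences of (1). First, $\iota_s$ is transverse to \emph{every} $G$‑stable smooth locally closed subscheme $S\subseteq G\times^B(H/\fb)$: at a point $p$ of the intersection the orbit $G\cdot p$ lies in $S$, so $T_p(G\cdot p)\subseteq T_pS$, and $\iota_s$ is transverse to $G\cdot p$ by (1). Second, $G\times^B(H/\fb)\to\cB$ is a Zariski‑locally trivial bundle with fibre $H/\fb$, so $G\times^B(H/\fb)$ is regular at $[g,v]$ exactly when $H/\fb$ is regular at $v$, and it is of pure dimension $\dim\cB+\dim(H/\fb)$ when $H/\fb$ is.

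For (2): since $\iota_s$ is a section of $T_\cB\to\cB$, it is a regular (local complete intersection) immersion of codimension $\dim\cB$, and $X_H(s)=\iota_s^{-1}(G\times^B(H/\fb))$ is obtained from $G\times^B(H/\fb)$ by imposing $\dim\cB$ equations, so by Krull's height theorem every component of $X_H(s)$ has dimension at least $\dim(H/\fb)$. For the reverse inequality I would stratify $(H/\fb)_{\mathrm{red}}$ by its iterated singular loci — a \emph{finite}, $B$‑stable stratification into smooth locally closed pieces — and apply the first observation to the induced $G$‑stable strata of $G\times^B(H/\fb)$: each $\iota_s^{-1}$ of a stratum is smooth of dimension $\dim(\text{stratum})-\dim\cB\le\dim(H/\fb)$, whence $\dim X_H(s)\le\dim(H/\fb)$ and $X_H(s)$ is pure of dimension $\dim(H/\fb)$. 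For (3): running the tangent‑space diagram of Proposition~\ref{5'} with the $G$‑orbit $G\cdot\iota_s(y)\subseteq G\times^B(H/\fb)$ in place of $\fX_v^\circ$, transversality of $\iota_s$ to that orbit makes the top row exact while the last column is an equality, forcing $\dim T_{X_H(s),y}=\dim T_{G\times^B(H/\fb),\iota_s(y)}-\dim\cB$; combined with (2), $y$ is a smooth point of $X_H(s)$ iff $\iota_s(y)$ is a regular point of $G\times^B(H/\fb)$, iff, by the second observation, the image of $y$ in $H/\fb$ avoids $\mathrm{Sing}(H/\fb)$ — that is, $\mathrm{Sing}(X_H(s))$ is the preimage of $G\times^B\mathrm{Sing}(H/\fb)$. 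Finally for (4): a $B$‑equivariant resolution $V\to H/\fb$ induces a $G$‑equivariant proper birational morphism $G\times^BV\to G\times^B(H/\fb)$ with $G\times^BV$ smooth; since $G\times^BV$ is smooth and $G$‑stable, $\iota_s$ is transverse to it by the first observation, so the fibre product $X_H(s)\times_{G\times^B(H/\fb)}G\times^BV=\iota_s^{-1}(G\times^BV)$ is smooth; properness is preserved by base change, and the morphism to $X_H(s)$ is an isomorphism over $\iota_s^{-1}$ of the dense $G$‑stable locus where $G\times^BV\to G\times^B(H/\fb)$ is an isomorphism — dense in $X_H(s)$ by (2) — hence birational, i.e.\ a resolution.

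The conceptual core is the short differential computation and Killing‑form duality in (1). I expect the only genuine bookkeeping to concern possibly reducible or non‑reduced $H/\fb$ (and the possible infinitude of $B$‑orbits on it): this is what forces the use of the finite iterated‑singular‑locus stratification together with the first observation above, in place of a naive orbit‑by‑orbit argument, but it introduces no real difficulty.
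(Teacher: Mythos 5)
Your proposal is correct and follows essentially the same route as the paper: transversality of $\iota_s$ to $G$-orbits for (1), a $G$-equivariant iterated-singular-locus stratification plus the regular-immersion lower bound for (2), the tangent-space diagram for (3), and transversality to the pulled-back $G$-stable smooth locus for (4). The one place you go beyond the paper is (1), where the paper simply cites de~Mari--Procesi--Shayman while you supply a self-contained argument via the differential computation $\operatorname{im}(d_g\bar\phi_s)=\pi_\fb(\Ad(g^{-1})[\fg,s])$ and Killing-form duality $[\fg,s]^\perp=\fc_\fg(s)$, $\fb^\perp=\fn$, reducing to $\fc_\fg(s)\cap\Ad(g)\fn=0$ (semisimple versus nilpotent) --- a clean proof, but the same underlying strategy rather than a different one.
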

\begin{proof}
	(1) This is an analogue to Lemma~\ref{2'} and it is essentially the content of \cite[Theorem~6]{demari-procesi-shayman} that establishes the smoothness of $X_H(s)$ in linear case. 
	
	The remaining assertions follow by the same arguments as in the proofs of Propositions~\ref{15c} and \ref{5'} and of the smoothness of $\bsY$ respectively. 
	
	(2) We may assume that $H$ is irreducible and reduced. Then, there is a $G$-equivariant stratification $Z:=G\times^B (H/\fb)=\bigsqcup_{j\geq0}Z_j$ of $Z$ with $Z_j$ being (not necessarily connected) locally closed smooth $G$-invariant subvarieties of pure codimension $j$ with $\mathrm{Sing}(Z)=Z-Z_0$. Indeed, one may take $Z_0$ to be the smooth locus of $Z$, take the next largest nonempty $Z_j$ to be the smooth locus of $Z-Z_0$, and so on. Now let $\Delta_s$ be the image of $\iota_s$. Then, $X_H(s)$ is 
	\[\Delta_s\cap Z=\bigsqcup_{j\geq0}\Delta_s\cap Z_j.\]
	By the LHS, every component of $X_H(s)$ has dimension at least $\ell$ while by the RHS and (1), it is the union of locally closed (smooth) subvarieties of dimension $\ell-j\leq \ell$. Since $X_H(s)$ is nonempty, it has pure dimension $\ell$.

	(3) The short exact sequences of tangent spaces, analogous to those used in the proof of Proposition~\ref{5'}, show that for any $x\in X_H(s)$, we have $\dim~T_{Z,\iota_s(x)}-\dim~T_{X_H(s),x}=\dim~\fg/\fb$ which equals $\dim~Z-\dim~X$ by (2). 
	In particular, $x\in X_H(s)$ is smooth if and only if $\iota_s(x)\in Z$ is smooth.
	
The proof for (4) is similar as well. 
\end{proof}

Especially when $H$ is a cone, the natural scaling $\mathbb{G}_m$-actions on $H$ and $\fg$ make the map $G\times^BH\to \mathfrak g$ a $\mathbb{G}_m$-equivariant morphism.
In particular, $X_H(s)=X_H(ts)$ for any $t\in \mathbb{G}_m$. Taking quotients by the scaling $\mathbb{G}_m$-actions and restricting over $\pgrs$, we obtain a family of regular semisimple Hessenberg varieties 
\[ X_{\PP(H)}^{\rs}:=G\times^B\PP(H)|_{\pgrs}\lra \pgrs\]
over $\pgrs$, whose fiber over $[s]\in \pgrs$ is $X_H(s)$. 
This is a flat family of relative dimension $\dim ~H/\fb$, by Proposition~\ref{77}.

\medskip

We are primarily concerned with Hessenberg spaces that are obtained as (the preimages of) the tangent cones of $X_w$ at the identity.

\begin{definition}\label{def:Hess.cone}
Let $w \in W$. Denote by $C_w$ the tangent cone of $X_w$ at the point $eB$ corresponding to the identity. By the canonical inclusion $X_w\subset \cB$, it is naturally a closed subscheme in $T_{\cB,eB}\cong \fg/\fb$.
Define 
\[H_w:=\pi_\fb^{-1}(C_w)\] to be the preimage of $C_w$ under the natural projection $\pi_\fb:\fg\to \fg/\fb$. 

This defines a set-theoretic map
\beq\label{eq:corres}\;W\lra \{\text{Hessenberg subspaces}\}, \quad w\longmapsto H_w=\pi_\fb^{-1}(C_w),\eeq
whose restriction to the set of smooth elements of $W$ is the map \eqref{e.sm}.
\end{definition}  

Note that $H_w$ is $B$-invariant, since $C_w$ is. 
When $w$ is smooth, $C_w$ is the tangent space of $X_w$ at $eB$, and admits an explicit description as follows. \begin{lemma}\label{l:H_w.smooth}
	When $w$ is smooth, $H_w$ is a linear Hessenberg space \[H_w=\fb\oplus \Big(\bigoplus_{\a \in M_w} \fg_{-\a}\Big),\] where we write $M_w:=\{\alpha \in \Phi^+ \mid s_{\alpha} \leq w \}$. In particular, $\dim\,H_w/\fb=\ell(w)$. \end{lemma}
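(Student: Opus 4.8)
The plan is to identify the tangent space $T_{X_w, eB}$ inside $T_{\cB, eB} \cong \fg/\fb$ by analysing the $T$-weights that occur. Since $w$ is smooth, $X_w$ is smooth at $eB$, so $C_w = T_{X_w, eB}$ is a genuine linear subspace of $\fg/\fb = \bigoplus_{\a \in \Phi^+} \fg_{-\a}$, and it is $B$-invariant, hence $T$-invariant. A $T$-invariant linear subspace of $\bigoplus_{\a \in \Phi^+}\fg_{-\a}$ is a direct sum of weight spaces, so it suffices to determine for which positive roots $\a$ the line $\fg_{-\a}$ lies in $T_{X_w, eB}$. Equivalently, identifying $\cB$-neighbourhoods of $eB$ with the big cell $U^- \cong \prod_{\a \in \Phi^+}\fg_{-\a}$ via the exponential/one-parameter-subgroup coordinates, I want to show that $\fg_{-\a} \subset T_{X_w, eB}$ if and only if $s_\a \le w$.

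First I would recall that, since $eB \in X_w$, one has $\dim T_{X_w, eB} \ge \dim X_w = \ell(w)$, with equality precisely because $X_w$ is smooth at $eB$; this already forces $|M_w| = \ell(w)$ once the weight description is established, giving the ``in particular'' clause for free (this equality $|M_w| = \ell(w)$ is itself a known characterisation of smoothness of $X_w$ at $eB$, due to Lakshmibai--Seshadri / Carrell--Peterson, but I will only need the inequality direction together with the inclusion below). Next, the key containment: for any reflection $s_\a$ with $s_\a \le w$, the $T$-stable curve $C_{\a} := \overline{U_{-\a}\, eB}$ (a $\PP^1$ joining $eB$ to $s_\a B$) lies in $X_w$, because $X_w = \overline{BwB/B} = \bigcup_{v \le w} BvB/B$ contains $s_\a B$ and is $B$-stable, hence contains $\overline{B s_\a B/B} = X_{s_\a} \supseteq C_\a$. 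The tangent line to $C_\a$ at $eB$ is exactly $\fg_{-\a} \subset T_{\cB, eB}$, so $\fg_{-\a} \subset T_{X_w, eB}$. Therefore $\fb \oplus \bigoplus_{\a \in M_w}\fg_{-\a} \subseteq H_w$, i.e. $\bigoplus_{\a \in M_w}\fg_{-\a} \subseteq C_w$.

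For the reverse inclusion I would use a dimension count: $C_w = T_{X_w, eB}$ has dimension $\ell(w)$ by smoothness of $X_w$ at $eB$, and I claim $|M_w| \ge \ell(w)$. This follows since a reduced word $w = s_{i_1}\cdots s_{i_{\ell(w)}}$ produces $\ell(w)$ distinct positive roots $\beta_k = s_{i_1}\cdots s_{i_{k-1}}(\alpha_{i_k})$, each of which is $\le w$ in Bruhat order when viewed as the reflection $s_{\beta_k} \le w$ (each $s_{\beta_k}$ is obtained from the subword deleting $s_{i_k}$... more carefully, $s_{\beta_k} = s_{i_1}\cdots s_{i_{k-1}} s_{i_k} s_{i_{k-1}}\cdots s_{i_1}$, which has a reduced expression as a subword of $w$ after the usual manipulation, hence $s_{\beta_k} \le w$). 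So $|M_w| \ge \ell(w) = \dim C_w$, and combined with the containment $\bigoplus_{\a \in M_w}\fg_{-\a} \subseteq C_w$ we get equality of dimensions, forcing $|M_w| = \ell(w)$ and $C_w = \bigoplus_{\a \in M_w}\fg_{-\a}$. Hence $H_w = \pi_\fb^{-1}(C_w) = \fb \oplus \bigoplus_{\a \in M_w}\fg_{-\a}$, which is a linear Hessenberg space ($B$-invariant as $C_w$ is, and containing $\fb$), with $\dim H_w/\fb = |M_w| = \ell(w)$.

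The main obstacle I anticipate is making the Bruhat-order bookkeeping for $M_w$ fully rigorous --- specifically, verifying cleanly that the $\ell(w)$ inversion roots $\beta_k$ of a reduced word give $\ell(w)$ distinct elements of $M_w$ and that no other reflections $s_\a \le w$ contribute extra weights beyond dimension $\ell(w)$ (the latter is automatic from the dimension equality, but the distinctness and the ``$s_{\beta_k} \le w$'' step need the standard subword/exchange arguments). A cleaner alternative, which I would likely adopt, is to invoke directly the known fact (Carrell--Kuttler, Lakshmibai, or \cite{Brion}-style references on singular loci of Schubert varieties) that the weights of $T_{X_w,eB}$ inside $T_{\cB,eB}$ are precisely $\{-\a : \a \in \Phi^+,\ s_\a \le w\}$, since $eB$ is the most singular point and this weight set is classical; then the only thing to check is that it has size $\ell(w)$, which is exactly the Carrell--Peterson smoothness criterion applied to the smooth point $eB$.
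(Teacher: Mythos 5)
Your ``easy'' inclusion $\bigoplus_{\a\in M_w}\fg_{-\a}\subseteq T_{X_w,eB}$ via the $T$-stable curves $\overline{U_{-\a}eB}\subseteq X_{s_\a}\subseteq X_w$ is correct. The paper itself gives no argument at all for this lemma; it simply cites Polo (Section 4.4) and Billey--Lakshmibai (Section 5.6), which is precisely your ``cleaner alternative.'' So the fallback option you name at the end is, in effect, identical to the paper's treatment.

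However, the detailed first route you sketch has a genuine gap: the claim that $s_{\beta_k}\le w$ for the inversion roots $\beta_k=s_{i_1}\cdots s_{i_{k-1}}(\alpha_{i_k})$ is false. Take $W=S_3$ and $w=s_1s_2$ (a smooth element); then $\beta_2=s_1(\alpha_2)=\alpha_1+\alpha_2$ and $s_{\beta_2}=s_1s_2s_1$, which has length $3>\ell(w)$, so $s_{\beta_2}\not\le w$. Here $M_w=\{\alpha_1,\alpha_2\}$, which is not the inversion set $\{\alpha_1,\alpha_1+\alpha_2\}$: the two sets merely happen to have the same cardinality. So the ``standard subword/exchange argument'' you hoped would rescue the step cannot, because the statement it would be proving is wrong, and consequently your dimension count $|M_w|\ge\ell(w)$ does not close. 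The inequality $|M_w|\ge\ell(w)$ is in fact true for all $w$, but the only proofs I know of pass through the full Zariski tangent space theorem $T_{X_w,eB}=\bigoplus_{s_\a\le w}\fg_{-\a}$ (Polo, Lakshmibai, or via Kumar's nil-Hecke criterion), which is exactly the nontrivial input you would be trying to avoid; there is no elementary combinatorial shortcut along the lines you propose. To make the proof self-contained you would need to cite that tangent-space theorem (or the equivalent Carrell--Peterson count) rather than derive $|M_w|\ge\ell(w)$ from a reduced word.
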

  \begin{proof} 

For arbitrary $w \in W$, the negative root spaces $\mathfrak g_{-\alpha}$ with $\alpha \in M_w$ correspond to $T$-stable curves in $X_w$ passing through $eB$ (\cite[Theorem~F(2)]{carrell}). Thus $M_w$ has at least $\ell(w)$ elements (\cite[Lemma in Section 2]{carrell}).    

The linear span of the tangent cone of $X_w$ at $eB$ is the $B$-module span of the negative root spaces $\fg_{-\alpha}$ with $\alpha \in M_w$ 
(See \cite[Section~4.4]{Po} or \cite[Section~5.6]{BL}). 
    When $X_w$ is smooth, the tangent cone at $eB$ is a linear space of dimension $\ell(w)$ and thus is linearly spanned by $\mathfrak g_{-\alpha}$ with $\alpha \in M_w$. 
\end{proof}

In general, when $w$ is singular, the tangent cone $C_w$ need not be a linear subspace,
and consequently $H_w$ may fail to be a linear subspace of $\fg$.
\begin{example}[{cf.~\cite[Section~1]{fuchs.et.al}}]
	Let $G=\GL_n$ and $W=S_n$. 
	
	When $n=4$ and $w=4231\in S_4$ in one-line notation, $w$ is singular and the tangent cone $C_w$ is isomorphic to a quadric cone 
	\[\{x_1x_2=x_3x_4\}\subset\A^6\] with an isolated singularity at the origin. In particular, $C_w$ is not linear.
	
	When $n=5$ and $w=45231\in S_5$, 
	$w$ is singular and 
	$C_w$ is reducible: 
	\[C_w=C_{w_1}\cup C_{w_2},\]
	where $w_1=35421$ and $w_2=43521$ are smooth. In particular, $H_w=H_{w_1}\cup H_{w_2}$ is the union of two linear Hessenberg spaces. Hence
	\[X_{H_w}(s)=X_{H_{w_1}}(s)\cup X_{H_{w_2}}(s)\]
	is the union of two smooth Hessenberg varieties, glued along their intersection 
	$X_{H_{w_3}}(s)$, where $w_3=34521$.
\end{example}

The map \eqref{e.sm}, which is the restriction of \eqref{eq:corres} to smooth elements $w$ and linear Hessenberg spaces $H$, is not surjective in general.

When $G$ is neither of type $A$ nor $C$, there is no smooth Schubert variety of codimension one \cite[Proposition 7.10]{Kumar96}, hence the map \eqref{e.sm} cannot be surjective. When $G$ is of type $C$, there is a linear Hessenberg space of higher codimension which is not in the image of the map \eqref{e.sm}; see Example~\ref{ex.c3} below.   It would be interesting to determine the image of the map \eqref{e.sm}.   

\begin{example} \label{ex.c3}  
   When $G$ is of type $C_3$, then there is no smooth $w \in W$ with $H=H_w$ for the linear Hessenberg space $H=\mathfrak b \oplus \left( \oplus_{\alpha \in M}\mathfrak g_{-\alpha}\right)$, 
   where $M$ is one of the following sets: \begin{eqnarray*} 
&&M_1:=\{\alpha_1, \alpha_2, \alpha_3, \alpha_1+\alpha_2, \alpha_2+ \alpha_3 \},    \\ 
&&M_2:=\{\alpha_1, \alpha_2, \alpha_3, \alpha_1+\alpha_2, \alpha_2+ \alpha_3, \alpha_1 +\alpha_2 + \alpha_3 \}.  
   \end{eqnarray*}

Recall that for smooth $w$, $H_w$ is given by $\mathfrak b \oplus \left( \oplus_{s_{\alpha} \leq w} \mathfrak g_{-\alpha}\right)$ (Lemma~\ref{l:H_w.smooth}).  We use the following property: If $s_{\alpha} \leq w$, then  any reduced expression of $w $ has a subword  that is a reduced expression for $s_{\alpha}$ (Corollary 3.2.3 of~\cite{BjBr}).

(1) Let $H$ be the linear Hessenberg space $\mathfrak b \oplus \left( \oplus_{\alpha \in M_1} \mathfrak g_{-\alpha}\right)$. Then $H$ has dimension 5. We list up all elements of length $5 $ in $W$:
\begin{eqnarray*}
 [23121], [12321], [32321], [13231],[12312], [32312],   [13232],[21323]. 
\end{eqnarray*}
Here, we use the notation $[abcde]$ for $s_as_bs_cs_ds_e$.

 Note that $\alpha_1+ \alpha_2 = s_2(\alpha_1)$ and $\alpha_2+ \alpha_3 =s_3(\alpha_2)$. If $s_{\alpha_1+ \alpha_2} \leq w$ and $s_{\alpha_2+ \alpha_3}\leq w$, then $w$ has both  $s_2s_1s_2$ (or $s_1s_2s_1$) and $s_3s_2s_3$ as subexpressions. Such $w$ is one of  
 \[[13231], [21323], [32312].\]

If $w=[13231]$, then $s_{\alpha_1+\alpha_2+\alpha_3}= s_1s_3s_2s_3s_1 \leq w$ but  $\alpha_1+\alpha_2+\alpha_3\not \in H$. If $w=[21323]$ or $w=[32312]$, then $s_{2\alpha_2 +\alpha_3} =s_2s_3s_2\leq w$ but  $2\alpha_2+ \alpha_3 \not \in H$. Therefore, $H$ cannot appear as $H_w$ for any smooth $w$.

(2) Let $H$ be the linear Hessenberg space  $\mathfrak b \oplus \left( \oplus_{\alpha \in M_2} \mathfrak g_{-\alpha}\right)$.  Then $H$ has dimension 6. We list up all elements of length 6 in $W$:
   \[[123121],[132321], [213231], [132312], [232312], [121323], [321323].\] 
    Here, again $[abcdef]$ denotes $s_as_bs_cs_ds_es_f$. All these reduced expression have $s_2s_3s_2$ as a subword. However, $2\alpha_2+ \alpha_3 \not \in H$. Therefore, $H$ cannot appear as $H_w$ for any smooth $w$.
\end{example}

\smallskip
\subsection{GKM graphs}\label{ss:GKM}
Recall that a smooth variety with an action of torus $T$ is called a \emph{GKM variety} if there are only finitely many $T$-fixed points and one-dimensional $T$-orbits, and if it is equivariantly formal (for example, if its cohomology group vanishes in odd degrees).
In this case, these $T$-fixed points and $T$-invariant curves form a graph equipped with decorations on the edges given by the $T$-weights on the corresponding $T$-invariant curves.

One advantage of a GKM variety is that its cohomology is completely determined by its GKM graph. More precisely, its $T$-equivariant cohomology admits an explicit description, as a subring of the $T$-equivariant cohomology of its fixed points, in terms of its GKM graph. Its singular cohomology is then obtained from this by taking quotient by the ideal generated by the equivariant parameters. We refer the reader to \cite{GKM} for the general theory.  

\smallskip

When $w\in W$ is smooth, both  
$\Y$ and $X_{H_w}(s)$ are smooth GKM varieties. Indeed, with respect to the natural action of the maximal torus $T=C_G(\s)$ or $C_G(s)$ centralizing $\s$ or $s$, these varieties have only finitely many fixed points and invariant curves since $\cB$ has, and their cohomology groups vanish in odd degrees, due to the Bia\l{}ynicki-Birula decomposition associated to an action of a generic one-dimensional subtorus in $T$.

Moreover, the GKM graphs for $X_{H_w}(s)$ are well-known (cf.~\cite[Section~8.2]{AHMMS}). Using this, Tymoczko \cite{tymoczko} defined the dot action of $W$ on the cohomology of $X_{H_w}(s)$, that gives rise to interesting graded representations of $W$. 

For example, in type $A$, it was conjectured by Shareshian and Wachs \cite{shareshian-wachs}, and proved by Brosnan and Chow \cite{brosnan-chow} and independently by Guay-Paquet \cite{guay-paquet}, that these representations correspond to the chromatic quasisymmetric functions (\cite{stanley-xg,shareshian-wachs}) of the indifference graphs of unit interval orders. 

On the other hand, the dot action of $W$ is well-defined on $H^*(\Y)$ for $w$ smooth as well, or more generally on the intersection cohomology $IH^*(\Y$) for any $w$. This is defined in \cite{AN} via the monodromy action of $\pi_1(\Grs)$, motivated by the fact in the case of Hessenberg variety, the dot action is equivalent to the monodromy action of $\pi_1(\grs)$, as proved in \cite{brosnan-chow}.

\smallskip
In this subsection, we prove that when $w$ is smooth, the GKM graph of $\Y$ is the same as that of $X_{H_w}(s)$, for any choice of regular semisimple $\s \in G$ and $s\in\fg$. In particular, the corresponding graded $W$-representations $H^*(\Y)$ and $H^*(X_{H_w}(s))$ are the same.  

\smallskip
This was already well-known when $G=\GL_n$. Thus, we review this first.
A \emph{Hessenberg function} is a function $h:[n] \rightarrow [n]$ such that $h(i)\geq i$ and $h(i)\leq h(j)$ for $i<j$, where $[n]:=\{1,\cdots, n\}$.
Note that these functions have a bijective correspondence with linear Hessenberg spaces in $\mathfrak{gl}_n$, via
\[h \quad \longleftrightarrow \quad  H_h:=\fb\oplus\Big(\bigoplus_{\a \in M_h}\fg_{-\a}\Big),\]
where we define 
\[M_h:=\{ \alpha_{i,j} \in \Phi^+: j \leq h(i) \text{ for all }i\in[n]  \}\] 
and $\alpha_{i,j}:=\alpha_i + \alpha_{i+1} + \dots + \alpha_{j-1}$ for $i<j$. Hence, we write $X_h(s):=X_{H_h}(s)$.

This bijection extends to the set of the \emph{codominant} permutations in $S_n$, which are by definition the maximal elements $w_h$ in $S_n$ satisfying 
\beq \label{eq:codom.def} w_h(i) \leq h(i)~ \text{ for all }i \in [n]\eeq
in the lexicographic order for some Hessenberg functions $h:[n]\to[n]$ (Definition in Section~3 of \cite{haiman}).
These are precisely  312-avoiding permutations.

\begin{proposition} \label{p normal form in type A}
For a Hessenberg function $h:[n] \rightarrow [n]$, there is a unique maximal element $w_h$ satisfying \eqref{eq:codom.def}.
Conversely, every codominant element $w$ in $S_n$ determines $h$ with $w=w_h$. 
In this case, $w_h$ has a reduced expression
\[w_h:= (s_{h(1)-1} \cdots s_2s_1)(s_{h(2)-1} \cdots  s_2)\cdots
(  s_{n-1}),\]
where $(s_{h(i)-1}\cdots s_i)$ is set to be an empty word when $h(i)=i$
\end{proposition}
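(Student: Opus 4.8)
The plan is to prove the three assertions in turn: existence and uniqueness of the maximal element $w_h$ satisfying \eqref{eq:codom.def}, the reverse correspondence recovering $h$ from a codominant $w$, and the explicit reduced expression. For the first part, I would work directly with the componentwise/lexicographic partial order on the one-line notations of permutations. The set $P_h := \{w \in S_n : w(i) \le h(i) \text{ for all } i\}$ is nonempty (it contains the identity), and the key observation is that it is closed under a natural "join" operation: given $u, v \in P_h$, one constructs $u \vee v \in P_h$ by a greedy left-to-right procedure, setting $(u\vee v)(1)$ to be the largest value $\le \min(h(1), \dots)$ available, etc. More cleanly, I would exhibit $w_h$ directly by the greedy algorithm: define $w_h(1) = h(1)$, then $w_h(2) = \max(\{1, \dots, h(2)\} \setminus \{w_h(1)\})$, and in general $w_h(i) = \max\bigl(\{1,\dots,h(i)\} \setminus \{w_h(1),\dots,w_h(i-1)\}\bigr)$. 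One checks this is well-defined (the set is nonempty because $h(i) \ge i$ forces at least $i$ values $\le h(i)$, of which at most $i-1$ are used), that it yields a permutation, that it lies in $P_h$, and that it dominates every element of $P_h$ in lexicographic order by an easy induction on the position $i$. This simultaneously gives existence and uniqueness of the maximal element.

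For the converse, given a codominant (equivalently $312$-avoiding) permutation $w$, I would define $h(i) := \max\{w(1), \dots, w(i)\}$ and verify that $h$ is a Hessenberg function: monotonicity $h(i) \le h(j)$ for $i < j$ is immediate, and $h(i) \ge w(i) \ge$ — here one needs $h(i) \ge i$, which follows because $\{w(1),\dots,w(i)\}$ are $i$ distinct positive integers so their max is at least $i$. Then $w \in P_h$ by construction, and I would show $w = w_h$ by comparing with the greedy recursion above; the $312$-avoiding hypothesis is exactly what forces the greedy choice $w_h(i) = \max(\{1,\dots,h(i)\}\setminus\{w(1),\dots,w(i-1)\})$ to equal $w(i)$, since if $w(i)$ were not the largest available value below $h(i)$, the skipped larger value would have to appear later, creating a $312$ (or, more precisely, a descent pattern forbidden for $312$-avoiding permutations). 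This step is where the combinatorial identification is genuinely used, and I expect it to be the most delicate point to phrase correctly — one has to be careful about which pattern-avoidance is equivalent to codominance and how it interacts with the greedy construction.

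For the reduced expression, I would proceed by induction on $n$ (or on $\ell(w_h)$) using the recursive structure: the factor $(s_{h(1)-1} \cdots s_2 s_1)$ is the minimal-length permutation moving $1$ to position... — more precisely, left-multiplication (or the appropriate one-sided multiplication) by this subword implements "insert the value $h(1)$ into the first slot and shift," reducing to a Hessenberg problem on $\{2,\dots,n\}$ with the restricted function. One verifies the displayed word has length $\sum_i (h(i) - i) = \sum_i h(i) - \binom{n+1}{2}$, and that this equals $\ell(w_h) = \#\{(i,j): i<j,\ w_h(i) > w_h(j)\}$, which can be computed directly from the greedy description of $w_h$. The reducedness then follows from the length count, and that the product equals $w_h$ follows from tracking the action of each elementary factor on one-line notation. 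The main obstacle across the whole proof is the converse direction and getting the $312$-avoidance equivalence lined up precisely with the greedy/maximality characterization; the rest is bookkeeping with one-line notation and inversions.
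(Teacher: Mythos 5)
The paper gives no proof of this Proposition---it says ``left to the reader'' and cites Haiman's Proposition 3.1 only for the reduced expression---so there is nothing in the text to compare your argument against. That said, your greedy construction is the natural approach, and the first two assertions are handled correctly and essentially completely: the greedy $w_h(i) = \max\bigl(\{1,\dots,h(i)\}\setminus\{w_h(1),\dots,w_h(i-1)\}\bigr)$ is well-defined because $h(i)\ge i$, it dominates any $u\in P_h$ lexicographically by looking at the first disagreeing position, and your $312$-argument for the converse is exactly right (if $w(i)$ skips an available larger value $m\le h(i)=\max\{w(1),\dots,w(i)\}$, then the position carrying that max, together with $i$ and the later position of $m$, forms a $312$).

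The one place to tighten is the reduced expression. As written, ``the reducedness then follows from the length count'' is slightly misstated: a word of length $\ell(w_h)$ is reduced only once you have also shown the product actually equals $w_h$, which you defer to ``tracking the action on one-line notation.'' A cleaner and self-contained route that fits your framework: show that the Lehmer code of the greedy $w_h$ is $c_i = h(i)-i$. This is immediate from the greedy rule---after step $i$, the values in $\{1,\dots,h(i)\}\setminus\{w_h(1),\dots,w_h(i)\}$ number exactly $h(i)-i$, all are $<w_h(i)$ (since $w_h(i)$ was chosen maximal among them plus itself), and no value in $(w_h(i),h(i)]$ can remain; conversely, any later value $>h(i)$ exceeds $w_h(i)$. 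Then invoke the standard fact that for a permutation with Lehmer code $(c_1,\dots,c_n)$, the staircase word $(s_{c_1}\cdots s_1)(s_{1+c_2}\cdots s_2)\cdots(s_{(n-2)+c_{n-1}}\cdots s_{n-1})$ is a reduced word for it; with $c_i=h(i)-i$ the $i$-th block is $(s_{h(i)-1}\cdots s_i)$, which is the displayed expression. This avoids the somewhat delicate relabeling bookkeeping your ``reduce to $\{2,\dots,n\}$'' induction would require, though that route would also work if carried out carefully.
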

\begin{proof}
	Left to the reader. For the second part, see \cite[Proposition~3.1]{haiman}.
\end{proof}

Based on this, one can see 
that smooth regular semisimple  
Lusztig varieties coincide with Hessenberg varieties in type $A$, as noted in \cite{AN2}. 
Identify $\GL_n\subset \mathfrak{gl}_n$ with the spaces of (invertible) $n\times n$ matrices. Then, $\GL_n^{\mathrm{rs}}\subset \mathfrak{gl}_n^{\mathrm{rs}}$.
\begin{proposition}
[\cite{AN2}]  \label{p codominant}
For $\s\in \GL_n$ regular semisimple,
$Y_{w_h}(\s)=X_h(\s)$.
\end{proposition}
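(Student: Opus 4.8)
The plan is to reduce the proposition to the purely matrix‑theoretic identity
\[\overline{B w_h B} \;=\; H_h \cap \GL_n\]
of subvarieties of $\GL_n$. Granting this, the proposition is immediate: unwinding the definitions, $Y_{w_h}(\s)=\{gB\in\cB : g^{-1}\s g\in\overline{Bw_hB}\}$, while $X_h(\s)=X_{H_h}(\s)=\{gB\in\cB : g^{-1}\s g\in H_h\}$ since the adjoint action of $\GL_n$ on $\mathfrak{gl}_n$ is conjugation; as $g^{-1}\s g$ is automatically invertible when $\s\in\GL_n$, the two loci in $\cB$ coincide. (Both are reduced — $Y_{w_h}(\s)$ by Theorem~\ref{22}, and $X_h(\s)$ by Proposition~\ref{77} since $\GL_n^{\rs}\subset\grs$ — so set‑theoretic equality is enough; in fact the scheme structures agree because $H_h\cap\GL_n$ is smooth.)

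To prove the matrix identity I would first rewrite $H_h$ in coordinates. Since $\fg_{-\alpha_{i,j}}$ is the $(j,i)$ matrix entry and $\alpha_{i,j}\in M_h\iff j\le h(i)$, one has $H_h=\{M\in\mathfrak{gl}_n : M_{kl}=0 \text{ whenever } k>h(l)\}$. The load‑bearing observation is that this linear subspace is stable under both left and right multiplication by $B$, and this is exactly where the Hessenberg hypothesis on $h$ (weakly increasing, $h(i)\ge i$) enters: for $b\in B$ and indices $k>h(l)$ one has $(bM)_{kl}=\sum_{m\ge k}b_{km}M_{ml}=0$, because $m\ge k>h(l)$ forces $M_{ml}=0$, and $(Mb)_{kl}=\sum_{m\le l}b_{ml}M_{km}=0$, because $m\le l$ gives $h(m)\le h(l)<k$ and hence $M_{km}=0$. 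Consequently $Z:=H_h\cap\GL_n$ is a closed, $B\times B$‑stable subvariety of $\GL_n$; it is irreducible, being a nonempty Zariski‑open subset of the affine space $H_h$, and $\dim Z=\dim H_h=\dim B+\lvert M_h\rvert$.

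Being closed, irreducible and a union of $(B,B)$‑double cosets, $Z$ then equals $\overline{BwB}$ for a unique $w\in W$, and comparing dimensions gives $\ell(w)=\lvert M_h\rvert$. It remains to check $w=w_h$. In the paper's conventions the permutation matrix of $v\in W$ has support $\{(v(i),i)\}_i$, so it lies in $H_h$ precisely when $v(i)\le h(i)$ for all $i$; by the defining property of $w_h$ in Proposition~\ref{p normal form in type A} this holds for $v=w_h$, so a representative of $w_h$ lies in $Z=\overline{BwB}$, whence $Bw_hB\subseteq\overline{BwB}$ and therefore $w_h\le w$. On the other hand, the reduced expression of $w_h$ in Proposition~\ref{p normal form in type A} has $\sum_{i}(h(i)-i)=\lvert M_h\rvert$ letters, so $\ell(w_h)=\lvert M_h\rvert=\ell(w)$; together with $w_h\le w$ this forces $w=w_h$, establishing the matrix identity and hence the proposition.

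The main obstacle here is not conceptual but bookkeeping: keeping the root/matrix‑coordinate conventions consistent so that the permutation matrix of $w_h$ genuinely lands in $H_h$, and verifying carefully the elementary but essential two‑sided $B$‑invariance of $H_h$ that makes the whole reduction work. The remaining ingredients — irreducibility and dimension of $H_h\cap\GL_n$, the shape of its double‑coset decomposition, and the length of $w_h$ — are routine given the results already recalled in the excerpt.
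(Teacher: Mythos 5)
The paper does not prove this proposition; it is stated with a citation to \cite{AN2} and used as an imported fact. Your argument is therefore not a variant of the paper's proof but a self-contained one, and I have checked it line by line: it is correct.

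The load-bearing step, as you rightly identify, is the purely matrix-theoretic identity $\overline{Bw_hB}=H_h\cap\GL_n$. Your verification of two-sided $B$-stability of $H_h$ is exactly where the Hessenberg hypotheses on $h$ enter, and both computations $(bM)_{kl}=0$ and $(Mb)_{kl}=0$ for $k>h(l)$ are correct (the second uses $h$ weakly increasing, the first uses only the shape of $B$). From there the logic is airtight: $Z:=H_h\cap\GL_n$ is open in the affine space $H_h$ hence irreducible of dimension $\dim\fb+\lvert M_h\rvert$, it is closed in $\GL_n$ and $B\times B$-stable, so by the Bruhat decomposition it equals $\overline{BwB}$ for the unique maximal $w$ it meets, with $\ell(w)=\lvert M_h\rvert$ by the dimension count. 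The permutation matrix of $w_h$ (supported on $\{(w_h(i),i)\}$) lies in $H_h$ precisely because $w_h(i)\le h(i)$, giving $w_h\le w$, and comparing $\ell(w_h)=\sum_i(h(i)-i)=\lvert M_h\rvert=\ell(w)$ forces $w=w_h$. The reduction of the proposition to the matrix identity is also correct, and your remark that set-theoretic equality suffices is justified since $Y_{w_h}(\s)$ is normal (Theorem~\ref{22}) and $X_h(\s)$ is smooth, so both are reduced closed subschemes of $\cB$ with the same support. This is a clean, essentially elementary proof of a statement the paper otherwise leaves to the reference.
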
 

More interestingly, it is observed in \cite{AN2} 
that there exists a natural map
\beq\label{eq:corres.A} \{\text{smooth elements in }S_n\}\lra \{\text{Hessenberg functions }h:[n]\to [n]\}\eeq
that restricts to a bijection on the set of codominant elements, sending $w_h$ to $h$.
In particular, \eqref{eq:corres.A} is surjective. 
Moreover, if two elements have the same image, then the associated Lusztig varieties have the same GKM graphs.
\begin{theorem}[Theorem~1.6 in \cite{AN2}] \label{thm:AN}
Let $\s\in \GL_n$ be regular semisimple.
Let $w$ and $w'\in S_n$ be smooth
and have the same image under \eqref{eq:corres.A}.  Then, $\Y$ and $Y_{w'}(\s)$ have the same GKM graphs. In particular,
\[H^*(\Y)\cong H^*(Y_{w'}(\s))\] 
as graded $S_n$-representations.
\end{theorem}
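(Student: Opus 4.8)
The plan is to prove Theorem~\ref{thm:AN} (the final displayed statement of the excerpt) by computing the GKM graph of a smooth regular semisimple Lusztig variety $\Y$ directly and showing that it depends only on the image $h$ of $w$ under \eqref{eq:corres.A}, matching the known GKM graph of the Hessenberg variety $X_h(\s)$. Recall that the GKM graph records the $T$-fixed points, the $T$-invariant curves, and the weights on those curves, where $T=C_{\GL_n}(\s)$; since the $W$-representation structure on cohomology is read off from the GKM graph (via the dot action defined in \cite{tymoczko,AN}), equality of graphs immediately gives $H^*(\Y)\cong H^*(Y_{w'}(\s))$ as graded $S_n$-representations. Throughout I would fix $\s$ regular semisimple and diagonalize it, so that $T$ is the diagonal torus, $\cB^T\cong W=S_n$, and the $T$-invariant curves of $\cB$ are indexed by pairs $(\tau, \tau s_\alpha)$ for $\tau\in W$ and $\alpha\in\Phi^+$, with weight $\tau(\alpha)$ (up to sign).

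The key steps are as follows. First, identify the $T$-fixed points of $\Y=\{gB: g^{-1}\s g\in\overline{BwB}\}$: since $gB\in\cB^T$ means $gB=\tau B$ for $\tau\in W$, and $\tau^{-1}\s\tau$ is again diagonal hence in $\overline{BwB}$ iff $e\le w$ — wait, one must be careful: $\tau^{-1}\s\tau$ lies in $T\subset \overline{BwB}$ always, so $\cB^T\subset\Y$ for every $w$; thus $\Y^T=W$, matching $X_h(\s)^T=W$. Second, determine which $T$-invariant curves of $\cB$ lie in $\Y$: the curve $C_{\tau,\alpha}$ through $\tau B$ and $\tau s_\alpha B$ consists of points $gB$ with $g^{-1}\s g$ ranging over a one-parameter family; I would show that $C_{\tau,\alpha}\subset\Y$ iff this family stays inside $\overline{BwB}$, and then translate this into a purely combinatorial condition on $\tau$, $\alpha$, and $w$. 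Here is where I would use that $w$ is smooth and that $H_w=\fb\oplus\bigoplus_{\alpha\in M_w}\fg_{-\alpha}$ with $M_w=\{\alpha\in\Phi^+: s_\alpha\le w\}$ (Lemma~\ref{l:H_w.smooth}): the local structure of $\Y$ near each fixed point $\tau B$ is governed, up to the transversality of Lemma~\ref{2'}, by the tangent space, and the tangent directions that survive are exactly those $\tau(\alpha)$ with $\alpha\in M_{\tau^{-1}w\cdots}$—more precisely, I expect the condition to read $\alpha\in M_w$ after conjugating by $\tau$, i.e. the GKM edge at $\tau B$ in direction $\alpha$ is present iff $s_\alpha\le w$, which depends only on the set $M_w$. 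Third, observe that $M_w$ depends only on the image of $w$ under \eqref{eq:corres.A}: by definition of that map (and of $M_h$ in type $A$), two smooth $w,w'$ with the same image have $M_w=M_{w'}=M_h$. Fourth, compare with the Hessenberg side: the GKM graph of $X_h(\s)$ is classical (see \cite[Section~8.2]{AHMMS} or the references in Subsection~\ref{ss:GKM}) and has an edge at $\tau B$ in direction $\alpha$ precisely when $\alpha\in M_h$ (equivalently $\tau^{-1}.s$ has nonzero $\fg_{-\tau(\alpha)}$-eligible component allowed by $H_h$). Combining the two computations gives that $\Y$, $X_h(\s)=Y_{w_h}(\s)$, and $Y_{w'}(\s)$ all share the same GKM graph, hence the same graded $W=S_n$-representation on cohomology.

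The main obstacle will be Step two: identifying exactly which $T$-invariant curves of $\cB$ lie in $\Y$, and proving that the combinatorial criterion is simply $s_\alpha\le w$ independent of the base fixed point $\tau$. The subtlety is that $\Y$ is \emph{not} $B$-stable, so one cannot reduce to the identity fixed point by a Borel translation as for Schubert varieties; instead one must genuinely analyze, for each $\tau\in W$, the family $g(t)=\tau u_\alpha(t)$ (or the $\PP^1$ joining $\tau B$ to $\tau s_\alpha B$) and decide when $g(t)^{-1}\s g(t)\in\overline{BwB}$ for all $t$. I would handle this by using the tangent-cone/tangent-space description: by Proposition~\ref{5'} and smoothness of $w$, the Zariski tangent space of $\Y$ at $\tau B$ is cut out inside $T_{\cB,\tau B}=\bigoplus_{\alpha\in\Phi^+}(\fg_{\tau(\alpha)}\oplus\fg_{-\tau(\alpha)})/\cdots$ by the transversality condition, and the weight-$\tau(\alpha)$ line survives iff the corresponding root of $X_w$ at a suitable reference point survives — which by $T$-equivariance of $\fX_w\to\cB$ and the explicit form of $M_w$ is the condition $\alpha\in M_w$. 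One clean way to make this rigorous is to note that the Bott-Samelson resolution $\bsY\to\Y$ is $T$-equivariant and an isomorphism near the fixed points in the open cell, and to read off invariant curves from the iterated $\PP^1$-bundle structure of Lemma~\ref{19}, transported through the fiber product \eqref{32}. A secondary, more bookkeeping-level obstacle is matching \emph{weights} (not just the underlying graph): I would check that the weight on the edge $C_{\tau,\alpha}$ is $\tau(\alpha)$ on both the Lusztig and the Hessenberg side, which follows on each side from the $T$-equivariant identification of the curve with a $\PP^1$ inside $\cB$, so the weights automatically agree. Finally, since all varieties in sight are equivariantly formal (odd cohomology vanishes by the Białynicki-Birula decomposition, as recalled in Subsection~\ref{ss:GKM}), the GKM package \cite{GKM} applies and the isomorphism of graded $W$-representations follows formally from the equality of decorated graphs.
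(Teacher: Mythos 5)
The theorem as stated in the paper is a citation to Theorem~1.6 of \cite{AN2}; the paper does not supply its own proof of that result but instead proves the strictly more general Theorem~\ref{thm GKM graph}, valid for any Lie type, whose proof specializes to the type~$A$ case. Your proposal follows essentially the same strategy as that proof: both sides reduce to showing that a $T$-invariant curve of $\cB$ joining two fixed points lies in $\Y$ if and only if the reflection relating those two fixed points (in the indexing you use, $s_\alpha$) satisfies $s_\alpha\le w$, which depends only on $M_w$ and hence only on $H_w$, and is the same criterion as for the Hessenberg variety $X_{H_w}(s)$.

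Where you differ from the paper is the way the curve-containment criterion is actually established. You propose identifying the Zariski tangent space of $\Y$ at each fixed point $\tau B$ via the transversality of Lemma~\ref{2'}/Proposition~\ref{5'}, then arguing that at a smooth $T$-fixed point with distinct weights, a $T$-invariant curve is contained in the variety iff its tangent direction is (which is a standard GKM fact, so the Bott--Samelson detour you sketch is not really needed). The paper's proof of Theorem~\ref{thm GKM graph} proceeds more directly: it computes $(U_\alpha v)^{-1}.\s$ as a subset of $G/B$, shows that its closure equals the $T$-invariant curve $\overline{U_{v^{-1}\alpha}e}$ passing through $e$, and so reduces containment of $\overline{U_\alpha v}$ in $\Y$ to containment of $\overline{U_{v^{-1}\alpha}e}$ in $X_w$; since $e$ is a smooth point of $X_w$, the latter is decided by whether $\fg_{v^{-1}\alpha}\subset T_{X_w,eB}=H_w/\fb$. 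The paper's root-subgroup computation is slicker and avoids having to unwind the tangent space of $\fX_w$ at each $\iota_\s(\tau B)$, while your tangent-space route is more pedestrian but workable; both lead to the same criterion. One small imprecision in your sketch: your passage ``the weight-$\tau(\alpha)$ line survives iff the corresponding root of $X_w$ at a suitable reference point survives --- which by $T$-equivariance of $\fX_w\to\cB$ and the explicit form of $M_w$ is the condition $\alpha\in M_w$'' glosses over exactly how the reference point enters; the paper's computation of $(U_\alpha v)^{-1}.\s$ pins this down cleanly by producing the explicit $T$-curve through $e$ whose containment in $X_w$ must be tested. If you fill in that step, your argument is complete and agrees with the paper's.
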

This confirms one of the Haiman's conjectures on the Hecke characters of Kazhdan-Lusztig basis elements (\cite[Conjecture~3.1]{haiman}) when $w$ is smooth, while it is disproved for singular $w$, as shown by a counterexample in \cite[Theorem~1.8]{AN2}. Note also that the smooth case was originally proved in \cite{CHSS}.

\smallskip

In the remainder of this subsection, we will formulate and prove a generalization of Theorem~\ref{thm:AN} to arbitrary Lie types.
To this end, we first extend
the map in \eqref{eq:corres.A}. The following observations in type $A$ suggest that 
such an extension should be formulated in terms of $H_w$.

\begin{proposition} \label{p.w and H A} 
Let $h:[n]\to [n]$ be a Hessenberg function and let $w_h\in S_n$ be the corresponding codominant permutation. Then, 
$H_h=H_{w_h}$.
\end{proposition}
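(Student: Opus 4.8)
The plan is to reduce the identity $H_h = H_{w_h}$ to a combinatorial statement about the Bruhat order on $S_n$. Since $w_h$ is codominant it avoids the pattern $312$, hence is smooth, so Lemma~\ref{l:H_w.smooth} applies and gives $H_{w_h} = \fb \oplus \bigoplus_{\alpha \in M_{w_h}} \fg_{-\alpha}$ with $M_{w_h} = \{\alpha \in \Phi^+ : s_\alpha \le w_h\}$. Comparing this with the definition $H_h = \fb \oplus \bigoplus_{\alpha \in M_h}\fg_{-\alpha}$, and recalling that every positive root in type $A_{n-1}$ has the form $\alpha_{a,b}$ with $a<b$, for which $s_{\alpha_{a,b}}$ is the transposition $(a\ b)$, it suffices to prove the equivalence
$$(a\ b)\le w_h \quad\Longleftrightarrow\quad b\le h(a)\qquad (a<b).$$

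For the implication $b\le h(a)\Rightarrow (a\ b)\le w_h$ I would use the subword characterization of the Bruhat order together with the explicit reduced word of $w_h$ from Proposition~\ref{p normal form in type A}: writing $w_h=B_1B_2\cdots B_{n-1}$ with $B_m=(s_{h(m)-1},s_{h(m)-2},\ldots,s_{m+1},s_m)$ (an empty word when $h(m)=m$), I would exhibit the reduced word $(s_{b-1},s_{b-2},\ldots,s_{a+1},s_a,s_{a+1},\ldots,s_{b-1})$ of $(a\ b)$ as a subword of $B_1\cdots B_{n-1}$. The decreasing part $s_{b-1},\ldots,s_a$ occurs as a contiguous tail of $B_a$ since $h(a)\ge b$; and the increasing part $s_{a+1},\ldots,s_{b-1}$ is obtained by taking the last letter of each of the subsequent blocks $B_{a+1},\ldots,B_{b-1}$, each of which is non-empty with last letter $s_m$ because $h$ is non-decreasing, so $h(m)\ge h(a)\ge b>m$ for $a<m\le b-1$.

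For the reverse implication I would argue contrapositively with the rank-matrix criterion: $u\le v$ in $S_n$ iff $r_u(i,j)\ge r_v(i,j)$ for all $i,j$, where $r_u(i,j):=\#\{k\le i : u(k)\le j\}$. A direct computation shows that for $(a\ b)$ one has $r_{(a\ b)}(i,j)=\min(i,j)$ except on the square $a\le i,j\le b-1$, where it equals $\min(i,j)-1$. If $b>h(a)$, then $(i,j)=(a,h(a))$ lies in that square (since $a\le h(a)<b$), where $r_{(a\ b)}(a,h(a))=a-1$; but $w_h(k)\le h(k)\le h(a)$ for all $k\le a$ by \eqref{eq:codom.def} and the monotonicity of $h$, so $r_{w_h}(a,h(a))=a>a-1$, forcing $(a\ b)\not\le w_h$. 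Combining the two implications gives $M_h=M_{w_h}$, hence $H_h=H_{w_h}$.

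I expect the main point of the argument to be the explicit subword embedding in the first implication, where one must keep track of which letter is drawn from which block and check non-emptiness of the blocks involved; this is elementary given Proposition~\ref{p normal form in type A}, but is the only place where the precise shape of the reduced expression of $w_h$ is used. (One could instead prove the forward implication again via the rank-matrix criterion, but that is somewhat more delicate since it requires controlling the whole initial row $w_h(1),\dots,w_h(i)$ rather than a single value, so the subword route seems cleaner.)
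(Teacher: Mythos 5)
Your proof is correct, and it overlaps with the paper's in the forward direction but diverges in the reverse. The paper also proves $M_h\subseteq M_{w_h}$ by embedding a reduced word for each $s_{\alpha_{i,j}}$ (namely $s_{j-1}\cdots s_{i+1}\, s_i\, s_{i+1}\cdots s_{j-1}$, obtained by writing $\alpha_{i,j}=s_{j-1}\cdots s_{i+1}(\alpha_i)$) as a subword of the explicit reduced word of $w_h$ from Proposition~\ref{p normal form in type A}, exactly as you do. For the reverse inclusion, however, the paper does not compute anything: it just observes that since $w_h$ is smooth, Lemma~\ref{l:H_w.smooth} gives $|M_{w_h}|=\ell(w_h)$, while the reduced word from Proposition~\ref{p normal form in type A} shows $\ell(w_h)=\sum_i(h(i)-i)=|M_h|$, so the inclusion $M_h\subseteq M_{w_h}$ between sets of equal finite cardinality is an equality. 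Your rank-matrix argument for the contrapositive is a valid alternative; it is more explicit and does not invoke the smoothness/length count, at the cost of introducing the rank-matrix criterion and a case analysis of $r_{(a\,b)}(i,j)$ that the paper avoids. Either route is fine; the paper's is shorter given that Lemma~\ref{l:H_w.smooth} and the reduced-word length are already on the table.
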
 
 
\begin{proof}  
Each $\alpha_{i,j}$ can be written as $\alpha_{i,j} = s_{j-1}\dots s_{i+2}s_{i+1}(\alpha_i)$ for all $i<j$. Thus, for any $\alpha \in M_h$, $s_{\alpha}$ has a reduced expression which is a subexpression of $w=w_h$ given in Proposition \ref{p normal form in type A}, and  we get $s_{\alpha} \leq w$. Therefore, $M_h$ is contained in $M_w$. Since $w$ is smooth, $M_h$ is equal to $M_w$, so $H_h=H_{w_h}$.
\end{proof} 
\begin{corollary}\label{cor:corres.A}
	Let $w\in S_n$ be smooth and let $h$ be its associated Hessenberg function. Then, $H_w=H_h$. In particular, \eqref{eq:corres.A} 
	agrees with \eqref{e.sm}.
\end{corollary}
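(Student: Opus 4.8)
The plan is to deduce the corollary from the codominant case already settled in Proposition~\ref{p.w and H A} together with the root-theoretic description of $H_w$ in Lemma~\ref{l:H_w.smooth}. First I would recall from \cite{AN2} the (purely combinatorial) definition of the map \eqref{eq:corres.A} on arbitrary smooth permutations, and isolate the one feature needed here: for smooth $w\in S_n$, the Hessenberg function $h$ assigned to $w$ by \eqref{eq:corres.A} is characterized by $M_h=M_w$, where $M_h=\{\alpha_{i,j}\in\Phi^+: j\le h(i)\}$ and $M_w=\{\alpha\in\Phi^+: s_\alpha\le w\}$ as in Lemma~\ref{l:H_w.smooth}. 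For $w=w_h$ codominant this is precisely Proposition~\ref{p.w and H A}, so the real content is to prove $M_h=M_w$ for a general smooth $w$.

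To prove $M_h=M_w$ for general smooth $w$, I would let $w_h$ be the codominant permutation sharing the image $h$ with $w$; since $M_{w_h}=M_h$ by Proposition~\ref{p.w and H A}, it suffices to show $M_w=M_{w_h}$. The inclusion $M_{w_h}\subseteq M_w$ should be read off from the combinatorial rule defining \eqref{eq:corres.A}, using the explicit reduced word for $w_h$ in Proposition~\ref{p normal form in type A} to make it concrete. For the reverse inclusion, Lemma~\ref{l:H_w.smooth} gives $\lvert M_w\rvert=\dim H_w/\fb=\ell(w)$ and likewise $\lvert M_{w_h}\rvert=\ell(w_h)$, while $\ell(w)=\ell(w_h)$ because the graded isomorphism $H^*(\Y)\cong H^*(Y_{w_h}(\s))$ of Theorem~\ref{thm:AN} equates top cohomological degrees; combining these forces $M_w=M_{w_h}$. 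This comparison — translating between the Rothe-diagram data of $w$ used to define \eqref{eq:corres.A}, the Bruhat condition $s_{\alpha_{i,j}}\le w$, and the $4231$/$3412$-avoidance of smooth $w$ — is where essentially all of the work lies, and I expect it to be the main obstacle; alternatively it may be possible to quote it directly from \cite{AN2}.

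With $M_h=M_w$ in hand the corollary follows formally. By Lemma~\ref{l:H_w.smooth},
\[H_w=\fb\oplus\Big(\bigoplus_{\alpha\in M_w}\fg_{-\alpha}\Big)=\fb\oplus\Big(\bigoplus_{\alpha\in M_h}\fg_{-\alpha}\Big)=H_h,\]
the last equality being the definition of $H_h$, which gives $H_w=H_h$. Finally, Lemma~\ref{l:H_w.smooth} also asserts that $H_w$ is a \emph{linear} Hessenberg space, hence corresponds to a unique Hessenberg function; by the displayed identity that function is $h$, so under the identification of linear Hessenberg spaces in $\mathfrak{gl}_n$ with Hessenberg functions, \eqref{eq:corres.A} indeed sends $w$ to $H_w$, as claimed.
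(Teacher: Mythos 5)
There is a genuine gap: you explicitly leave the inclusion $M_{w_h}\subseteq M_w$ unproved, asserting it ``should be read off from the combinatorial rule defining \eqref{eq:corres.A}'' and flagging it as ``the main obstacle.'' This is not a small omission --- it is precisely the content of the corollary (once one has Lemma~\ref{l:H_w.smooth}, the statement $H_w=H_h$ is equivalent to $M_w=M_h$), so the combinatorics you defer is the whole point. Your dimension count, using $|M_w|=\ell(w)$, $|M_{w_h}|=\ell(w_h)$, and the equality $\ell(w)=\ell(w_h)$ coming from the top-degree of the graded isomorphism in Theorem~\ref{thm:AN}, is correct; but it only upgrades an inclusion to an equality, and you have no inclusion to upgrade.

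The paper sidesteps the combinatorics entirely by invoking the \emph{full} strength of Theorem~\ref{thm:AN}: $\Y$ and $Y_{w_h}(\s)$ have the same \emph{GKM graph}, not merely isomorphic cohomology. At the fixed point $eB$, the edges of the GKM graph are exactly the $T$-invariant curves through $eB$, whose tangent weights are $\{-\alpha : \alpha\in M_w\}$; equivalently, the tangent space at $eB$ is $T_{\Y,eB}=H_w/\fb$ (this is a piece of the computation already carried out in the proof of Theorem~\ref{thm GKM graph}). So equality of GKM graphs immediately gives $T_{\Y,eB}=T_{Y_{w_h}(\s),eB}$, i.e.\ $H_w=H_{w_h}$, and Proposition~\ref{p.w and H A} finishes: $H_{w_h}=H_h$. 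You already cite Theorem~\ref{thm:AN} but only extract the graded cohomology isomorphism from it, which is strictly weaker and insufficient; reading off the local data at the vertex $eB$ of the GKM graph is what closes the argument without any appeal to the combinatorial description of \eqref{eq:corres.A}.
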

\begin{proof}
	Note that $\Y$ and $Y_{w_h}(\s)$ have the same tangent spaces at $eB$, viewed as a subspace in $\fg/\fb$, since they have the same GKM graphs by Theorem~\ref{thm:AN}. 
	Hence, $H_w=H_{w_h}$ is equal to $H_h$ by Proposition~\ref{p.w and H A}.
\end{proof}
\smallskip

From now on, we let $G$ be of arbitrary type. 

\smallskip
The following extends 
Theorem~\ref{thm:AN} by Abreu and Nigro to arbitrary type. 
Before stating the theorem, we make one remark: the automorphism $\cB\xrightarrow{\cong}\cB$ given by the left multiplication by $g\in G$   
induces automorphisms 
\beq\label{eq:conjugate}\Y \cong Y_{w}(g.\s) \and X_H(s)\cong X_H(g.s)\eeq
where we denote $g.\s:=g\s g^{-1}$. So when computing the GKM graphs or the cohomology of $\Y$ or $X_H(s)$, we may assume $\s \in T$ and $s\in \ft$.

Let $T^{\rs}:=T\cap \Grs$ and $\ft^{\rs}:=\ft\cap \grs$.
\begin{theorem} \label{thm GKM graph}
Let $w\in W$ be smooth. Let $\s \in T^{\rs}$ and $s\in \ft^{\rs}$.
Then, $\Y$ and $X_{H_w}(s)$ have the same GKM graphs. 
In particular, 
if two smooth elements $w$ and $w'$ in $W$ have the same image $H=H_w=H_{w'}$ under \eqref{eq:corres},
then $\Y$ and $Y_{w'}(\s)$ have the same GKM graphs. Consequently,
\[H^*(\Y)\cong H^*(X_{H}(s))\cong H^*(Y_{w'}(\s))\]
as graded $W$-representations
\end{theorem}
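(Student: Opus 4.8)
The statement asserts that for a smooth $w$, the GKM graph of the regular semisimple Lusztig variety $\Y$ coincides with the GKM graph of the Hessenberg variety $X_{H_w}(s)$. The whole strategy is to exploit the fact that both varieties are GKM with respect to the torus $T = C_G(\s) = C_G(s)$, so that a GKM graph is determined by (a) its vertex set, identified with the $T$-fixed locus, (b) its edge set, given by the $T$-invariant curves, and (c) the edge labels, given by the $T$-weights on those curves. I would first observe that both $\Y$ and $X_{H_w}(s)$ sit inside $\cB$ as $T$-stable subvarieties, and that $\cB^T = W$ canonically. So the first step is to identify, inside $W = \cB^T$, which fixed points lie on $\Y$ and which lie on $X_{H_w}(s)$, and show these two subsets agree; then do the same for the $T$-invariant curves through each fixed point, with matching weights.

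\emph{Step 1 (local model at a fixed point).} Fix $v \in W$, viewed as the fixed point $vB \in \cB$. Translating by $v^{-1}$ (which is harmless since it is an automorphism of $\cB$ commuting with the $T$-action up to the automorphism $\mathrm{Ad}(v^{-1})$ of $T$, equivalently $w \mapsto v^{-1}wv$ on the weight lattice), it suffices to analyze the situation at $eB$. The tangent space $T_{\cB,eB} \cong \fg/\fb$ decomposes under $T$ into the negative root spaces $\fg_{-\alpha}$, $\alpha \in \Phi^+$, and the $T$-invariant curves through $eB$ are exactly the $\mathrm{SL}_2$- or $\mathrm{PGL}_2$-orbit closures, one for each $\alpha \in \Phi^+$, with weight $-\alpha$. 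The key local computation is: the fixed point $vB$ lies on $\Y$ (resp. on $X_{H_w}(s)$) iff a certain Schubert-type condition on $v$ and $w$ holds, and the $T$-invariant curve in direction $-\alpha$ lies on $\Y$ (resp. on $X_{H_w}(s)$) iff $s_\alpha \le w$ (equivalently $\alpha \in M_w$). For $X_{H_w}(s)$: by Definition~\ref{d.Hess} and Proposition~\ref{77}, $X_{H_w}(s) = \iota_s^{-1}(G\times^B (H_w/\fb))$, and by Lemma~\ref{l:H_w.smooth}, $H_w/\fb = \bigoplus_{\alpha\in M_w}\fg_{-\alpha}$; since $s \in \ft^{\rs}$, $\iota_s$ meets each $T$-orbit transversely, and one reads off directly that $eB \in X_{H_w}(s)$ and that the curve in direction $-\alpha$ is contained in $X_{H_w}(s)$ precisely when $\fg_{-\alpha} \subset H_w$, i.e. $\alpha \in M_w$. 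For $\Y$: one shows in parallel that $eB \in \Y$ iff $e \le w$ (always true) and that the curve in direction $-\alpha$ lies in $\Y$ iff $s_\alpha \le w$, using the description $\Y = \{gB : g^{-1}\s g \in \overline{BwB}\}$, the fact that $C_G(\s) = T$ is a torus, and that the relevant $\mathrm{PGL}_2$ (or $\mathrm{SL}_2$) through $eB$ conjugates $\s$ inside $\overline{B s_\alpha B}$ iff $s_\alpha \le w$. Translating back by $v$, the fixed-point and edge conditions for $\Y$ and $X_{H_w}(s)$ at $vB$ become \emph{identical} Weyl-group conditions depending only on $v$ and $w$ (with weights $-v(\alpha)$ in both cases), which proves the GKM graphs coincide.

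\emph{Step 2 (consequences).} Once the GKM graphs agree, $H^*_T(\Y) \cong H^*_T(X_{H_w}(s))$ as graded $H^*_T(\mathrm{pt})$-algebras by the GKM description (both varieties being equivariantly formal, by the Bia\l ynicki-Birula argument already invoked in the text), hence $H^*(\Y)\cong H^*(X_{H_w}(s))$ after killing the equivariant parameters. Since $\Y$ and $Y_{w'}(\s)$ share the same GKM graph when $H_w = H_{w'}$ (apply Step 1 to both $w$ and $w'$ and use $M_w = M_{w'}$), the same holds for $Y_{w'}(\s)$. Finally the $W$-action: the dot action on $H^*(\Y)$ is defined (following \cite{AN}) via the monodromy of $\pi_1(\Grs)$ and, for smooth $w$, matches the GKM-theoretic $W$-action; likewise for $X_{H_w}(s)$ by \cite{tymoczko}. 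Because the GKM-theoretic dot action is manufactured purely from the GKM graph together with the $W$-action on the fixed locus $\cB^T = W$ (permuting vertices and simultaneously acting on weights), the isomorphism constructed above is automatically $W$-equivariant. This upgrades the graded isomorphism to an isomorphism of graded $W$-representations, giving the displayed chain.

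\emph{Main obstacle.} The delicate point is Step 1 for $\Y$: unlike $X_{H_w}(s)$, the Lusztig variety is not $B$-stable, so I cannot simply read its local structure off a $B$-orbit decomposition. The work is to pin down, at each $T$-fixed point $vB$, exactly which $T$-invariant $\mathbb{P}^1$'s through it are contained in $\Y$, and to verify the membership criterion is governed by $s_\alpha \le w$ (equivalently by the tangent cone $C_w$, via $H_w$), matching the Hessenberg side on the nose. Concretely this reduces to an $\mathrm{SL}_2$-calculation: restrict the family to the minimal parabolic $P_\alpha$, where $\Y$ meets the fiber $P_\alpha/B \cong \mathbb{P}^1$ in the Lusztig variety for the rank-one group, and check it is all of $\mathbb{P}^1$ exactly when $s_\alpha\le w$. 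I expect this rank-one reduction, together with the transversality of $\iota_\s$ from Lemma~\ref{2'}, to make the computation routine but it is the crux; everything else is formal GKM bookkeeping and the translation-equivariance of the $W$-action.
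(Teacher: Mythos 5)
Your overall strategy is the same as the paper's: both varieties are GKM for the torus $T$ with the same $T$-fixed locus $\cB^T\cong W$, so the task is to show the $T$-invariant curves and their weights agree, and your Step~2 (a graph isomorphism automatically intertwines the dot actions, because the dot action is manufactured from the graph and the $W$-action on vertices) is exactly what the paper leaves implicit in ``the remaining assertions are now immediate.'' The point is that you stop at the step you yourself call ``the crux'': you assert but do not verify that the $T$-invariant curve through a fixed point in a given direction lies in $\Y$ precisely when the corresponding tangent direction lies in $H_w/\fb$, and you defer the $\mathrm{SL}_2$-reduction as ``routine.'' The paper carries out this step directly and it is worth seeing how. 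Parametrize $T$-invariant curves as $\overline{U_\a v}$ with $\a\in\Phi^+$, $v\in W$, $-v^{-1}\a\in\Phi^+$. Then $\overline{U_\a v}\subset \Y$ iff the image of this curve under $gB\mapsto g^{-1}.\s B$ lands in $X_w$. Using $U_\a v=vU_{v^{-1}\a}$ and the fact that $T$ normalizes $U_{v^{-1}\a}$, one computes
\[(U_\a v)^{-1}.\s\;=\;U_{v^{-1}\a}.(v^{-1}.\s)\;\subset\;U_{v^{-1}\a}\,e\quad\text{in }\cB,\]
and because the left side is one-dimensional (regularity of $\s$) while the right is a $T$-invariant affine curve, the closures agree: $\overline{(U_\a v)^{-1}.\s}=\overline{U_{v^{-1}\a}e}$, the $T$-invariant curve through $eB$ with tangent $\fg_{v^{-1}\a}$. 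Since $eB$ is a smooth point of $X_w$ (this is where smoothness of $w$ enters), that curve is in $X_w$ iff $\fg_{v^{-1}\a}\subset T_{X_w,eB}=H_w/\fb$, which is exactly the membership criterion for $\overline{U_\a v}\subset X_{H_w}(s)$ coming from \cite[(8.4)]{AHMMS}, namely $(U_\a v)^{-1}.s=\fg_{v^{-1}\a}$ in $\fg/\fb$. No translation bookkeeping is needed.

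One subsidiary caution about your ``translate by $v^{-1}$'' step: that translation sends $\Y$ to $Y_w(v^{-1}\s v)$, a different (though still regular semisimple, torus-centralized) Lusztig variety, so the rank-one computation would have to be done with $\s$ replaced by $v^{-1}\s v$; this is harmless but should be stated, and it is slightly cleaner to avoid it by arguing directly on $\overline{U_\a v}$ as the paper does. Your phrasing ``the relevant $\mathrm{PGL}_2$ conjugates $\s$ inside $\overline{Bs_\a B}$ iff $s_\a\le w$'' is also not quite a well-formed condition: the correct formulation is that the conjugates $(us_\a)^{-1}\s(us_\a)$ for $u\in U_\a$ generically lie in $Bs_\a B$, so the curve $P_\a/B$ is in $\Y$ iff $Bs_\a B\subset\overline{BwB}$, i.e.\ $s_\a\le w$. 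With these points tightened up your plan closes the gap and coincides with the paper's argument.
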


\begin{proof}
	This follows from the proof of \cite[Proposition 8.2]{AHMMS}, where the GKM graph of $X_H(s)$ is computed. Since its vertices are equal to those for $\cB$, it suffices to know which edges are contained in $X_H(s)$. We recall this first. 

For each root $\a$, let $U_\a \subset G$ be the \emph{root subgroup} associated to $\a$. It is the unique one-dimensional connected subgroup of $G$ normalized by $T$ with $\mathrm{Lie}(U_\a)=\fg_\a$ (\cite[Proposition~8.1.1]{Sp}).
Every $T$-invariant curve in $\cB$ is the closure of  $U_\a v$ with $v\in W$ and $\a\in \Phi^+$ such that $-v^{-1}\a\in \Phi^+$ (\cite{carrell} or \cite[Proposition~4.6]{tymoczko-schubert}). Here, by abuse of notation, we also denote by $v$ any lift of $v\in W$ in $\cB$.
In \cite[(8.4)]{AHMMS}, it is shown that 
\[(U_\a v)^{-1}.s  =\fg_{v^{-1}\alpha}\]
in $\fg/\fb$, where $\fg_{\beta}\subset \fg$ is the root space of a root $\beta$. Therefore, $U_\a v\subset X_H(s)$ if and only if $\fg_{v^{-1}\a}\subset H$. This determines the GKM graph of $X_H(s)$.

Similarly, since $U_\a v=vU_{v^{-1}\a}$ and $U_{v^{-1}\a}^{-1}=U_{v^{-1}\a}$, we have
\[(U_\a v)^{-1}.\s=(vU_{v^{-1}\a})^{-1}.\s=U_{v^{-1}\a}.(v^{-1}.\s) ~\subset~ U_{v^{-1}\a}B\]
in $G$,
where the last inclusion holds since $U_{v^{-1}\a}$ is normalized by $T$. Indeed,
$U_{v^{-1}\a}.T \subset U_{v^{-1}\a}T U_{v^{-1}\a}=U_{v^{-1}\a}^2T \subset U_{v^{-1}\a}B$.
Consequently, we have
\beq\label{eq:Tinvar.curve}(U_\a v)^{-1}.\s~\subset~ U_{v^{-1}\a}e\eeq
in $G/B=\cB$.
Since $(U_\a v)^{-1}.\s$ is not a point while $U_{v^{-1}\a}e$ is a $T$-invariant (affine) curve in $\cB$, the inclusion \eqref{eq:Tinvar.curve} induces the equality of the closures  
\[\overline{(U_\a v)^{-1}.\s}= \overline{ U_{v^{-1}\a}e}\] in $\cB$. 
Moreover, its tangent space at $e\in \cB$ is $\fg_{v^{-1}\a}\subset \fg/\fb$.

This shows that $\overline{U_\a v}$ (as a $T$-invariant curve in $\cB$) is contained in $\Y$ if and only if the $T$-invariant curve $\overline{U_{v^{-1}\a}e}$ passing through $e$ is contained in $X_w$. Since $e\in X_w$ is a smooth point, this is equivalent to $\fg_{v^{-1}\a}\subset T_{X_w,eB}=H_w/\fb$, hence equivalent to $\overline{U_\a v}\subset X_{H_w}(s)$. This shows that $\Y$ and $X_{H_w}(s)$ have the same $T$-invariant curves, hence the same GKM graphs.

The remaining assertions are now immediate from this. 
\end{proof}

\begin{proof}[Proof of Corollary~\ref{cor:Wrep}]
	Now immediate from Theorem~\ref{thm GKM graph} and \eqref{eq:conjugate}.
\end{proof}

We will lift this relationship between $\Y$ and $X_{H_w}(s)$ to the level of varieties by constructing a family of these varieties where one degenerates to the other, in the next subsection. It will lead to a result on their diffeomorphism types that specializes to a proof of Conjecture~\ref{conj} in type $A$.

\medskip

\subsection{Hessenberg varieties as degenerations of  Lusztig varieties}  
\label{ss:degeneration}
Consider the universal families $Y_w^{\rs}\to \Grs$ of regular semisimple Lusztig varieties and $X_{\PP(H_w)}^{\rs}\to \pgrs$ of regular semisimple Hessenberg varieties.

Let $\tG\to G$ denote the blowup at the identity $e$. 
Let $\widehat G$ be the complement of the proper transform of $G-\Grs$.
Define $\widehat E$ to be the intersection of the exceptional divisor with $\widehat G$.
By construction, $\Grs$ is an open subset of $\widehat G$, with $\widehat E\cong \pgrs$ as its complement. Moreover, $\widehat G$ is smooth and connected.

In this section, we prove the following. 
\begin{theorem}\label{thm:univ.fam}
	There exists a flat projective morphism $\widehat Y_w \to \widehat G$ of relative dimension $\ell(w)$ over 
    $\widehat G$ which satisfies the following. 
	\begin{enumerate}
		\item The family restricts to $Y_w^{\rs} \to \Grs$ over $\Grs$.
		\item The family restricts to $X_{\PP(H_w)}^{\rs}\to \pgrs$ over $\pgrs$. 
	\end{enumerate}
	Moreover, the family is smooth if $w$ is smooth.
	In short,
	\beq\label{eq:univ.fam.diagram}\begin{tikzcd}
			Y_w^{\rs}\arrow[r,hook]\arrow[d]&\widehat Y_w \arrow[d]&X_{\PP(H_w)}^{\rs}
			\arrow[l,hook']\arrow[d]\\ 
			\Grs\arrow[r,hook]& \widehat G&\pgrs \arrow[l,hook']
		\end{tikzcd}\eeq
	where the left and right squares are Cartesian and $\widehat G=\Grs\sqcup \pgrs$.
\end{theorem}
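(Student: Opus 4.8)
\emph{Proof strategy.} The plan is to construct $\widehat Y_w$ by pulling back the family of fibre-product diagrams \eqref{3} over $G$ to $\widehat G$ and curing the resulting non-flatness by an adaptation of Fulton's deformation to the normal cone \cite{fulton-intersection}. Writing $\sigma\colon G\times\cB\to G\times(\cB\times\cB)$, $(x,gB)\mapsto(x,gB,xgB)$, for the universal version of the section $\iota_\s$, one has $Y_w=\sigma^{-1}(G\times\fX_w)$, which is exactly the family \eqref{3} varying over $G$. Pulling back along $\widehat G\to G$ gives $Y_w\times_G\widehat G=\sigma_{\widehat G}^{-1}(\widehat G\times\fX_w)$ inside $\widehat G\times(\cB\times\cB)$; this restricts to the smooth family $Y_w^{\rs}\to\Grs$, but since $eB\in X_w$ forces the diagonal $\Delta_\cB\subset\cB\times\cB$ to lie in $\fX_w$, and $\sigma_{\widehat G}$ sends $\widehat E\times\cB$ into $\widehat E\times\Delta_\cB$, its fibre over every point of $\widehat E\cong\pgrs$ is all of $\cB$, so it is not flat over $\widehat G$. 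We will repair this near $\widehat E$, using that $\widehat E$ is a Cartier divisor on the smooth variety $\widehat G$.

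Apply the deformation to the normal cone of $\Delta_\cB\subset\cB\times\cB$, with the Cartier divisor $\widehat E$ in the role of ``time zero'': blowing up $\widehat G\times(\cB\times\cB)$ along $\widehat E\times\Delta_\cB$ and deleting the proper transform of $\widehat E\times(\cB\times\cB)$ produces a smooth morphism $\sB\to\widehat G$ that restricts to the constant family $\cB\times\cB$ over $\Grs$ and to the normal bundle $N_{\Delta_\cB/\cB\times\cB}=T_\cB=G\times^B(\fg/\fb)$ over $\widehat E$. Inside $\sB$, the proper transform of $\widehat G\times\fX_w$ is, by Fulton's construction, a flat family $\sF\to\widehat G$ with fibre $\fX_w$ over $\Grs$ and fibre the normal cone $C_{\Delta_\cB}\fX_w$ over $\widehat E$. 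Because $\fX_w\to\cB$ is a fibre bundle with fibre $X_w$ and $\Delta_\cB$ is its $eB$-section, this normal cone is the bundle with fibre the tangent cone $C_w$ of $X_w$ at $eB$; that is, $C_{\Delta_\cB}\fX_w=G\times^B C_w=G\times^B(H_w/\fb)$ by the very definition of $H_w$. Moreover $\sigma_{\widehat G}$ lifts to a section $\widehat\sigma\colon\widehat G\times\cB\to\sB$ which is $\iota_\s$ over $\Grs$ and, over a point $[s]\in\widehat E$, is the first-order term of $gB\mapsto\s gB$ as $\s\to e$ in the direction $s$, namely $gB\mapsto[g,g^{-1}.s]\in T_\cB$, which is precisely the section $\iota_s$ of \eqref{75}. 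Setting $\widehat Y_w:=\widehat\sigma^{-1}(\sF)$, a closed subscheme of $\widehat G\times\cB$ and hence projective over $\widehat G$, we obtain $\widehat Y_w|_{\Grs}=\iota_\s^{-1}(\fX_w)=Y_w^{\rs}$ by \eqref{3} and $\widehat Y_w|_{\widehat E}=\iota_s^{-1}\bigl(G\times^B(H_w/\fb)\bigr)=X_{H_w}(s)$ by \eqref{75}, which assemble over $\pgrs$ into $X_{\PP(H_w)}^{\rs}\to\pgrs$. This yields the diagram \eqref{eq:univ.fam.diagram}, i.e.\ parts (1) and (2).

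For flatness and the dimension count: $\widehat\sigma$ meets $\sF$ fibrewise in the expected codimension $\dim\cB$ --- over $\Grs$ by Lemma~\ref{2'} (transversality of $\iota_\s$ to the $G$-orbits $\fX_v^\circ$, hence to $\fX_w$), and over $\widehat E$ by Proposition~\ref{77}(1)--(2), which also gives that $X_{H_w}(s)$ has pure dimension $\dim(H_w/\fb)=\ell(w)$. Since $\sB\to\widehat G$ is smooth, $\widehat G\times\cB\hookrightarrow\sB$ is a regular immersion of codimension $\dim\cB$; as $\sF\to\widehat G$ is flat with Cohen--Macaulay fibres ($\fX_w$ by Proposition~\ref{9}, and its tangent cone $C_{\Delta_\cB}\fX_w$ likewise), the equations cutting out $\widehat G\times\cB$ restrict to a regular sequence on every fibre of $\sF$. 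Hence $\widehat Y_w$ is Cohen--Macaulay, all of its fibres over $\widehat G$ have dimension $\ell(w)=\dim\widehat Y_w-\dim\widehat G$, and therefore $\widehat Y_w\to\widehat G$ is flat. When $w$ is smooth, $\fX_w$ is smooth, so $\sF$ --- the deformation to the normal cone of a smooth subvariety of a smooth variety --- is smooth, and the fibrewise transversality upgrades to transversality of $\widehat\sigma$ to $\sF$ in $\sB$, making $\widehat Y_w=\widehat\sigma^{-1}(\sF)$ smooth.

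The conceptual core, and the step requiring the most care, is the normal-cone construction relative to the base $\widehat G$ together with the identification of the fibre over $\widehat E$: one must verify both that $C_{\Delta_\cB}\fX_w=G\times^B(H_w/\fb)$ --- which is exactly where the definition of $H_w$ via the tangent cone of $X_w$ enters --- and that the lifted section $\widehat\sigma$ restricts over $\widehat E$ to the linearised section $\iota_s$ of \eqref{75}, i.e.\ that blowing up $G$ at $e$ is precisely the bookkeeping that converts the family $\{\iota_\s\}_{\s\in\Grs}$ into the family $\{\iota_s\}_{[s]\in\pgrs}$ in the limit $\s\to e$. A secondary technical point, feeding the flatness argument, is the Cohen--Macaulayness of the special fibre $C_{\Delta_\cB}\fX_w$, i.e.\ of the tangent cone of the Schubert variety $X_w$ at $eB$; this is immediate when $w$ is smooth, and in general follows from known properties of Schubert varieties.
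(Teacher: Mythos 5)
Your construction is essentially the same as the paper's: both pull back the universal family along $\widehat G\to G$, blow up the trivially-nonflat locus $\widehat E\times\Delta(\cB)$ inside $\widehat G\times\cB\times\cB$ (and inside $\widehat G\times\fX_w$), identify the special fibre over $\widehat E$ with $T_\cB$ (resp.\ $G\times^B C_w=G\times^B(H_w/\fb)$), and observe that the lifted section over $[s]\in\widehat E$ becomes exactly the Hessenberg section $\iota_s$ of \eqref{75}. The only cosmetic difference is that you explicitly discard the proper transform of $\widehat E\times(\cB\times\cB)$ to land in the genuine deformation-to-normal-cone space, whereas the paper keeps the full blowup and notes that the section lands away from the superfluous component; the resulting $\widehat Y_w$ is the same.

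One caveat on the flatness step, which you flag but which deserves scrutiny: your argument routes through Cohen--Macaulayness of the fibres of $\sF$, and for singular $w$ this requires the tangent cone $C_w$ of $X_w$ at $eB$ to be Cohen--Macaulay. This is \emph{not} an obvious consequence of Cohen--Macaulayness of $X_w$ (the associated graded of a CM local ring need not be CM), and you should either cite a reference establishing it for tangent cones of Schubert varieties or argue differently. The paper's own flatness step is equally terse (``$\widehat G$ smooth and all fibres equidimensional''), which is implicitly a miracle-flatness invocation requiring the same CM input; so you have not introduced a new gap, but the point is genuinely nontrivial when $w$ is singular and should not be dismissed as ``follows from known properties of Schubert varieties'' without a citation. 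For smooth $w$, which is what the main applications (Theorem~\ref{thm GKM graph} and Corollary~\ref{cor:diffeo}) use, both arguments are complete, and your transversality argument for smoothness of $\widehat Y_w$ in that case is correct and slightly cleaner than the paper's.
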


The rest of this subsection is devoted to constructing the 
morphism $\widehat Y_w\to
\widehat G$ and proving Theorem~\ref{thm:univ.fam}. The idea is to apply the
deformation-to-normal-cone type argument in \cite{fulton-intersection} to the
twisted diagonals $\iota_\s$ for varying $\s\in G$.

We begin by recalling the construction of the universal family $Y_w\to G$ of Lusztig varieties over $G$, given as the following fiber product:
\beq\label{72}
	\begin{tikzcd}
		Y_w\arrow[r,hook]\arrow[d,hook']&G\times \fX_w\arrow[d,hook']\\ G\times \cB\arrow[r,hook]&G\times \cB\times \cB
	\end{tikzcd} 
\eeq
Here, the lower horizontal map sends $(x,gB)$ to $(x,gB, xgB)$. This induces a family of diagrams over $G$ via the first projections, whose fiber over $\s\in G$ is the diagram \eqref{3}. This $Y_w\to G$ is the universal family of Lusztig varieties.

Applying base change to \eqref{72} along $\widehat G \to G$, we obtain
\beq\label{71}\begin{tikzcd}
			&\widehat G\times \fX_w\arrow[d,hook']\\\widehat G\times \cB\arrow[r,hook]&\widehat G\times \cB\times \cB.
		\end{tikzcd}\eeq
Moreover, all three spaces in \eqref{71}, as subvarieties of $\widehat G\times \cB\times \cB$, contain $\widehat E\times \Delta(\cB)$, since the corresponding three spaces in \eqref{72} contain	$e\times \Delta(\cB)$ and $\widehat E$ is the preimage of $e$ in $\widehat G$.
Blowing up along $\widehat E\times \Delta(\cB)$ and taking the fiber product, we obtain 
\beq\label{70}\begin{tikzcd}
			\widehat Y_w \arrow[r,hook]\arrow[d,hook']&\Bl_{\widehat E\times \Delta(\cB)}(\widehat G\times \fX_w)\arrow[d,hook']\\  \Bl_{\widehat E\times \cB}(\widehat G\times \cB )\arrow[r,hook]&\Bl_{\widehat E\times \Delta(\cB)}(\widehat G\times \cB\times \cB)
		\end{tikzcd}\eeq
where $\Bl_{\widehat E\times \cB}(\widehat G\times \cB )\cong \widehat G\times \cB$.
Thus, by the first projection, we obtain
\beq\label{eq:simul.fam}\widehat Y_w\lra \widehat G\eeq
each of whose fibers is a closed subscheme in $\cB$. 
Since the diagram~\eqref{70} coincides with \eqref{72} when restricted over $\Grs$, the map \eqref{eq:simul.fam} restricts to $Y_w^{\rs}\to \Grs$ over $\Grs$.
Consequently, we obtain the left square in \eqref{eq:univ.fam.diagram},
leaving only (3) in Theorem~\ref{thm:univ.fam} to be proved.  
Thus, we now verify (3).
\begin{lemma}
	The map \eqref{eq:simul.fam} restricts to $X_{\PP(H_w)}^{\rs}\to \pgrs\cong \widehat E$ over $\widehat E$.
\end{lemma}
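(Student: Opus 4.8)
The plan is to restrict the whole construction \eqref{70} over the divisor $\widehat{E}\subset\widehat{G}$ and recognize the restricted family as a deformation to the normal cone, following \cite{fulton-intersection}. Since $\widehat{E}$ is a Cartier divisor in the smooth variety $\widehat{G}$, restriction over $\widehat{E}$ is flat base change and hence commutes with the blowups and fibre products in \eqref{71}--\eqref{70}. The first point to establish is that, under $\Bl_{\widehat{E}\times\cB}(\widehat{G}\times\cB)\cong\widehat{G}\times\cB$, the lower arrow of \eqref{70} is the lift of $(x,gB)\mapsto(x,gB,\bar{x}\,gB)$ (with $\bar{x}$ the image of $x$ in $G$) along the blowup; as $\widehat{E}\times\cB$ is a Cartier divisor mapped into the centre $\widehat{E}\times\Delta(\cB)$, this lift carries $\widehat{E}\times\cB$ into the exceptional divisor $E'$ of $\Bl_{\widehat{E}\times\Delta(\cB)}(\widehat{G}\times\cB\times\cB)$. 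Writing $E'_w$ for the exceptional divisor of $\Bl_{\widehat{E}\times\Delta(\cB)}(\widehat{G}\times\fX_w)$ (note $\widehat{E}\times\Delta(\cB)$ lies in $\widehat{G}\times\fX_w$, as already observed in the construction of \eqref{70}), it will follow that $\widehat{Y}_w|_{\widehat{E}}=\sigma^{-1}(E'_w)$ for a section $\sigma$ of the projective bundle $E'\to\widehat{E}\times\cB$.

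The second step is to compute $E'$, $E'_w$ and $\sigma$ explicitly. Here I would use that $\widehat{G}$ is smooth and $\widehat{E}$ a smooth divisor whose normal bundle $N_{\widehat{E}/\widehat{G}}$ is the tautological line bundle on $\pgrs\cong\widehat{E}$ (fibre $\ell\subset\fg$ over $[\ell]$), that $N_{\Delta(\cB)/\cB\times\cB}\cong T_\cB$, and that, identifying $\fX_w=G\times^BX_w$ and $\Delta(\cB)=G\times^B\{eB\}$, the normal cone $C_{\Delta(\cB)/\fX_w}$ is $G\times^BC_w$, the tangent cone of $X_w$ at $eB$ (that is, $H_w/\fb$) spread over $\cB$. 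This gives $E'=\PP(N_{\widehat{E}/\widehat{G}}\oplus T_\cB)$ and $E'_w=\PP(N_{\widehat{E}/\widehat{G}}\oplus(G\times^BC_w))\subset E'$ over $\widehat{E}\times\cB$. To pin down $\sigma$ I would compute the first-order behaviour of $(x,gB)\mapsto(gB,\bar{x}\,gB)$ along $\widehat{E}$ after the usual identification of a neighbourhood of $\Delta(\cB)$ in $\cB\times\cB$ with a neighbourhood of the zero section of $T_\cB$: a curve $x_t$ in $\widehat{G}$ with $x_0\in\widehat{E}$ of class $[\ell]$ and velocity $v\in\ell$ satisfies $\bar{x}_t\,gB=[g,\,t\,\pi_\fb(g^{-1}.v)+O(t^2)]$, so $\sigma([\ell],gB)$ is the point of $E'$ over $([\ell],gB)$ represented by $(v,\pi_\fb(g^{-1}.v))\in\ell\oplus(T_\cB)_{gB}$.

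With these in hand the lemma is obtained by unwinding definitions: since the $\ell$-component of $\sigma([\ell],gB)$ is nonzero, this point lies in $E'_w$ exactly when $\pi_\fb(g^{-1}.v)\in C_w$, i.e. when $g^{-1}.v\in H_w$, i.e. when $gB\in X_{H_w}(v)$, so
\[
\widehat{Y}_w|_{\widehat{E}}=\{([\ell],gB)\in\pgrs\times\cB : g^{-1}.v\in H_w\ \text{for }v\in\ell\setminus\{0\}\}=G\times^B\PP(H_w)|_{\pgrs}=X_{\PP(H_w)}^{\rs},
\]
which is well defined because $H_w$ is a cone, and the identification is manifestly compatible with the projections to $\widehat{E}\cong\pgrs$. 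The main obstacle I anticipate is the second step: correctly identifying the normal cone $C_{\Delta(\cB)/\fX_w}$ with $G\times^BC_w$ and carrying out the first-order computation of $\sigma$ so that it matches the projectivized-normal-cone description of $E'_w$. The cleanest way to handle this will be to pass to a $B$-stable affine chart of $\cB$ around $eB$, so that $\fX_w$ becomes $G\times^B(\text{affine chart of }X_w)$ and the relevant blowup is literally the one of \cite{fulton-intersection}, and to keep track of everything $B$-equivariantly.
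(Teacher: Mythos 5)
Your proposal is correct and follows essentially the same strategy as the paper: restrict the construction over $\widehat E$, identify the lift of the lower horizontal map of \eqref{70} with a section $\sigma$ of the exceptional divisor $E'=\PP\bigl(pr_1^*\cO_{\widehat E}(-1)\oplus pr_2^*T_\cB\bigr)$, compute $\sigma$ via a first-order expansion of $\iota_\s$ near $e$, identify $E'_w$ with $\PP\bigl(pr_1^*\cO_{\widehat E}(-1)\oplus(G\times^BC_w)\bigr)$, and unwind the fiber product. Your explicit one-parameter curve computation of $\sigma([\ell],gB)=[v:\pi_{\fb}(g^{-1}.v)]$ is precisely the ``one can easily check using one-parameter families'' step in the paper, which gives $[s:[g,g^{-1}.s]]$, and the conclusion matches the fiber diagram \eqref{75}.

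One remark to fix: the sentence claiming ``restriction over $\widehat E$ is flat base change and hence commutes with the blowups and fibre products'' is false. The inclusion $\widehat E\hookrightarrow\widehat G$ of a Cartier divisor is not flat, and restricting $\Bl_{\widehat E\times\Delta(\cB)}(\widehat G\times\cB\times\cB)$ over $\widehat E$ does not yield the blowup of $\widehat E\times\cB\times\cB$ along $\widehat E\times\Delta(\cB)$ --- it yields the two-component central fiber of a deformation to the normal cone (exactly as in \eqref{73}). Fortunately, your actual argument never uses the claimed commutation: you describe the restriction directly in terms of the exceptional divisor and the section $\sigma$, so deleting or rewording that sentence leaves the proof intact. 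It would also be cleaner to justify explicitly that the lower arrow of \eqref{70} lands in $E'$ over $\widehat E$: the scheme-theoretic preimage of $\widehat E\times\Delta(\cB)$ under $(x,gB)\mapsto(x,gB,\bar xgB)$ is exactly the Cartier divisor $\widehat E\times\cB$ (since $pr_1^*I_{\widehat E}$ already generates $I_{\widehat E\times\cB}$, and $I_{\Delta(\cB)}$ pulls back into it because $\bar x=e$ on $\widehat E$), so the universal property of blowup furnishes the lift and sends $\widehat E\times\cB$ into $E'$.
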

\begin{proof}
	The fibers of $\Bl_{\widehat E \times \Delta(\cB)}(\widehat G\times \cB\times \cB)\to \widehat G$ over points in $\widehat E$ are
	\beq\label{73}\Bl_{\Delta(\cB)}(\cB\times \cB)~\cup~ \PP_{\Delta(\cB)}\left(\cO_{\Delta(\cB)}\oplus N_{\Delta(\cB)/\cB\times \cB}\right),\eeq
	glued along $\PP_{\Delta(\cB)}(N_{\Delta(\cB)/\cB\times \cB)})$,
	where $\cO_{\Delta(\cB)}$ is the restriction of $N_{\widehat E/\widehat G}$.
	Similarly, the fibers of $\Bl_{\widehat E\times \Delta(\cB)}(\widehat G\times \fX_w) \to \widehat G$ over points in 
	$\widehat E$ are 
	\beq\label{74}\Bl_{\Delta(\cB)}\fX_w~\cup~ \PP_{\Delta(\cB)}\left(\cO_{\Delta(\cB)} \oplus C_{\Delta(\cB)/\fX_w}\right)\eeq
	glued along $\PP_{\Delta(\cB)}(C_{\Delta(\cB)/\fX_w})$, where $C_{\Delta(\cB)/\fX_w}$ denotes the normal cone of $\Delta(\cB)$ in $\fX_w$ so that $C_{\Delta(\cB)/\fX_w}=N_{\Delta(\cB)/\fX_w}$ if $w$ is smooth. 
	Note that \eqref{73} and \eqref{74} contain $N_{\Delta(\cB)/\cB\times \cB}\cong T_\cB$ and $C_{\Delta(\cB)/\fX_w}\cong G\times^BC_w$ respectively.
	
	On the other hand, the preimage of $\widehat E$ under  $\Bl_{\widehat E\times \cB}(\widehat G\times \cB )\cong \widehat G\times \cB \to \widehat G$
	is canonically isomorphic to $\PP_{\widehat E}(N_{\widehat E/\widehat G})\times \cB\cong \PP_{\widehat E}(\cO_{\widehat E}(-1))\times \cB$. 
	Under the lower horizontal map in \eqref{70}, this maps into 
	\[\PP_{\widehat E \times \cB}\left(pr_1^*\cO_{\widehat E}(-1)\oplus pr_2^*N_{\Delta(\cB)/\cB\times \cB}\right) \cong \PP_{\widehat E \times \cB}\left(pr_1^*\cO_{\widehat E}(-1)\oplus pr_2^*T_\cB\right)\]
	which is the union of the right component in \eqref{73} over points in $\widehat E$, for the projections $\widehat E\xleftarrow{pr_1}\widehat E\times \cB \xrightarrow{pr_2} \cB\cong \Delta(\cB)$. Furthermore, one can easily check, for example using one-parameter families of Lusztig varieties, that this map sends $([s],gB)$ to $[s:[g,g^{-1}s]]$, a point lying over $([s],gB)\in \widehat E\times \cB$.
	
	In particular, it further factors through as
	\[\widehat E\times \cB ~\hooklongrightarrow~  pr_2^*T_\cB. \]
	Here, $pr_2^*T_\cB$ is the locus where the first entry is nonzero, that is, it is the union of the complement of the left component in \eqref{73}, over points in $\widehat E$.
	
	The inclusion map restricts to the map 
	\[\cB ~\hooklongrightarrow~  T_\cB\] 
	that sends $([s],gB)$ to $[g,g^{-1}.s]$ over $[s]\in \widehat E\cong \pgrs$, which is precisely the lower horizontal map in the fiber diagram \eqref{75}.
	Moreover, the preimage of $N_{\Delta(\cB)/\cB\times \cB}$ in \eqref{74} is $C_{\Delta(\cB)/\fX_w}\cong G\times^BH_w/\fb$. Therefore, the fiber of \eqref{eq:simul.fam} over $[s]$ 
	is the fiber product \eqref{75} with $H=H_w$. 
	Hence, it is $X_{H_w}(s)$.
\end{proof}

This proves (3) in Theorem~\ref{thm:univ.fam}. 
Since $\widehat G$ is smooth and  all the fibers have the same dimension (resp. and they are smooth), \eqref{eq:simul.fam} is flat (resp. smooth, when $w$ is smooth). This completes the proof of Theorem~\ref{thm:univ.fam}. \hfill $\square$

\begin{remark}\label{rem:trivial.bundle}
By construction, the family $\widehat Y_w$ is a closed subscheme of the trivial
$\cB$-bundle $\widehat G\times \cB$; see \eqref{70}.
\end{remark}

The following strengthening of Theorem~\ref{thm GKM graph} is now immediate from Theorem~\ref{thm:univ.fam} and the Ehresmann theorem.

\begin{corollary}\label{cor:diffeo}
	Let $w$ and $w'\in W$ be smooth elements with $H_w=H_{w'}$. Then, $\Y$ and $Y_{w'}(\s')$ are diffeomorphic for any $\s$ and $\s'\in \Grs$.
\end{corollary}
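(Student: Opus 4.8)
The plan is to read the statement off directly from Theorem~\ref{thm:univ.fam} together with Ehresmann's fibration theorem. First I would record that, because $w$ is smooth, the morphism $\widehat Y_w \to \widehat G$ produced in Theorem~\ref{thm:univ.fam} is \emph{smooth} and projective; passing to the underlying complex manifolds, it becomes a proper holomorphic submersion, in particular a proper submersion of $C^\infty$ manifolds. Since $\widehat G$ is smooth and connected (as noted in the construction preceding Theorem~\ref{thm:univ.fam}), Ehresmann's theorem shows that $\widehat Y_w \to \widehat G$ is a locally trivial $C^\infty$ fiber bundle over a connected base, so all of its fibers are diffeomorphic. By parts (1) and (2) of Theorem~\ref{thm:univ.fam}, the fiber over any $\s \in \Grs$ is $\Y$, while the fiber over any $[s] \in \pgrs$ is the Hessenberg variety $X_{H_w}(s)$. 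Hence $\Y$ is diffeomorphic to $X_{H_w}(s)$ for every $\s \in \Grs$ and every $[s] \in \pgrs$; in particular the diffeomorphism type of $\Y$ does not depend on the choice of $\s \in \Grs$.

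Next I would run the identical argument for $w'$. Since $w'$ is smooth, $\widehat Y_{w'} \to \widehat G$ is again a smooth projective morphism over the connected base $\widehat G$, so by Ehresmann its fiber $Y_{w'}(\s')$ over any $\s' \in \Grs$ is diffeomorphic to its fiber $X_{H_{w'}}(s)$ over any $[s] \in \pgrs$. Now the hypothesis $H_w = H_{w'}$ enters: the two families $X_{\PP(H_w)}^{\rs} \to \pgrs$ and $X_{\PP(H_{w'})}^{\rs} \to \pgrs$ of regular semisimple Hessenberg varieties are literally the same, so $X_{H_w}(s) = X_{H_{w'}}(s)$ as subvarieties of $\cB$. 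Chaining the three (diffeo)morphisms gives
\[\Y \ \cong\ X_{H_w}(s) \ =\ X_{H_{w'}}(s) \ \cong\ Y_{w'}(\s'),\]
which is the claim. Applying this with $G = \GL_n(\C)$, $W = S_n$, and invoking Corollary~\ref{cor:corres.A} to identify $H_w$ with $H_h$ for the Hessenberg function $h$ attached to a smooth $w$, one recovers Conjecture~\ref{conj:AN}.

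I do not expect a serious obstacle; essentially everything is immediate once Theorem~\ref{thm:univ.fam} is available. The points that need care are: (i) smoothness of $w$ and $w'$ is precisely what makes the families $\widehat Y_w \to \widehat G$ and $\widehat Y_{w'} \to \widehat G$ \emph{smooth} morphisms, which is why the statement must be restricted to smooth elements — for singular $w$ one only has flatness, and Ehresmann does not apply; (ii) one must pass from the algebraic notion of a smooth projective morphism over $\C$ to a proper $C^\infty$ submersion before invoking Ehresmann; and (iii) one must correctly match the fibers of $\widehat Y_w \to \widehat G$ over the two strata $\Grs$ and $\pgrs$ of $\widehat G$, which is exactly the content of parts (1)--(2) of Theorem~\ref{thm:univ.fam}. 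Beyond these bookkeeping points there is nothing to prove.
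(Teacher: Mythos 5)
Your proposal is correct and is essentially the same as the paper's own (brief) argument: apply Ehresmann's theorem to the two smooth projective families $\widehat Y_w \to \widehat G$ and $\widehat Y_{w'} \to \widehat G$ from Theorem~\ref{thm:univ.fam}, then chain the resulting diffeomorphisms through the common Hessenberg fiber $X_{H_w}(s)=X_{H_{w'}}(s)$. Your version simply spells out the bookkeeping (smooth morphism $\Rightarrow$ proper $C^\infty$ submersion, connectedness of $\widehat G$, matching of fibers over the two strata) that the paper leaves implicit.
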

\begin{proof}
	Applying the Ehresmann theorem to $\widehat Y_w\to \widehat G$ and $\widehat Y_{w'}\to \widehat G$,   
	$\Y\cong X_H(s)\cong Y_{w'}(\s')$ are diffeomorphic for any $s\in \grs$ and $H:=H_w=H_{w'}$.
\end{proof}
Restricting to the case of type $A$, this in particular proves Conjecture~\ref{conj}.

\subsection{Cohomology of line bundles revisited}\label{ss:cohomology.revisited}
Combining Theorems~\ref{thm:vanishing} 
 and \ref{thm:univ.fam}, we can compute the $C_G(\s)$-representation
 \[V_w(\lambda):=H^0(\Y,L_\lambda)\]
 for dominant $\lambda$ and smooth $w$.
 Note that the virtual $C_G(\s)$-representation
\[\chi^{C_G(\s)}(\cB,L_\mu):=\sum_{i\geq0}(-1)^iH^i(\cB,L_\mu)\]
viewed as a class in the Grothendieck group of $C_G(\s)$-representations
is computable for any integral weight $\mu$ by the Borel-Weil-Bott theorem (\cite[p.~103]{Ak}). Forgetting the representation structure recovers 
the Euler characteristic $\chi(\cB,L_\mu)=\sum_{i\geq0}(-1)^i\dim\,H^i(\cB,L_\mu)$.
\begin{proposition}\label{prop:rep}
	Let $w\in W$ be smooth and $\s\in \Grs$. Let $\lambda$ be a dominant
integral weight. Then
	\[V_w(\lambda)=\sum_{I\subset \Phi^+-M_w}(-1)^{\lvert I \rvert}\chi^{C_G(\s)}(\cB, L_{\lambda -\langle I\rangle})\]
	where $M_w$ is as in Lemma~\ref{l:H_w.smooth} and $\langle I\rangle:= \sum_{\a \in I}\a$ denotes the sum of all the  roots in $I$.
	In particular, 
	\[\dim\, V_w(\lambda)=\sum_{I\subset \Phi^+-M_w}(-1)^{\lvert I \rvert}\chi(\cB, L_{\lambda -\langle I\rangle}).\]
\end{proposition}
\begin{proof}
	By Theorem~\ref{thm:vanishing} or \ref{thm:vanishing.Frob}, the higher cohomology of $L_\lambda$ on $\Y$ vanishes for dominant $\lambda$. Hence $V_w(\lambda)=\chi^{C_G(\s)}(\Y,L_\lambda)$. Moreover, by Theorem~\ref{thm:univ.fam} (cf.~Remark~\ref{rem:trivial.bundle}), there exists a $C_G(\s)$-equivariant flat degeneration from $\Y$ to $X_{H_w}(s)$, for any $s\in\grs$ with $C_G(s)=C_G(\s)$, along which the line bundle $L_\lambda$ extends.
	It follows that 
	\[V_w(\lambda)=H^0(\Y,L_\lambda)=\chi^{C_G(\s)}(\Y,L_\lambda)=\chi^{C_G(\s)}(X_{H_w}(s),L_\lambda).\]

	Since $X_{H_w}(s)$ is the zero locus of a regular section of the homogeneous vector bundle $E:=G\times^B(\fg/H_w)\to\cB$, the dual map $E^\vee\to\cO_\cB$ gives rise to a Koszul resolution of $\cO_{X_{H_w}(s)}$, and hence of $L_\lambda|_{X_{H_w}(s)}$.
	By Lemma~\ref{l:H_w.smooth}, the bundle $E$ is an iterated extension of line bundles $L_\a$ with $\a\in\Phi^+-M_w$.
	
	Applying the Leray spectral sequence to this Koszul resolution yields the desired formula, since all constructions involved are $C_G(\s)$-equivariant.
\end{proof}

An immediate consequence of Proposition~\ref{prop:rep} is Corollary~\ref{cor:weight.multi}. \begin{proof}[Proof of Corollary~\ref{cor:weight.multi}]
	By Proposition~\ref{prop:rep}, the assertion reduces to the corresponding statement for the virtual representation $\chi^{C_G(\s)}(\cB,L_\mu)$, for (not necessarily dominant) integral weights $\mu$. This follows immediately from the Borel-Weil-Bott theorem and the Weyl character formula.
\end{proof}

As immediate corollaries of Theorems~\ref{thm:vanishing} and~\ref{thm:vanishing.Frob} and Proposition~\ref{p codominant}, we obtain the following results for Hessenberg varieties in type $A$.

\begin{corollary}\label{cor:vanishing.hess}
	Let $s\in \mathfrak{gl}_n$ be regular semisimple. Let $h:[n]\to[n]$ be a Hessenberg function. Let $L$ be a line bundle on $X_h(s)$.
	\begin{enumerate}
		\item If $L$ is nef, then $H^i(X_h(s),L)=0$ for $i>0$.
		\item If $L$ is ample and $h':[n]\to[n]$ is another Hessenberg function with $h'(i)\leq h(i)$ for all $i\in [n]$, 
		then the restriction map $H^0(X_h(s),L)\to H^0(X_{h'}(s),L)$ is surjective.
	\end{enumerate}
\end{corollary}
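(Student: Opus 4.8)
The plan is to deduce both statements from Theorem~\ref{thm:vanishing} (and, in positive characteristic, Theorem~\ref{thm:vanishing.Frob}) by transporting them to Lusztig varieties through the identification in Proposition~\ref{p codominant}. The first, bookkeeping, step is to reconcile hypotheses: Proposition~\ref{p codominant} is stated for $\s\in\GL_n$ regular semisimple, whereas here $s$ is only assumed to lie in $\mathfrak{gl}_n^{\rs}$. I would remove this gap by noting that $H_h\supset\fb$ is a linear subspace of $\mathfrak{gl}_n$ containing every scalar matrix, so that $g^{-1}.s\in H_h$ if and only if $g^{-1}.(s+cI)\in H_h$; hence $X_h(s)=X_h(s+cI)$ as subvarieties of $\cB$, and similarly $X_{h'}(s)=X_{h'}(s+cI)$. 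Choosing $c$ outside the finite set of negatives of the eigenvalues of $s$, the matrix $\sigma:=s+cI$ lies in $\GL_n^{\rs}$, and Proposition~\ref{p codominant} then gives $X_h(s)=Y_{w_h}(\sigma)$ and $X_{h'}(s)=Y_{w_{h'}}(\sigma)$ for this single $\sigma$. From here one may assume $s=\sigma\in\GL_n^{\rs}$.

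Granting this, part (1) is immediate: for $L$ nef on $X_h(s)=Y_{w_h}(\sigma)$ we get $H^i(X_h(s),L)=H^i(Y_{w_h}(\sigma),L)=0$ for $i>0$ by Theorem~\ref{thm:vanishing}(1). For part (2) the only substantive point is that $h'\le h$ pointwise implies $w_{h'}\le w_h$ in the Bruhat order. I would prove this from the reduced expression of Proposition~\ref{p normal form in type A}: $w_h=(s_{h(1)-1}\cdots s_1)(s_{h(2)-1}\cdots s_2)\cdots(s_{n-1})$ is a concatenation of blocks whose $i$-th block has $h(i)-i$ letters, and replacing $h$ by $h'$ merely deletes the first $h(i)-h'(i)$ letters of the $i$-th block; so the displayed reduced word for $w_{h'}$ is a subword of the displayed reduced word for $w_h$, and the subword characterization of the Bruhat order (cf.~\cite{BjBr}) gives $w_{h'}\le w_h$. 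If $h'=h$ there is nothing to prove; otherwise the bijectivity in Proposition~\ref{p normal form in type A} forces $w_{h'}\ne w_h$, hence $w_{h'}<w_h$, and since $H_{h'}\subset H_h$ we have the closed inclusion $X_{h'}(s)=Y_{w_{h'}}(\sigma)\subset Y_{w_h}(\sigma)=X_h(s)$. Theorem~\ref{thm:vanishing}(2), applied with $w=w_h$ and $v=w_{h'}$, then yields surjectivity of $H^0(X_h(s),L)\to H^0(X_{h'}(s),L)$ for $L$ ample.

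The only genuinely non-formal ingredient is the Bruhat-order comparison $h'\le h\Rightarrow w_{h'}\le w_h$, and even that reduces to the one-line subword argument above once Proposition~\ref{p normal form in type A} is available; the scalar shift, the monotonicity $H_{h'}\subset H_h$, and the passage through Proposition~\ref{p codominant} are all routine. I therefore expect the finished proof to be only a few lines long, the single point worth spelling out being that one fixed $\sigma\in\GL_n^{\rs}$ can be used for $h$ and $h'$ simultaneously.
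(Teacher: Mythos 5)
Your proof is correct and follows the paper's intended route: the paper states the corollary as an immediate consequence of Theorems~\ref{thm:vanishing}, \ref{thm:vanishing.Frob} and Proposition~\ref{p codominant}, with no further argument supplied. You correctly fill in the two points that the paper leaves implicit, namely the scalar shift $s\mapsto s+cI$ needed to pass from $\mathfrak{gl}_n^{\rs}$ to $\GL_n^{\rs}$ so that Proposition~\ref{p codominant} applies, and the subword argument via Proposition~\ref{p normal form in type A} showing that $h'\le h$ pointwise forces $w_{h'}\le w_h$ in Bruhat order so that Theorem~\ref{thm:vanishing}(2) is applicable.
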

\begin{remark}\label{rem:AFZ}
	In \cite{AFZ}, Abe, Fujita and Zeng classified weak Fano regular semisimple Hessenberg varieties in type $A$ (\cite[Theorem~B]{AFZ}). As a result, the vanishing in (1) holds for this class of Hessenberg varieties (over characteristic zero) by the Kawamata-Viehweg vanishing theorem (\cite[Corollary~D]{AFZ}). Our Corollary~\ref{cor:vanishing.hess}(1) extends this vanishing result to arbitrary $h$.
\end{remark}

\bibliographystyle{plain}
\def\cprime{$'$}

\end{document}